\newtheoremstyle{results}
  {1ex}
  {1ex}
  {\itshape}
  {}
  {\bfseries}
  {.}
  {.5em}
  {}
\theoremstyle{results}
	\newtheorem{Sat}{Satz}[section]
	\newtheorem{Thm}[Sat]{Theorem}
	\newtheorem{Lem}[Sat]{Lemma}
	\newtheorem{Pro}[Sat]{Proposition}
	\newtheorem{Cor}[Sat]{Corollary}
	\newtheorem{DefThm}[Sat]{Definition and Theorem}
	\newtheorem{Def}[Sat]{Definition}
	\newtheorem{Ass}[Sat]{Assumption}	
\newtheoremstyle{remarks}
  {1ex}
  {1ex}
  {}
  {}
  {\itshape}
  {.}
  {.5em}
  {}
\theoremstyle{remarks}
	\newtheorem{Ex}[Sat]{Example}
	\newtheorem{Rem}[Sat]{Remark}
	\newcommand{\EW}{\mathbb{E}}
	\newcommand{\WM}{\mathbb{P}}
	\newcommand{\NZ}{\mathbb{N}}
	\newcommand{\RZ}{\mathbb{R}}
	\newcommand{\md}{\mathrm{d}}
	\newcommand{\FA}{\mathcal{F}}
	\newcommand{\aFA}{\mathcal{F}^\Lambda}
	\newcommand{\midG}{\,\middle|\,}
	\newcommand{\uP}{\varphi}
	\newcommand{\st}{{\mathcal{S}^\mathcal{O}}}
	\newcommand{\stp}{{\mathcal{S}^\mathcal{P}}}
	\newcommand{\stm}{{\mathcal{S}^\Lambda}}
	\newcommand{\stmr}{\stm\left([0,\infty)\right)}
	\newcommand{\stmg}[1]{\stm\left([#1,\infty]\right)}
	\newcommand{\stmsg}[1]{\stm\left((#1,\infty]\right)}
	\newcommand{\stmd}{{{\mathcal{S}^{\Lambda,\mathrm{div}}}}}	
	\newcommand{\stmgs}[2]{\stm\left([#1,#2]\right)}
    \newcommand{\ubar}[1]{\underaccent{\bar}{#1}}
    \newcommand{\RM}[1]{\MakeUppercase{\romannumeral #1{}}}
	\newcommand{\stsetLO}[2]{\left\rrbracket #1,#2 \right\rrbracket}
	\newcommand{\stsetRO}[2]{\left\llbracket #1,#2 \right\llbracket}
	\newcommand{\stsetC}[2]{\left\llbracket #1,#2 \right\rrbracket}
	\newcommand{\stsetO}[2]{\left\rrbracket #1,#2 \right\llbracket}
	\newcommand{\lsl}[1]{{^\ast #1}}
	\newcommand{\lsr}[2]{{#1^\ast_{#2}}}
	\newcommand{\citing}[3]{\cite{#1}, #2, p.#3}
\begin{document}

\makeatletter
\let\@fnsymbol\@arabic
\makeatother

\title{\vspace*{-3cm} On a Stochastic Representation Theorem for Meyer-measurable Processes and its Applications in Stochastic Optimal Control and Optimal Stopping}

\author{ Peter Bank\footnote{Technische Universit{\"a}t Berlin,
    Institut f{\"u}r Mathematik, Stra{\ss}e des 17. Juni 136, 10623
    Berlin, Germany, email \texttt{bank@math.tu-berlin.de}. }
  \hspace{4ex} David Besslich \footnote{Technische Universit{\"a}t
    Berlin, Institut f{\"u}r Mathematik, Stra{\ss}e des 17. Juni 136,
    10623 Berlin, Germany, email \texttt{besslich@math.tu-berlin.de}.}  }
\date{\today}

\maketitle
\begin{abstract}
    In this paper we study a representation problem first considered in a simpler version by \cite{BK04}. A key ingredient to this problem is a random measure $\mu$ on the time axis which in the present paper is allowed to have atoms. Such atoms turn out to not only pose serious technical challenges in the proof of the representation theorem, but actually have significant meaning in its applications, for instance, in irreversible investment problems. These applications also suggest to study the problem for processes which are measurable with respect to a Meyer-$\sigma$-field that lies between the predictable and the optional $\sigma$-field. Technically, our proof amounts to a delicate analysis of optimal stopping problems and the corresponding optimal divided stopping times and we will show in a second application how an optimal stopping problem over divided stopping times can conversely be obtained from the solution of the representation problem. 
\end{abstract}

\begin{description}
	\item[Keywords:] Representation Theorem, Meyer-$\sigma$-fields, Divided stopping times, Optimal Stochastic Control.
\end{description}


	\section{Introduction}
        
        In this paper we study a stochastic representation problem,
	first considered in a simpler framework by \cite{BK04}. Specifically, we consider a Meyer-$\sigma$-field $\Lambda$
	such as the predictable or optional $\sigma$-field and, under weak regularity assumptions, construct a 
	$\Lambda$-measurable process $L$ 
	such that a given $\Lambda$-measurable process $X$ can be written as
	\begin{align}\label{sto:intro}
		X_S=\EW\left[\int_{[S,\infty)} g_t\left(\sup_{v\in [S,t]} L_v\right)
							 \mu(\md t)\midG \aFA_S\right]
	\end{align}
	at every $\Lambda$-stopping time $S$. 
	
	In \cite{BK04}, stochastic representations like \eqref{sto:intro}
	are proven for optional processes $X$ and atomless optional 
	random measures $\mu$ with full support.
	Our Main Theorem \ref{sto:results_thm_2} 
	generalizes their result in several ways.
	Most notably, we solve the representation problem
	for measures $\mu$ with atoms. Such atoms not only pose considerable technical challenges for the representation problem \eqref{sto:intro}, but also convey significant meaning in its applications.
	
	For instance in an application of this representation problem to a novel version of the singular stochastic control problem of irreversible investment with inventory risk (see \cite{BB18_3} and, e.g., \cite{RX11}, \cite{CF14} for earlier versions),
	$\mu$ and $g$ are used to measure the incurred risk and the atoms of $\mu$  reflect times of particular importance for the risk assessment;
	the process $X$ describes the revenue per additional investment unit. As proven in the companion paper \cite{BB18_3}, it then turns out that $(\sup_{v\in [0,t]}L_v)_{t\geq 0}$ yields an
	 optimal investment strategy. At any atom of $\mu$, the optimal control has to trade off an improvement in the impending risk assessment against any revenue from additional investment. How exactly this comes down also depends crucially on what information is available to the controller in this moment.
	This can be modelled by a Meyer-$\sigma$-field interpolating between ``reactive'' predictable controls and ``proactive'' optional ones;  see \cite{EL80} for a detailed account.
	To account for the full variety of such information dynamics, we solve \eqref{sto:intro} for an arbitrary  
	Meyer-$\sigma$-field $\Lambda$ instead of merely the optional
	$\sigma$-field. 
	Another extension over \cite{BK04} is the possibility to take \emph{every}  stopping time
	$\hat{T}$ instead of $\Hat{T}=\infty$ as the time horizon in \eqref{sto:intro}, not just predictable ones: We just need to ``freeze'' the problem's inputs as we reach $\hat{T}$. Moreover, 
	we are able to prove that our solution $L$ is maximal in the sense that
	any other $\Lambda$-measurable solution is less than or equal
	to ours up to an evanescent set.
	Instead of this natural maximality property, \cite{BK04} just prove uniqueness under additional assumptions on the paths of $L$ which do not always obtain.
	
	The technical challenges in establishing the representation \eqref{sto:intro} arise first due to the fact
	that the original construction of $L$ in \cite{BK04} is based on the properties of optimal stopping times for the family of auxiliary stopping problems
	\begin{align}\label{sto:auxil}
		Y^\ell_S=\esssup_{T\in \stmg{S}} 
					 		\EW\left[X_T+\int_{[S,T)}g_t(\ell)\mu(\md t)\midG \aFA_S\right], \ \ell\in \RZ, \text{ $S$ $\Lambda$-stopping time.} 
	\end{align}
	When $\mu$ has atoms, the running costs can exhibit upward and downward jumps and so, optimal stopping times for \eqref{sto:auxil} may exist only in the relaxed form of divided stopping times (or \emph{temps divis\'ees}, see \cite{EK81}). Divided stopping times are actually quadruples consisting of a stopping time and three disjoint sets decomposing the probability space. These require a considerable more refined analysis. Conversely, we show that a solution to \eqref{sto:intro} also allows us to solve such generalized stopping problems and thus offers an alternative to the usual approach via Snell envelopes as in \cite{EK81}.
	
	While conceptually very versatile for modelling information flows and technically convenient to unify the treatment of predictable and optional settings, the consideration of Meyer-$\sigma$-fields adds mathematical challenges of its own.
	For instance, the level passage times of a Meyer-measurable process may not necessarily be Meyer stopping times: The entry time of a predictable
	process is itself not predictable in general. Second, for optional processes,
	right-upper-semicontinuity in expectation implies pathwise 
	right-upper-semicontinuity; see \cite{BS77}. For $\Lambda$-measurable
	processes however,	this implication does not hold true in general.
	Such subtleties can be disregarded when $\mu$ does not have atoms as in \cite{BK04}, but become crucial for \eqref{sto:intro} when it does.
	
	This paper will be organized in the following way. Section
	\ref{sec:rep} introduces the framework and the main result.
	In Section \ref{sec:appl} we give applications of the main theorem first in irreversible investment and then in optimal stopping over divided stopping times.
	In Section \ref{sto:sec_upper} we
	prove maximality and in Section \ref{sto:sec_ex} we finally 
	prove existence of a solution $L$ to \eqref{sto:intro}. The technical proofs of auxiliary results are deferred to the appendix.
	    
	\section{A Representation Problem}\label{sec:rep}
	
	   Let us fix throughout a filtered probability space $(\Omega,\mathbb{F}, 
		\FA:=(\FA_t)_{t\geq 0},\WM)$ 
		and
		$\mathcal{F}_{\infty}:=\bigvee_{t} \FA_t	\subset \mathbb{F}$
		and $\FA$ satisfying the usual conditions of completeness and 
		right-continuity. 
		Furthermore let $\Lambda$ be a $\WM$-complete Meyer-$\sigma$-field which contains
		the predictable-$\sigma$-field with respect to $\FA$ and is contained in the optional-$\sigma$-field with respect to $\FA$. We will use the concept of Meyer-$\sigma$-fields as they can be used to model different information dynamics in optimal control problems (see our companion paper \cite{BB18_3}). But Meyer $\sigma$-fields also allow us to prove our main result simultaneously for the predictable and the optional-$\sigma$-field which are both special cases of Meyer-$\sigma$-fields. The theory of Meyer-$\sigma$-fields was initiated in \cite{EL80}. We review and expand some of this material in the companion paper \cite{BB18_2}. 
			Let us recall in the next Section \ref{sec:meyer} the basic concepts and results. Upon first reading, the reader is invited to think of $\Lambda$ as the optional-$\sigma$-field in which case $\Lambda$-stopping times $\stm$ are just classical stopping times and may then skip directly to Section \ref{sec:notation}.
	
	    \subsection{Meyer-$\sigma$-fields}\label{sec:meyer}
	    
			\begin{Def}[Meyer-$\sigma$-field, \citing{EL80}{Definition 2}{502}]
			\label{Main:45}
				A $\sigma$-field $\Lambda$ on $\Omega\times [0,\infty)$
				is called a \emph{Meyer-$\sigma$-field},
				if the following conditions hold:
				\begin{compactenum}[(i)]
					\item It is generated by some right-continuous, left-limited
					(rcll or c\`adl\`ag in short) processes.
					\item It contains 
					$\{\emptyset,\Omega\}\times \mathcal{B}([0,\infty))$,
					where $\mathcal{B}([0,\infty))$
					denotes the Borel-$\sigma$-field on $[0,\infty)$.
					\item It is stable with respect to stopping at deterministic
					time points, i.e. for a $\Lambda$-measurable process $Z$,
					$s\in [0,\infty)$, also the stopped process 
					$(\omega,t)\mapsto Z_{t\wedge s}(\omega)$ is 
					$\Lambda$-measurable.
				\end{compactenum}
			\end{Def}			
			
			\begin{Ex}
			    The optional $\sigma$-field 
    			$\mathcal{O}(\mathcal{F})$ and the predictable 
    			$\sigma$-field  $\mathcal{P}(\FA)$ associated to a given
    			filtration $\FA$ are examples of Meyer-$\sigma$-fields. 
			\end{Ex}
			
			Like for filtrations, there also exists a notion of completeness of Meyer-$\sigma$-fields with respect to a probability measure $\WM$:
			\begin{DefThm}[$\WM$-complete Meyer-$\sigma$-field, see \cite{EL80}, p.507-508]
			    A Meyer-$\sigma$-field $\Lambda\subset \mathbb{F}\otimes \mathcal{B}([0,\infty))$ is called $\WM$-complete if any process $\tilde{Z}$ which is indistinguishable
			    from a $\Lambda$-measurable process $Z$ is already $\Lambda$-measurable. For any Meyer-$\sigma$-field $\tilde{\Lambda}\subset \mathbb{F}\otimes \mathcal{B}([0,\infty))$
			    there exists a smallest $\WM$-complete Meyer-$\sigma$-field $\Lambda$ containing $\tilde{\Lambda}$, called the
			    $\WM$-completion of $\tilde{\Lambda}$.
			\end{DefThm}
			
			\begin{Ex}[\citing{EL80}{Example}{509}]
			    We have a filtration $\tilde{\FA}:=(\tilde{\FA}_t)_{t\geq 0}$ on a probability space $(\Omega,\mathbb{F},\WM)$
			    and denote by $\FA$ the smallest filtration satisfying the usual conditions containing $\tilde{\FA}$.
			    Then the $\WM$-completion of the $\tilde{\FA}$-predictable $\sigma$-field is the $\FA$-predictable $\sigma$-field
			    and the $\WM$-completion of the $\tilde{\FA}$-optional $\sigma$-field is contained in the $\FA$-optional $\sigma$-field.
			\end{Ex}
			
    		The following Theorem shows that the optional and predictable
    		$\sigma$-fields are the extreme cases of Meyer-$\sigma$-fields:
			
			\begin{Thm}[\citing{EL80}{Theorem 5}{509}]
        			A $\sigma$-field on  
        			$\Omega\times [0,\infty)$ generated by
        			c\`adl\`ag processes is a $\WM$-complete Meyer-$\sigma$-field if and only if
        			 it lies between the predictable
        			and the optional $\sigma$-field of a filtration satisfying the usual conditions.
        	\end{Thm}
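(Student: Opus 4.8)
I would treat the two implications separately; the ``if'' direction is a verification, the ``only if'' direction carries the content.

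For ``if'', assume $\mathcal{P}(\FA)\subseteq\Lambda\subseteq\mathcal{O}(\FA)$ for a filtration $\FA$ satisfying the usual conditions, with $\Lambda$ generated by c\`adl\`ag processes. Conditions (i)--(iii) of Definition~\ref{Main:45} then follow: (i) is the hypothesis; (ii) holds because Borel functions of time are predictable, so $\{\emptyset,\Omega\}\times\mathcal{B}([0,\infty))\subseteq\mathcal{P}(\FA)\subseteq\Lambda$; for (iii), given $\Lambda$-measurable $Z$ and $s\geq0$ write $Z_{\cdot\wedge s}=Z\cdot 1_{[0,s]}+Z_s\cdot 1_{(s,\infty)}$, a sum of a product of $\Lambda$-measurable processes and a left-continuous $\FA$-adapted process (recall $Z\in\mathcal{O}(\FA)$ makes $Z_s$ $\FA_s$-measurable), the latter being predictable and hence in $\Lambda$. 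Finally $\Lambda$ is $\WM$-complete: $\mathcal{P}(\FA)$ contains all evanescent sets, hence so does $\Lambda$, and then any process differing from a $\Lambda$-measurable one by an evanescent set is $\Lambda$-measurable.

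For ``only if'', let $\Lambda$ be a $\WM$-complete Meyer-$\sigma$-field with generating family $(Z^i)_{i\in I}$ of c\`adl\`ag processes, and set $\FA_t:=\{A\in\FA_\infty:\ A\times(t,\infty)\in\Lambda\}$. By condition (ii) this is a filtration; it is right-continuous since $A\times(u,\infty)\in\Lambda$ for all rational $u>t$ forces $A\times(t,\infty)=\bigcup_{u>t}A\times(u,\infty)\in\Lambda$, and complete since $\WM$-completeness puts all evanescent sets into $\Lambda$ (the precise choice of $\FA_0$ still needs care, see below). That $\Lambda\subseteq\mathcal{O}(\FA)$ follows because each $Z^i$ is c\`adl\`ag and $\FA$-adapted --- stopping $Z^i$ at $t$ by (iii) gives $\{Z^i_t>a\}\times(t,\infty)\in\Lambda$, i.e.\ $\{Z^i_t>a\}\in\FA_t$ --- hence $\mathcal{O}(\FA)$-measurable, and $\mathcal{O}(\FA)$ is $\WM$-complete. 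For the converse inclusion $\mathcal{P}(\FA)\subseteq\Lambda$ I would use the classical (filtration-free) fact that $\mathcal{P}(\FA)=\sigma\big(\{0\}\times\FA_0\cup\{A\times(s,\infty):\ A\in\FA_s,\ s\geq0\}\big)$, obtained by expanding the stochastic intervals $]\!]S,T]\!]$ through elementary predictable sets. Since the sets $A\times(s,\infty)$ with $A\in\FA_s$ lie in $\Lambda$ by the very definition of $\FA_s$, this reduces everything to proving $\{0\}\times\FA_0\subseteq\Lambda$; here the tool is that left limits stay $\Lambda$-measurable --- for $\Lambda$-measurable c\`adl\`ag $Z$, the dyadic step processes $W^n_t:=\sum_{k\geq1}Z_{(k-1)2^{-n}}\,1_{((k-1)2^{-n},k2^{-n}]}(t)$ are $\Lambda$-measurable (each summand a product of $Z_{\cdot\wedge(k-1)2^{-n}}$ with a deterministic indicator, by (ii)--(iii)) and converge pointwise to $Z_-$ --- which also pins down $\FA$ as the usual augmentation of the natural filtration of the $Z^i$, so that $\{0\}\times\FA_0\subseteq\Lambda$ can be reached by approximating $A\in\FA_0$ from the right through $\sigma(Z^i_s:\ s\leq u,\ i)$ and invoking (ii)--(iii) and $\WM$-completeness.

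The main obstacle, I expect, is exactly this last step: the behaviour of $\Lambda$ at time $0$ --- and more generally at the left endpoints of the germ $\sigma$-fields of the generators --- together with the matching correct definition of $\FA_0$ that makes $\{0\}\times\FA_0\subseteq\Lambda$ and $\Lambda\subseteq\mathcal{O}(\FA)$ hold at once. It is precisely here that one genuinely needs that $\Lambda$ is $\WM$-complete and stable under stopping at deterministic times (a bare $\sigma$-field generated by c\`adl\`ag processes need not have the ``right'' trace at time $0$) and that Lenglart's framework of Meyer-$\sigma$-fields, with its attendant notion of $\Lambda$-stopping times, becomes essential.
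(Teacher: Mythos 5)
Note first that the paper does not prove this statement at all: it is quoted verbatim from \cite{EL80} (Theorem~5, p.~509), so there is no proof of the paper's to compare against, and your sketch can only be assessed on its own terms.

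Your ``if'' direction is essentially sound; (i)--(iii) are checked correctly and the use of $\mathcal{P}(\FA)$ to absorb the deterministic pieces is the right move, modulo a word one should say about $\mathbb{F}$ versus $\FA_\infty$ when asserting ``$\mathcal{P}(\FA)$ contains all evanescent sets'' (those sets live in $\mathbb{F}\otimes\mathcal{B}([0,\infty))$ while $\mathcal{P}(\FA)\subseteq\FA_\infty\otimes\mathcal{B}([0,\infty))$). The genuine gap is in the ``only if'' direction, and it is exactly where you flag it --- but it is more than a loose end. With $\FA_t:=\{A:\ A\times(t,\infty)\in\Lambda\}$, the stopping argument gives $\Lambda\subseteq\mathcal{O}(\FA)$ and puts $A\times(s,\infty)$, $A\in\FA_s$, inside $\Lambda$; everything then reduces to the single implication $A\times(0,\infty)\in\Lambda\ \Longrightarrow\ A\times[0,\infty)\in\Lambda$, i.e.\ $\FA_0\times\{0\}\subseteq\Lambda$. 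Your dyadic left-limit device does \emph{not} deliver this: it shows $Z_-\in\Lambda$ for c\`adl\`ag $\Lambda$-measurable $Z$, but $\mathbb{1}_{A\times(0,\infty)}$ is not c\`adl\`ag at $0$ --- its \emph{right} limit there is $\mathbb{1}_A$, and right limits of $\Lambda$-measurable processes are not in general $\Lambda$-measurable. Likewise, the claim that $\FA$ coincides with the usual augmentation of the natural filtration of the generators is not a free observation but is \emph{equivalent} to the very time-$0$ inclusion at issue; a priori $\FA_0=\bigcap_{u>0}\{A:\ A\times(u,\infty)\in\Lambda\}$ may strictly contain the trace of $\Lambda$ on $\Omega\times\{0\}$, because the intersection sees germ information that $\Lambda$ need not record at $t=0$. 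Closing this requires a real argument in which the hypothesis that $\Lambda$ is the $\WM$-\emph{completion} of a $\sigma$-field generated by c\`adl\`ag processes is used in an essential way (indeed the indistinguishability-closure of such a $\sigma$-field need not be generated by c\`adl\`ag processes, and it is the requirement of remaining so that forces the germ data at $0$ into $\Lambda$). As written, your sketch names this obstacle but does not overcome it, so the ``only if'' direction remains open.
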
	
			
			\begin{Rem}[Meyer-$\sigma$-fields vs. Filtrations]
			   The main advantages of a Meyer-$\sigma$-field $\Lambda$ compared on a filtration are technical but powerful tools
			   like the upcoming Meyer Section Theorem, which for example gives us uniqueness up to indistinguishability of
			   two $\Lambda$-measurable processes once they coincide at every $\Lambda$-stopping time. 
			\end{Rem}
			
			In our main theorem we will need the following generalization of
			optional and predictable projections: 
			
			\begin{DefThm}[\citing{BB18_2}{Theorem 2.14}{6}]\label{app:meyer_thm_3}
						For any non-negative $\mathbb{F}\otimes
						\mathcal{B}([0,\infty))$-measurable process $Z$,  
						there exists a non-negative
						$\Lambda$-measurable process $^\Lambda Z$, unique up 
						to indistinguishability, such that 
						\[
							\EW\left[\int_{[0,\infty)} Z_s\md A_s\right]
							=\EW\left[\int_{[0,\infty)} {^\Lambda Z}_s\md A_s\right]
						\]
						for any c\`adl\`ag, $\Lambda$-measurable, increasing process
						$A$.
						This process ${^\Lambda Z}$ is called \emph{$\Lambda$-projection of $Z$}. \qed
			\end{DefThm}
			
			Uniqueness up to indistinguishability follows as usual from a suitable section theorem. 
			For stating this theorem we have to use a generalized notion of 
			stopping times:
			
			\begin{Def}[Following \citing{EL80}{Definition 1}{502}]\label{Main:1}
				A mapping $S$ from $\Omega$ to $[0,\infty]$
				is a \emph{$\Lambda$-stopping time},
				if 
				\[
					[[S,\infty[[:=\left\{(\omega,t)\in \Omega\times 
					[0,\infty)\, \middle | \, S(\omega)\leq t\right\}\in \Lambda.
				\]			
				The set of all $\Lambda$-stopping times is denoted
				by $\stm$.
				Additionally we define to each mapping 
				$S:\Omega\rightarrow [0,\infty]$ a
				$\sigma$-field 
				\[
				\mathcal{F}^\Lambda_S := \sigma(Z_S \,|\, Z \text{ $\Lambda$-measurable process}).
				\]	
			\end{Def}
			
			Having introduced the concept of $\Lambda$-stopping times
			we can now state the Meyer Section Theorem,
			which is the Meyer-$\sigma$-field extension of the
			powerful Optional and Predictable Section Theorems:
		
			\begin{Thm}[Meyer Section Theorem, 
						\citing{EL80}{Theorem 1}{506}]\label{Main:4}
						Let $B$ be an element of $\Lambda$. For every
						$\epsilon>0$, there exists $S\in \stm$
						such that $B$ contains the graph of $S$, i.e. 
						\[ 
							B\supset \mathrm{graph}(S):=\{(\omega,S(\omega))
							\in \Omega\times [0,\infty)	\, | \, S(\omega)<\infty\}
						\]								
						and
						\[
							\mathbb{P}(S<\infty)>\mathbb{P}(\pi(B))-\epsilon,
						\]
						where $\pi(B):=\left\{\omega\in \Omega \,
						\middle |\, (\omega,t)\in B
						\text{ for some $t\in [0,\infty)$}\right\}$
						denotes the projection of $B$ onto $\Omega$.
			\end{Thm}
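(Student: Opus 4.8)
This is Lenglart's theorem, a Meyer-$\sigma$-field version of the classical optional (and predictable) section theorems, and I would prove it by the same route, with Choquet's capacitability theorem as the engine.

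First I would fix the set function $I(A):=\sup\{\WM(S<\infty):S\in\stm,\ \mathrm{graph}(S)\subseteq A\}$ on subsets $A\subseteq\Omega\times[0,\infty)$. Since $\{S<\infty\}\subseteq\pi(A)$ one has $I(A)\le\WM^*(\pi(A))$ (outer measure), so it suffices to prove $I(B)\ge\WM(\pi(B))-\epsilon$ for every $B\in\Lambda$ and $\epsilon>0$. From such an estimate I would extract a single $\Lambda$-stopping time $S$ with $\mathrm{graph}(S)\subseteq B$ and $\WM(S<\infty)>\WM(\pi(B))-\epsilon$ by taking an $\epsilon$-optimising sequence $(S_n)$, setting $S:=S_n$ on a partition $(D_n)$ with $D_n\in\mathcal F^\Lambda_{S_n}$, $D_n\subseteq\{S_n<\infty\}$ and $\bigcup_n D_n$ nearly filling $\pi(B)$; such a countable concatenation is again a $\Lambda$-stopping time (a standard stability property) and $\mathrm{graph}(S)\subseteq\bigcup_n\mathrm{graph}(S_n)\subseteq B$. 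Passing to the $\WM$-completion one may also assume $\Lambda$ is generated by countably many c\`adl\`ag processes $(Z^k)_{k\ge1}$.

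Next I would introduce the paving $\mathcal K$ of all finite unions of graphs of $\Lambda$-stopping times; it is stable under finite unions and finite intersections. The key elementary observations are: for $C=\bigcup_{i\le m}\mathrm{graph}(S_i)\in\mathcal K$ the d\'ebut $D_C=\min(S_1,\dots,S_m)$ is a $\Lambda$-stopping time (as $[[D_C,\infty[[=\bigcup_i[[S_i,\infty[[\in\Lambda$) with $\mathrm{graph}(D_C)\subseteq C$ and $\{D_C<\infty\}=\pi(C)$; and for $C_n\downarrow K$ in $\mathcal K$ the d\'ebuts $D_{C_n}$ increase to a $\Lambda$-stopping time $D_K$ (as $[[D_K,\infty[[=\bigcap_n[[D_{C_n},\infty[[\in\Lambda$) with $\mathrm{graph}(D_K)\subseteq K$ and $\{D_K<\infty\}=\pi(K)$, because the $\omega$-sections of $C_1$ are finite so decreasing sections stabilise. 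Hence $I(K)=\WM^*(\pi(K))=\WM(\pi(K))$ for every $K\in\mathcal K_\delta$, and $A\mapsto\WM^*(\pi(A))$ is a $\mathcal K$-capacity (increasing, continuous along increasing sequences of arbitrary sets, and continuous from above along decreasing sequences in $\mathcal K$, where $\pi(C_n)\downarrow\pi(K)$ are measurable and $\WM$ is continuous from above).

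Then the plan is to show that every $\Lambda$-measurable set lies in the Suslin closure of $\mathcal K$, and to invoke Choquet's capacitability theorem for the capacity $\WM^*\circ\pi$: this gives, for $B\in\Lambda$, $\WM(\pi(B))=\WM^*(\pi(B))=\sup\{\WM^*(\pi(K)):K\in\mathcal K_\delta,\ K\subseteq B\}=\sup\{I(K):K\in\mathcal K_\delta,\ K\subseteq B\}\le I(B)$, which together with $I(B)\le\WM^*(\pi(B))$ closes the argument. The hard part will be exactly the Suslin representation: right-continuity of the generators, $Z^k_t=\lim_{q\downarrow t,\,q\in\QQ}Z^k_q$, reduces the description of $\Lambda$ to countable set-operations on $\Lambda$-measurable rectangles, and one must see that these are obtained from $\mathcal K$ by the Suslin operation while keeping everything $\Lambda$-measurable. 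This is the Meyer analogue of the combinatorial core of the classical section theorems, and here one has to work around the warning recorded earlier in the paper --- d\'ebuts of $\Lambda$-measurable sets need not be $\Lambda$-stopping times --- so the approximating graphs in $\mathcal K$ have to be produced with care; this is where the hypotheses that $\Lambda$ is generated by c\`adl\`ag processes, is $\WM$-complete, and is stable under deterministic stopping are really used.
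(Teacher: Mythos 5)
The paper does not give a proof of this theorem: it is stated as a citation to \cite{EL80} (Lenglart, Theorem 1, p.\,506) and used as a black box throughout. So there is no ``paper's own proof'' to compare against; what can be assessed is whether your sketch reproduces Lenglart's argument.

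Your outline is the right one and is in fact how Lenglart proves the result, following the Dellacherie--Meyer capacitability template for the classical optional and predictable section theorems: introduce the paving $\mathcal K$ of finite unions of graphs of $\Lambda$-stopping times, verify that débuts of sets in $\mathcal K_\delta$ remain $\Lambda$-stopping times (your observation about $\bigl[\!\bigl[D_K,\infty\bigr[\!\bigr[=\bigcap_n\bigl[\!\bigl[D_{C_n},\infty\bigr[\!\bigr[$ and the stabilisation of finite $\omega$-sections is the correct way to see this), check that $\WM^*\circ\pi$ is a $\mathcal K$-capacity, and apply Choquet. Two small remarks. First, once you have $I(B)\ge\WM(\pi(B))-\epsilon$ you can read off a single $S$ directly from the definition of the supremum $I$; the countable concatenation you describe is not needed at that stage (it is, however, the device used to prove the subadditivity/continuity-from-below of $I$ itself, so it is not wasted effort). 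Second, you are right that the substantive gap is the Suslin representation of $\Lambda$-sets by the paving $\mathcal K$: that is precisely where the hypotheses that $\Lambda$ is generated by c\`adl\`ag processes and is stable under deterministic stopping are used (through the dyadic-time approximation of the generators), and your sketch flags but does not carry out this step. As an outline of Lenglart's proof, though, the architecture is correct and nothing in it would fail if the Suslin step were filled in.
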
			
			
			An important consequence is the following corollary:
	
			\begin{Cor}[\citing{EL80}{Corollary}{507}]\label{app:meyer_cor_1}
				If $Z$ and $Z'$ are two $\Lambda$-measurable processes,
				such that for each bounded $T\in \stm$ we have
				$Z_T\leq Z_T'$ a.s. (resp. $Z_T=Z_T'$ a.s.), then the 
				set $\{Z>Z'\}$ is evanescent (resp. $Z$ and $Z'$ are
				indistinguishable).
			\end{Cor}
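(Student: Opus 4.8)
The plan is to deduce this from the Meyer Section Theorem (Theorem \ref{Main:4}) by the usual contradiction argument, proving the inequality statement first and then getting the equality statement from it by symmetry.

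First I would record that the set $B_\infty:=\{Z>Z'\}$ is $\Lambda$-measurable: writing
$B_\infty=\bigcup_{q\in\QQ}\left(\{Z>q\}\cap\{Z'<q\}\right)$
exhibits it as a countable union of members of $\Lambda$, and this description also covers the extended-real-valued case. Now suppose, for contradiction, that $B_\infty$ is not evanescent. Since $B_\infty=\bigcup_{N\in\NZ}B_N$ with $B_N:=B_\infty\cap\left(\Omega\times[0,N]\right)$ increasing in $N$, and since $\pi$ commutes with increasing unions, there is some $N$ with $\WM(\pi(B_N))>0$.

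Next I would apply the Meyer Section Theorem to $B:=B_N\in\Lambda$ with $\epsilon:=\tfrac12\WM(\pi(B))>0$, obtaining $S\in\stm$ whose graph is contained in $B$ and with $\WM(S<\infty)>\WM(\pi(B))-\epsilon=\tfrac12\WM(\pi(B))>0$. I then set $T:=S\wedge N$. This is again a $\Lambda$-stopping time, since $[[T,\infty[[\,=\,[[S,\infty[[\,\cup\,\left(\Omega\times[N,\infty)\right)$ and both sets lie in $\Lambda$ (the second by property (ii) of Definition \ref{Main:45}), and $T\leq N$ is bounded. On the event $\{S<\infty\}$ the point $(\omega,S(\omega))$ lies in $B\subset\Omega\times[0,N]$, so $S(\omega)\leq N$, whence $T(\omega)=S(\omega)$ and $(\omega,T(\omega))\in B_\infty$, i.e.\ $Z_T(\omega)>Z'_T(\omega)$. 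Therefore $\WM(Z_T>Z'_T)\geq\WM(S<\infty)>0$, contradicting the hypothesis $Z_T\leq Z'_T$ a.s. Hence $B_\infty$ is evanescent.

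For the equality statement I would observe that if $Z_T=Z'_T$ a.s.\ for every bounded $T\in\stm$, then in particular $Z_T\leq Z'_T$ a.s.\ and $Z'_T\leq Z_T$ a.s.\ for every such $T$; applying the part just proved to the pair $(Z,Z')$ and to the pair $(Z',Z)$ shows that both $\{Z>Z'\}$ and $\{Z'>Z\}$ are evanescent, so $\{Z\neq Z'\}$ is evanescent, which is exactly indistinguishability of $Z$ and $Z'$. The only delicate point in the whole argument is the bookkeeping in the second paragraph — confirming that $\{Z>Z'\}\in\Lambda$ and that non-evanescence can be localized to a bounded time interval on which the section time yields an honest bounded $\Lambda$-stopping time; once the problem has been reduced to a set $B\in\Lambda$ with $\WM(\pi(B))>0$, the Meyer Section Theorem does all the remaining work.
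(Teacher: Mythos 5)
Your proof is correct. The paper itself does not prove this corollary — it is cited from \citet{EL80} — and your argument is the standard deduction from the Meyer Section Theorem: reduce to a set $B\in\Lambda$ with $\WM(\pi(B))>0$ via the bounded-time truncation $B_N:=\{Z>Z'\}\cap(\Omega\times[0,N])$, section it to get $S\in\stm$ with graph in $B_N$ and $\WM(S<\infty)>0$, note $T:=S\wedge N$ is a bounded $\Lambda$-stopping time agreeing with $S$ on $\{S<\infty\}$, and contradict the hypothesis; the equality statement then follows by applying the inequality part in both directions.
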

	
	    \subsection{Notation}\label{sec:notation}
	        
	        For the sake of notational simplicity, let us
			introduce the following notation:
			\paragraph{Sets of stopping times:} We set	
			\begin{align*}
				\stmr:=
					\left\{T\in \stm \midG T <\infty
						\ \WM\text{-a.s.}\right\}.
			\end{align*}
			Given $S\in \stm$, we shall furthermore make frequent use of
			\[
				\stmg{S}:=
				\left\{T\in \stm \midG T \geq S\ \WM\text{-a.s.}\right\}
			\]
			and
			\[
				\stmsg{S}
				:=\left\{T\in \stm \midG T > S\
				\WM\text{-a.s. on } \{S<\infty\}\right\}.
			\]		
			Analogously, for $R\in \stm$ define the sets
			 $\stm\left((S,R]\right)$, $\stm\left([S,R]\right)$
			as above.
			\paragraph{Stochastic Intervals:}
			Finally we define the stochastic interval for $S,T\in \st$ by
			\[
				\stsetC{S}{T}:=\left\{(\omega,t)\in \Omega\times [0,\infty)
				\midG S(\omega)\leq t \leq T(\omega)\right\}
			\]
			and analogously $\stsetRO{S}{T}$, $\stsetLO{S}{T}$
			and $\stsetO{S}{T}$. Observe that the stochastic intervals defined in this way
			are always subsets of $\Omega\times [0,\infty)$ even if the considered stopping times
			attain the value $\infty$ for some $\omega\in \Omega$.

			\paragraph{Other notation:}			
			We use the convention that $\inf \emptyset = \infty$, $\sup \emptyset = -\infty$, $\infty\cdot 0=0$, $\frac{\cdot}{0}=\infty$
			and $\mathbb{N}:=\{1,2,3,\dots\}$.
	
	    \subsection{Dramatis personae of the representation problem}
			
			Let us now set the stage for our main result. Apart from the Meyer-$\sigma$-field $\Lambda$, our representation problem needs a random Borel-measure $\mu$ on $[0,\infty)$ and a random field $g:\Omega\times [0,\infty)\times \RZ\rightarrow \RZ$ as input. These are assumed to satisfy the following conditions:
	
			\begin{Ass}\label{sto:frame_ass}
				\begin{compactenum}[(i)]				
    				\item 	The random measure $\mu$ on $[0,\infty)$ is optional, i.e. such that its random distribution function $(\mu([0,t])$, $t\geq 0$, is a real-valued, c\`adl\`ag, non-decreasing $\FA$-adapted process, and $\mu(\{\infty\}):=0$.
					\item 	The random field $g:\Omega\times [0,\infty)\times \RZ \rightarrow \RZ$ satisfies:
					\begin{compactenum}
						\item 	For each $\omega\in \Omega$, $t\in [0,\infty)$, the 
							 	function $g_t(\omega,\cdot):\RZ\rightarrow \RZ$ is continuous and strictly increasing from $-\infty$ to $\infty$. \label{sto:ass_bullet_1}
						\item For each $\ell \in \RZ$, the \label{sto:ass_bullet_2}	process $g_\cdot(\cdot,\ell): \Omega\times [0,\infty) \rightarrow \RZ$ is $\FA$-progressively measurable with
					 	\[
					 		\EW\left[\int_{[0,\infty)} |g_t(\ell)|\mu
							(\md t)\right]<\infty.
					 	\]
					\end{compactenum}
				\end{compactenum}
			\end{Ass}						
			
			Furthermore the $\Lambda$-measurable process $X=(X_t,\ 0\leq t\leq \infty)$ 
			with $X_{\infty}=0$ to be represented should exhibit certain regularity properties which we specify next:
			
			\begin{Def}\label{def_conditions_1}	
			    $X$ is of class($\text{D}^\Lambda$) if $\left\{X_T\midG T\in \stm \right\}$ is uniformly integrable, i.e. if
				\[
					\lim_{r\rightarrow \infty}
						\sup_{T\in \stm}
						\EW\left[|X_T|\mathbb{1}_{\{|X_T|>r\}}\right]=0.
				\]
			\end{Def}
				
			\begin{Rem}
					In \citing{EK81}{Proposition 2.29}{127} the
					condition class($D^\Lambda$) is introduced as the appropriate notion for Meyer-measurable processes which only focusses on $\Lambda$-stopping times 
					with respect to which the process
					$X$ has to be uniformly integrable, whereas the classical notion of class($D$) requires this for all $\FA$-stopping times.
			\end{Rem}
				
			\begin{Def}\label{def_conditions_2}		
			\label{sto:def_usc} A $\Lambda$-measurable process
			$X$ of class($D^\Lambda$) will be called
			 \emph{$\Lambda$-$\mu$-upper-semi\-continuous in expectation}
			\label{sto:semi_def} if:
				\begin{compactenum}[(a)]
					\item\label{sto:def_lusc} The process $X$ is
						\emph{left-upper-semicontinuous
						 in expectation at every $S\in \stp$}
						in the sense that for any non-decreasing sequence 
						$(S_n)_{n\in \NZ}\subset {\stm}$ 
						 with $S_n<S$ on $\{S>0\}$ and $\lim_{n\rightarrow \infty}
						S_n=S$ we have
								\begin{align*}
									\EW\left[X_S\right]\geq
									\limsup_{n\rightarrow \infty}
											\EW\left[X_{S_n}\right].
								\end{align*}
					\item \label{sto:def_rusc}
						The process $X$ is
						\emph{$\mu$-right-upper-semicontinuous
						in expectation at every $S\in \stm$} in the
						sense that for $S\in \stm$ and any sequence 
						$(S_n)_{n\in \NZ}\subset
						\stmg{S}$ with
							$\lim_{n\rightarrow \infty} 
							\mu([S,S_n))=0$ almost surely
								we have
						\[
							\EW\left[X_S\right]
							\geq \limsup_{n\rightarrow \infty}
							\EW\left[X_{S_n}\right].  
						\]				
					\end{compactenum}
			\end{Def}
			
			\begin{Rem}[Remark to the $\Lambda$-$\mu$-upper-semicontinuity]\label{sto:frame_rem}
					\begin{compactenum}[(a)]
						\item 
							For $\mu$ with no atoms and full support 
							we see that
							$\mu$-right-upper-semicontinuity in expectation is equivalent to the property that, for all $S\in \stm$ and every sequence $S_n\in \stmg{S}$ which converges
							to $S$ from above, we have that
							\[
									\EW[X_{S}]\geq
											\limsup_{n\rightarrow \infty}
											\EW\left[X_{S_n} \right].
							\]
							It then gives for $\Lambda=\mathcal{O}$ 
							that the classical condition of right-upper-semicontinuity
							in expectation
							 in all $S\in \st$  (cf.
							\citing{EK81}{Proposition 2.42}{141-142})
							is equivalent to
							our definition.
						\item
							Notice that in our notions of $\Lambda$-$\mu$-upper-semicontinuity
							we only require to approximate with
							$\Lambda$-stopping
							times. In order to deduce path properties of $X$ or its
							$\Lambda$-projection we thus extend in \citing{BB18_2}{Lemma 4.4}{21},
							some results of \cite{BS77} who confine themselves to the optional
							case $\Lambda=\mathcal{O}$.
				\end{compactenum}
			\end{Rem}
	
	\subsection{The Main Theorem -- Statement and Discussion}
	
    Now we will state and discuss the main theorem of this paper:
	
	\begin{Thm}\label{sto:results_thm_2}
			Suppose $g$ and $\mu$ satisfy Assumption \ref{sto:frame_ass} 
			and let $X$ be a
			 $\Lambda$-measurable process of
			  class($\text{D}^\Lambda$) which
					 is $\Lambda$-$\mu$-upper-semicontinuous in expectation 
					and satisfies $X_S=0$ for $S\in \stm$ with $\mu([S,\infty))=0$
					almost surely. 
					
					Then $X$ admits a representation of the form 
					  \begin{align}\label{sto:frame_gl}
							X_S=\EW\left[\int_{[S,\infty)} 
								g_t\left(\sup_{v\in [S,t]} L_v\right)
							 \mu(\md t)\midG \aFA_S\right]
							,\ S\in \stm,
					\end{align}
					for the unique (up to indistinguishability) $\Lambda$-measurable process $L$ such that
					\begin{align}\label{sto:results_gl}
						L_S= \essinf_{T\in \stmsg{S}} \ell_{S,T}, \quad 
						S\in \stmr,
					\end{align}					
					where for $S\in \stmr$ and $T\in \stmsg{S}$,
					$\ell_{S,T}$ is the unique (up to a $\WM$-null set)
					$\aFA_S$-measurable random variable such that 
					\begin{align}\label{sto:results_gl_2}
						\EW\left[X_S-X_T\midG \aFA_{S}\right]
						=\EW\left[\int_{[S,T)} g_t(\ell_{S,T})
						\mu(\md t)\midG \aFA_{S}\right]\; \text{ on }
					\{\WM\left(\mu([S,T))>0|\aFA_S\right)>0\}
					\end{align}
					and $\ell_{S,T}=\infty$ on 
					$\{\WM\left(\mu([S,T))>0|\aFA_S\right)=0\}$.
					Furthermore this process $L$ satisfies 				
					\begin{align}
							\EW\left[\int_{[S,\infty)} \left\lvert 
								g_t\left(\sup_{v\in [S,t]} L_v\right)
							 \right\rvert \mu(\md t)\right]<\infty\;
							\text{ for any $S\in \stm$},
							 \label{sto:frame_gl_2}
					\end{align}
					and it is maximal in the sense that 
					$\tilde{L}_S\leq L_S$ for any $S\in \stmr$					
					for every other $\Lambda$-measurable process $\tilde{L}$ 
					satisfying, 	mutatis mutandis, 
					(\ref{sto:frame_gl}) and (\ref{sto:frame_gl_2}).
		\end{Thm}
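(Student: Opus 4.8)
\emph{Sketch of the intended argument.} I would split the statement into two essentially independent parts, maximality and existence, and dispose of maximality first since it needs no optimal-stopping theory. Let $\tilde L$ be any $\Lambda$-measurable process satisfying \eqref{sto:frame_gl} and \eqref{sto:frame_gl_2}, fix $S\in\stmr$ and $T\in\stmsg{S}$, and write the representation \eqref{sto:frame_gl} both at $S$ and at $T$. Conditioning the identity at $T$ on $\aFA_S$ (using $\aFA_S\subseteq\aFA_T$ and the tower property) and subtracting the one at $S$, the contributions from $[T,\infty)$ combine into $\EW\big[\int_{[T,\infty)}(g_t(\sup_{v\in[S,t]}\tilde L_v)-g_t(\sup_{v\in[T,t]}\tilde L_v))\,\mu(\md t)\,\big|\,\aFA_S\big]\ge 0$ because each $g_t$ is increasing, leaving, after also using $\sup_{v\in[S,t]}\tilde L_v\ge\tilde L_S$,
\[
\EW[X_S-X_T\mid\aFA_S]\ \ge\ \EW\Big[\int_{[S,T)}g_t(\tilde L_S)\,\mu(\md t)\,\Big|\,\aFA_S\Big].
\]
Comparing with the defining relation \eqref{sto:results_gl_2} for $\ell_{S,T}$, testing the resulting inequality against the $\aFA_S$-measurable event $\{\tilde L_S>\ell_{S,T}\}\cap\{\WM(\mu([S,T))>0\mid\aFA_S)>0\}$, and using the \emph{strict} monotonicity of $g_t$ together with class($\text{D}^\Lambda$) to see this event is $\WM$-null, one gets $\tilde L_S\le\ell_{S,T}$ a.s.\ on $\{\WM(\mu([S,T))>0\mid\aFA_S)>0\}$ and trivially (as $\ell_{S,T}=\infty$) elsewhere; taking the essential infimum over $T\in\stmsg{S}$ yields $\tilde L_S\le L_S$. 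The same computation applied to the process $L$ itself shows $L_S\le\ell_{S,T}$ for every $T$, so that only the attainment of the infimum in \eqref{sto:results_gl} remains for the existence part.

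Uniqueness of the process described by \eqref{sto:results_gl} is then immediate: \eqref{sto:results_gl_2} pins down $\ell_{S,T}$, hence $L_S$, up to a $\WM$-null set at every $S\in\stmr$ — in particular at every bounded $\Lambda$-stopping time — because $g_t(\cdot)$ is continuous and strictly increasing onto $\RZ$ and $X_S-X_T$ is integrable, so Corollary \ref{app:meyer_cor_1} forces any two $\Lambda$-measurable realisations to be indistinguishable. The real work is existence, for which I would follow the route of \cite{BK04} suitably upgraded to atoms and to $\Lambda$. For a countable dense set of levels $\ell\in\RZ$, first construct the value process $Y^\ell$ of the auxiliary optimal-stopping problem \eqref{sto:auxil} as a $\Lambda$-measurable process (a Snell envelope in the Meyer setting), of class($\text{D}^\Lambda$), decreasing and jointly measurable in $\ell$, with $Y^\ell_S\to X_S$ as $\ell\to\infty$; the crucial input here is the existence of an optimal \emph{divided} stopping time $D^\ell_S$ for $Y^\ell_S$, in the sense of \cite{EK81} adapted to $\Lambda$. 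Then define $L$ pathwise by $L_{(\omega,t)}:=\sup\{\ell\in\QQ:Y^\ell_t(\omega)>X_t(\omega)\}$, which is $\Lambda$-measurable as a countable combination of the processes $Y^\ell$ and $X$.

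The remaining steps connect $L$ to the data. Using the Meyer Section Theorem I would identify $\{L_S<\ell\}$ with the ``stopping region'' $\{Y^\ell_S=X_S\}$ (up to $\WM$-null sets, modulo the boundary $\{L_S=\ell\}$) and identify $D^\ell_S$ with the appropriately relaxed first-passage time of $L$ through the level $\ell$; the two optimality relations $X_S=Y^\ell_S$ on $\{L_S<\ell\}$ and $Y^\ell_S=\EW[X_{D^\ell_S}+\int_{[S,D^\ell_S)}g_t(\ell)\mu(\md t)\mid\aFA_S]$ then yield, on one hand, $\ell$-approximating stopping times showing the infimum in \eqref{sto:results_gl} is attained, and, on the other hand, the representation \eqref{sto:frame_gl} via the stratification $g_t(\sup_{v\in[S,t]}L_v)-g_t(c)=\int_\RZ(\mathbb{1}_{\{\ell<\sup_{v\in[S,t]}L_v\}}-\mathbb{1}_{\{\ell<c\}})\,g_t(\md\ell)$ with $g_t(\md\ell)$ the Lebesgue--Stieltjes measure of $g_t(\cdot)$: integrating against $\mu(\md t)$, taking $\EW[\,\cdot\mid\aFA_S]$, and recognising each layer $\{\ell<\sup_{v\in[S,t]}L_v\}$ as a $\{\inf\{v\ge S:L_v>\ell\}\le t\}$-event whose contribution is exactly governed by the optimality relation for $Y^\ell$. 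The integrability \eqref{sto:frame_gl_2} comes along the way from class($\text{D}^\Lambda$) and Assumption \ref{sto:frame_ass}, and indistinguishability in \eqref{sto:frame_gl} from the Meyer Section Theorem again.

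The principal obstacle lies entirely in this existence half, and is exactly the phenomenon flagged in the introduction. When $\mu$ charges points the running cost $t\mapsto\int_{[S,t)}g_t(\ell)\mu(\md t)$ has jumps of both signs, so \eqref{sto:auxil} need not possess an ordinary optimal stopping time and one is forced to the relaxed notion of a divided stopping time, whose three-set decomposition at an atom $t$ of $\mu$ must be shown to contribute precisely $g_t(L_S)\mu(\{t\})$ to \eqref{sto:frame_gl}. Worse, for $\Lambda$-measurable processes $\mu$-right-upper-semicontinuity in expectation does not imply pathwise right-upper-semicontinuity (contrary to the optional case of \cite{BS77}), and first-passage times of $L$ are not in general $\Lambda$-stopping times; consequently every identification above — of $\{L<\ell\}$ with a stopping region, of $D^\ell_S$ with a passage time of $L$ — has to be carried out through evaluations at $\Lambda$-stopping times and section arguments rather than pathwise. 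I expect the delicate bookkeeping around the ``undecided'' middle component of $D^\ell_S$ at atoms of $\mu$, and the passage from the family $(Y^\ell)_\ell$ to $L$ across these atoms, to absorb essentially all of the effort.
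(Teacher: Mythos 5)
Your proposal follows essentially the same route as the paper: maximality via the comparison argument against \eqref{sto:results_gl_2}, uniqueness via the Meyer Section Theorem (Corollary \ref{app:meyer_cor_1}), and existence via a family of auxiliary Snell envelopes $Y^\ell$, a threshold definition of $L$, optimal divided stopping times, and a disintegration formula. You also correctly flag the genuinely hard parts relative to \cite{BK04}: atoms of $\mu$ forcing the relaxation to divided stopping times, the ``undecided'' middle component at atoms, and the fact that for $\Lambda$-measurable processes level-passage times and pathwise right-upper-semicontinuity are not automatic, so every identification must go through evaluations at $\Lambda$-stopping times and section arguments rather than pathwise.

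There is, however, a concrete error running through the existence half: you have the monotonicity of $\ell\mapsto Y^\ell$ backwards. Since each $g_t(\cdot)$ is \emph{increasing}, the running reward $\int_{[S,T)}g_t(\ell)\,\mu(\md t)$ increases in $\ell$, hence $Y^\ell$ is \emph{non-decreasing} in $\ell$, the stopping set $\{Y^\ell_S=X_S\}$ consists of the \emph{small} levels $\{\ell\le L_S\}$, and $Y^\ell_S\to X_S$ as $\ell\to -\infty$ (not $+\infty$). Accordingly the threshold must be $L_t=\sup\{\ell : Y^\ell_t=X_t\}$ (equivalently $\inf\{\ell : Y^\ell_t>X_t\}$), not $\sup\{\ell : Y^\ell_t>X_t\}$, which by that monotonicity equals $+\infty$ whenever the set is nonempty; likewise the optimality identity is $X_S=Y^\ell_S$ on $\{L_S\ge\ell\}$, not on $\{L_S<\ell\}$. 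With your orientation the running \emph{supremum} $\sup_{v\in[S,t]}L_v$ inside $g_t$ in \eqref{sto:frame_gl} cannot emerge (one would need a running infimum), so the sketch as written cannot reach the stated representation. Once these signs are flipped consistently the remaining plan — relate $T_{S,\ell}$ to the passage of $\sup_{v\in[S,\cdot]}L_v$ through level $\ell$, disintegrate the measure $Y_S(\md\ell)$, then let $\ell\uparrow\infty$ — coincides with Lemmas \ref{Lem:s5}, \ref{sto:Lem_12}, \ref{sto:Lem_13} and Proposition \ref{sto:dis_pro} of the paper.
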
		
			 
	    Let us highlight one by one three different aspects of the preceding result, 
	    the way they go beyond the work of \cite{BK04} and how they can be used in the applications
	    of Section \ref{sec:appl}. First of all,
		Theorem \ref{sto:results_thm_2} can be used for an optional 
		measure $\mu$ with atoms and not necessarily full support. In particular we can embed discrete time frameworks and can take any $\FA$-stopping time $\hat{T}$ as
		time horizon. 
		Second, we can represent any $\Lambda$-measurable 
		process $X$, satisfying the stated conditions
		in the form \eqref{sto:frame_gl} for some $\Lambda$-measurable process $L$.
		This can be used in stochastic control problems to account for different information dynamics for the controller. 
		In the companion paper \cite{BB18_3} we develop this new information modeling idea for stochastic optimal control in greater detail. From this work, we obtain in Section~\ref{sec:control} a first illustration of how different Meyer-$\sigma$-fields $\Lambda$ lead to different solutions $L=L^\Lambda$ to \eqref{sto:frame_gl}. 
		Third, we characterize the maximal solution up to indistinguishability 
		by \eqref{sto:results_gl} without additional assumptions on~$L$.
		Notice that it is not obvious that 
		the family of random variables defined
		for $S\in \stm$ by the right hand side of \eqref{sto:results_gl} 		
		can be aggregated into a $\Lambda$-measurable process $L$.
		So \eqref{sto:results_gl}	does not itself give a construction
		of a stochastic process $L$ and, in fact,
		we are going to construct $L$ instead by using methods from
		optimal stopping;	see Section \ref{sto:sec_ex}. The characterization 
		\eqref{sto:results_gl} can be used in some cases to calculate $L$ explicitly
		as this for example was done in \cite{BB18_3}, Section 2.
		A third application of Theorem \ref{sto:results_thm_2} is an explicit solution to an optimal stopping problem over divided stopping times, where the optimal divided stopping time will
		only depend on $L$. Concerning the proof of Theorem \ref{sto:results_thm_2} we will use the concept of the Snell envelope. In contrast to \cite{BK04} we will not get a stopping time attaining the value of the optimal stopping problem connected to the Snell envelope. This is mainly due to the atoms of $\mu$.
		To overcome this problem we will
		use divided stopping times, which offer another application which we discuss in Section~\ref{sec:stopping}.
		
		The next result shows that for a representation as in \eqref{sto:frame_gl} the process $X$ has to
	be $\mu$-right-upper-semicontinuous in expectation:
	
	\begin{Pro}\label{sto:nec_rusc}
		Assume we have a $\Lambda$-measurable process $X$ of 
					class($D^\Lambda$) with $X_{\infty}=0$ which admits the
		 representation as in (\ref{sto:frame_gl})
		for some $\Lambda$-measurable process $L$ with the integrability condition
		(\ref{sto:frame_gl_2}).	Then we have  
		 that the process $X$ is $\mu$-right-upper-semicontinuous in expectation
		 (see Definition \ref{def_conditions_2} (b))
		 at all $S \in \stm$.
	\end{Pro}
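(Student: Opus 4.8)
The strategy is to pass to expectations in the representation \eqref{sto:frame_gl} and to exploit the monotonicity of $g$ together with the integrability bound \eqref{sto:frame_gl_2}. Fix $S\in\stm$ and a sequence $(S_n)_{n\in\NZ}\subset\stmg{S}$ with $\mu([S,S_n))\to 0$ $\WM$-a.s., and abbreviate $H^R_t:=g_t\big(\sup_{v\in[R,t]}L_v\big)$ for $R\in\stm$ and $t\in[R,\infty)$. The first, routine point is that by \eqref{sto:frame_gl_2} the integral $\int_{[R,\infty)}|H^R_t|\,\mu(\md t)$ is finite both $\WM$-a.s.\ and in expectation for every $R\in\stm$; consequently $H^R_t$ is finite for $\mu(\md t)$-a.e.\ $t$, $\WM$-a.s., the tower property applies to \eqref{sto:frame_gl}, and $\EW[X_R]=\EW\big[\int_{[R,\infty)}H^R_t\,\mu(\md t)\big]$ for every $R\in\stm$.

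Next I would record the pathwise comparison that drives the estimate in the relevant direction: since $S_n\geq S$, for $t\geq S_n$ we have $[S_n,t]\subseteq[S,t]$, hence $\sup_{v\in[S_n,t]}L_v\leq\sup_{v\in[S,t]}L_v$, and as each $g_t$ is increasing (Assumption~\ref{sto:frame_ass}\eqref{sto:ass_bullet_1}) this gives $H^{S_n}_t\leq H^S_t$ for $\mu(\md t)$-a.e.\ $t\geq S_n$, $\WM$-a.s. Integrating and using $[S,\infty)=[S,S_n)\cup[S_n,\infty)$ (a disjoint union because $S_n\geq S$, and $\mu(\{\infty\})=0$), this yields
\[
\EW[X_{S_n}]=\EW\Big[\int_{[S_n,\infty)}H^{S_n}_t\,\mu(\md t)\Big]\leq\EW\Big[\int_{[S_n,\infty)}H^S_t\,\mu(\md t)\Big]=\EW[X_S]-\EW\Big[\int_{[S,S_n)}H^S_t\,\mu(\md t)\Big].
\]
So the whole statement reduces to showing that the boundary term $\EW\big[\int_{[S,S_n)}H^S_t\,\mu(\md t)\big]$ tends to $0$ (nonnegative $\liminf$ would already suffice).

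I expect this last convergence to be the only genuine obstacle, because it is precisely here that the atoms of $\mu$ and the possible unboundedness of $H^S$ near $S$ interact: we are given only that the \emph{unweighted} mass $\mu([S,S_n))$ vanishes, while $H^S$ may blow up close to $S$ and $\mu$ may even charge $S$ itself. The plan is a truncation argument: for fixed $K>0$,
\[
\Big|\int_{[S,S_n)}H^S_t\,\mu(\md t)\Big|\leq\int_{[S,S_n)}|H^S_t|\,\mu(\md t)\leq K\,\mu([S,S_n))+\int_{[S,\infty)}|H^S_t|\,\mathbb{1}_{\{|H^S_t|>K\}}\,\mu(\md t);
\]
letting $n\to\infty$ kills the first term by hypothesis, and then letting $K\to\infty$ kills the second by pathwise dominated convergence with the $\WM$-a.s.\ $\mu$-integrable majorant $|H^S_\cdot|$. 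This shows $\int_{[S,S_n)}H^S_t\,\mu(\md t)\to 0$ $\WM$-a.s.; since this quantity is bounded in absolute value by $\int_{[S,\infty)}|H^S_t|\,\mu(\md t)\in L^1(\WM)$, a second application of dominated convergence transfers the convergence to the expectations. Substituting back into the displayed inequality yields $\limsup_{n\to\infty}\EW[X_{S_n}]\leq\EW[X_S]$, which is exactly $\mu$-right-upper-semicontinuity in expectation at $S$ in the sense of Definition~\ref{def_conditions_2}\eqref{sto:def_rusc}; as $S\in\stm$ was arbitrary, this completes the argument.
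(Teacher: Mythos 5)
Your proof is correct and is exactly the ``straightforward adaptation of the proof of [BK04, Theorem 2]'' that the paper invokes without spelling out: take expectations in the representation, use the monotonicity $\sup_{v\in[S_n,t]}L_v\leq\sup_{v\in[S,t]}L_v$ together with the monotonicity of $g_t(\cdot)$ to bound $\EW[X_{S_n}]$ from above by $\EW[X_S]$ minus a boundary term over $[S,S_n)$, and show that boundary term vanishes. The truncation at level $K$ in the last step is precisely the adaptation needed because $\mu$ may have atoms and $(S_n)$ need not be monotone, so the sets $[S,S_n)$ are not nested and one cannot invoke monotone convergence; it is in effect an explicit proof of the $\epsilon$--$\delta$ absolute continuity of the $\WM$-a.s.\ finite measure $|H^S_t|\,\mu(\md t)$ with respect to $\mu$, which is the heart of the claim. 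All the intermediate integrability manipulations (tower property, decomposing $[S,\infty)=[S,S_n)\sqcup[S_n,\infty)$, dominated convergence for the outer expectation) are justified by \eqref{sto:frame_gl_2} applied at both $S$ and $S_n$. So this is a correct and complete filling-in of the argument the paper only references.
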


	\begin{proof}
		This is straight forward adaption of the proof of \citing{BK04}{Theorem 2}{1048}, adapted 
		to our stochastic setting.
		\end{proof}		
	
	\begin{Rem}[Necessecity of Left-upper-semicontinuity in expectation]
        One can construct simple examples, which show that there are processes, which are not left-upper-semicontinuous in expectation at every
		$\Lambda$-stopping time and which can not be represented by a process
		$L$ in the form \eqref{sto:frame_gl}. On the other hand there are also simple examples, which
		show that there are processes which can be represented
		as in \eqref{sto:frame_gl} and are not left-upper-semicontinuous in expectation at every
		$\Lambda$-stopping time. Hence, the condition of left-upper-semicontinuity in expectation
		at every $\Lambda$-stopping time is neither necessary nor can we improve the conditions used in Theorem \ref{sto:results_thm_2} in general.
	\end{Rem}
			
    \section{Applications of the Extended Representation Theorem}\label{sec:appl}
    
        Let us discuss briefly the applications mentioned in the introduction and in the discussion after Theorem \ref{sto:results_thm_2}.
        
        \subsection{Irreversible Investment with Inventory Risk}\label{sec:control}
            
            In this section we illustrate in a simple L{\'e}vy process specification of $X$ and $\mu$ how in our companion paper \cite{BB18_2} the representation result Theorem \ref{sto:results_thm_2} leads to different optimal policies 
            for an irreversible investment problem when the information flow is described by different Meyer-$\sigma$-fields. Along the way, we thus will also obtain first nontrivial explicit solutions to our general representation problem \eqref{sto:frame_gl}. 
            
        Controls in our irreversible investment problem are given by increasing processes $C$ starting in $C_{0-} = \varphi$. They generate expected rewards 
            \begin{align}\label{Main:rewards}
                \EW\left[\int_{[0,\infty)} e^{-rt} \tilde{P}_t\md C_{t+}\right].
            \end{align}
       Here, $\tilde{P}$ is an increasing compound Poisson process
		\begin{align}\label{Main:36}
			\tilde{P}_t=\tilde{p}+\sum_{k=1}^{N_t}Y_k,	\quad t\in [0,\infty),
		\end{align}
		where $\tilde{p}\in \RZ$ and where $N$ is a Poisson process with intensity $\lambda>0$, independent of the i.i.d. sequence of \emph{strictly positive} jumps $(Y_k)_{k\in \mathbb{N}}\subset \mathrm{L}^2(\WM)$. 
		
		Controls also incur risk which is assessed at the jump times of the Poisson process $N$:
	    \begin{align}\label{Main:risk}
            \EW\left[\int_{[0,\infty)} e^{-rt}\frac12 C_t^2 \,dN_t\right].
        \end{align}
              
		  The information flow in our control problem is generated by an imperfect sensor which gives timely warnings about impending jumps of $\tilde{P}$ only if these are large enough. Mathematically, this is described by the Meyer $\sigma$-field $\Lambda$ which is the  $\WM$-completion of 
    	\begin{align}\label{Main:92}
			\tilde{\Lambda}= 
			\sigma\left(Z \text{ is $\tilde{\mathcal{F}}^\eta$-adapted and c\`adl\`ag}\right),
		\end{align}
    	where $\tilde{\mathcal{F}}^\eta$ describes the filtration generated by the sensor process 
		\begin{align}\label{Main:56}
		    \tilde{P}^\eta:= \tilde{P}_-+\Delta  \tilde{P}\mathbb{1}_{\{\Delta  \tilde{P} \geq \eta\}}
		\end{align}
		for some sensitivity threshold $\eta\in [0,\infty]$. One can readily check that
		$\mathcal{P}(\mathcal{F})\subset \Lambda\subset \mathcal{O}(\mathcal{F})$. 
		
		The controller's optimization problem can thus be summarized by
		\begin{align}\label{Main:42}
                \sup_{C \geq \varphi \text{ increasing, $\Lambda$-measurable}}
                \EW\left[\int_{[0,\infty)} e^{-rt}\left( \tilde{P}_t\md C_{t+}
                -\frac12 C_t^2  \md N_t\right)\right],
            \end{align}
        where the last expectation is supposed to be $-\infty$ if $P^-$ is not $\P \otimes e^{-rt}\md C_{t+}$-integrable or $C$ is not $\P \otimes e^{-rt}dN_t$-square integrable.
		
		As shown in \cite{BB18_3}, this optimization problem can be reduced to our representation problem~\eqref{sto:frame_gl} by choosing, for $t \in [0,\infty)$,
\begin{align}\label{Main:choices}
    X_t := e^{-rt}\tilde{P}^\eta_t, \quad 
    g_t(\ell) := \ell, \; \ell \in \RZ, \quad
    \mu(dt) := e^{-rt} dN_t,
\end{align}
        and we obtain:
	
			\begin{Thm}
			    If $p(\eta):=\WM(Y_1< \eta)\in(0,1)$, then the representation problem~\eqref{sto:frame_gl} with $X$, $g$, and $\mu$ as in \eqref{Main:choices} and $\Lambda$ the $\WM$-completion of \eqref{Main:92} is solved by the $\Lambda$-measurable process 
			    \begin{align}\label{Main:57}
						L_t^\Lambda = \begin{cases}
							0,& \tilde{P}^\eta_t\geq b,\
					\Delta\tilde{P}^\eta_t \geq \eta,									\vspace{2ex}\\
						\frac{r}{\lambda}	(\tilde{P}^\eta_t-b),
									& \tilde{P}^\eta_t\geq b,\
									\Delta\tilde{P}^\eta_t
									< \eta,\vspace{2ex}\\
						\inf\limits_{\gamma \in \mathcal{D}(\tilde{P}^\eta_t)} 
						f^\eta(\gamma,\tilde{P}^\eta_t) 
							,& \tilde{P}^\eta_t< b,\
									\Delta
						\tilde{P}^\eta_t\geq \eta,\vspace{2ex}\\
						\frac{1}{p(\eta)}\frac{r}{\lambda}
								(\tilde{P}^\eta_t-b),& \tilde{P}^\eta_t
								< b,\ \Delta
												\tilde{P}^\eta_t< \eta,\\
						\end{cases}
				\end{align}
				where $b:=m\frac{\lambda}{r}$,
				$
				\mathcal{D}(z):=\left[0,\left(1-\frac{\lambda}{1+\lambda}p(\eta)\right)(b-z)\right)$,  $z\in (-\infty,b),
				$ 
				and	the function $f^\eta:
				[0,\infty)\times \RZ
				\rightarrow \RZ$
				is given by
				\begin{align}\label{Main:16}
					f^\eta(\gamma,z):=\frac{\left(1-
					\EW\left[\mathrm{e}^{-rT(\gamma)}\right]\right)z
					-\EW\left[\mathrm{e}^{-rT(\gamma)} 
					\sum\limits_{k=1}^{N_{T(\gamma)}}Y_k\right]}{1+
					\frac{\lambda}{r}\left(1-
					\EW\left[\mathrm{e}^{-rT(\gamma)}\right]\right)
					-\EW\left[\mathrm{e}^{-rT(\gamma)}
					\mathbb{1}_{\{Y_{N_{T(\gamma)}}\geq \eta\}}\right]}
				\end{align}
				with
				\begin{align}\label{Main:52}
					T(\gamma):=\inf\left\{t\in \{\Delta N>0\}\midG 
					Y_{N_t}\geq \eta \text{ or }
					 \sum_{k=1}^{N_t} Y_{k}\geq \gamma \right\}.
				\end{align}
			    Furthermore, an optimal strategy for \eqref{Main:42} is given by the l{\`a}dl{\`a}g control
				\[
					C_t^{\Lambda}:=\uP \vee\sup_{v\in [0,t]} 
					L^{\Lambda}_v, \quad t \in [0,\infty). 
				\]
			\end{Thm}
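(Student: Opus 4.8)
My plan is to verify that the candidate process $L^\Lambda$ in \eqref{Main:57} coincides with the object uniquely characterized by \eqref{sto:results_gl}--\eqref{sto:results_gl_2} in Theorem~\ref{sto:results_thm_2}, applied to the data $X,g,\mu$ of \eqref{Main:choices}, and then to invoke the companion paper \cite{BB18_3} for the passage from the representation to the optimal control $C^\Lambda$. Concretely, I would first check the hypotheses of Theorem~\ref{sto:results_thm_2}: that $\mu(dt)=e^{-rt}\,dN_t$ is an optional random measure with atoms exactly at the jump times of $N$ and $\mu(\{\infty\})=0$; that $g_t(\ell)=\ell$ satisfies Assumption~\ref{sto:frame_ass}(ii); and that $X_t=e^{-rt}\tilde P^\eta_t$ is $\Lambda$-measurable (from the very definition \eqref{Main:92}--\eqref{Main:56} of $\tilde\Lambda$), of class($\mathrm{D}^\Lambda$) and $\Lambda$-$\mu$-upper-semicontinuous in expectation. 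The class($\mathrm{D}^\Lambda$) and integrability statements reduce to the $\mathrm{L}^2$ assumption on $(Y_k)$ and the exponential discount; upper-semicontinuity in expectation will hold because $\tilde P^\eta$ is làdlàg with jumps controlled by the sensor, and the discount makes the relevant limits behave — here I would cite Remark~\ref{sto:frame_rem} and \cite{BB18_2}, Lemma~4.4, since $\mu$ does have atoms. Once these are in place, $L=L^\Lambda$ is the unique process solving \eqref{sto:results_gl}.

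The heart of the argument is the explicit evaluation of $L^\Lambda_S=\essinf_{T\in\stmsg{S}}\ell_{S,T}$. With $g_t(\ell)=\ell$, equation \eqref{sto:results_gl_2} linearizes: on $\{\WM(\mu([S,T))>0\mid\aFA_S)>0\}$ one gets
\[
\ell_{S,T}=\frac{\EW[X_S-X_T\mid\aFA_S]}{\EW[\mu([S,T))\mid\aFA_S]}
=\frac{e^{-rS}\tilde P^\eta_S-\EW[e^{-rT}\tilde P^\eta_T\mid\aFA_S]}{\EW[\int_{[S,T)}e^{-rt}\,dN_t\mid\aFA_S]}.
\]
I would then minimize over $T\in\stmsg{S}$. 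The key simplification is that, because the Lévy dynamics restart at each Poisson jump and the sensor reveals jump sizes only through $\mathbb{1}_{\{Y\ge\eta\}}$, the relevant stopping times are of the threshold form $T(\gamma)$ in \eqref{Main:52}, parametrized by a single level $\gamma$ (plus the deterministic-shift structure of the compensator $\EW[\int e^{-rt}dN_t]=\frac{\lambda}{r}(1-e^{-r\,\cdot})$). Plugging $T=T(\gamma)$ into the ratio and using independence of $N$ and $(Y_k)$ together with the optional-projection/sensor structure of $\aFA_S$, the conditional expectations reduce to the unconditional ones appearing in $f^\eta(\gamma,z)$ of \eqref{Main:16}, with $z=\tilde P^\eta_S$; the four cases in \eqref{Main:57} come from whether $\tilde P^\eta_S$ is above or below the myopic threshold $b=m\lambda/r$ (where $m=\EW[Y_1]$) and whether the current time carries a sensor-visible jump ($\Delta\tilde P^\eta_S\ge\eta$) or not. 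In the "easy" regions one checks directly that the infimum is attained in the limit $\gamma\to\infty$ or $\gamma\to 0$, giving the linear expressions $0$ and $\frac{r}{\lambda}(\tilde P^\eta_t-b)$ (resp. its $1/p(\eta)$-rescaled form when the jump is invisible), while in the region $\tilde P^\eta_t<b,\ \Delta\tilde P^\eta_t\ge\eta$ the infimum is genuinely over the interval $\gamma\in\mathcal D(\tilde P^\eta_t)$ and cannot be simplified further, which is why it is left as $\inf_{\gamma}f^\eta(\gamma,\cdot)$.

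Having identified $L^\Lambda$, the final assertion — that $C^\Lambda_t=\varphi\vee\sup_{v\in[0,t]}L^\Lambda_v$ is optimal for \eqref{Main:42} — is exactly the content of the reduction in \cite{BB18_3}: for controls $C$ starting at $\varphi$, the rewards-minus-risk functional \eqref{Main:42} equals, up to the constant term coming from $C_{0-}=\varphi$, a functional whose maximizer is the running supremum of the solution $L$ to \eqref{sto:frame_gl} with the data \eqref{Main:choices}, by a pathwise/variational argument combining \eqref{Main:rewards}--\eqref{Main:risk} with integration by parts and the Meyer Section Theorem. So I would phrase that step as "by \cite{BB18_3}, Theorem~... (or Section~2), applied with the inputs \eqref{Main:choices}" rather than reprove it here, and just double-check that $C^\Lambda$ is admissible — $\Lambda$-measurable (as a running sup of a $\Lambda$-measurable process), increasing, làdlàg, and satisfying the integrability needed for \eqref{Main:42} to be finite, which again follows from $(Y_k)\subset\mathrm{L}^2(\WM)$.

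**Main obstacle.** The delicate part is the second paragraph: justifying rigorously that the essential infimum over \emph{all} $T\in\stmsg{S}$ is realized within the one-parameter family $\{T(\gamma)\}$ and that the conditional expectations collapse to the unconditional quantities in \eqref{Main:16}. This needs the strong Markov / independent-increments structure of $\tilde P$, a careful description of the $\sigma$-field $\aFA_S$ generated by the $\eta$-sensor (in particular what $\aFA_S$ does and does not know about the current jump size), and a convexity/monotonicity analysis of $\gamma\mapsto f^\eta(\gamma,z)$ to locate the minimizer in each of the four regimes — together with the verification that $\ell_{S,T}$ as an $\aFA_S$-measurable random variable, not just its expectation, equals the claimed pathwise expression, which is where the Meyer-measurability of $\Lambda$ (via Corollary~\ref{app:meyer_cor_1}) is used to aggregate.
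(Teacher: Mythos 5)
The paper does not actually prove this theorem: the stated proof is a single sentence citing Theorem~2.5 of the companion paper \cite{BB18_3}. Your proposal, by contrast, is an attempt to reconstruct the argument that the companion paper presumably carries out, by plugging \eqref{Main:choices} into Theorem~\ref{sto:results_thm_2} and evaluating the characterizing formula \eqref{sto:results_gl}--\eqref{sto:results_gl_2} explicitly. The framing is sound and consistent with how the paper positions the result; the linearization $\ell_{S,T}=\EW[X_S-X_T\mid\aFA_S]/\EW[\mu([S,T))\mid\aFA_S]$ for $g_t(\ell)=\ell$ is correct, and your check of the standing hypotheses (optionality of $\mu$, class($\mathrm{D}^\Lambda$), $\Lambda$-$\mu$-upper-semicontinuity) is the right list.

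However, the substance of the theorem lives entirely in your acknowledged \emph{main obstacle}, and as written the proposal does not resolve it. Specifically, you assert --- but do not argue --- (a) that the essential infimum over all $T\in\stmsg{S}$ is achieved within the one-parameter threshold family $\{T(\gamma)\}$ of \eqref{Main:52}; (b) that the conditional expectations defining $\ell_{S,T}$ collapse to the unconditional ones in $f^\eta$ of \eqref{Main:16}, which requires a careful description of what $\aFA_S$ knows about the current jump size under the sensor filtration \eqref{Main:56}; (c) that in the ``easy'' regimes the minimizing $\gamma$ degenerates to $0$ or $\infty$, producing the linear expressions in \eqref{Main:57} and the $1/p(\eta)$ rescaling; and (d) that in the regime $\tilde P^\eta_t<b,\ \Delta\tilde P^\eta_t\ge\eta$, the minimizer sits in the interval $\mathcal D(\tilde P^\eta_t)$, which is the one place where the specific formula for $\mathcal D$ matters. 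Each of these is a nontrivial claim; (a) in particular needs a strong-Markov/dynamic-programming reduction of the infimum, and (b) needs the partial-observation structure of the Meyer-$\sigma$-field to be made precise before the pathwise (not just in-expectation) identity $L^\Lambda_S=\essinf\ell_{S,T}$ can be asserted. Unless you fill in these four steps, the argument is a roadmap rather than a proof --- which, to be fair, matches the paper's own choice to delegate entirely to \cite{BB18_3} rather than prove this theorem in situ.
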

			\begin{proof}
			 This is a reformulation of Theorem~2.5 in \cite{BB18_3}.
			\end{proof}
			\begin{Rem}
			The maximal solution for the predictable case $p(\eta)=1$ is shown in \cite{BB18_3} to emerge as the limit $L^{\mathcal{P}}:=\lim_{\eta\uparrow \infty} L^{\Lambda}$;
			for the optional case $p(\eta)=0$ it is shown there that a solution emerges from $L^{\mathcal{O}}:=\lim_{\eta\downarrow 0} L^{\Lambda}$.
		\end{Rem}
		
		We want to conclude	with Figure \ref{figure:6}, which plots a trajectory of the process $\tilde{P}$ (brown) with its critical level $b=5.5$ (grey) along with the processes $L^\Lambda$ for three different choices of $\eta \in \{ 5,6,16\}$.
				\begin{figure}[h]
					\centering
					\includegraphics[width=0.49\textwidth]{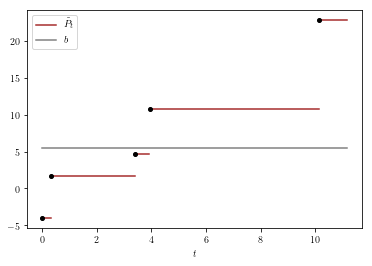}
						\includegraphics[width=0.49\textwidth]{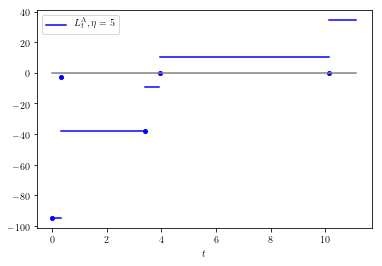}	\\
						\includegraphics[width=0.49\textwidth]{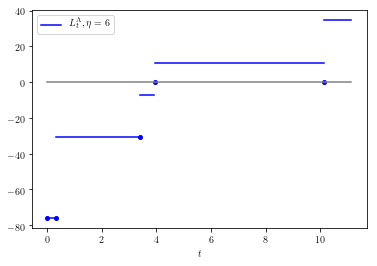}
						\includegraphics[width=0.49\textwidth]{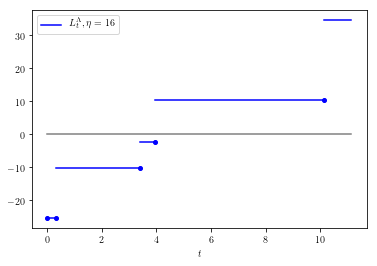}
					
					\caption{Upper left corner:
				$\tilde{P}_t$,  Upper right corner: $L^\Lambda$ for $\eta=5$,
				Lower left corner: $L^\Lambda$ for $\eta=6$, Lower right corner: $L^\Lambda$ for $\eta=16$}
					\label{figure:6}
				\end{figure} 
				As one can see, the solution $L^\Lambda$ (and hence the induced optimal control $C^\Lambda$) strongly depends on the considered sensor sensitivity $\eta$. We refer to our companion paper \cite{BB18_3} for a more thorough discussion.
        
        \subsection{An Optimal Stopping Problem over Divided Stopping Times}\label{sec:stopping}
            
             An extension of classical stopping times is given by divided stopping times introduced in \cite{EK81}:
			\begin{Def}[\citing{EK81}{Definition 2.37}{136-137}]\label{app:optstop_def_4}
    			A given quadrupel $\tau:=(T,H^-,H,H^+)$ is called a \emph{divided
    			($\Lambda$-)stopping time}, if $T$ is an $\mathcal{F}$-stopping time
    			and $W^-,W,W^+$ build a partition 
    			of $\Omega$ such that
    			\begin{compactenum}[(i)]
    				\item $W^-\in \mathcal{F}_{T-}$ and $W^-\cap \{T=0\}=\emptyset$,
    				\item $W\in \mathcal{F}^\Lambda_T$,
    				\item $W^+ \in \mathcal{F}_{T}^\Lambda$ and $W^+\cap 
    				\{T=\infty\}=\emptyset$,
    				\item $T_{W^-}$ is an $\mathcal{F}$-predictable 
    				stopping time,
    				\item $T_{W}$ is a $\Lambda$-stopping time.
    			\end{compactenum}
    			The set of all such divided stopping times will be denoted 
    			as $\stm^{\, \mathrm{div}}$.			
    			For a $\Lambda$-measurable positive process $Z$
    			we define the values attained 
    			at a divided stopping time $\tau=(T,H^-,H,H^+)$ as
    			\begin{align}\label{sto:eq_9}                 
    			Z_\tau=\lsl{Z}_{T}\mathbb{1}_{H^-}+Z_T\mathbb{1}_{H}
    				+\lsr{Z}{T}\mathbb{1}_{H^+}.
    			\end{align}
        	\end{Def}
        	Here we used the notation $^\ast Z$, 
			$Z^\ast$ for the left- and
			right-uppercontinuous envelopes of a process $Z:\Omega\times [0,\infty)\rightarrow \RZ$
			defined for $t\in [0,\infty)$ by
			\begin{align*}
					\lsr{Z}{t}(\omega)&:=\limsup_{s\downarrow t} Z_s(\omega)
						:=\lim_{n\rightarrow \infty} \sup_{s\in (t,t+\frac1n)} 
						Z_s(\omega), \quad &\lsr{Z}{\infty}(\omega)
						:=Z_{\infty}(\omega),
			\end{align*}
			and for $t\in (0,\infty)$ by
			\begin{align*}
				&\lsl{Z}_t(\omega):=\limsup_{s\uparrow t} Z_s(\omega)
						:=\lim_{n\rightarrow \infty} \sup_{s\in (t-\frac1n,t)
						\cap [0,\infty)} Z_s(\omega),\nonumber \\
					& \lsl{Z}_{0}(\omega):=Z_0(\omega), \quad
					\lsl{Z}_{\infty}(\omega):=\limsup_{t
					\uparrow \infty} Z_t(\omega):=\lim_{n\rightarrow \infty}
					\sup_{s\in [n,\infty)} Z_s(\omega).
			\end{align*}
        	
			One main benefit of divided stopping times comes from the fact that an optimal divided stopping time always exists 
			(see \citing{EK81}{Theorem 2.39}{138}). More precisely, assume we have 
			a $\Lambda$-measurable non-negative process $Z$ of class($D^\Lambda$).
			Then there exists a divided stopping time $\hat{\tau}:=(\hat{T},\hat{H}^-,\hat{H},\hat{H}^+)$ attaining the supremum
			\[
				\sup_{\tau \in \stm^{\, \mathrm{div}}}
				\EW[Z_\tau]=\EW[{^\ast Z_{\hat{T}}}\mathbb{1}_{\hat{H}^-}+{Z_{\hat{T}}}\mathbb{1}_{\hat{H}}+ Z_{\hat{T}}^\ast\mathbb{1}_{\hat{H}^+}].
			\]
			For more on divided stopping times we refer to \cite{BB18}, Section~3.5.
		
			In the rest of this section we want to tackle the question how to describe an optimal divided stopping time for 
			\begin{align}\label{eq:0}
				\sup_{\tau\in \stmd}
				\EW\left[X_\tau+\int_{[0,\tau)}g_t(\ell)\mu(\md t)\right],
			\end{align}
		    where $g$ and $\mu$ satisfy Assumption \ref{sto:frame_ass}, 
		    $\ell\in \RZ$ is fixed, and where $X$ satisfies the assumptions stated in Theorem~\ref{sto:results_thm_2}. In the optional case, this is discussed in \citing{BF03}{Theorem 2}{6}, albeit only for atomless $\mu$ with full support. Without these regularity properties, optimal stopping times can no longer be expected from a representation as in Theorem~\ref{sto:results_thm_2}. But we still can describe optimal divided stopping times in terms of the representing process $L$ and thus provide an optimal stopping characterization alternative to the Snell-envelope approach of \citing{EK81}{Theorem 2.39}{138}:
		    
		    \begin{Thm}\label{thm:opt_stop}
                Let $X$, $g$, and $\mu$ be as in Theorem~\ref{sto:results_thm_2} and denote by $L$ the unique $\Lambda$-measurable process with
			\eqref{sto:frame_gl}, \eqref{sto:results_gl} and \eqref{sto:frame_gl_2}. Then $L$ is a universal stopping signal for~\eqref{eq:0} in the sense that, for any $\ell \in \RZ$, the divided stopping time
    			\[
    					\tau_\ell:=(T_{\ell},\emptyset,\{L_{T_{\ell}}\geq \ell\},
    				\{L_{T_{\ell}}<\ell\}),
    			\]
    			with
    			\[
    				T_\ell:=\inf\left\{t\geq 0\midG \sup_{v\in [0,t]}L_v
    				\geq \ell\right\}
    			\]
    			attains the supremum in \eqref{eq:0}.
            \end{Thm}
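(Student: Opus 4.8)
The plan is to show that every divided stopping time is dominated, in expectation, by the closed-form value that the representation \eqref{sto:frame_gl} points to. Put $Z_t:=X_t+\int_{[0,t)}g_s(\ell)\,\mu(\md s)$ for $t\in[0,\infty]$, so that \eqref{eq:0} reads $\sup_{\tau\in\stmd}\EW[Z_\tau]$, the reward at a divided stopping time being understood through \eqref{sto:eq_9} with the convention that the running cost is collected over $[0,T)$ on $H^-\cup H$ and over $[0,T]$ on $H^+$. Since $X$ is of class($D^\Lambda$) and $\int_{[0,\infty)}|g_s(\ell)|\,\mu(\md s)\in L^1(\WM)$ by Assumption~\ref{sto:frame_ass}, $Z$ is of class($D^\Lambda$), so by \citing{EK81}{Theorem 2.39}{138} (whose non-negativity hypothesis is inessential for class-$D^\Lambda$ processes) the supremum is attained by some divided stopping time. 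The candidate value is $v:=\EW[\int_{[0,\infty)}g_s(\ell\vee\sup_{u\in[0,s]}L_u)\,\mu(\md s)]$, which is finite by \eqref{sto:frame_gl_2}; write $V_t:=\EW[\int_{[0,\infty)}g_s(\ell\vee\sup_{u\in[0,s]}L_u)\,\mu(\md s)\mid\cF_t]$ for the associated uniformly integrable martingale, taken in its c\`adl\`ag version.

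The upper bound $\EW[Z_\tau]\le v$ for every divided stopping time $\tau=(T,H^-,H,H^+)$ is the easy half. Since $g$ is increasing and $\sup_{u\in[s,t]}L_u\le\sup_{u\in[0,t]}L_u$ for $s\le t$, \eqref{sto:frame_gl} gives $Z_s\le V_s$ for all $s$, whence, taking left- and right-limits, $\lsl{Z}_T\le V_{T-}$ and $\lsr{Z}{T}\le V_T$; feeding this into \eqref{sto:eq_9} and using $H^-\in\cF_{T-}$, $H\cup H^+\in\cF_T$ together with optional sampling for the uniformly integrable martingale $V$ at the $\cF$-stopping time $T$, one gets $\EW[Z_\tau]\le\EW[V_{T-}\mathbb{1}_{H^-}+V_T\mathbb{1}_{H\cup H^+}]=\EW[V_\infty]=v$.

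The core of the argument is the reverse inequality $\EW[Z_{\tau_\ell}]\ge v$, which we would establish as an equality by evaluating $Z_{\tau_\ell}$ through the representation on each of the two pieces of $\tau_\ell$. On $\{L_{T_\ell}\ge\ell\}$ one has $\sup_{u\in[0,t]}L_u<\ell$ for $t<T_\ell$ (by definition of $T_\ell$) and $\sup_{u\in[0,t]}L_u=\sup_{u\in[T_\ell,t]}L_u\ge\ell$ for $t\ge T_\ell$ (because $\sup_{u\in[0,T_\ell)}L_u\le\ell\le L_{T_\ell}$ there), so applying \eqref{sto:frame_gl} at the $\Lambda$-stopping time which equals $T_\ell$ on $\{L_{T_\ell}\ge\ell\}$ and $\infty$ elsewhere shows that, on this set, $Z_{\tau_\ell}$ contributes exactly $\int_{[0,\infty)}g_s(\ell\vee\sup_{u\in[0,s]}L_u)\,\mu(\md s)$ to the expectation. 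On $\{L_{T_\ell}<\ell\}$ one still has $\sup_{u\in[0,T_\ell]}L_u\le\ell$, so the running cost $\int_{[0,T_\ell]}g_s(\ell)\,\mu(\md s)$ already matches the candidate integrand on $[0,T_\ell]$; for the tail one invokes the right-limit analogue of \eqref{sto:frame_gl}, namely $\lsr{X}{S}=\EW[\int_{(S,\infty)}g_t(\sup_{u\in(S,t]}L_u)\,\mu(\md t)\mid\cF_S]$ (obtained by approximating $S$ strictly from the right by $\Lambda$-stopping times and using class($D^\Lambda$)), together with the fact that $\sup_{u\in(T_\ell,t]}L_u\ge\ell$ for $\mu$-almost every $t>T_\ell$ on $\{L_{T_\ell}<\ell\}$. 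Where $\sup_{u\in[0,T_\ell]}L_u<\ell$ strictly, this last fact is automatic because $\sup_{u\in[0,s]}L_u\ge\ell$ for $s>T_\ell$ then forces $\limsup_{u\downarrow T_\ell}L_u\ge\ell$; on the boundary set $\{\sup_{u\in[0,T_\ell]}L_u=\ell,\,L_{T_\ell}<\ell\}$ it is precisely here that the assumed left-upper-semicontinuity in expectation of $X$ enters, applied at a suitable predictable restriction of $T_\ell$, to rule out (up to a $\WM$-null set) that $\mu$ charges $\{t>T_\ell:\sup_{u\in(T_\ell,t]}L_u<\ell\}$ --- otherwise approximating $T_\ell$ from the left by $\Lambda$-stopping times would contradict Definition~\ref{def_conditions_2}(a). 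Consequently the tail integrand again agrees $\mu$-a.e.\ with $g_s(\ell\vee\sup_{u\in[0,s]}L_u)$, so $Z_{\tau_\ell}$ contributes $\int_{[0,\infty)}g_s(\ell\vee\sup_{u\in[0,s]}L_u)\,\mu(\md s)$ to the expectation on $\{L_{T_\ell}<\ell\}$ as well; adding the two pieces gives $\EW[Z_{\tau_\ell}]=v$, and with the upper bound this makes $\tau_\ell$ optimal.

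The main obstacle is this last step, inside which three features of the problem have to be handled simultaneously: (i) the atoms of $\mu$, which are exactly why $\tau_\ell$ must be genuinely divided --- the set $\{L_{T_\ell}<\ell\}$ being carved off as the ``stop immediately after $T_\ell$'' part --- and why the mass $\mu(\{T_\ell\})$ has to be tracked in the split; (ii) the right- and left-limit analogues of the representation and the fact, stressed in the introduction, that the level-passage time $T_\ell$ of $(\sup_{u\in[0,\cdot]}L_u)$ need not be a $\Lambda$-stopping time, so that all representations and sampling steps at $T_\ell$ must be produced by approximation through $\Lambda$-stopping times, using class($D^\Lambda$), the Meyer Section Theorem~\ref{Main:4} and Corollary~\ref{app:meyer_cor_1}; and (iii) the boundary-set argument, where the left-upper-semicontinuity in expectation of $X$ (and its pathwise consequences recorded in \cite{BB18_2}) is genuinely indispensable. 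Finally, the maximality of $L$ in Theorem~\ref{sto:results_thm_2} identifies $\tau_\ell$, built from this particular $L$, as the canonical optimiser, consistent with $L$ serving as a single stopping signal valid simultaneously for all levels $\ell\in\RZ$.
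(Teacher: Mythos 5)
Your route is genuinely different from the paper's. The paper proves this theorem as a corollary of the Snell-envelope machinery it already built: it first restricts to divided stopping times with $H^-=\emptyset$ (using that left-upper-semicontinuity in expectation is equivalent to $\lsl{X}\le {}^{\mathcal P}X$), shows the supremum over divided stopping times equals the supremum over $\Lambda$-stopping times, and then recognizes $\tau_\ell$ as the optimal divided stopping time $\tau_{0,\ell}$ of Lemma~\ref{Lem:sp3}(vi) via the identity $L_t=\sup\{\ell:X_t=Y_t^\ell\}$, $\{L_{T_{0,\ell}}<\ell\}=H_{0,\ell}^+$, and $T_{0,\ell}=T_\ell$. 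Your upper-bound step is close in spirit to a martingale-dominance argument and is salvageable; note though that for a general Meyer-$\sigma$-field the pathwise estimate ``$Z_s\le V_s$ for all $s$'' is not what the representation gives you. At a $\Lambda$-stopping time $S$ you obtain $Z_S\le\EW[V_S\mid\aFA_S]$, not $Z_S\le V_S$ a.s., so you must work with the $\Lambda$-projection of the terminal random variable $W:=\int_{[0,\infty)}g_s(\ell\vee\sup_{u\in[0,s]}L_u)\mu(\md s)$ rather than with the c\`adl\`ag $\cF$-martingale $V$, and then invoke the appropriate optional-sampling result for $\Lambda$-martingales at divided stopping times. With that correction the upper bound goes through.

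The lower bound is where there are genuine gaps that the Snell-envelope machinery is precisely designed to close, and which your direct-from-the-representation route does not. First, your argument needs $(T_\ell)_{\{L_{T_\ell}\ge\ell\}}$ to be a $\Lambda$-stopping time so that \eqref{sto:frame_gl} may be applied there; you flag this but never supply it. This is a nontrivial structural fact -- recall $T_\ell$ itself need not be a $\Lambda$-stopping time -- and the paper obtains it only through Lemma~\ref{Lem:sp3}(iv) (via $(T_{0,\ell})_{H^-_{0,\ell}\cup H_{0,\ell}}$), not from the representation alone. Second, the ``right-limit analogue'' $\lsr{X}{S}=\EW[\int_{(S,\infty)}g_t(\sup_{u\in(S,t]}L_u)\mu(\md t)\mid\cF_S]$ is an unproved auxiliary statement, requiring a backward-martingale/UI passage to the limit along $\Lambda$-stopping times decreasing strictly to $S$; the paper sidesteps this entirely by using the identity $Y^\ell_{T_{0,\ell}+}=\lsr{X}{T_{0,\ell}}$ on $H^+_{0,\ell}$, which comes from the Snell envelope, not the representation. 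Third, the boundary-set argument -- that $\mu$ does not charge $\{t>T_\ell:\sup_{u\in(T_\ell,t]}L_u<\ell\}$ on $\{\sup_{u\in[0,T_\ell]}L_u=\ell,\ L_{T_\ell}<\ell\}$ -- is asserted to follow from left-upper-semicontinuity in expectation ``applied at a suitable predictable restriction of $T_\ell$'', but no such contradiction is actually derived, and it is unclear how a left-side condition on $X$ controls the behavior of $L$ immediately to the right of $T_\ell$; the paper never needs this claim because it reads off the value from $Y_0^\ell=\EW[X_{\tau_{0,\ell}}+\int_{[0,\tau_{0,\ell})}g_t(\ell)\mu(\md t)]$. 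In short, your plan would require you to reconstruct, from the representation alone, exactly the three structural facts that the $Y^\ell$-construction packages; as written these steps remain open.
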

            
            This theorem is actually a corollary to our construction of a solution to our representation problem \eqref{sto:frame_gl} and its proof is thus deferred to Appendix \ref{sec:opt_stop}.
            
	    \section{Proof of the upper bound for $\Lambda$-measurable solutions to the representation problem}\label{sto:sec_upper}

		In this section we will prove the maximality of the solution
		to \eqref{sto:frame_gl} in the sense described in Theorem \ref{sto:results_thm_2}, which satisfies \eqref{sto:results_gl}
		and \eqref{sto:frame_gl_2},
		 assuming it exists. This existence will be proven in Section \ref{sto:sec_ex}.
	    More precisely we deduce an upper bound on all $\Lambda$-measurable
	    solutions to \eqref{sto:frame_gl} and \eqref{sto:frame_gl_2}, which will
	    be attained by the solution, which additionally satisfies \eqref{sto:results_gl}. The proofs of the upcoming results can be found in appendix \ref{app:A}.
	
    	Let us first note a result, which shows that the 
    	random variables $\ell_{S,T}$ from \eqref{sto:results_gl_2} exist.
		
		\begin{Lem}\label{Lem:um1}
			For any $S\in \stmr$ and $T\in \stmsg{S}$, there exists a unique (up to a $\WM$-null set) random	variable $\ell_{S,T}\in \mathrm{L}^0(\aFA_S)$  
			such that we have 
			\begin{align}\label{Gl:sp30}
				\EW\left[X_S-X_T\midG \aFA_S\right]=
				\EW \left[\int_{[S,T)} g_t(\ell_{S,T})
					\mu(\md t) \midG \aFA_S\right] \text{ on $\{\WM[\mu([S,T))>0|\aFA_S]>0\}$}
			\end{align}
			and 
			\[
				\ell_{S,T}=\infty \ \text{ on }\ \{\WM[\mu([S,T))>0|\aFA_S]=0\}.
			\]
		\end{Lem}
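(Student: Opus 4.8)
The plan is to fix $S \in \stmr$ and $T \in \stmsg{S}$ and work pathwise-in-$\omega$ conditionally on $\aFA_S$, exploiting the strict monotonicity and surjectivity of $\ell \mapsto g_t(\omega,\ell)$ from Assumption \ref{sto:frame_ass}(ii)(\ref{sto:ass_bullet_1}). First I would introduce the random function
\[
	\Phi(\ell) := \EW\!\left[\int_{[S,T)} g_t(\ell)\,\mu(\md t)\,\middle|\,\aFA_S\right], \qquad \ell \in \RZ,
\]
which is well defined by the integrability in Assumption \ref{sto:frame_ass}(ii)(\ref{sto:ass_bullet_2}) together with the fact that $|g_t(\ell)| \le |g_t(\ell_0)| \vee |g_t(\ell_1)|$ for $\ell$ in a bounded interval $[\ell_0,\ell_1]$ (using monotonicity of $g_t(\omega,\cdot)$), so that dominated convergence applies. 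On the event $A := \{\WM[\mu([S,T))>0 \mid \aFA_S] > 0\}$, which lies in $\aFA_S$, I claim $\ell \mapsto \Phi(\ell)$ is, $\WM$-a.s., \emph{strictly} increasing and continuous with $\lim_{\ell\to-\infty}\Phi(\ell) = -\infty$ and $\lim_{\ell\to+\infty}\Phi(\ell) = +\infty$. Continuity and strict monotonicity follow from the corresponding pointwise-in-$t$ properties of $g_t(\omega,\cdot)$ via monotone/dominated convergence (for strictness, note $g_t(\ell) - g_t(\ell') > 0$ for all $t$ when $\ell > \ell'$, and its $\mu$-integral has positive conditional expectation precisely on $A$). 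The divergence at $\pm\infty$ follows because $g_t(\omega,\ell)\to\pm\infty$ as $\ell\to\pm\infty$ for every $(\omega,t)$; one has to be a little careful that the exceptional $\WM$-null set can be chosen uniformly, which is handled by taking limits along integer $\ell$ and using monotonicity to interpolate, plus the conditional Fatou/monotone convergence theorems.

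Granting this, on $A$ the equation $\Phi(\ell) = \EW[X_S - X_T \mid \aFA_S] =: \xi$ has, for $\WM$-a.e. $\omega \in A$, a unique real solution $\ell_{S,T}(\omega)$ by the intermediate value theorem applied to the a.s.\ continuous strictly increasing surjection $\Phi$. The remaining issue is \emph{measurability}: I need $\ell_{S,T}$ to be $\aFA_S$-measurable. This I would obtain by writing, for each rational $q$, $\{\ell_{S,T} \le q\} \cap A = \{\Phi(q) \ge \xi\} \cap A$ — using that $\Phi$ is increasing — and noting the right-hand side is in $\aFA_S$ since $\Phi(q)$ and $\xi$ are $\aFA_S$-measurable; hence $\ell_{S,T}\mathbb 1_A$ is $\aFA_S$-measurable. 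On the complement $A^c = \{\WM[\mu([S,T))>0 \mid \aFA_S] = 0\}$, which is in $\aFA_S$, I simply \emph{define} $\ell_{S,T} := \infty$; this is consistent because on $A^c$ we have $\mu([S,T)) = 0$ a.s.\ so both sides of \eqref{Gl:sp30} vanish for any choice of $\ell$, leaving no constraint there (and making the global convention $\ell_{S,T} \in \mathrm{L}^0(\aFA_S)$ with values in $\RZ \cup \{\infty\}$).

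For uniqueness up to a $\WM$-null set: if $\ell'$ is another $\aFA_S$-measurable solution, then on $A$ we have $\Phi(\ell_{S,T}) = \xi = \Phi(\ell')$ a.s., and strict monotonicity of $\Phi$ forces $\ell_{S,T} = \ell'$ a.s.\ on $A$; on $A^c$ both equal $\infty$ by the stipulated convention. The main obstacle I anticipate is the careful bookkeeping of null sets in establishing that $\Phi$ is simultaneously (for a.e.\ $\omega$) continuous, strictly increasing, and surjective onto $\RZ$ on $A$ — in particular arguing that $\Phi(\ell) - \Phi(\ell')$ is a.s.\ strictly positive on $A$ whenever $\ell > \ell'$ requires relating the (nonnegative) conditional expectation $\EW[\int_{[S,T)}(g_t(\ell)-g_t(\ell'))\,\mu(\md t)\mid\aFA_S]$ to the event $A$, which uses that $g_t(\ell)-g_t(\ell')$ is bounded below by a strictly positive $\aFA$-progressive process and hence its $\mu$-integral over $[S,T)$ is strictly positive exactly when $\mu([S,T))>0$. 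Everything else is a routine application of conditional monotone and dominated convergence and the intermediate value theorem.
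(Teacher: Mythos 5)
Your proposal is correct and follows essentially the same route as the paper: you introduce the same conditional random function (the paper calls it $G$, you call it $\Phi$), establish strict monotonicity on $\{\WM(\mu([S,T))>0\mid\aFA_S)>0\}$ via the conditional-expectation argument (the paper isolates this as Lemma \ref{technical_result_1}, invoking Proposition \ref{sto:Pro_46}), and then invert. The only cosmetic difference is that you invoke the intermediate value theorem and check $\aFA_S$-measurability via rational level sets, whereas the paper writes the measurable inverse down directly as a limit of dyadic step functions $\sum_k \frac{k}{n}\mathbb{1}_{\{G(\frac{k-1}{n})\leq\Gamma<G(\frac{k}{n})\}}$ — these are two standard phrasings of the same inversion argument.
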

		
		The next result states an upper bound on all $\Lambda$-measurable processes which satisfy \eqref{sto:frame_gl} and \eqref{sto:frame_gl_2}:
	
	    \begin{Pro}\label{Main:7}
	        Any $\Lambda$-measurable process $\tilde{L}$ satisfying mutatis mutandis \eqref{sto:frame_gl} and \eqref{sto:frame_gl_2} will fulfill
	        \[
	            \tilde{L}_S \leq \essinf_{T\in \stmsg{S}} \ell_{S,T}, \quad 
						S\in \stmr,
	        \]
	        where for $S\in \stmr$ and $T\in \stmsg{S}$, $\ell_{S,T}$ is defined in Lemma \ref{Lem:um1}.
	    \end{Pro}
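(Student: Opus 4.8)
The plan is to fix $S \in \stmr$ and an arbitrary competitor $\tilde L$ satisfying the representation \eqref{sto:frame_gl} and the integrability \eqref{sto:frame_gl_2}, and to show that for each $T \in \stmsg{S}$ we have $\tilde L_S \le \ell_{S,T}$ up to a $\WM$-null set; taking the essential infimum over $T$ then yields the claim. On the event $\{\WM[\mu([S,T)) > 0 \mid \aFA_S] = 0\}$ there is nothing to prove since $\ell_{S,T} = \infty$ there, so we work on $A := \{\WM[\mu([S,T)) > 0 \mid \aFA_S] > 0\} \in \aFA_S$. The idea is to subtract the two instances of \eqref{sto:frame_gl} at the stopping times $S$ and $T$: writing $X_S$ via \eqref{sto:frame_gl} and conditioning the representation of $X_T$ further down to $\aFA_S$ via the tower property, one obtains
\[
\EW\left[X_S - X_T \midG \aFA_S\right]
= \EW\left[\int_{[S,T)} g_t\!\left(\sup_{v\in[S,t]} \tilde L_v\right)\mu(\md t) \midG \aFA_S\right]
+ R,
\]
where the remainder $R$ collects the difference between $g_t(\sup_{v\in[S,t]}\tilde L_v)$ and $g_t(\sup_{v\in[T,t]}\tilde L_v)$ integrated over $[T,\infty)$. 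Since $[S,t] \supset [T,t]$ for $t \ge T$, we have $\sup_{v\in[S,t]}\tilde L_v \ge \sup_{v\in[T,t]}\tilde L_v$, and as $g_t$ is increasing by Assumption \ref{sto:frame_ass}\eqref{sto:ass_bullet_1}, the remainder satisfies $R \ge 0$.

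Consequently
\[
\EW\left[X_S - X_T \midG \aFA_S\right]
\geq \EW\left[\int_{[S,T)} g_t\!\left(\sup_{v\in[S,t]} \tilde L_v\right)\mu(\md t) \midG \aFA_S\right]
\geq \EW\left[\int_{[S,T)} g_t(\tilde L_S)\,\mu(\md t) \midG \aFA_S\right],
\]
where the last inequality again uses monotonicity of $g_t$ together with $\sup_{v\in[S,t]}\tilde L_v \ge \tilde L_S$ for $t \ge S$. Comparing with the defining identity \eqref{Gl:sp30} for $\ell_{S,T}$, this reads
\[
\EW\left[\int_{[S,T)} g_t(\ell_{S,T})\,\mu(\md t) \midG \aFA_S\right]
\geq \EW\left[\int_{[S,T)} g_t(\tilde L_S)\,\mu(\md t) \midG \aFA_S\right]
\qquad \text{on } A.
\]
Now the function $\ell \mapsto \int_{[S,T)} g_t(\ell)\,\mu(\md t)$ is, $\omega$ by $\omega$, strictly increasing on $\{\mu([S,T)) > 0\}$ (being an integral of the strictly increasing functions $g_t(\omega,\cdot)$ against a measure of positive mass) and constant on $\{\mu([S,T)) = 0\}$. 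One then has to pass from the conditional-expectation inequality above to the pointwise inequality $\tilde L_S \le \ell_{S,T}$ on $A$: argue by contradiction, intersecting $A$ with $\{\tilde L_S \ge \ell_{S,T} + \varepsilon\} \in \aFA_S$ for some $\varepsilon > 0$; on this set the integrand comparison gives a strict gap precisely on $\{\mu([S,T)) > 0\}$, which has positive conditional probability throughout $A$, so integrating leads to a contradiction with the displayed inequality. This is essentially the same uniqueness/monotonicity argument that underlies Lemma \ref{Lem:um1}.

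The main obstacle is the integrability bookkeeping needed to make the subtraction of the two representations rigorous and to justify conditioning $X_T$'s representation down to $\aFA_S$: one must know that $\int_{[S,\infty)} |g_t(\sup_{v\in[S,t]}\tilde L_v)|\,\mu(\md t)$ and $\int_{[T,\infty)} |g_t(\sup_{v\in[T,t]}\tilde L_v)|\,\mu(\md t)$ are integrable, which is exactly what \eqref{sto:frame_gl_2} supplies for $\tilde L$ (applied at $S$ and at $T$), and that the nonnegative remainder $R$ is finite so that rearranging the identity is legitimate; finiteness of $R$ follows by squeezing it between $0$ and the sum of these two integrable quantities. A secondary technical point is the measurability of $\ell_{S,T}$ and the handling of the value $\infty$, both of which are already settled by Lemma \ref{Lem:um1}, so they can be invoked directly. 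With these integrability facts in hand the rest is the monotonicity argument sketched above, and the passage to the essential infimum over $T \in \stmsg{S}$ is immediate.
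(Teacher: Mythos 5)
Your proposal is correct and follows essentially the same route as the paper: split the representation of $X_S$ at $T$, bound the $[S,T)$ integrand below by $g_t(\tilde L_S)$ and the $[T,\infty)$ piece below by the representation of $X_T$ (your nonnegative remainder $R$ is exactly this second estimate), and then invoke the strict monotonicity of $\ell\mapsto\EW[\int_{[S,T)}g_t(\ell)\mu(\md t)\mid\aFA_S]$ from the definition of $\ell_{S,T}$. The extra care you take with the final conditional-expectation-to-pointwise step and the integrability bookkeeping merely makes explicit what the paper dispatches in one line.
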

		
		\begin{Cor}\label{Main:2}
		    A solution $L$ satisfying \eqref{sto:frame_gl}, \eqref{sto:results_gl}
		and \eqref{sto:frame_gl_2} 
		    is maximal in the sense that 
					$\tilde{L}_S\leq L_S$ for any $S\in \stmr$					
					for every other $\Lambda$-measurable process $\tilde{L}$ 
					satisfying, 	mutatis mutandis, 
					(\ref{sto:frame_gl}) and (\ref{sto:frame_gl_2}).
		\end{Cor}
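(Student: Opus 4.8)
The plan is to obtain this maximality statement directly from Proposition~\ref{Main:7}, which already carries all the analytic content; Corollary~\ref{Main:2} is then pure bookkeeping. Concretely, I would begin by fixing an arbitrary competing solution: a $\Lambda$-measurable process $\tilde L$ satisfying, mutatis mutandis, the representation \eqref{sto:frame_gl} together with the integrability bound \eqref{sto:frame_gl_2}. Proposition~\ref{Main:7} applies verbatim to $\tilde L$ and yields, for every $S\in\stmr$, the almost sure inequality $\tilde L_S\leq\essinf_{T\in\stmsg{S}}\ell_{S,T}$, where $(\ell_{S,T})_{S\in\stmr,\,T\in\stmsg{S}}$ is precisely the family of $\aFA_S$-measurable random variables provided (uniquely up to $\WM$-null sets) by Lemma~\ref{Lem:um1}.

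Next I would invoke the hypothesis on the process $L$ itself: by assumption $L$ satisfies \eqref{sto:results_gl}, i.e.\ $L_S=\essinf_{T\in\stmsg{S}}\ell_{S,T}$ for every $S\in\stmr$, with the \emph{same} random variables $\ell_{S,T}$ — there is no ambiguity here since, by Lemma~\ref{Lem:um1}, these are determined up to a $\WM$-null set and hence so is the essential infimum over $T\in\stmsg{S}$. Comparing the two displays gives $\tilde L_S\leq L_S$ $\WM$-almost surely for every $S\in\stmr$, which is exactly the claimed maximality. As $\tilde L$ was an arbitrary element of the class of competing solutions, this finishes the argument.

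There is essentially no obstacle in this corollary; the work was all done in Proposition~\ref{Main:7} (and, upstream of that, Lemma~\ref{Lem:um1}). The only point I would take care to state explicitly is that the comparison is asserted at, and only makes sense at, $\Lambda$-stopping times $S$ that are $\WM$-a.s.\ finite, since \eqref{sto:results_gl} is imposed only on $\stmr$; at stopping times charging $\{S=\infty\}$ the normalization $X_\infty=0$ trivializes the representation and no comparison of $\tilde L$ and $L$ is needed or expected. If one wished to strengthen the pointwise a.s.\ inequalities at each $S\in\stmr$ into an evanescence statement for $\{\tilde L>L\}$ on $\stsetRO{0}{\infty}$, that would follow by applying Corollary~\ref{app:meyer_cor_1} to the two $\Lambda$-measurable processes; but this refinement is not required for the statement as phrased, so I would not pursue it here.
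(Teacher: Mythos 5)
Your argument is correct and is essentially the only reasonable route: Proposition~\ref{Main:7} gives $\tilde L_S\leq\essinf_{T\in\stmsg{S}}\ell_{S,T}$ for every competitor, and \eqref{sto:results_gl} identifies that essential infimum with $L_S$, which is precisely why the paper states this as an immediate corollary without a separate written proof. Your additional remarks on the $\WM$-a.s.\ determinacy of the $\ell_{S,T}$ and the restriction to $\stmr$ are accurate and match the paper's setup.
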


\section{Construction of a solution to the representation problem}\label{sto:sec_ex}

	In this section we will construct a solution to the
	representation	problem \eqref{sto:frame_gl} stated in Theorem
	\ref{sto:results_thm_2}. As in \cite{BK04} the idea will be to
	introduce suitable stopping problems which can be analyzed
	using the general results of \cite{EK81}. \cite{EK81} also used the theory
	of Meyer-$\sigma$-fields, developed by
	\cite{EL80}, and introduced stopping problems
  for those $\sigma$-fields. One key tool to describe optimality
	results but also to get an intuition about those problems is the Snell 
	envelope. We will also use this concept and more precisely we
	construct, as in \cite{BK04},
	 a regular version of a family of Snell envelopes
	$(Y^\ell)_{\ell\in \RZ}$ given by
	 \begin{align*}
			Y^{\ell}_S=\esssup_{T\in \stmg{S}}\EW\left[X_T+\int_{[S,T)}
			g_t(\ell)\mu(\md t)\midG \aFA_S\right],
			\ S\in \stm, 
	\end{align*}
	and show that a solution $L$ to the representation problem
	(\ref{sto:frame_gl}) is given by
	\begin{align*}
				L_t(\omega):=\sup\left\{\ell\in \RZ \midG
										Y^{\ell}_t(\omega)=X_t(\omega)\right\},
										\quad (\omega,t)\in \Omega\times [0,\infty),
	\end{align*}	
	i.e. $L$ is the maximal value $\ell$ for which
	the optimal stopping problem introduced by $Y^\ell$ is solved by 
	stopping immediately. 
	
	One key problem in our work will be that, for fixed $\ell$, we will in
	general	not get  a stopping time which
	 solves the optimal stopping problem introduced by $Y^\ell$. 
	This is due to the atoms of $\mu$ and because the process $X$
	will not be pathwise right-upper-semicontinuous in general in contrast to the situation
	of \cite{BK04}. Therefore we will
		use the so-called ``temps divis\'ees'', which we call divided 
		stopping times. These will be the solutions to a suitable relaxation of our initial optimal stopping problem.
		
		This section will be organized as follows. We start with a short explanation about a convenient transformation
		of $g$.
		Afterwards we construct in
		Section \ref{sto:ssec_143} the mentioned processes $(Y^\ell)_{\ell\in \RZ}$ and its aggregation
		$Y$. In Section
		\ref{sto:ssec_145} we define $L$ and prove some properties of it, which we will use
		in Section \ref{sto:ssec_146} to show that $L$ solves the representation problem
		\eqref{sto:frame_gl} with integrability condition \eqref{sto:frame_gl_2}.				
		
	    \paragraph{Normalization assumption on $g$:} We will assume that
	    $g$ is normalized in the sense that
		\begin{align}\label{sto:sep_gl}
			g_t(\omega,0)=0,\quad (\omega,t)\in \Omega\times [0,\infty).
		\end{align}
		As a consequence, we then have
		\begin{align*}
			g_t(\omega,\ell)	\begin{cases}
									<0 	&	\text{ for } \ell<0,\\
									=0 	&	\text{ for } \ell=0,\\			
									>0 	&	\text{ for } \ell>0,
							\end{cases}\quad (\omega,t)\in \Omega\times [0,\infty).
		\end{align*}		
		This normalization is in fact without loss of generality as we could define the processes $\tilde{g}$ and $\tilde{X}$ by
		\[
			\tilde{g}(\ell):=g(\ell)-g(0),
			\quad \tilde{X}:=X-\sideset{^\Lambda}{}{\mathop{\left(\int_{[\cdot,\infty)}
					 g_t(0)\mu (\md t)\right)}}.			
		\]
		These two processes again fulfill the assumptions of Theorem \ref{sto:results_thm_2}
		and if there exists a solution $L$ to (\ref{sto:frame_gl}) for $\tilde{X}$
		and $\tilde{g}$ then it is already a solution to (\ref{sto:frame_gl}) for $X$
		and $g$.
	    
	    \subsection{A family of optimal stopping problems}\label{sto:ssec_143}

	Our first lemma in this section introduces some auxiliary stopping problems and 
	provides a suitably regular choice of the corresponding Snell-envelopes $(Y^\ell)_{\ell \in \RZ}$ which
	will be crucial for the construction of the maximal solution $L$ to our representation problem
	\eqref{sto:frame_gl} in Lemma \ref{Lem:s4}. The proof of this lemma will be carried out in Section \ref{proof:4.1}. 
		
	\begin{Lem}\label{Lem:sp3}
			There is a jointly measurable mapping
			\begin{align*}
				Y:\Omega\times [0,\infty] \times \RZ &\rightarrow \RZ,\\
				(\omega,t,\ell)&\mapsto Y^\ell_t(\omega)
			\end{align*}
			with the following properties:
		\begin{compactenum}[(i)]
			\item 	For each $\ell\in \RZ$, the process
					 $Y^\ell:\Omega\times [0,\infty]\rightarrow \RZ$,
					$(\omega,t)\mapsto Y_t^\ell(\omega)$
					  is $\Lambda$-measurable, l\`adl\`ag and of class($\text{D}^\Lambda$) with
					  $Y^\ell_\infty=0$ such that for all $S\in \stm$ we have almost surely
					\begin{align}\label{sto:eq_5}
						Y^\ell_S=\esssup_{T\in \stmg{S}} 
					 		\EW\left[X_T+\int_{[S,T)}g_t(\ell)\mu(\md t)\midG \aFA_S\right].
					\end{align}			
			\item Define the l\`adl\`ag $\Lambda$-measurable processes $E^\ell$ of class($D^\Lambda$), $\ell\in \RZ$, 
				 by
						\[
							E^\ell:=\int_{[0,\cdot)} g_t(\ell)\mu(\md t)
									+\sideset{^\Lambda}{}{\mathop{\left(\int_{[0,\infty)}
									 |g_t(\ell)|\mu(\md t)\right)}}+M^X+1
						\]
						and
						\[
							E^\ell_{\infty}:=\int_{[0,\infty)} g_t(\ell)\mu(\md t)
									+\int_{[0,\infty)}
									 |g_t(\ell)|\mu(\md t)+M^X_{\infty}+1
						\]
					with $M^X$ as in Lemma \ref{sto:tools_lem}. Then there is a version of the stochastic field $(E^\ell)_{\ell\in \RZ}$
					such that
					the following holds for all $\omega \in \Omega$:
					\begin{enumerate}
						\item[(1)] Uniform continuity in $\ell \in \RZ$, i.e.  
								   \begin{align}\label{sto_conv_ass_2}
										\lim_{\delta \downarrow 0}\sup_{\overset{\ell,\ell'\in C}{|\ell'-\ell|
		\leq \delta}}\sup_{t\in [0,\infty]}
										 |E^{\ell'}_t(\omega)-E_t^\ell(\omega)|=0
					\end{align}
					for all compact sets $C\subset \RZ$.
					\item[(b)] Monotonicity in $\ell\in \RZ$, i.e. 
					for all $\ell,\ell'\in \RZ$ with
					$\ell\leq \ell'$ and all
								$t\in [0,\infty]$,
								$E^{\ell}_t(\omega)\leq E^{\ell'}_t(\omega)$.
					\item[(c)] L\`adl\`ag paths for $\ell\in \RZ$, i.e. for any
					$\ell\in \RZ$ the mapping $t\mapsto 
							E_t^\ell(\omega)$ is real valued and l\`adl\`ag. In
							particular the paths $t\mapsto E^\ell_{t+}(\omega)$ 
							and $t\mapsto E^\ell_{t-}(\omega)$ are bounded on
							compact intervals.			
					\item[(d)]	We have
							\begin{align}\label{sto:useful_gl_23}
								Y^\ell_t(\omega)+E_t^\ell(\omega)\geq 1 \quad \text{ for all } \ell\in \RZ \text{ and all }
								 t\in [0,\infty].
							\end{align}
					\end{enumerate}
			\item \label{sto:item_1} For any $\ell\in \RZ$
						and $S\in \stm$, 
				the family of $\aFA_+$-stopping times
					\[
						T_{S,\ell}^\lambda(\omega)
						:=\inf\left\{t\in [S(\omega),\infty] \midG X_t(\omega)\geq 
						\lambda Y^\ell_t(\omega) -(1-\lambda) E_t^\ell(\omega)\right\},\ \lambda\in [0,1), 
					\]
					is non-decreasing in $\lambda$ for all $\omega\in \Omega$ with limit
					\[
						\lim_{\lambda\uparrow 1} T_{S,\ell}^\lambda =:T_{S,\ell}.
					\]
					Moreover we have on all of $\Omega$ that
					\begin{align}\label{sto:useful_gl_8}
						T_{S,\ell}=\min 
							\left\{t\in [S,\infty] \midG 
											Y_t^{\ell}=X_t 
											 \text{ or } Y_{t-}^{\ell}=\lsl{X}_t
											\text{ or } Y_{t+}^{\ell}=\lsr{X}{t}
											\right\},
					\end{align}
					and, for every $\omega\in \Omega$, the mapping $\ell\mapsto T_{S,\ell}(\omega)$
					is non-decreasing.
					\item The following inclusions hold for any $S\in \stm$, $\ell\in \RZ$:
					\begin{align}
						 H_{S,\ell}^-&:=\left\{ T_{S,\ell}^\lambda<	T_{S,\ell}
										\text{ for all } \lambda\in [0,1)\right\}
										\subseteq \left\{		Y^\ell_{T_{S,\ell}-}
						=\lsl{X}_{T_{S,\ell}}\right\},\label{sto:eq_277}\\
						 H_{S,\ell}&:=\left\{ Y^\ell_{T_{S,\ell}}
									=X_{T_{S,\ell}}\right\}\cap
						\left(H^-_{S,\ell}\right)^c \subseteq \left\{Y^\ell_{T_{S,\ell}}
						=X_{T_{S,\ell}}\right\},\label{sto:eq_1277}\\
						 H_{S,\ell}^+&:=\left\{Y^\ell_{T_{S,\ell}}>X_{T_{S,\ell}}\right\}
						\cap \left({ H_{S,\ell}^-}\right)^c
						\subseteq \left\{	Y^\ell_{T_{S,\ell}+}
						=\lsr{X}{T_{S,\ell}}\right\}.\label{sto:eq_77}
					\end{align}
					The sets $H_{S,\ell}$ and $H_{S,\ell}^+$ are contained 
					in $\mathcal{F}_{T_{S,\ell}}^\Lambda$ and
			$H_{S,\ell}^-\in \mathcal{F}_{T_{S,\ell}-}^\Lambda$.
					Moreover, we have up to a $\WM$-null set
					\begin{align}\label{sto:eq_12}
						H_{S,\ell}^-\cup H_{S,\ell}
						=\left\{Y^\ell_{T_{S,\ell}}
						=X_{T_{S,\ell}}\right\},\quad
						\left({ H_{S,\ell}^-}\right)^c=
						\left\{Y^\ell_{T_{S,\ell}}>X_{T_{S,\ell}}\right\}	,				
					\end{align}
					and $(T_{S,\ell})_{H_{S,\ell}^-}$ is an $\FA$-predictable stopping
					time, $(T_{S,\ell})_{H_{S,\ell}}$ is a $\Lambda$-stopping
					time and $(T_{S,\ell})_{H_{S,\ell}^+}$ is an $\aFA_+$-stopping time. 
				\item For $S\in \stm$, $\ell\in \RZ$, the quadrupel
					\begin{align}	\label{sto:eq_69}				
						\tau_{S,\ell}:=(T_{S,\ell}, H_{S,\ell}^-, H_{S,\ell}, H_{S,\ell}^+)
					\end{align}					
					 is a divided 
					stopping time (see Definition \eqref{app:optstop_def_4}).
					
				\item For $S\in \stm$, the mapping $\ell\mapsto \tau_{S,\ell}\in \stmd$
					is increasing in the sense that 
					\begin{align}\label{sto:eq_50}					
						[S,\tau_{S,\ell})
					\subset [S,\tau_{S,\ell'})
					\end{align}
					for all $\ell,\ell'\in \RZ$
					with $\ell\leq \ell'$, where the definition of
					$[S,\tau_{S,\ell})$ is given by
        			\begin{align}\label{sto:eq_4}
        				[S,\tau_{S,\ell}):=\begin{cases}
        									[S,T)&\text{ on } H^-\cup H,\\
        									[S,T]&\text{ on } H^+.\\
        								\end{cases}
        			\end{align}			
					Furthermore for $\ell\in \RZ$ the divided stopping time
					$\tau_{S,\ell}$ attains the value of the optimal stopping problem
					in \eqref{sto:eq_5}, i.e. almost surely
					\begin{align}\label{sto:useful_gl_16}
						Y^\ell_S=\ &\EW\left[X_{\tau_{S,\ell}}
								+\int_{[S,\tau_{S,\ell})} g_t(\ell)\mu(\md t)
								\midG \aFA_S\right],
					\end{align}
					where $X_{\tau_{S,\ell}}$ follows definition
					\eqref{sto:eq_9}.
				\item For $S\in \stm$ and $\ell,\ell' \in \RZ$, we have almost surely that
					\begin{align*}
						Y_S^\ell\geq \ &\EW\left[X_{\tau_{S,\ell'}}
								+\int_{[S,\tau_{S,\ell'})} g_t(\ell)\mu(\md t)\midG \aFA_S\right].
					\end{align*}
					 Moreover, for fixed $\ell\leq \ell'$, we have
					\begin{align}\label{sto:useful_gl_24}
						Y^{\ell'}_s\geq Y^{\ell}_s \geq Y^{\ell'}_s
							+\ {^\Lambda \left(\int_{[0,\infty)}
								 g_t(\ell)\mu(\md t)\right)}_s
								 -\ {^\Lambda \left(\int_{[0,\infty)} 
								 g_t(\ell')\mu(\md t)\right)}_s,
					\end{align}
					 for all $s\in [0,\infty]$ and all $\omega\in \Omega$, where the latter $\Lambda$-projections
					 are chosen such that
					 for all $\omega\in \Omega$ we have:
					\begin{align*}
										\lim_{\delta \downarrow 0}\sup_{\overset{\ell,\ell'\in C}{|\ell'-\ell|
		\leq \delta}}\sup_{t\in [0,\infty]}
										 \left|{^\Lambda \left(\int_{[0,\infty)}
								 g_t(\ell)\mu(\md t)\right)}_t-{^\Lambda \left(\int_{[0,\infty)}
								 g_t(\ell')\mu(\md t)\right)}_t\right|=0
					\end{align*}
					for all compact sets $C\subset \RZ$.
			\item \label{sto:useful_item_1} For fixed $(\omega,s)\in \Omega\times [0,\infty]$, the mapping $\ell \mapsto
							Y^\ell_s(\omega)$ is continuous and non-decreasing. 
							Furthermore, we have for every $S\in \stm$, that almost surely
							\[
								Y^{-\infty}_S:=\lim_{\ell\downarrow -\infty} 
								Y^\ell_S= X_S, 
							\]
							i.e. $X=\inf_{\ell \in \mathbb{Q}} Y^\ell$ up to
							indistinguishability.
				\item 		For $S\in \stm$, we have $X_S=Y_S^\ell$ and $T_{S,\ell}=S$ for all $\ell\in \RZ$ almost surely on the set 
						$\{\WM\left(\mu([S,\infty)>0)\midG \aFA_S\right)=0\}$.
			\end{compactenum}
		\end{Lem}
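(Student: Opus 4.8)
The plan is to realise the family $(Y^\ell)_{\ell\in\RZ}$ as a jointly measurable, regular version of the Snell envelopes of the \emph{positive} reward processes $W^\ell:=X+E^\ell$, and then to read off the optimal divided stopping times from the general theory of \cite{EK81}. Throughout we use the normalisation $g_t(0)=0$, so that $g_t(\ell)<0$ for $\ell<0$. First I would reduce to a positive Snell envelope: for $S\le T$ one has $X_T+\int_{[S,T)}g_t(\ell)\mu(\md t)=W^\ell_T-K^\ell_T-\int_{[0,S)}g_t(\ell)\mu(\md t)$, with $W^\ell:=X+E^\ell$ and $K^\ell:=E^\ell-\int_{[0,\cdot)}g_t(\ell)\mu(\md t)={}^\Lambda(\int_{[0,\infty)}|g_t(\ell)|\mu(\md t))+M^X+1$ for $M^X$ as in Lemma~\ref{sto:tools_lem}. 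Since $M^X$ and the $\Lambda$-projection of an integrable random variable act, when sampled at $\Lambda$-stopping times, as conditional expectations, we get $\EW[K^\ell_T\mid\aFA_S]=K^\ell_S$ for every $T\in\stmg{S}$ (with the evident reading at $T=\infty$ and via \eqref{sto:eq_9} at divided times), so that taking $\EW[\cdot\mid\aFA_S]$ and the essential supremum over $T$ yields $Y^\ell_S+E^\ell_S=\esssup_{T\in\stmg{S}}\EW[W^\ell_T\mid\aFA_S]=:\bar Y^\ell_S$. By monotonicity of the $\Lambda$-projection one has ${}^\Lambda(\int_{[0,\infty)}|g_t(\ell)|\mu(\md t))_t\ge\int_{[0,t)}|g_t(\ell)|\mu(\md t)\ge-\int_{[0,t)}g_t(\ell)\mu(\md t)$, and with the domination property of $M^X$ this gives $W^\ell\ge1$; moreover $W^\ell$ is $\Lambda$-measurable, l\`adl\`ag and of class($D^\Lambda$). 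The Snell-envelope theory over Meyer-$\sigma$-fields (\cite{EK81}; see also \cite{BB18_2}) then furnishes, for each fixed $\ell$, a l\`adl\`ag $\Lambda$-measurable class($D^\Lambda$) process $\bar Y^\ell\ge W^\ell\ge1$ with $\bar Y^\ell_\infty=W^\ell_\infty$ realising this essential supremum, and setting $Y^\ell:=\bar Y^\ell-E^\ell$ establishes (i) and the estimate \eqref{sto:useful_gl_23}.

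Next I would pass from $\ell\in\QQ$ to $\ell\in\RZ$ and secure joint measurability. Monotonicity of $\ell\mapsto Y^\ell$ and of the $\Lambda$-projections of $\int_{[0,\infty)}g_t(\ell)\mu(\md t)$ follows from $g_t(\cdot)$ being increasing and the monotonicity of conditional expectation and $\Lambda$-projection; for $\ell\le\ell'$ one has $0\le Y^{\ell'}_s-Y^{\ell}_s\le{}^\Lambda(\int_{[0,\infty)}(g_t(\ell')-g_t(\ell))\mu(\md t))_s$, and Assumption~\ref{sto:frame_ass} with dominated convergence gives $\EW[\int_{[0,\infty)}\sup_{\ell\in C,\,|\ell'-\ell|\le\delta}|g_t(\ell')-g_t(\ell)|\mu(\md t)]\to0$ as $\delta\downarrow0$ for compact $C\subset\RZ$. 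Choosing the $\Lambda$-projections occurring in $E^\ell$ and in \eqref{sto:useful_gl_24} consistently --- first on $\QQ$, then extending to $\RZ$ by the locally uniform limit --- one obtains the versions with the continuity, monotonicity and path properties in (ii), the estimate \eqref{sto:useful_gl_24} together with the uniform continuity of the projections in (vii), and the continuity assertion in (viii); joint measurability of $Y$ then follows by aggregating the countable family $(Y^q)_{q\in\QQ}$. The identity $Y^{-\infty}_S=X_S$ in (viii) follows, as in \cite{BK04}, on letting $\ell\downarrow-\infty$ in the optimality relation \eqref{sto:useful_gl_16}: the running reward forces the supremum in \eqref{sto:eq_5} onto stopping times $T\in\stmg{S}$ with $\mu([S,T))=0$ almost surely, for which the $\mu$-right-upper-semicontinuity of $X$ in expectation (Definition~\ref{def_conditions_2}(b), applied to constant approximating sequences after localising on $\aFA_S$) gives $\EW[X_T\mid\aFA_S]\le X_S$, while $T=S$ gives the reverse inequality; the assertion $X=\inf_{\ell\in\QQ}Y^\ell$ up to indistinguishability then follows from Corollary~\ref{app:meyer_cor_1}.

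The technical heart, and the step I expect to be the main obstacle, is the divided-stopping-time analysis behind (iii)--(vi). The key identity is $X_t\ge\lambda Y^\ell_t-(1-\lambda)E^\ell_t\iff W^\ell_t\ge\lambda\bar Y^\ell_t$, so the $T^\lambda_{S,\ell}$ of (iii) are exactly the classical $\lambda$-entry times $\inf\{t\in[S,\infty]:W^\ell_t\ge\lambda\bar Y^\ell_t\}$ of the reward into its Snell envelope; these are $\FA$-stopping times by the d\'ebut theorem, non-decreasing in $\lambda$ because $\bar Y^\ell\ge1$, and by the theory of l\`adl\`ag Snell envelopes (\cite{EK81}) their limit $T_{S,\ell}$ is the first time $\bar Y^\ell$ meets $W^\ell$ either exactly, from the left, or from the right. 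Subtracting the l\`adl\`ag process $E^\ell$ --- using ${}^\ast W^\ell_t={}^\ast X_t+E^\ell_{t-}$, $(W^\ell)^\ast_t=\lsr{X}{t}+E^\ell_{t+}$ and $\bar Y^\ell=Y^\ell+E^\ell$ --- turns this into \eqref{sto:useful_gl_8}, and sorting $\Omega$ by the type of contact yields the partition: on $\{T^\lambda_{S,\ell}<T_{S,\ell}\text{ for all }\lambda\}$ the envelope $\bar Y^\ell$ drops to ${}^\ast W^\ell$ at $T_{S,\ell}$, which gives \eqref{sto:eq_277}, $H^-_{S,\ell}\in\aFA_{T_{S,\ell}-}$ and $(T_{S,\ell})_{H^-_{S,\ell}}$ predictable; on the complement, $\{\bar Y^\ell_{T_{S,\ell}}=W^\ell_{T_{S,\ell}}\}$ is an exact contact giving \eqref{sto:eq_1277}, $H_{S,\ell}\in\aFA_{T_{S,\ell}}$ and $(T_{S,\ell})_{H_{S,\ell}}$ a $\Lambda$-stopping time, while $\{\bar Y^\ell_{T_{S,\ell}}>W^\ell_{T_{S,\ell}}\}$ is a right contact giving \eqref{sto:eq_77}, $H^+_{S,\ell}\in\aFA_{T_{S,\ell}}$ and $(T_{S,\ell})_{H^+_{S,\ell}}$ an $\FA$-stopping time; the $\sigma$-field memberships use the l\`adl\`ag regularity of $\bar Y^\ell$ and $W^\ell$. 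Hence $\tau_{S,\ell}=(T_{S,\ell},H^-_{S,\ell},H_{S,\ell},H^+_{S,\ell})$ is a divided stopping time in the sense of Definition~\ref{app:optstop_def_4}, proving (iv)--(v), and that it attains \eqref{sto:useful_gl_16} is the optimality assertion of \citing{EK81}{Theorem 2.39}{138} for $\bar Y^\ell$, translated back through $\bar Y^\ell=Y^\ell+E^\ell$ and the fact that $K^\ell$, sampled on the three (predictable, $\Lambda$- and $\FA$-type) pieces of $\tau_{S,\ell}$, still averages to $K^\ell_S$. For (vi) one combines $T_{S,\ell}\le T_{S,\ell'}$ for $\ell\le\ell'$ (from \eqref{sto:useful_gl_8} and monotonicity of $\ell\mapsto Y^\ell$) with a case analysis on $\{T_{S,\ell}=T_{S,\ell'}\}$ showing the contact types line up so that $[S,\tau_{S,\ell})\subset[S,\tau_{S,\ell'})$, and the sub-optimality inequality in (vii) is simply $\bar Y^\ell$ dominating its value at the divided stopping time $\tau_{S,\ell'}$. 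The delicate point throughout is establishing the $\sigma$-field memberships and the predictable/$\Lambda$/$\FA$ character of the three pieces of $\tau_{S,\ell}$, and doing this uniformly enough in $(S,\ell)$, together with the monotonicity in (vi). Finally, (ix) is a degenerate case: on the $\aFA_S$-measurable event $A=\{\WM(\mu([S,\infty))>0\mid\aFA_S)=0\}$ one has $\mu([S,\infty))=0$ almost surely, so replacing any $T\in\stmg{S}$ by $T$ on $A$ and by $\infty$ off $A$ and using that $X$ vanishes wherever $\mu([\cdot,\infty))=0$, the supremum in \eqref{sto:eq_5} collapses to $X_S$ on $A$; hence $Y^\ell_S=X_S$ there for every $\ell$, and $T_{S,\ell}=S$ on $A$ by \eqref{sto:useful_gl_8}.
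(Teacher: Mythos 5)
Your overall strategy is the same as the paper's: reduce to the positive Snell envelope $\bar Y^\ell$ of the class($D^\Lambda$) reward $W^\ell = X + E^\ell \ge 1$ (the paper's $Z^\ell$, $\bar Z^\ell$), read off the optimal divided stopping times $\tau_{S,\ell}$ from the $\lambda$-entry times and the theory of \cite{EK81}, pass to real $\ell$ by a monotone rational limit, and use the normalisation $\bar Y^\ell \ge 1$ in the case analysis for (vi). Your verification that $W^\ell \ge 1$ via the comparison of ${}^\Lambda(\int|g(\ell)|\,\md\mu)$ with $\int_{[0,\cdot)}|g(\ell)|\,\md\mu$ is a nice touch the paper merely asserts, and your reduction to $\bar Y^\ell_S = Y^\ell_S + E^\ell_S$ by observing that $K^\ell$ has the $\Lambda$-martingale property is exactly how the paper argues.

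There is, however, a genuine gap in your treatment of (viii). You claim that "the running reward forces the supremum onto stopping times $T$ with $\mu([S,T))=0$ a.s., for which $\mu$-right-upper-semicontinuity applied to constant approximating sequences gives $\EW[X_T\mid\aFA_S]\le X_S$." This is too optimistic: the optimisers for level $\ell\to-\infty$ are divided stopping times, not $\Lambda$-stopping times, and the value $X_{\tau_{S,\ell}}$ involves $\lsl{X}$ on $H^-_{S,\ell}$ and $\lsr{X}{}$ on $H^+_{S,\ell}$, which constant approximating sequences cannot control. One must first prove, via Fatou and the normalisation $g_t(\ell)\downarrow-\infty$, that $\mu([S,T_{S,-\infty}))=0$, that $\mu(\{T_{S,-\infty}\})=0$ on the announcing set $\Gamma$, and that $\mu(\{T_{S,-\infty}\})=0$ on the set where $X_{T_{S,-\infty}}<Y^\ell_{T_{S,-\infty}}$ for all $\ell$; then, after using $\lsl{X}\le {}^\mathcal{P}X$ (Lemma~\ref{sto:tools_lem}(ii)) to absorb the predictable component, one still has to manufacture a genuinely decreasing sequence of $\Lambda$-stopping times $T_n$ converging to the divided stopping time before $\mu$-right-upper-semicontinuity can be invoked. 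The crucial obstruction here, which the paper devotes a separate claim to, is that the restriction $(T_{S,-\infty})_{E^c}$ (on the complementary set where $T_{S,-\infty}$ is hit exactly at some finite level) must itself be a $\Lambda$-stopping time, which requires identifying $E^c$ with a countable union of $\aFA_{(T_{S,-n})_{H^-\cup H}}$-measurable sets. Without this you cannot form the $\Lambda$-stopping-time approximants required by Definition~\ref{def_conditions_2}(b), and the inequality $\EW[Y^{-\infty}_S]\le\EW[X_S]$ does not follow.
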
	
		
    \subsection{Construction of the solution}\label{sto:ssec_145}

		With the help of the stochastic field
		$Y=(Y^\ell_t)_{\ell \in \mathbb{R}, t \geq 0}$ we will now construct the process $L$ which will turn out to be the solution to our
		 stochastic representation problem. At any time $t \geq 0$, it is defined in the same 
		way as in \citing{BK04}{Lemma 4.13}{1051} as
			the threshold value $\ell\in \RZ$ up to which one would immediately stop
			in the optimal stopping problems with the Snell envelopes  $(Y^\ell_t)_{\ell\in \RZ}$:
		
		\begin{Lem}\label{Lem:s4}
					For $Y=(Y^\ell)_{\ell\in \RZ}$ as in Lemma \ref{Lem:sp3}, the process $L$ defined by
					\begin{align}\label{sto:eq_43}
						L_t(\omega):=\sup\left\{\ell\in \RZ \midG
										Y^\ell_t(\omega)=X_t(\omega)\right\},\quad (\omega,t)\in \Omega\times [0,\infty)		,			
					\end{align}
					and
					\[
						 L_{\infty}(\omega):=\infty, \quad \omega\in \Omega,
					\]
					is $\Lambda$-measurable. Furthermore we have for $S\in \stm$ that
					\[
						\WM(\{L_S=\infty\}\cap \{\WM\left(\mu([S,\infty))>0
						\midG \aFA_S\right)>0\})=0
					\]
					and
					\[
						\WM(\{L_S=-\infty\}\cap \{\WM\left(\mu(\{S\})>0
						\midG \aFA_S\right)>0\})=0.
					\]
		\end{Lem}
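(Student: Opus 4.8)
The plan is to establish the three assertions in turn: first the $\Lambda$-measurability of $L$, then the a.s.\ non-occurrence of $\{L_S=\infty\}$ on $\{\WM(\mu([S,\infty))>0\mid\aFA_S)>0\}$, and finally that of $\{L_S=-\infty\}$ on $\{\WM(\mu(\{S\})>0\mid\aFA_S)>0\}$. The only inputs I use from Lemma~\ref{Lem:sp3} are part~(i) (each $Y^\ell$ is $\Lambda$-measurable, l\`adl\`ag, of class~$(\mathrm{D}^\Lambda)$ and satisfies \eqref{sto:eq_5}) and part~(x) (for every $(\omega,t)$ the map $\ell\mapsto Y^\ell_t(\omega)$ is continuous and non-decreasing), together with the normalization $g_t(0)=0$, Assumption~\ref{sto:frame_ass}, and the section-theorem Corollary~\ref{app:meyer_cor_1}.

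\emph{$\Lambda$-measurability.} Fix $c\in\RZ$; it suffices to show $\{L>c\}\in\Lambda$. For rational $q$ the process $Y^q-X$ is $\Lambda$-measurable, so $\{Y^q=X\}\in\Lambda$. Taking $T=S$ in \eqref{sto:eq_5} gives $Y^q_S\ge X_S$ a.s.\ for every $S\in\stm$, whence $\{Y^q<X\}$ is evanescent by Corollary~\ref{app:meyer_cor_1}; letting $N:=\bigcup_{q\in\QQ}\{Y^q<X\}$, which is again evanescent, continuity in $\ell$ yields $Y^\ell_t(\omega)\ge X_t(\omega)$ for all $\ell$ on $N^c$. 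Hence on $N^c$ the set $A_t(\omega):=\{\ell:Y^\ell_t(\omega)=X_t(\omega)\}=\{\ell:Y^\ell_t(\omega)\le X_t(\omega)\}$ is, by monotonicity and continuity of $\ell\mapsto Y^\ell_t(\omega)$, a downward-closed closed interval, i.e.\ of the form $(-\infty,L_t(\omega)]$, $\RZ$ or $\emptyset$; in particular $L_t(\omega)>c$ there iff $A_t(\omega)$ contains some rational $q>c$. Therefore $\{L>c\}\cap N^c=\big(\bigcup_{q\in\QQ,\,q>c}\{Y^q=X\}\big)\cap N^c$, which lies in $\Lambda$ because $\Lambda$ is $\WM$-complete and hence contains $N^c$ and the evanescent set $\{L>c\}\cap N$. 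Thus $\{L>c\}\in\Lambda$; with $L_\infty=\infty$ this gives $\Lambda$-measurability of $L$.

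\emph{The event $\{L_S=\infty\}$.} Taking $T=\infty$ in \eqref{sto:eq_5} and using $X_\infty=0$ gives $Y^\ell_S\ge\EW[\int_{[S,\infty)}g_t(\ell)\,\mu(\md t)\mid\aFA_S]$ a.s.\ for every $\ell$. For $\ell\ge0$ the normalization yields $g_\cdot(\ell)\ge0$, and $g_t(\ell)\uparrow\infty$ as $\ell\uparrow\infty$ by Assumption~\ref{sto:frame_ass}, so two applications of the (conditional) monotone convergence theorem give, along $\ell\in\NZ$, $\EW[\int_{[S,\infty)}g_t(\ell)\mu(\md t)\mid\aFA_S]\uparrow\infty\cdot\WM(\mu([S,\infty))>0\mid\aFA_S)$, which equals $\infty$ on $\{\WM(\mu([S,\infty))>0\mid\aFA_S)>0\}$. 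On the other hand, on $\{L_S=\infty\}$ there is for every $n$ some $\ell\ge n$ with $Y^\ell_S=X_S$, so $X_S\le Y^n_S\le Y^\ell_S=X_S$ by monotonicity, i.e.\ $Y^n_S=X_S$ a.s.\ there. Combining, $X_S=\lim_n Y^n_S=\infty$ a.s.\ on $\{L_S=\infty\}\cap\{\WM(\mu([S,\infty))>0\mid\aFA_S)>0\}$, which is impossible since $X_S$ is finite (class~$(\mathrm{D}^\Lambda)$); hence this set is $\WM$-null.

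\emph{The event $\{L_S=-\infty\}$ (the main obstacle).} Here the aim is to exhibit, for a.e.\ $\omega$ with $\mu(\{S\})(\omega)>0$, some $\ell$ with $Y^\ell_S(\omega)=X_S(\omega)$, so that $L_S(\omega)\ge\ell>-\infty$; the key is a bound on $Y^\ell_S$ uniform in the stopping time. Fix $\ell<0$ and $T\in\stmg{S}$ and put $B:=\{T=S\}$; since $T$ is a $\Lambda$-stopping time with $T\ge S$ a.s., $B\in\aFA_S$. On $B$ the integrand $X_T+\int_{[S,T)}g_t(\ell)\mu(\md t)$ equals $X_S$, while on $B^c=\{T>S\}$ it is at most $X_T+g_S(\ell)\mu(\{S\})$ because $g_\cdot(\ell)<0$ forces $\int_{(S,T)}g_t(\ell)\mu(\md t)\le0$. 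As $\EW[X_T\mid\aFA_S]\le Y^0_S$ (take $\ell=0$ in \eqref{sto:eq_5}, using $g_t(0)=0$ and $T\in\stmg{S}$), conditioning on $\aFA_S$ and then passing to the essential supremum over $T$ yields
\[
  Y^\ell_S\ \le\ X_S\vee\big(Y^0_S+g_S(\ell)\,\mu(\{S\})\big)\qquad\text{a.s.}
\]
Since $X_S$ and $Y^0_S$ are finite and $g_S(\ell)\to-\infty$ as $\ell\to-\infty$ (Assumption~\ref{sto:frame_ass}), on $\{\mu(\{S\})>0\}$ the right-hand side equals $X_S$ once $\ell$ is negative enough; as also $Y^\ell_S\ge X_S$ a.s., this forces $Y^\ell_S=X_S$ there. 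Running this along $\ell\in-\NZ$ and discarding the countably many exceptional null sets shows that $\{L_S=-\infty\}\cap\{\mu(\{S\})>0\}$ is $\WM$-null; and since $\mu$ is optional, $\mu(\{S\})=\mu([0,S])-\mu([0,S))$ is $\aFA_S$-measurable, so $\{\mu(\{S\})>0\}$ and $\{\WM(\mu(\{S\})>0\mid\aFA_S)>0\}$ coincide up to a null set, which is the claim. The remaining bookkeeping (e.g.\ $\{T\le S\}\in\aFA_S$, $\aFA_S$-measurability of $g_S(\ell)$ and $\mu(\{S\})$, and the conditional monotone-convergence manipulations) is routine, and I would only indicate it.
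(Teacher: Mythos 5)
Your arguments for $\Lambda$-measurability and for the $\{L_S=\infty\}$ case are correct, and for the $\{L_S=-\infty\}$ case your direct bound on $Y^\ell_S$ is conceptually cleaner than the paper's proof, which argues by contradiction via an auxiliary construction of stopping times $T_k$ and nested sets $E_k$ extracted from the essential suprema defining $Y^{-k}_S$. However, the $\{L_S=-\infty\}$ step contains a genuine gap in the Meyer-$\sigma$-field setting. In deriving $Y^\ell_S\le X_S\vee\bigl(Y^0_S+g_S(\ell)\mu(\{S\})\bigr)$ you pull $g_S(\ell)\mu(\{S\})$ out of the conditional expectation given $\aFA_S$, and at the end you identify $\{\mu(\{S\})>0\}$ with $\{\WM(\mu(\{S\})>0\mid\aFA_S)>0\}$. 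Both steps tacitly presume that $\mu(\{S\})$ (and $g_S(\ell)$) is $\aFA_S$-measurable, which is false in general: $\mu([0,\cdot])$ is adapted and c\`adl\`ag, hence $\mathcal{O}$-measurable, but need not be $\Lambda$-measurable when $\Lambda\subsetneq\mathcal{O}$, so $\mu(\{S\})=\mu([0,S])-\mu([0,S))$ is only $\mathcal{F}_S$-measurable, not $\aFA_S$-measurable. For instance, with $\Lambda=\mathcal{P}$ one has $\aFA_S=\mathcal{F}_{S-}$, and a jump of $\mu$ at a totally inaccessible time $S$ is not $\mathcal{F}_{S-}$-measurable. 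Consequently, as written you only obtain $\WM(\{L_S=-\infty\}\cap\{\mu(\{S\})>0\})=0$, whereas the lemma asserts the stronger statement on $\{\WM(\mu(\{S\})>0\mid\aFA_S)>0\}$, which by Proposition~\ref{sto:Pro_46} contains $\{\mu(\{S\})>0\}$ but is in general not contained in it.

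The repair is easy and makes your route go through, indeed more directly than the paper's: since $\mathbb{1}_{B^c}\in\aFA_S$ and $g_S(\ell)\mu(\{S\})$ does not depend on $T$, keeping the conditional expectation and passing to the essential supremum yields
\[
 Y^\ell_S\ \le\ X_S\vee\Bigl(Y^0_S+\EW\bigl[g_S(\ell)\mu(\{S\})\bigm|\aFA_S\bigr]\Bigr)\quad\text{a.s.}
\]
for each $\ell<0$. By conditional monotone convergence (using $0\ge g_S(-n)\mu(\{S\})\downarrow-\infty\cdot\mathbb{1}_{\{\mu(\{S\})>0\}}$ with the $n=1$ term integrable) one gets $\EW[g_S(-n)\mu(\{S\})\mid\aFA_S]\downarrow-\infty\cdot\WM(\mu(\{S\})>0\mid\aFA_S)$, which is $-\infty$ exactly on $\{\WM(\mu(\{S\})>0\mid\aFA_S)>0\}$; combined with $Y^\ell_S\ge X_S$ and the a.s.\ finiteness of $X_S$ and $Y^0_S$, this forces $Y^{-n}_S=X_S$ for $n$ large, hence $L_S>-\infty$, a.s.\ on that set. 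This is the same quantitative mechanism the paper exploits in its inequality $X_S< M^X_S+\EW[g_S(-n)\mu(\{S\})\mid\aFA_S]$ on $\bigcap_k E_k$, but your (corrected) derivation bypasses the construction of the exhausting sequences $(T_k,E_k)$ entirely.
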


        For the proof of this and the other lemmas in this section we refer to Section \ref{proof:4.2}, \ref{proof:4.3} 
        and \ref{proof:4.2} below.
		
		Next we see that the process $L$ constructed in Lemma \ref{Lem:s4}
		is the essential infimum over 
		  the family of random variables $\ell_{S,T}$ introduced in Theorem \ref{sto:results_thm_2}. This
		  will imply the maximality of the solution in the sense of Theorem \ref{sto:results_thm_2} by Corollary \ref{Main:2}.
			
				\begin{Lem}\label{Lem:s6}
					For $L$ as in the previous Lemma \ref{Lem:s4}
					we have
					\begin{align}\label{sto:sol_gl_1}
						L_S=\essinf_{T\in \stmsg{S}} \ell_{S,T},\quad S\in \stmr,
					\end{align}					
					where $\ell_{S,T}$ is defined in Lemma \ref{Lem:um1} or Theorem \ref{sto:results_thm_2}. Moreover,
					 with $Y$ from Lemma \ref{Lem:sp3} we have that
					 \begin{align}\label{sto:eq_1}
					 	X_S=Y_S^{L_S} \quad \text{ almost surely for any } 
						S\in \stmr.
					 \end{align}
			\end{Lem}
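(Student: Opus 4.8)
The plan is to establish the two equalities by combining the optimal‐stopping characterisation of $L$ in Lemma \ref{Lem:sp3} with the defining property \eqref{sto:results_gl_2} of the random variables $\ell_{S,T}$. First I would fix $S\in\stmr$ and, using Lemma \ref{Lem:sp3}\,(\ref{sto:useful_item_1}) together with continuity and monotonicity of $\ell\mapsto Y^\ell_S$, observe that $X_S=Y^{L_S}_S$ on $\stmr$; this is essentially \eqref{sto:eq_1}, which I would prove first since it is the technical engine for the rest. The subtlety is that $L_S$ may a priori take the values $\pm\infty$: Lemma \ref{Lem:s4} already rules out $L_S=\infty$ on $\{\WM(\mu([S,\infty))>0\mid\aFA_S)>0\}$ and $L_S=-\infty$ on $\{\WM(\mu(\{S\})>0\mid\aFA_S)>0\}$, and on the complementary null‐mass sets one has $X_S=Y^\ell_S$ for \emph{all} $\ell$ by part (xi) of Lemma \ref{Lem:sp3}, so the identity \eqref{sto:eq_1} holds there trivially. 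Putting these cases together gives $X_S=Y^{L_S}_S$ a.s.

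For the ``$\leq$'' inequality in \eqref{sto:sol_gl_1}, fix $T\in\stmsg{S}$. On $\{\WM(\mu([S,T))>0\mid\aFA_S)>0\}$ I would argue that $L_S\le\ell_{S,T}$ as follows: by \eqref{sto:eq_1} and the sub-optimality of stopping at $T$ in the stopping problem \eqref{sto:eq_5} with parameter $\ell=L_S$, we get $X_S=Y^{L_S}_S\ge \EW[X_T+\int_{[S,T)}g_t(L_S)\mu(\md t)\mid\aFA_S]$, i.e.\ $\EW[X_S-X_T\mid\aFA_S]\ge\EW[\int_{[S,T)}g_t(L_S)\mu(\md t)\mid\aFA_S]$. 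Comparing with the defining equality \eqref{sto:results_gl_2} for $\ell_{S,T}$ and using that $\ell\mapsto\int_{[S,T)}g_t(\ell)\mu(\md t)$ is (strictly) increasing on the set where $\mu([S,T))>0$ — here the strict monotonicity from Assumption \ref{sto:frame_ass}\,(ii) and the normalisation are what let me pass from an inequality between integrals to an inequality between the integrands' arguments after conditioning — I conclude $L_S\le\ell_{S,T}$ on that set; on the complement $\ell_{S,T}=\infty$ by definition, so the bound holds there too. Taking the essential infimum over $T\in\stmsg{S}$ yields $L_S\le\essinf_{T}\ell_{S,T}$.

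For the reverse inequality ``$\geq$'', I would use the divided stopping times $\tau_{S,\ell}$ from Lemma \ref{Lem:sp3}. The idea is that for $\ell$ slightly above $L_S$ the optimal stopping problem $Y^\ell$ is \emph{not} solved by immediate stopping, so the optimal divided stopping time $\tau_{S,\ell}$ strictly postpones past $S$ on a set of positive conditional $\mu$-mass; making this quantitative via \eqref{sto:useful_gl_16} and \eqref{sto:eq_50} produces, for each rational $\ell>L_S$ (on the appropriate set), a stopping time $T$ built from $\tau_{S,\ell}$ with $\WM(\mu([S,T))>0\mid\aFA_S)>0$ and $\EW[X_S-X_T\mid\aFA_S]\le\EW[\int_{[S,T)}g_t(\ell)\mu(\md t)\mid\aFA_S]$, whence $\ell_{S,T}\le\ell$ by strict monotonicity and \eqref{sto:results_gl_2}. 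Letting $\ell\downarrow L_S$ through the rationals gives $\essinf_T\ell_{S,T}\le L_S$. Here one has to treat the degenerate sets carefully: on $\{L_S=\infty\}$ the left side cannot exceed $L_S$; on $\{\WM(\mu([S,\infty))>0\mid\aFA_S)=0\}$ both sides are $+\infty$ by Lemma \ref{Lem:sp3}\,(xi) and the definition of $\ell_{S,T}$, so equality is immediate. I would phrase the two inequalities via Corollary \ref{app:meyer_cor_1}-type $\aFA_S$-measurable comparisons to patch the local statements into a global a.s.\ identity.

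The main obstacle I anticipate is the ``$\geq$'' direction: extracting from ``$\tau_{S,\ell}$ does not stop at $S$'' a \emph{single} stopping time $T\in\stmsg{S}$ (rather than a divided stopping time) for which \eqref{sto:results_gl_2} can be invoked, while controlling the conditional $\mu$-mass $\WM(\mu([S,T))>0\mid\aFA_S)$ from below — the $H^+$-component of $\tau_{S,\ell}$, which includes its own endpoint, and the possibility that $T_{S,\ell}=S$ on part of the space, are exactly the places where the atoms of $\mu$ bite. The monotonicity \eqref{sto:eq_50} of $\ell\mapsto\tau_{S,\ell}$ and the continuity of $\ell\mapsto Y^\ell_S$ should be the tools that make the limiting argument $\ell\downarrow L_S$ work cleanly; I would also lean on the fact, from Lemma \ref{Lem:sp3}, that $Y^\ell_S>X_S$ strictly for $\ell>L_S$ on the relevant set, which is what forces $\tau_{S,\ell}$ to genuinely postpone.
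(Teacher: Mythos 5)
Your overall architecture is sound, and the ``$\le$'' direction together with its use of \eqref{sto:eq_1} is essentially identical to the paper's argument. However, the ``$\ge$'' direction contains a genuine gap, which you yourself flag as ``the main obstacle'' but do not close. You propose to extract, for $\ell>L_S$, an ordinary stopping time $T\in\stmsg{S}$ with $\WM(\mu([S,T))>0\mid\aFA_S)>0$ and $\EW[X_S-X_T\mid\aFA_S]\le\EW[\int_{[S,T)}g_t(\ell)\mu(\md t)\mid\aFA_S]$ ``built from $\tau_{S,\ell}$''. This is precisely the nontrivial step, and the divided stopping time is not a convenient starting point: on the $H^+$-component, $X_{\tau_{S,\ell}}$ involves the right upper envelope $\lsr{X}{}$, and passing from that to an ordinary $T$ requires a right approximation that may send $\mu([S,T))$ to $0$, at which point the $\mu$-mass bound and the inequality you need for $\ell_{S,T}\le\ell$ can both fail. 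The paper sidesteps $\tau_{S,\ell}$ entirely for this direction: it uses Neveu's essential-supremum argument (\citing{NE75}{Proposition VI-1-I}{121}) applied to the upwards-directed family in \eqref{sto:gl_77} to get a \emph{monotone sequence of ordinary $\Lambda$-stopping times} $T_m^n\in\stmg{S}$ attaining $Y^{K^n}_S$, works on the $\aFA_S$-measurable sets $A_m=\{\mathcal{E}_{T_m^n}>0\}$, and invokes Proposition \ref{eq_rusc} (a consequence of $\mu$-right-upper-semicontinuity in expectation) together with Proposition \ref{sto:Pro_46} to conclude $A_m\subset\{\WM(\mu([S,T_m^n))>0\mid\aFA_S)>0\}$. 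Nothing in your sketch plays the role of Proposition \ref{eq_rusc}, which is the tool that converts ``$T_m^n$ does strictly better than stopping at $S$'' into the required positive conditional $\mu$-mass.

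Two secondary issues. First, you propose to let $\ell\downarrow L_S$ through a \emph{deterministic} sequence of rationals; the paper instead uses the $\aFA_S$-measurable thresholds $K^n:=(L_S+\tfrac1n)\mathbb{1}_{\{L_S>-\infty\}}-n\mathbb{1}_{\{L_S=-\infty\}}$ together with the extension \eqref{sto:gl_77} of the Snell relation to $\aFA_S$-measurable parameters; your variant can in principle be patched by working separately on the $\aFA_S$-measurable sets $\{L_S<q\}$ for $q\in\mathbb{Q}$, but you do not say so, and one has to be careful that the resulting stopping times and sets depend on $q$. Second, your treatment of \eqref{sto:eq_1} when $L_S=-\infty$ is imprecise: Lemma \ref{Lem:s4} rules out $\{L_S=-\infty\}$ only against $\{\WM(\mu(\{S\})>0\mid\aFA_S)>0\}$, and on the remaining (not null) set where $L_S=-\infty$ it is not true that $X_S=Y^\ell_S$ for all $\ell$ by Lemma \ref{Lem:sp3}\,(ix); what the paper uses there is Lemma \ref{Lem:sp3}\,(viii), namely $Y^{-\infty}_S=X_S$, which defines $Y^{L_S}_S$ on that set and gives \eqref{sto:eq_1} directly.
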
						
					
			Next we clarify how $L$ to the stopping times $T_{S,\ell}$ ($\ell\in \RZ$, $S\in \stm$) constructed
		  in Lemma \ref{Lem:sp3}. Our result reveals that $L$ can be seen as a threshold for those optimal stopping problems
		  as already mentioned in Section~\ref{sec:stopping}.
				 
		\begin{Lem}\label{Lem:s5}
					For every $S\in\stm$ there exists a $\WM$-null set $\mathcal{N}$ 
					such that with
					\begin{align}\label{eq_267}
						\bar{\Omega}_S^\mathcal{N}:=\left\{(\omega,t,\ell)\in \Omega\times[0,\infty)\times \RZ\midG
							\omega\in \mathcal{N}^c,\ S(\omega)\leq t\right\}
					\end{align}				
					the stopping times $(T_{S,\ell})_{\ell\in \RZ}$ from Lemma 
					\ref{Lem:sp3} (iii) and the process $L$ from Lemma \ref{Lem:s4} are related by the inclusions
					\begin{align}
						&\ A:=\left\{(\omega,t,\ell)\in \bar{\Omega}_S^\mathcal{N}
						\midG \sup_{v\in [S(\omega),t]}
										L_v(\omega)<\ell \right\}\label{sto:set_a}\\
					\subset&\ 
						B:=\left\{(\omega,t,\ell)\in \bar{\Omega}_S^\mathcal{N}
						\midG  t\leq T_{S,\ell}(\omega)\right\}\label{sto:set_b}\\
						\subset &\ C:=\left\{(\omega,t,\ell)\in \bar{\Omega}_S^\mathcal{N}		\midG 
							\sup_{v\in [S(\omega),t)} L_v(\omega)
							\leq \ell\right\}\label{sto:set_c}
					\end{align}
					and
					\begin{align}
						  &\ \tilde{A}:=A \cap \left\{(\omega,t,\ell) \in 
							\bar{\Omega}_S^\mathcal{N}
						\midG X_{T_{S,\ell}(\omega)}(\omega)=Y_{T_{S,\ell}(\omega)}^\ell
						(\omega)\right\}\label{sto:set_ta}\\
				\subset&
					\	\tilde{B}:= \left\{(\omega,t,\ell) 
					\in \bar{\Omega}_S^\mathcal{N}
							\midG t<T_{S,\ell}(\omega)\right\}\label{sto:set_tb}		\\			
						\subset&\ \tilde{C}:=\left\{(\omega,t,\ell) 
						\in \bar{\Omega}_S^\mathcal{N}
							\midG \sup_{v\in [S(\omega),t]}
										L_v(\omega) \leq \ell\right\}.\label{sto:set_tc}
					\end{align}
				\end{Lem}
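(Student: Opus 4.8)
With $S\in\stm$ fixed, the plan is to choose a single $\WM$-null set $\mathcal{N}=\mathcal{N}(S)$ that absorbs every ``a.s.''- or ``up-to-indistinguishability''-type exceptional set occurring in Lemmas \ref{Lem:sp3} and \ref{Lem:s4} (enlarged as the argument proceeds), and to verify the six inclusions pathwise on $\bar{\Omega}_S^{\mathcal{N}}$. I would first arrange that for all $\omega\in\mathcal{N}^c$, $t\in[0,\infty]$ and $\ell\in\RZ$ one has $Y^\ell_t(\omega)\ge X_t(\omega)$ and $\lim_{\ell\downarrow-\infty}Y^\ell_t(\omega)=X_t(\omega)$: the identity ``$X=\inf_{\ell\in\QQ}Y^\ell$ up to indistinguishability'' from Lemma \ref{Lem:sp3}(viii) fails only on an evanescent set, whose $\WM$-null $\Omega$-projection we put into $\mathcal{N}$, and one concludes using the monotonicity of $\ell\mapsto Y^\ell_t(\omega)$. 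Together with the pathwise continuity of $\ell\mapsto Y^\ell_t(\omega)$ (again Lemma \ref{Lem:sp3}(viii)) this gives the single fact driving everything below: $\{\ell\in\RZ:Y^\ell_t(\omega)=X_t(\omega)\}=(-\infty,L_t(\omega)]$, equivalently $L_t(\omega)\ge\ell\iff Y^\ell_t(\omega)=X_t(\omega)$ and $L_t(\omega)<\ell\iff Y^\ell_t(\omega)>X_t(\omega)$. I would also fold into $\mathcal{N}$ the null sets of \eqref{sto:eq_12}, \eqref{sto:useful_gl_16}, of Lemma \ref{Lem:sp3}(ix), and the further exceptional sets identified below.

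Three of the four inclusions are then short. For $B\subseteq C$: if $t\le T_{S,\ell}(\omega)$, each $v\in[S(\omega),t)$ has $v<T_{S,\ell}(\omega)$, so by \eqref{sto:useful_gl_8} none of the three contact alternatives holds at $v$; in particular $Y^\ell_v(\omega)\ne X_v(\omega)$, hence $>$, hence $L_v(\omega)<\ell$, so $\sup_{v\in[S(\omega),t)}L_v(\omega)\le\ell$. For $\tilde{B}\subseteq\tilde{C}$: if $\sup_{v\in[S(\omega),t]}L_v(\omega)>\ell$ pick $v_0$ with $L_{v_0}(\omega)>\ell$; then $Y^\ell_{v_0}(\omega)=X_{v_0}(\omega)$, so \eqref{sto:useful_gl_8} gives $T_{S,\ell}(\omega)\le v_0\le t$, contradicting $t<T_{S,\ell}(\omega)$. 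Finally $\tilde{A}\subseteq\tilde{B}$ follows once $A\subseteq B$ is available: on $\tilde{A}\subseteq A\subseteq B$ one has $t\le T_{S,\ell}(\omega)$, and $t=T_{S,\ell}(\omega)$ is impossible since the extra condition of $\tilde{A}$ would give $X_t(\omega)=Y^\ell_{T_{S,\ell}(\omega)}(\omega)=Y^\ell_t(\omega)$, i.e. $L_t(\omega)\ge\ell$, against $(\omega,t,\ell)\in A$.

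The heart is $A\subseteq B$, which I would prove by contraposition: assuming $r:=T_{S,\ell}(\omega)<t$ (so $r\in[S(\omega),t)$), I must show $\sup_{v\in[S(\omega),t]}L_v(\omega)\ge\ell$. By \eqref{sto:eq_12} either $Y^\ell_r(\omega)=X_r(\omega)$ -- then $L_r(\omega)\ge\ell$ and we are done since $r\in[S(\omega),t]$ -- or $Y^\ell_r(\omega)>X_r(\omega)$, so $\omega\in H_{S,\ell}^+$ and hence $Y^\ell_{r+}(\omega)=\lsr{X}{r}(\omega)$ by \eqref{sto:eq_77}. In this second case pick $w_n\downarrow r$, $w_n\in(r,t]$, with $X_{w_n}(\omega)\to\lsr{X}{r}(\omega)$; since $Y^\ell(\omega)$ is l\`adl\`ag, $Y^\ell_{w_n}(\omega)\to Y^\ell_{r+}(\omega)=\lsr{X}{r}(\omega)$, so $Y^\ell_{w_n}(\omega)-X_{w_n}(\omega)\to0$, and it is enough to show $\limsup_nL_{w_n}(\omega)\ge\ell$. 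Suppose not; fix rationals $\ell^*<\ell'<\ell$ with $\limsup_nL_{w_n}(\omega)<\ell^*$; then for large $n$, $L_{w_n}(\omega)<\ell^*$, so $X_{w_n}(\omega)\le Y^{\ell^*}_{w_n}(\omega)$ and $0<Y^{\ell'}_{w_n}(\omega)-X_{w_n}(\omega)\le Y^\ell_{w_n}(\omega)-X_{w_n}(\omega)\to0$, forcing $Y^{\ell'}_{w_n}(\omega)-Y^{\ell^*}_{w_n}(\omega)\to0$. The contradiction should come from a positive lower bound on this $\ell$-increment: combining the first displayed inequality of Lemma \ref{Lem:sp3}(vii) (applied with the optimal divided stopping time $\tau_{\cdot,\ell^*}$) with \eqref{sto:useful_gl_16}, $Y^{\ell'}_u-Y^{\ell^*}_u$ dominates the $\aFA_u$-conditional expectation of $\int_{[u,\tau_{u,\ell^*})}\bigl(g_s(\ell')-g_s(\ell^*)\bigr)\,\mu(\md s)$, which is strictly positive since $g$ is strictly increasing in its last argument (Assumption \ref{sto:frame_ass}) -- \emph{unless} the conditional $\mu$-mass carried by $[u,\tau_{u,\ell^*})$ vanishes, in which case Lemma \ref{Lem:sp3}(ix) and \eqref{sto:eq_4} force $Y^{\ell^*}_u=X_u$, i.e. $L_u=+\infty\ge\ell$. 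So along $(w_n)$, either infinitely many $w_n$ lie in $\{L=+\infty\}$ (already contradicting $\limsup_nL_{w_n}(\omega)<\ell$), or, after discarding a further $\WM$-null set on which the contact time $r$ carries no positive conditional future $\mu$-mass, the conditional masses after $w_n$ are bounded below, yielding a uniform positive lower bound for $Y^{\ell'}_{w_n}(\omega)-Y^{\ell^*}_{w_n}(\omega)$ and the desired contradiction. The boundary case $r=S(\omega)$ needs no separate treatment, since the $H_{S,\ell}^+$ argument uses only $w_n\downarrow r$ with $w_n>r$.

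The main obstacle I expect is turning this gap estimate into a rigorous argument, since the $w_n=w_n(\omega)$ are not stopping times, so the almost-sure inequalities of Lemma \ref{Lem:sp3}(vii), (ix) and Lemma \ref{Lem:s4} are not applicable at them directly. The honest route replaces the pathwise limit by a sectioning argument: for fixed rationals $\ell^*<\ell'<\ell$ and $t_0$, on the subevent where $A\subseteq B$ would fail one uses the Meyer Section Theorem \ref{Main:4} (and the right-continuity of $\FA$) to produce $\Lambda$-stopping times $R_k$ with $T_{S,\ell}<R_k\le t_0$, $Y^\ell_{R_k}-X_{R_k}<1/k$ and $L_{R_k}<\ell^*$ on a set of probability approaching that of the subevent, runs the dichotomy above at the $R_k$, and recovers the statement for every $(\omega,t,\ell)\in\bar{\Omega}_S^{\mathcal{N}}$ by a countable union over rationals together with the continuity and monotonicity of $\ell\mapsto Y^\ell$ and $\ell\mapsto T_{S,\ell}$. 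Keeping track of the measurability of the auxiliary sets (for instance $\{T_{S,\ell}<u\}$, with $T_{S,\ell}$ only an $\aFA_+$-stopping time), of the exact conditioning in the $\mu$-mass dichotomy, and of which degenerate configurations at the contact time must be placed into $\mathcal{N}$, is the delicate bookkeeping of the proof.
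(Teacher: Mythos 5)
Your treatment of the three ``easy'' inclusions ($B\subset C$, $\tilde B\subset\tilde C$, $\tilde A\subset\tilde B$) is correct and matches the paper's, and you have correctly located the heart of the lemma in $A\subset B$ and its reduction (via \eqref{sto:eq_12} and \eqref{sto:eq_77}) to the case $\omega\in H_{S,\ell}^+$, where $Y^\ell_{T_{S,\ell}+}=\lsr{X}{T_{S,\ell}}$. But the gap estimate you propose there does not close. You bound $Y^{\ell'}_u-Y^{\ell^*}_u$ from below by $\EW\bigl[\int_{[u,\tau_{u,\ell^*})}(g_s(\ell')-g_s(\ell^*))\,\mu(\md s)\mid\aFA_u\bigr]$ and then need this to stay uniformly positive as $u\downarrow T_{S,\ell}$. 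Nothing forces that: the divided stopping time $\tau_{u,\ell^*}$ is tied to $u$ and can collapse onto $u$ as $u$ decreases, so the integral can vanish even when $\mu$ charges $(T_{S,\ell},\infty)$. Your fallback -- if the conditional mass vanishes then $L_u=\infty$ -- invokes Lemma \ref{Lem:sp3}(ix), but that lemma concerns $\mu([u,\infty))$, not $\mu([u,\tau_{u,\ell^*}))$; the correct conclusion from $\mu([u,\tau_{u,\ell^*}))=0$ is only $L_u\ge\ell^*$, and in any case the dichotomy is not exhaustive because ``mass exactly zero'' versus ``mass bounded away from zero'' leaves out ``mass positive but tending to zero,'' which is precisely the dangerous case. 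The ``honest route'' you sketch (Meyer sectioning at $R_k$ with $Y^\ell_{R_k}-X_{R_k}<1/k$) inherits the same problem: at $R_k$ you would run the same estimate with the same collapsing interval.

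What the paper does differently, and what you are missing, is Proposition \ref{sto:pro_70}: a Hausdorff--Maximality construction producing, for each $U$ between $S$ and the sectioned time $T$, a stopping time $R^U>T$ at which it is still no better to stop, i.e.\ $X_U\le\EW[X_{R^U}+\int_{[U,R^U)}g_t(\ell')\mu(\md t)\mid\aFA_U]$. Because $R^U$ is forced \emph{past the fixed reference point $T>T_{S,\ell}$}, the resulting gap estimate (Claim~2 in the paper) is over the fixed interval $[U,T]$ rather than over $[U,\tau_{U,\ell^*})$, and it cannot collapse. Even then the paper still needs a backward-supermartingale argument (Claims 3--4) together with $\mu$-right-upper-semicontinuity to show that $\mu((T_{S,\ell},T])=0$ would force $X_{U_1}<Y^{\ell'}_{U_1}=\lim_p\EW[X_{\underline R_p}\mid\aFA_{U_1}]\le X_{U_1}$ and thereby reach the contradiction. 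Without a substitute for the Proposition \ref{sto:pro_70} construction, the quantitative core of your proof of $A\subset B$ does not hold, so this is a genuine gap rather than a technical detail to be tidied up.
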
	
     
	\subsection{Verification of the solution}\label{sto:ssec_146}
		
		We follow the blueprint of the proof from \cite{BK04} and we prove
		 in four steps that the process $L$ of Lemma
		\ref{Lem:s4} satisfies
		\eqref{sto:frame_gl}, i.e. that
		\begin{align}\label{sto:eq_346}
			X_S=\EW\left[\int_{[S,\infty)}g_t\left(\sup_{v\in [S,t]}
			L_v\right)\mu(\md t)
			\midG \aFA_S\right]
		\end{align}
		for any $S\in \stm$, establishing among the way also the
			 integrability property \eqref{sto:frame_gl_2}.
			The idea of the proof is the following. We first show 
			that  for any $S\in \stm$ and all $\ell \in \mathbb{Q}$ we have that
			\begin{align}\label{sto:eq_27}
			X_S=&\EW\left[X_{\tau_{S,\ell}}
									+\int_{[S,\tau_{S,\ell})} g_t\left(\sup_{v\in [S,t]}L(v)\right)
					\mu(\md t)
						\midG \aFA_S\right].
		\end{align}
		Afterwards we let $\ell$ tend to infinity which lets the
		$X_{\tau_{S,l}}$-term in the preceding expectation vanish while the integral converges
		 to an integral over all of $[S,\infty)$. This then establishes
		the desired representation~\ref{sto:frame_gl}. 
		
		The starting point~\eqref{sto:eq_27} will be established in Lemma \ref{sto:Lem_12} using a disintegration formula from the following Proposition \ref{sto:dis_pro};
		that $X_{\tau_{S,\ell}}$ vanishes as $\ell \uparrow \infty$ is obtained in Lemma \ref{sto:Lem_13}. All the proofs of the upcoming results are deferred to Section~\ref{proof:4.5}, \ref{proof:4.6} and \ref{proof:4.7} to allow us to conclude this section with the proof of our main result Theorem~\ref{sto:results_thm_2}.
							
		We start now with the following disintegration formula: 

    	\begin{Pro}\label{sto:dis_pro}
    			For every $S\in \stm$, the nonnegative random Borel-measure $Y_S(\md \ell)$
    							associated with the non-decreasing continuous random mapping
    							$\ell\mapsto Y_S^\ell$ (see Lemma
    							\ref{Lem:sp3} (viii)) can be disintegrated in the form 
    							\begin{align}\label{sto:dis_gl_1}
    								\int_{\RZ} \phi(\ell)Y_S(\md \ell)=
    										\EW\left[\int_{[S,\infty)}\left\{\int_{\RZ}\phi(\ell)
    										\mathbb{1}_{[S,\tau_{S,\ell})}(t)\
    										 g_t(\md \ell)\right\}\mu(\md t)\midG \aFA_S\right]
    							\end{align}
    							for any nonnegative, $\aFA_S\otimes \mathcal{B}(\RZ)$-measurable 
    							$\phi:\Omega\times\RZ
    							\rightarrow \RZ$. Here $\tau_{S,\ell}$ is the divided 
    							stopping time from \eqref{sto:eq_69} and $[S,\tau_{S,\ell})$ is given by \eqref{sto:eq_4}.
    	\end{Pro}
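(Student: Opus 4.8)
The plan is to reduce the full disintegration formula \eqref{sto:dis_gl_1} to a single increment identity for the optimal values $Y^{\ell}_S$, and then to prove that identity by a Darboux--Stieltjes approximation built from the optimality of the divided stopping times $\tau_{S,\ell}$. Read as set functions in $\phi$, both sides of \eqref{sto:dis_gl_1} are nonnegative, $\sigma$-additive, and $\aFA_S$-measurable in $\omega$ — here I use the joint measurability supplied by Lemma \ref{Lem:sp3} together with the fact that $\ell\mapsto Y^{\ell}_S$ is continuous non-decreasing and $\ell\mapsto g_t(\ell)$ is continuous strictly increasing, so that $Y_S(\md\ell)$ and $g_t(\md\ell)$ are genuine Lebesgue--Stieltjes measures. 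A monotone-class argument and monotone convergence therefore reduce the general $\phi$ to $\phi(\omega,\ell)=\mathbb{1}_F(\omega)\mathbb{1}_{(a,b]}(\ell)$ with $F\in\aFA_S$ and $a<b$ in $\RZ$; multiplying by $\mathbb{1}_F$ and conditioning on $\aFA_S$, it then suffices to show
\begin{align}\label{plan:core}
Y^{b}_S-Y^{a}_S=\EW\left[\int_{[S,\infty)}\int_{(a,b]}\mathbb{1}_{[S,\tau_{S,\ell})}(t)\,g_t(\md\ell)\,\mu(\md t)\midG\aFA_S\right]\quad\text{a.s.}
\end{align}

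To prove \eqref{plan:core} I would fix a partition $a=\ell_0<\ell_1<\dots<\ell_n=b$ and bound each increment $Y^{\ell_i}_S-Y^{\ell_{i-1}}_S$ from both sides. On the one hand, \eqref{sto:useful_gl_16} represents $Y^{\ell_i}_S$ exactly as $\EW[X_{\tau_{S,\ell_i}}+\int_{[S,\tau_{S,\ell_i})}g_t(\ell_i)\,\mu(\md t)\midG\aFA_S]$, whereas the first display of Lemma \ref{Lem:sp3}(viii), applied with $(\ell,\ell')=(\ell_{i-1},\ell_i)$, bounds $Y^{\ell_{i-1}}_S$ below by $\EW[X_{\tau_{S,\ell_i}}+\int_{[S,\tau_{S,\ell_i})}g_t(\ell_{i-1})\,\mu(\md t)\midG\aFA_S]$; subtracting the two (the $X_{\tau_{S,\ell_i}}$-contributions cancel, all terms being integrable by the construction of Lemma \ref{Lem:sp3} and Assumption \ref{sto:frame_ass}) gives
\[
Y^{\ell_i}_S-Y^{\ell_{i-1}}_S\le\EW\left[\int_{[S,\tau_{S,\ell_i})}\bigl(g_t(\ell_i)-g_t(\ell_{i-1})\bigr)\mu(\md t)\midG\aFA_S\right].
\]
Exchanging the roles of $\ell_{i-1}$ and $\ell_i$ — i.e. using \eqref{sto:useful_gl_16} for $Y^{\ell_{i-1}}_S$ and Lemma \ref{Lem:sp3}(viii) with $(\ell,\ell')=(\ell_i,\ell_{i-1})$ for $Y^{\ell_i}_S$ — produces the matching lower bound with $\tau_{S,\ell_{i-1}}$ in place of $\tau_{S,\ell_i}$. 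Summing over $i$ and telescoping the left-hand side, $Y^{b}_S-Y^{a}_S$ is squeezed between
\[
\EW\left[\int_{[S,\infty)}\sum_{i=1}^{n}\mathbb{1}_{[S,\tau_{S,\ell_{i-1}})}(t)\bigl(g_t(\ell_i)-g_t(\ell_{i-1})\bigr)\mu(\md t)\midG\aFA_S\right]
\]
and the same expression with $\tau_{S,\ell_i}$ replacing $\tau_{S,\ell_{i-1}}$.

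The key observation is that, for fixed $(\omega,t)$, these two inner sums are precisely the left- and right-endpoint Darboux--Stieltjes sums of the $\{0,1\}$-valued integrand $\ell\mapsto\mathbb{1}_{[S,\tau_{S,\ell})}(t)$ — which is non-decreasing in $\ell$ by the nesting \eqref{sto:eq_50} — against the continuous increasing integrator $\ell\mapsto g_t(\ell)$. Their difference equals $\sum_{i}\bigl(\mathbb{1}_{[S,\tau_{S,\ell_i})}(t)-\mathbb{1}_{[S,\tau_{S,\ell_{i-1}})}(t)\bigr)\bigl(g_t(\ell_i)-g_t(\ell_{i-1})\bigr)\le\max_{i}\bigl(g_t(\ell_i)-g_t(\ell_{i-1})\bigr)$, which vanishes along partitions of mesh tending to $0$ by uniform continuity of $g_t$ on $[a,b]$; hence both sums converge to $\int_{(a,b]}\mathbb{1}_{[S,\tau_{S,\ell})}(t)\,g_t(\md\ell)$, the ambiguity of the integrand at the single critical value of $\ell$ being immaterial since $g_t(\md\ell)$ is atomless. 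Since each sum is dominated, uniformly in the partition, by $t\mapsto|g_t(a)|+|g_t(b)|$, which by Assumption \ref{sto:frame_ass}(ii) is $\mu(\md t)$-integrable with finite expectation, dominated convergence lets me pass to the limit under $\int\!\cdots\mu(\md t)$ and under $\EW[\,\cdot\mid\aFA_S\,]$ — working along a fixed refining sequence of partitions, so only countably many null sets intervene — which gives \eqref{plan:core} and, via the reduction of the first paragraph, the proposition.

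I expect the structural heart — the two-sided estimate, which is simply "$\tau_{S,\ell}$ is optimal at level $\ell$ while $\tau_{S,\ell'}$ is merely admissible at level $\ell$", read off from Lemma \ref{Lem:sp3}(vii)–(viii) — to be immediate. The genuinely delicate points are the Darboux--Stieltjes bookkeeping in the limit and verifying the joint measurability needed to make the iterated integral and the monotone-class reduction rigorous and to apply Fubini; for these, Lemma \ref{Lem:sp3} is precisely what supplies the required input.
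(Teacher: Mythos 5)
Your proof is correct, and up to the Darboux--Stieltjes step it follows the same skeleton as the paper's: reduce by a monotone-class argument to the single increment $Y^{b}_S-Y^{a}_S$, obtain the two-sided partition estimate by playing off the optimality \eqref{sto:useful_gl_16} of $\tau_{S,\ell}$ against its suboptimality for a neighbouring level (this is Lemma \ref{Lem:sp3}\emph{(vii)}, not (viii) as you cite -- a label slip only), and telescope. Where you genuinely diverge from the paper is the passage to the limit. The paper rewrites $\RM{1}^{\pi_n}_1,\RM{1}^{\pi_n}_2$ as iterated integrals against $g_t(\md\ell)$ with the left/right selection maps $\ell_n(\ell),r_n(\ell)$ (exactly your Darboux sums), and then applies Fatou in both directions, which forces it to prove the pointwise inequalities
\begin{align}
\liminf_{n}\mathbb{1}_{[S,\tau_{S,\ell_n(\ell)})}(\omega,t)\geq \mathbb{1}_{[S,\tau_{S,\ell})}(\omega,t),\qquad
\limsup_{n}\mathbb{1}_{[S,\tau_{S,r_n(\ell)})}(\omega,t)\leq \mathbb{1}_{[S,\tau_{S,\ell})}(\omega,t)
\end{align}
for $g_t(\omega,\md\ell)$-a.e.\ $\ell$; this is the content of its Claims 1--3, a fairly involved analysis of the jump set of $\ell\mapsto T_{S,\ell}(\omega)$, of the interval $J=\{\ell:T_{S,\ell}(\omega)=t\}$, of the almost-sure identity $H_{S,\ell}^+=\{X_{T_{S,\ell}}<Y^{\ell}_{T_{S,\ell}}\}$ taken over rational levels, and of the $\ell$-continuity of $Y^\ell$. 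Your observation that the two sums are the lower and upper Darboux--Stieltjes sums of the monotone, $\{0,1\}$-valued integrand $\ell\mapsto\mathbb{1}_{[S,\tau_{S,\ell})}(\omega,t)$ against the continuous integrator $g_t(\omega,\cdot)$, so that $U^\pi-L^\pi\le\max_i(g_t(\ell_i)-g_t(\ell_{i-1}))\to 0$, collapses all of that pointwise bookkeeping into one line and identifies the common limit as $\int_{(a,b]}\mathbb{1}_{[S,\tau_{S,\ell})}(t)\,g_t(\md\ell)$; the one ingredient you must check, and which you correctly use, is that the monotonicity \eqref{sto:eq_50} is pathwise rather than merely up to evanescence (it is: the proof of Proposition \ref{sto:useful_pro}(vi), carried over to Lemma \ref{Lem:sp3}(vi), establishes $[S,\tau_{S,\ell})(\omega)\subset[S,\tau_{S,\ell'})(\omega)$ for every $\omega$). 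After that, domination by $g_t(b)-g_t(a)$, which is $\WM\otimes\mu$-integrable by Assumption \ref{sto:frame_ass}(ii), and a fixed refining sequence of partitions let DCT and conditional DCT finish, exactly as you say. What your route buys is a cleaner and more robust limit argument that is blind to the fine structure of the critical levels and the sets $H^{\pm}_{S,\ell}$; what the paper's route makes visible (at the cost of length) is precisely that fine structure, which, however, is not actually needed here.
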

		
		Now the following lemma establishes \eqref{sto:eq_27}:
		
		\begin{Lem}\label{sto:Lem_12}
		For fixed $S\in \stm$ and any $\ell \in \RZ$, we have $\mathbb{1}_{[S,\tau_{S,\ell})}
		g(\sup_{v\in [S,t]}L_v)\in \mathrm{L}^1(\WM\otimes\mu)$ with
		$\tau_{S,\ell}$ from \eqref{sto:eq_69}
		and
		\begin{align}\label{sto:eq_42}
			X_S=&\EW\left[X_{\tau_{S,\ell}}
									+\int_{[S,\tau_{S,\ell})} g_t\left(\sup_{v\in [S,t]}L(v)\right)
					\mu(\md t)
						\midG \aFA_S\right].
		\end{align}
		\end{Lem}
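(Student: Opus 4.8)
The plan is to apply Proposition~\ref{sto:dis_pro} with the test function $\phi:=\mathbb{1}_{(-\infty,\ell]}$, where $\ell$ is the fixed level from the statement; $\phi$ is deterministic, hence $\aFA_S\otimes\mathcal{B}(\RZ)$-measurable and nonnegative, so \eqref{sto:dis_gl_1} applies. On the left-hand side this produces $\int_{(-\infty,\ell]}Y_S(\md\ell')$, which by the continuity and monotonicity of $\ell'\mapsto Y_S^{\ell'}$ and the limit relation $Y_S^{-\infty}=X_S$ recorded in Lemma~\ref{Lem:sp3} equals $Y_S^\ell-X_S$. Everything then comes down to rewriting the right-hand side of \eqref{sto:dis_gl_1}.

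The heart of the matter is the pathwise identity
\[
	\int_{(-\infty,\ell]}\mathbb{1}_{[S,\tau_{S,\ell'})}(t)\,g_t(\md\ell')
	=g_t(\ell)-g_t\!\left(\Big(\sup_{v\in[S,t]}L_v\Big)\wedge\ell\right),
\]
to be established for $\mu(\md t)$-a.e.\ $t\geq S$, almost surely. To this end I fix $\omega$ outside the $\WM$-null set $\mathcal{N}$ of Lemma~\ref{Lem:s5} and $t\geq S(\omega)$ and consider $\mathcal{J}:=\{\ell'\in\RZ:(\omega,t)\in[S,\tau_{S,\ell'})\}$. By the monotonicity $[S,\tau_{S,\ell'})\subset[S,\tau_{S,\ell''})$ for $\ell'\leq\ell''$ from Lemma~\ref{Lem:sp3}, the set $\mathcal{J}$ is upward closed, hence a half-line; combining the description \eqref{sto:eq_4} of $[S,\tau_{S,\ell'})$ and the definitions of $H^-_{S,\ell'},H_{S,\ell'},H^+_{S,\ell'}$ with the sandwiched inclusions $A\subset B\subset C$ and $\tilde A\subset\tilde B\subset\tilde C$ of Lemma~\ref{Lem:s5} identifies its left endpoint as $\sup_{v\in[S(\omega),t]}L_v(\omega)$. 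Since $g_t(\cdot)$ is continuous, the measure $g_t(\md\ell')$ is non-atomic, so the endpoint itself is irrelevant and the displayed identity follows by evaluating $g_t(\md\ell')$ on $(\sup_{v\in[S,t]}L_v,\ell]$. Inserting this into \eqref{sto:dis_gl_1} yields
\[
	Y_S^\ell-X_S=\EW\!\left[\int_{[S,\infty)}\Big(g_t(\ell)-g_t\big((\sup_{v\in[S,t]}L_v)\wedge\ell\big)\Big)\mu(\md t)\;\middle|\;\aFA_S\right].
\]

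Reading the endpoint identification at $\ell'=\ell$ shows that, up to the set $\{t\geq S:\sup_{v\in[S,t]}L_v=\ell\}$ on which the integrand just displayed vanishes anyway, $[S,\tau_{S,\ell})$ agrees with $\{(\omega,t):t\geq S,\ \sup_{v\in[S,t]}L_v(\omega)<\ell\}$; in particular $\sup_{v\in[S,t]}L_v\leq\ell$ on $[S,\tau_{S,\ell})$, so $(\sup_{v\in[S,t]}L_v)\wedge\ell=\sup_{v\in[S,t]}L_v$ there. Hence the last display may be rewritten as
\[
	Y_S^\ell-X_S=\EW\!\left[\int_{[S,\tau_{S,\ell})}\Big(g_t(\ell)-g_t\big(\sup_{v\in[S,t]}L_v\big)\Big)\mu(\md t)\;\middle|\;\aFA_S\right],
\]
and subtracting it from the optimality relation \eqref{sto:useful_gl_16} of Lemma~\ref{Lem:sp3}, $Y_S^\ell=\EW[X_{\tau_{S,\ell}}+\int_{[S,\tau_{S,\ell})}g_t(\ell)\,\mu(\md t)\mid\aFA_S]$, gives exactly \eqref{sto:eq_42}; on $\{S=\infty\}$ both sides of \eqref{sto:eq_42} reduce to $0$. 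The stated integrability then follows at once: taking expectations in the first display of this paragraph and using that $Y^\ell$ and $X$ are of class($\mathrm{D}^\Lambda$) shows that the nonnegative process $\mathbb{1}_{[S,\tau_{S,\ell})}\big(g_t(\ell)-g_t(\sup_{v\in[S,t]}L_v)\big)$ has finite $\WM\otimes\mu$-integral, and since $g_\cdot(\ell)\in\mathrm{L}^1(\WM\otimes\mu)$ by Assumption~\ref{sto:frame_ass}, this forces $\mathbb{1}_{[S,\tau_{S,\ell})}\,\lvert g_\cdot(\sup_{v\in[S,\cdot]}L_v)\rvert\in\mathrm{L}^1(\WM\otimes\mu)$.

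The step I expect to be the main obstacle is the identification of the left endpoint of $\mathcal{J}$ at the critical level $t=T_{S,\ell'}(\omega)$: the stochastic interval $[S,\tau_{S,\ell'})$ contains the graph of $T_{S,\ell'}$ exactly on $H^+_{S,\ell'}$ and on none of $H^-_{S,\ell'}\cup H_{S,\ell'}$, so one must decide, from the inclusions of Lemma~\ref{Lem:s5} together with the definitions of the three sets, which part of the trichotomy $\omega$ belongs to. This is precisely where the atoms of $\mu$ rule out an ordinary optimal stopping time and make the divided stopping time $\tau_{S,\ell'}$ unavoidable, and it is the reason Lemma~\ref{Lem:s5} is stated with two chains of sandwiched inclusions rather than a single equality; keeping the various $\WM$-null sets under control simultaneously in $\ell'$ is the accompanying technical nuisance.
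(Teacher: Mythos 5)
Your proposal takes essentially the same route as the paper's own proof: apply the disintegration formula of Proposition~\ref{sto:dis_pro}, use the inclusions of Lemma~\ref{Lem:s5} to evaluate the inner $g_t(\md\ell')$-integral, and subtract from the optimality relation \eqref{sto:useful_gl_16}. Your choice of the deterministic test function $\phi=\mathbb{1}_{(-\infty,\ell]}$ together with $Y_S^{-\infty}=X_S$ from Lemma~\ref{Lem:sp3}~(viii) is tidier than the paper's random $\phi=\mathbb{1}_{[L_S\wedge\ell_0,\ell_0)}$ together with $Y_S^{L_S}=X_S$ from Lemma~\ref{Lem:s6}: it collapses the paper's three-part Claim into a single computation of the left endpoint of $\mathcal{J}(\omega,t)$, and the surrounding steps (the rewrite of $[S,\infty)$ to $[S,\tau_{S,\ell})$, the subtraction, the integrability argument) are all correct.

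However, the one step you do not carry out is precisely the one the paper invests most of its work in, and your set-up for it is incomplete. The ``$\geq$'' half of $\inf\mathcal{J}(\omega,t)=\bar{L}_{S,t}(\omega)$ indeed follows for $\omega\notin\mathcal{N}$ alone, from $B\subset C$, $\tilde B\subset\tilde C$ and the definitional inclusion $H^+_{S,\ell'}\subset\{X_{T_{S,\ell'}}<Y^{\ell'}_{T_{S,\ell'}}\}$. The ``$\leq$'' half, though, must place $\omega$ into $H^+_{S,\ell'}$ rather than $H^-_{S,\ell'}$ in the borderline case $t=T_{S,\ell'}(\omega)$, and that requires the relation $(H^-_{S,\ell'})^c=\{X_{T_{S,\ell'}}<Y^{\ell'}_{T_{S,\ell'}}\}$ from \eqref{sto:eq_12} in Lemma~\ref{Lem:sp3}~(iv), which holds only up to an $\ell'$-dependent $\WM$-null set. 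Excluding $\mathcal{N}$ is therefore not enough: you must also work on a full-measure $\bar\Omega$ on which \eqref{sto:eq_12} holds simultaneously for all rational $\ell'$, argue for rational $\ell'$ first, and then use that $\mathcal{J}$ is an up-ray to pass to real $\ell'$ --- which is exactly the role of the set $\tilde\Omega=\bar\Omega\setminus\mathcal{N}$ in the paper's proof. You flag this as a ``technical nuisance,'' but as written your proof only excludes $\mathcal{N}$ and leaves the endpoint identification asserted rather than proved; once both are supplied, your argument closes.
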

		
	    As a last preparation step, the following Lemma will allow us to let 
	    $\ell$ converge to infinity in \eqref{sto:eq_27}:
	
		\begin{Lem}\label{sto:Lem_13}
			For fixed $S\in \stm$ and $T_{S,\infty}:=\lim_{\mathbb{Q}\ni l\uparrow \infty} T_{S,\ell}$	we have the following:
		\begin{enumerate}[(i)]
			\item We have
				\begin{align}\label{sto:eq_36}
					\mu((T_{S,\infty},\infty))=0,\quad \WM\text{-almost surely.}
				\end{align}						
			\item Additionally, we have with
					\[
						\Gamma:=\bigcap_{n=1}^\infty \left\{T_{S,n}<T_{S,\infty}\right\}
					\]
					that
					\begin{align}\label{sto:eq_101}
						&\WM(\Gamma \cap \{\mu(\{T_{S,\infty}\})>0\})=0,\\														
					&\WM(\Gamma^c \cap \{\mu(\{T_{S,\infty}\})>0\}
					\cap \{X_{T_{S,\infty}}=Y_{T_{S,\infty}}^\ell
						\text{ for all $\ell$}\})=0,\label{sto:eq_104}
				\end{align}
				and almost surely we get the pointwise limit
				\begin{align}\label{sto:eq_105}
					\lim_{n\rightarrow \infty} \mathbb{1}_{(H_{S,n}^-\cup H_{S,n})\cap\Gamma^c}
					=\mathbb{1}_{\Gamma^c\cap\{X_{T_{S,\infty}}=Y_{T_{S,\infty}}^\ell
						\text{ for all $\ell$}\}}.
				\end{align}
			\item Finally, we have 
				\begin{align}\label{sto:eq_38} 
					\lim_{\mathbb{Q}\ni \ell\uparrow \infty} \EW\left[X_{\tau_{S,\ell}}\midG \aFA_S\right]=0.
				\end{align}
		\end{enumerate}
		\end{Lem}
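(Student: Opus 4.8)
The plan is to analyze the structure of the increasing family of divided stopping times $(\tau_{S,\ell})_{\ell}$ and extract limiting information as $\ell \uparrow \infty$ along the rationals. First I would establish the pointwise limit $T_{S,\infty} := \lim_{\mathbb{Q}\ni\ell\uparrow\infty} T_{S,\ell}$, which exists in $[S,\infty]$ by monotonicity (Lemma \ref{Lem:sp3} (iii)). For part (i), the characterization \eqref{sto:useful_gl_8} gives $T_{S,\ell} = \min\{t \geq S : Y^\ell_t = X_t \text{ or } Y^\ell_{t-} = \lsl X_t \text{ or } Y^\ell_{t+} = \lsr X_t\}$; I would argue that if $\mu((T_{S,\infty},\infty)) > 0$ on a set of positive probability, then for $t$ in that range one has $Y^\ell_t$ strictly bounded away from $X_t$ (and similarly for the one-sided limits) for all finite $\ell$, yet by Lemma \ref{Lem:sp3} (viii), $Y^\ell_t \downarrow X_t$ as $\ell \downarrow -\infty$ — the wrong direction. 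The correct mechanism is the second inequality in \eqref{sto:useful_gl_24}: as $\ell \uparrow \infty$, $Y^\ell_t - X_t$ is controlled below by a $\Lambda$-projection difference, but more to the point, if $\mu$ charges $(T_{S,\infty},\infty)$ then the running-cost term $\int_{[S,t)} g_t(\ell)\,\mu(\md t) \to +\infty$ there, forcing $Y^\ell_t$ to exceed the uniformly integrable bound from class($\mathrm D^\Lambda$) unless one stops earlier — contradicting the definition of $T_{S,\ell}$ as the first such time. Making this quantitative via the optimal-stopping identity \eqref{sto:useful_gl_16} is the route I would take.

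For part (ii), I would split the probability space according to whether the approximating times $T_{S,n}$ reach $T_{S,\infty}$ strictly from below (the set $\Gamma$) or are eventually equal to $T_{S,\infty}$ (the complement $\Gamma^c$, modulo null sets). On $\Gamma$, each $T_{S,n}$ lies in the predictable part $H^-_{S,n}$ in the limit, and \eqref{sto:eq_277} forces $Y^n_{T_{S,n}-} = \lsl X_{T_{S,n}}$; combined with part (i) and the $\mu$-right-upper-semicontinuity in expectation of $X$ (Definition \ref{def_conditions_2}(b)), I would deduce that $\mu$ cannot have an atom at $T_{S,\infty}$ on $\Gamma$, giving \eqref{sto:eq_101}. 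On $\Gamma^c$, the times $T_{S,n}$ stabilize at $T_{S,\infty}$, so $Y^n_{T_{S,\infty}} = X_{T_{S,\infty}}$ or $Y^n_{T_{S,\infty}+} = \lsr X_{T_{S,\infty}}$ for large $n$; using the continuity and monotonicity of $\ell \mapsto Y^\ell_{T_{S,\infty}}$ from Lemma \ref{Lem:sp3} (viii) and that $Y^\ell_{T_{S,\infty}} \geq X_{T_{S,\infty}}$ always, the indicator $\mathbb{1}_{(H^-_{S,n} \cup H_{S,n}) \cap \Gamma^c} = \mathbb{1}_{\{Y^n_{T_{S,\infty}} = X_{T_{S,\infty}}\} \cap \Gamma^c}$ converges monotonically to $\mathbb{1}_{\{Y^\ell_{T_{S,\infty}} = X_{T_{S,\infty}}\ \forall \ell\} \cap \Gamma^c}$, which is \eqref{sto:eq_105}; the set in \eqref{sto:eq_104} is then shown to be $\mu(\{T_{S,\infty}\})$-null because if $\mu$ has an atom at $T_{S,\infty}$ and all $Y^\ell$ agree with $X$ there, letting $\ell \uparrow \infty$ in \eqref{sto:useful_gl_16} again blows up the integral.

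For part (iii), I would write $X_{\tau_{S,\ell}} = \lsl X_{T_{S,\ell}} \mathbb{1}_{H^-_{S,\ell}} + X_{T_{S,\ell}} \mathbb{1}_{H_{S,\ell}} + \lsr X_{T_{S,\ell}} \mathbb{1}_{H^+_{S,\ell}}$ and take $\ell \uparrow \infty$ inside the conditional expectation, justified by uniform integrability (class($\mathrm D^\Lambda$) applied to $X$ and the relevant envelopes, using Lemma \ref{Lem:sp3} (i)). On $\Gamma$, the contribution collapses to $\lsl X_{T_{S,\infty}}$; since by part (ii) $\mu$ does not charge $T_{S,\infty}$ there, one can invoke $\mu$-left-upper-semicontinuity in expectation (Definition \ref{def_conditions_2}(a)) together with $X_{S'} = 0$ for $S'$ with $\mu([S',\infty)) = 0$ (the terminal condition in Theorem \ref{sto:results_thm_2}) and part (i) to conclude the limit vanishes. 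On $\Gamma^c$, the term is $X_{T_{S,\infty}} \mathbb{1}_{\{Y^\ell_{T_{S,\infty}} = X_{T_{S,\infty}}\ \forall\ell\}} + \lsr X_{T_{S,\infty}}\mathbb{1}_{(\cdots)^c}$ in the limit by \eqref{sto:eq_104}–\eqref{sto:eq_105}, and since part (i) gives $\mu((T_{S,\infty},\infty)) = 0$, i.e. $\mu([t,\infty)) = 0$ for $t > T_{S,\infty}$, again $X$ vanishes there; the atom case is excluded by \eqref{sto:eq_104}. Assembling the pieces gives \eqref{sto:eq_38}.

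The main obstacle I anticipate is part (iii) on the set $\Gamma^c \cap \{\mu(\{T_{S,\infty}\}) = 0\}$ where $X_{T_{S,\infty}} > 0$ is possible but $\mu$ does not charge $T_{S,\infty}$: here one must show the \emph{right} limit $\lsr X_{T_{S,\infty}}$ of $X$ is what survives and that it too vanishes, which forces a careful interplay between the strict-inequality set $\{Y^\ell_{T_{S,\infty}} > X_{T_{S,\infty}}\}$, the inclusion \eqref{sto:eq_77}, and $\mu$-right-upper-semicontinuity; this is exactly the subtlety the introduction flags, namely that right-upper-semicontinuity in expectation does not upgrade to a pathwise statement for $\Lambda$-measurable processes, so the argument must stay at the level of expectations throughout.
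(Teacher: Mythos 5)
Your overall plan is in the right direction, but there are two concrete gaps and one piece of the paper's argument whose necessity you seem to underestimate.

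For part (i), your self-corrected route via the identity \eqref{sto:useful_gl_16} is essentially the paper's argument: one rearranges $\EW[Y^\ell_S] = \EW[X_{\tau_{S,\ell}} + \int_{[S,\tau_{S,\ell})} g_t(\ell)\,\mu(\md t)]$, bounds $\EW[X_{\tau_{S,\ell}}]$ uniformly by $\EW[M^X_S]$, uses normalization and monotone convergence to get $\EW[M^X_S] \geq \EW\bigl[\int_{(T_{S,\infty},\infty)} g_t(\ell_0)\mu(\md t)\bigr]$ for every $\ell_0$, and lets $\ell_0 \uparrow \infty$. Your rendering of this idea is a bit vague but conceptually correct.

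For part (ii), your derivation of \eqref{sto:eq_105} (monotonicity and continuity of $\ell\mapsto Y^\ell$, plus $Y^\ell\geq X$) and of \eqref{sto:eq_104} (cost blow-up at the atom) match the paper. However, your proposed mechanism for \eqref{sto:eq_101} on $\Gamma$ does not work as stated: you claim that ``each $T_{S,n}$ lies in the predictable part $H^-_{S,n}$ in the limit'' and that \eqref{sto:eq_277} then forces $Y^n_{T_{S,n}-}=\lsl{X}_{T_{S,n}}$, but membership of $\omega$ in $H^-_{S,n}$ versus $H_{S,n}$ or $H^+_{S,n}$ is not controlled by $\Gamma$ alone, and I do not see how $\mu$-right-upper-semicontinuity enters. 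The paper avoids this entirely by reusing the cost blow-up argument from (i): it isolates the contribution of the atom at $T_{S,\infty}$ in the running cost integral (the term $g_{T_{S,\infty}}(\ell_0)\mu(\{T_{S,\infty}\})\mathbb{1}_\Gamma$ in inequality \eqref{sto:eq_199}) and lets $\ell_0\uparrow\infty$. You should replace your qualitative argument by this quantitative one.

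For part (iii), the decisive step that your proposal leaves out is the following. To conclude $X_{T_{S,\infty}}=0$ on the set
$\Gamma^c\cap\{X_{T_{S,\infty}}=Y^\ell_{T_{S,\infty}}\text{ for all }\ell\}$, one must invoke the hypothesis ``$X_{S'}=0$ for $S'\in\stm$ with $\mu([S',\infty))=0$ a.s.''. But this hypothesis is only available at $\Lambda$-stopping times, so one has to prove that the restriction of $T_{S,\infty}$ to that event is a $\Lambda$-stopping time. This is nontrivial: the paper decomposes the event as a countable disjoint union over $k$ of the sets $B_k=\{X_{T_{S,k}}=Y^\ell_{T_{S,k}}\text{ for all }\ell\}\cap\{T_{S,k-1}<T_{S,k}=T_{S,\infty}\}$, uses \eqref{sto:eq_12} to rewrite $B_k$ in terms of the sets $H^-_{S,k}\cup H_{S,k}$, and then shows $(T_{S,k})_{B_k}$ is a $\Lambda$-stopping time before patching them together. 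Your proposal only gestures at ``staying at the level of expectations'' but does not supply this construction; it is the core of the $\Gamma^c$ case and cannot be skipped. A second, smaller point: the notion you invoke under the name ``$\mu$-left-upper-semicontinuity in expectation'' does not exist in the paper; what is used on $\Gamma$ is ordinary left-upper-semicontinuity at predictable times via Lemma \ref{sto:tools_lem}~(ii), i.e.\ $\lsl{X}\leq{^{\mathcal{P}}X}$, together with the fact that $(T_{S,\infty})_\Gamma$ is predictable (announced by $(T_{S,n})_{\{T_{S,n}<T_{S,\infty}\}}\wedge n$). And on $\Gamma^c$ the term that survives for the subset $\{X_{T_{S,\infty}}<Y^\ell_{T_{S,\infty}}\text{ for some }\ell\}$ is $\lsr{X}{T_{S,\infty}}$, which the paper shows to vanish globally by observing that $T_{S,\infty}+\epsilon$ is predictable and $\mu([T_{S,\infty}+\epsilon,\infty))=0$ a.s.; your worry about the $\lsr{X}$ contribution is legitimate but is resolved exactly there, not by a further split according to $\mu(\{T_{S,\infty}\})$.
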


		Now we can put all pieces together to finally prove 
		Theorem \ref{sto:results_thm_2}:

		\textbf{Proof of Theorem \ref{sto:results_thm_2}:}
			We get by Lemma \ref{sto:Lem_12} for $\mathbb{Q}\ni 
			\ell\geq 0$ that
			\begin{align*}
			X_S&-\EW\left[X_{\tau_{S,\ell}}\midG \aFA_S\right]
						=\EW\left[\int_{[S,\tau_{S,\ell})} g_t(\bar{L}_{S,t})
					\mu(\md t)
						\midG \aFA_S\right]
		\end{align*}			
		and by \eqref{sto:eq_38} we can let 
		$\ell \uparrow \infty$ along the rationals to obtain		
			\begin{align}\label{sto:eq_520}
			X_S=\lim_{\mathbb{Q}\ni \ell\rightarrow \infty}\EW\left[
			\int_{[S,\tau_{S,\ell})} g_t(\bar{L}_{S,t})
					\mu(\md t)
						\midG \aFA_S\right].
		\end{align}		
		Next we get by Lemma \ref{Lem:sp3} (vi) that $[S,\tau_{S,\ell})$
		is increasing. 
		Further let $\tilde{\Omega}:=\bar{\Omega}\backslash \mathcal{N}$
		with $\mathcal{N}$ the $\WM$-null set from Lemma \ref{Lem:s5} 
		and $\bar{\Omega}\subset\Omega$ with $\WM(\bar{\Omega})=1$ such that on $\bar{\Omega}$
		relation \eqref{sto:eq_12} holds for $\ell=0$. Then the following claim 
		holds:
		
		\textbf{Claim:} For $\omega\in \tilde{\Omega}$, $\ell\geq 0$
		and $t\in [S,\tau_{S,\ell})(\omega)\backslash 
		[S,\tau_{S,0})(\omega)$, we have
		$\bar{L}_{S,t}(\omega)\geq 0$.
		
		Indeed, for $t>T_{S,0}(\omega)$ we get immediately from $A\subset B$ in Lemma \ref{Lem:s5} 
		(\eqref{sto:set_a} and \eqref{sto:set_b}) that 
		$\bar{L}_{S,t}(\omega)\geq 0$. Hence we can focus on
		$t=T_{S,0}(\omega)$. By assumption also $t\in [S,\tau_{S,\ell})(\omega)
				\backslash [S,\tau_{S,0})(\omega)$
		and so 
		$\omega \in H_{S,0}^-\cup H_{S,0}$. Hence
		we get from $\tilde{A}\subset \tilde{B}$ in Lemma \ref{Lem:s5} 
		(\eqref{sto:set_ta} and \eqref{sto:set_tb}) with $\omega\in \bar{\Omega}$
		 again that
		 $\bar{L}_{S,t}(\omega)\geq 0$, which we wanted to show.
		
		As by Lemma \ref{sto:Lem_13} we see that $[S,\tau_{S,\ell})\nearrow
		[S,\infty)$ up to a $\WM\otimes\mu$-null set. Hence, we can use by the above claim and normalization \eqref{sto:sep_gl} monotone convergence 
		to obtain from \eqref{sto:eq_520} that
		\begin{align*}
			X_S&= \EW\left[\int_{[S,\tau_{S,0})} g_t(\bar{L}_{S,t})
							\mu(\md t)
						\midG \aFA_S\right]
						+\lim_{\mathbb{Q}\ni \ell\rightarrow \infty}
					\EW\left[\int_{[S,\tau_{S,\ell})\backslash [S,\tau_{S,0})} g_t(\bar{L}_{S,t})
			\
					\mu(\md t)
						\midG \aFA_S\right]\\
						&=\EW\left[\int_{[S,\infty)} g_t\left(\sup_{v\in [S,t]}
						L_v\right) \mu(\md t)\midG \aFA_S\right].
		\end{align*}
		From this result we infer that in fact the a priori generalized conditional expectation on
		the right-hand side has finite mean by integrability of $X_S$.
		This additionally yields
		$g_t(\sup_{v\in [S,t]} L_v)\mathbb{1}_{[S,\infty)}(t)
		\in \mathrm{L}^1(\WM\otimes \mu(\md t))$. 
		
		Hence $L$ satisfies 
		\eqref{sto:frame_gl}, \eqref{sto:results_gl} and 
		\eqref{sto:frame_gl_2}. By Corollary \ref{Main:2}
		this solution is also maximal and uniqueness of such a maximal solution follows
		by a corollary to the Meyer Section Theorem (see Corollary
		 \ref{app:meyer_cor_1}), which completes the proof of Theorem \ref{sto:results_thm_2}.
		\hfill $\blacksquare$


\begin{appendix}

    \section{Proofs for the results in Section \ref{sto:sec_upper}}\label{app:A}
	
	        In the proof of Lemma \ref{Lem:um1} we need the following result:
	        \begin{Lem}\label{technical_result_1}			
    			Let $S\in \stmr$, $T\in \stmsg{S}$ and $\ell>\ell'$. Then we have
    			\[
    				\EW\left[\int_{[S,T)}g_t(\ell)-g_t(\ell')\mu(\md t)\midG \aFA_S\right]>0
    			\]
    			on $\left\{\WM\left(\mu([S,T))>0\midG \aFA_S\right)>0\right\}$ almost surely.
    		\end{Lem}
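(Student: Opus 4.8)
The plan is to reduce the statement to the pointwise strict positivity of the integrand together with a standard fact about conditional expectations of non-negative random variables. Set
\[
  Z:=\int_{[S,T)}\bigl(g_t(\ell)-g_t(\ell')\bigr)\,\mu(\md t).
\]
Since $\ell>\ell'$ and, by Assumption \ref{sto:frame_ass}(ii), the map $g_t(\omega,\cdot)$ is strictly increasing for every $(\omega,t)$, the integrand is strictly positive on all of $\Omega\times[0,\infty)$; and since $\EW[\int_{[0,\infty)}|g_t(\ell)|\,\mu(\md t)]<\infty$ and likewise for $\ell'$, the random variable $Z$ is non-negative and integrable, so $\EW\left[Z\midG\aFA_S\right]$ is a bona fide (a.s.\ finite) conditional expectation. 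The crucial elementary observation --- and the only place where \emph{strict} monotonicity of $g$ is used --- is that, the integrand being strictly positive everywhere,
\[
  \{Z>0\}=\{\mu([S,T))>0\}\qquad\text{$\WM$-a.s.}
\]

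Next I would argue as follows. Write $A:=\{\WM\left(\mu([S,T))>0\midG\aFA_S\right)>0\}$ and put $B:=A\cap\{\EW\left[Z\midG\aFA_S\right]=0\}$, which belongs to $\aFA_S$ once versions are fixed. Then $\EW[Z\mathbb{1}_B]=\EW\bigl[\EW\left[Z\midG\aFA_S\right]\mathbb{1}_B\bigr]=0$, and as $Z\geq 0$ this forces $Z=0$ $\WM$-a.s.\ on $B$; by the displayed event identity, $\mu([S,T))=0$ $\WM$-a.s.\ on $B$. Since $B\in\aFA_S$, conditioning on $\aFA_S$ then gives $\WM\left(\mu([S,T))>0\midG\aFA_S\right)=0$ $\WM$-a.s.\ on $B$. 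But $B\subseteq A$, on which this conditional probability is strictly positive; hence necessarily $\WM(B)=0$. Therefore $\EW\left[Z\midG\aFA_S\right]>0$ $\WM$-a.s.\ on $A$, which is exactly the assertion.

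I do not anticipate a genuine obstacle here: the lemma is essentially a routine consequence of the strict monotonicity of $g$ in its last argument and the positivity of $\mu$. The only points that deserve a line of care are, first, the pointwise event identity $\{Z>0\}=\{\mu([S,T))>0\}$, which would fail if $g$ were merely weakly increasing, and second, the bookkeeping at $\infty$: the stochastic interval $[S,T)$ is a subset of $\Omega\times[0,\infty)$ and $\mu(\{\infty\})=0$ by Assumption \ref{sto:frame_ass}(i), so $Z$ is well defined even on $\{T=\infty\}$ and the computation above is unaffected.
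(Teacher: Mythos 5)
Your proof is correct and rests on the same two ingredients as the paper's: strict monotonicity of $g$ makes the integrand pointwise strictly positive (so $\{Z>0\}=\{\mu([S,T))>0\}$ a.s.), and the standard fact that a non-negative random variable with vanishing conditional expectation must itself vanish a.s. The paper routes this through an intermediate set $\{\EW[\mu([S,T))\mid\aFA_S]>0\}$ and invokes Proposition \ref{sto:Pro_46}, whereas you inline the same reasoning directly on $Z$; this is a presentational difference, not a different argument.
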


    		\begin{proof}
    			Without loss of generality we have $\WM(\Gamma)>0$ with
    			\[
    				\Gamma:=\left\{\WM\left(\mu([S,T))>0\midG \aFA_S\right)>0\right\}\in \aFA_S.
    			\]
    			By strict monotonicity of $g$
    			in $\ell$ we know that $g_t(\omega,\ell)-g_t(\omega,\ell')$ is strictly positive
    			for all $\omega\in \Omega$ and all $t\in [0,\infty)$. Furthermore we have by 
    			Proposition \ref{sto:Pro_46} below 
    			that we have $0<\WM\left(\mu([S,T))>0\midG \aFA_S\right)=0$
    			on $\Gamma\cap\left\{\EW\left[\mu([S,T))\midG \aFA_S\right]=0\right\}$
    			and therefore we have up to a $\WM$-null set
    			\[
    				\Gamma \subset \left\{\EW\left[\mu([S,T))\midG \aFA_S\right]>0\right\}
    				\subset \left\{\EW\left[\int_{[S,T)}g_t(\ell)-g_t(\ell')\mu(\md t)\midG \aFA_S\right]>0\right\}.
    			\]
    		\end{proof}	
		
	    \begin{proof}[Proof of Lemma \ref{Lem:um1}]
	        
	        The uniqueness claim of Lemma \ref{Lem:um1} now follows immediately from strict monotonicity of $g=g_t(\omega,\ell)$	in $\ell \in \RZ$. For existence set
			\[
				\Gamma:=\EW\left[X_S-X_T\midG \aFA_S\right]
			\]
			and 
			\[
				G(\ell):=\EW\left[\int_{[S,T)}g_t(\ell)\mu(\md t)\midG \aFA_S\right],
				\quad \ell\in \RZ.
			\]
			As $\{\WM(\mu([S,T))>0|\aFA_S)>0\}\in \aFA_S$, we can define $\ell_{S,T}$ separately on this set and its complement. Therefore we focus in the following on $\{\WM(\mu([S,T))>0|\aFA_S)>0\}$. By Lemma \ref{technical_result_1} we have $G(\ell)>G(\ell')$ on $\{\WM(\mu([S,T))>0|\aFA_S)>0\}$ for $\ell,\ell'\in \RZ$ with $\ell>\ell'$ and hence it is enough to construct $\ell_{S,T}$ on $E:=\{\WM(\mu([S,T))>0|\aFA_S)>0\}\cap \{G(z)\leq \Gamma < G(z+1)\}$ for any $z\in \mathbb{Z}$. On $E$ we set
			\[
				\ell_{S,T}:=\lim_{n\rightarrow \infty} \sum_{k\in \mathbb{Z}} \frac{k}{n}\mathbb{1}_{\left\{G\left(\frac{k-1}{n}\right)\leq
						\Gamma < G\left(\frac{k}{n}\right)\right\}}
			\]
			and this will satisfy the required properties by continuity of $\ell\mapsto g(\ell)$, the $\WM\otimes \mu$-integrability of $t\mapsto g_t(\ell)$ and dominated convergence.
			
		\subsection{Proof of Proposition \ref{Main:7}}
		
		    The proof of this result is the first part of the proof of \citing{BK04}{Theorem 1}
		    adapted to our context. So assume we are given a $\Lambda$-measurable process
			$\tilde{L}$ which satisfy mutatis mutandis
			 \eqref{sto:frame_gl} and \eqref{sto:frame_gl_2}.
			Fix a stopping time $S\in  \stmr$.
			Consider $T\in \stmsg{S}$ and
			use the representation property and the integrability property of $\tilde{L}$
			to write
			\begin{align*}
				X_S= &\EW\left[\int_{[S,T)} g_t\left(\sup_{v\in [S,t]} \tilde{L}_v
						\right)  \mu(\md t) \midG \aFA_{S}\right]+\EW\left[\int_{[T,\infty)} g_t\left(\sup_{v\in [S,t]} \tilde{L}_v
							\right) \mu(\md t) \midG \aFA_{S}\right].
			\end{align*}
			As $\ell \mapsto g_t(\ell)$ is non-decreasing by Assumption \ref{sto:frame_ass}, we may estimate the first integrand from below
			 by $g_t(\tilde{L}_S)$ and
			the second integrand by $g_t(\sup_{v\in [T,t]} \tilde{L}_v)$ to obtain
			\begin{align*}
				X_S&\geq \EW\left[\int_{[S,T)} g_t\left(\tilde{L}_S\right) \mu(\md t) 
				\midG \aFA_{S}\right]+\EW\left[\int_{[T,\infty)} g_t\left(\sup_{v\in [T,t]} \tilde{L}_v
							\right) \mu(\md t) \midG \aFA_{S}\right].
			\end{align*}
			From the representation property of $\tilde{L}$ at time $T$, it follows that we may rewrite the second of the above
			summands as
			\[
				\EW\left[\int_{[T,\infty)} g_t\left(\sup_{v\in [T,t]} \tilde{L}_v\right) \mu(\md t) 
				\midG \aFA_S\right]=\EW\left[X_T\midG\aFA_S\right]
			\]
			and, therefore, we get the estimate
			\begin{align}\label{eq_1}
				\EW\left[X_S-X_T\midG\aFA_{S}\right]\geq \EW\left[\int_{[S,T)} 
				g_t\left(\tilde{L}_S\right) \mu(\md t) \midG \aFA_{S}\right].
			\end{align}
			As $\tilde{L}_S$ is $\aFA_{S}$-measurable, this shows $\tilde{L}_S\leq \ell_{S,T}$ 
			almost surely.
			Since in the above estimate $T\in \stmsg{S}$ was
			arbitrary, we deduce
			\[
				\tilde{L}_S\leq \essinf_{T\in \stmsg{S}} \ell_{S,T}.
			\]
		\end{proof}
	

    \section{Proofs for the results in Section \ref{sto:sec_ex}}
	    
	    \subsection{Preliminary Path regularity results}\label{sto:tools}
	
        	In this section we will state three results, two concerning the path properties of the process $X$ considered
        	 in Theorem \ref{sto:results_thm_2}
        	and one about the regularity of $\Lambda$-projections of random fields
        	(see Definition and Theorem \ref{app:meyer_thm_3}). These results will be needed in several arguments in the upcoming proofs.

        	First we adapt \citing{BK04}{Lemma 4.11}{1050}, now in the context of $\Lambda$-measurable processes. The proof and
        	the changed statements are mainly based on \citing{BS77}{Theorem II.1}{305}, and \citing{DL82}{Lemma 6}{303}.
        	
        	\begin{Lem}\label{sto:tools_lem}
        			Any $\Lambda$-measurable process $X$ of class($\text{D}^\Lambda$)
        			which is left-upper-semicontinuous in expectation at every $S\in \stp$
        			 with $X_{\infty}=0$ has the following properties:
        			\begin{compactenum}[(i)]
        				\item $X$ is pathwise bounded from above and below by a positive
        				 $\Lambda$-martingale of class($D^\Lambda$), i.e there is a positive $\Lambda$-martingale 
        				 $M^X:\Omega\times [0,\infty]\rightarrow [0,\infty)$  (see 
        				 \citing{BB18_2}{Definition 3.3}{8})
        							such that $-M^X_t(\omega) \leq X_t(\omega)\leq M^X_t(\omega)$ for $(\omega,t)\in \Omega\times 
        							[0,\infty]$.
        				\item We have up to an evanescent set in $\Omega\times
        				[0,\infty)$ that $^\mathcal{P} X\geq \lsl{X}$ and $^\mathcal{P} X_\infty\geq \lsl{X}_\infty$.
        			\end{compactenum}
        		\end{Lem}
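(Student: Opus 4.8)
The plan is to follow the proof of \citing{BK04}{Lemma~4.11}{1050}, systematically replacing classical stopping times and the optional section theorem by $\Lambda$-stopping times and the Meyer section theorem (Theorem~\ref{Main:4}, Corollary~\ref{app:meyer_cor_1}), with the path regularity of the relevant martingale supplied by \citing{DL82}{Lemma~6}{303} and the argument for~(ii) by the predictable-projection reasoning of \citing{BS77}{Theorem~II.1}{305}. For~(i): class($\text{D}^\Lambda$) gives $c:=\sup_{T\in\stm}\EW[|X_T|]<\infty$, so the least positive $\FA$-supermartingale majorant $V$ of the positive process $|X|$ over $\Lambda$-stopping times,
\[
  V_t:=\esssup_{T\in\stmg{t}}\EW\left[\,|X_T|\,\middle|\,\FA_t\,\right],\qquad t\ge0,
\]
is well defined (the families $\stmg{t}$ are stable under the pasting operations needed for the essential supremum, cf.\ \cite{EK81}), is a positive supermartingale with $\EW[V_0]\le c$, hence of class($\text{D}$), and satisfies $V_T\ge|X_T|$ a.s.\ for every $T\in\stm$. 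By the Doob--Meyer decomposition $V=N-A$ with $N$ a uniformly integrable c\`adl\`ag martingale and $A$ increasing, $A_0=0$; then $N=V+A\ge0$ and $N_T\ge V_T\ge|X_T|$ a.s.\ for all $T\in\stm$. Using \citing{DL82}{Lemma~6}{303} I pass to a positive, l\`adl\`ag $\Lambda$-martingale $M^X$ agreeing with $N$ at every $\Lambda$-stopping time (for $\Lambda=\mathcal O$ this is $N$ itself, for $\Lambda=\mathcal P$ it is ${}^{\mathcal P}N$), so that still $M^X_T\ge|X_T|$ a.s.\ for all $T\in\stm$; since $|X|$ and $M^X$ are both $\Lambda$-measurable, Corollary~\ref{app:meyer_cor_1} gives that $\{|X|>M^X\}$ is evanescent. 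Modifying $M^X$ on that evanescent set, which preserves the martingale property and, by $\WM$-completeness of $\Lambda$, $\Lambda$-measurability, yields the pathwise bound $-M^X\le X\le M^X$ on all of $\Omega\times[0,\infty]$.

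For~(ii), by the predictable section theorem it suffices to prove ${}^{\mathcal P}X_S\ge\lsl X_S$ a.s.\ on $\{S<\infty\}$ for every $\FA$-predictable stopping time $S$, and, separately, ${}^{\mathcal P}X_\infty\ge\lsl X_\infty$ a.s.\ (note ${}^{\mathcal P}X_\infty=0$ since $X_\infty=0$). Fix such $S$ with announcing sequence $S_n\uparrow S$, $S_n<S$ on $\{S>0\}$. For $A\in\FA_{S-}$ I apply left-upper-semicontinuity in expectation (Definition~\ref{def_conditions_2}) to the predictable stopping time equal to $S$ on $A$ and running off to $\infty$ on $A^c$, announced accordingly; using $X_\infty=0$ together with the domination $|X_{S_n}|\le M^X_{S_n}$ from part~(i) to control the contribution on $A^c$, this yields $\EW[X_S\mathbb 1_A]\ge\limsup_n\EW[X_{S_n}\mathbb 1_A]$. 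Choosing, via a measurable selection based on the Meyer section theorem, the approximating times so that $\limsup_n X_{S_n}=\lsl X_S$ on $A$, and invoking reverse Fatou's lemma (justified by $X_{S_n}\le M^X_{S_n}$ with $(M^X_{S_n})_n$ uniformly integrable, again by part~(i)), one obtains $\EW[X_S\mathbb 1_A]\ge\EW[\lsl X_S\mathbb 1_A]$ for all $A\in\FA_{S-}$, that is ${}^{\mathcal P}X_S=\EW[X_S\mid\FA_{S-}]\ge\lsl X_S$ a.s.; the case $S=\infty$ is treated the same way, using $\lsl X_\infty=\lim_n\sup_{s\ge n}X_s$. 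This is precisely the $\Lambda$-setting analogue of \citing{BS77}{Theorem~II.1}{305}.

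The main obstacle is the one recurring throughout the paper: class($\text{D}^\Lambda$) and left-upper-semicontinuity in expectation only constrain $X$ at $\Lambda$-stopping times (and predictable ones), so the optional-section arguments of \cite{BK04} and \cite{BS77} must be rebuilt around the Meyer section theorem. The genuinely delicate points are, first, producing from the c\`adl\`ag Doob--Meyer martingale $N$ a $\Lambda$-\emph{measurable} martingale whose values at $\Lambda$-stopping times still dominate $X$ --- this is where \cite{DL82} is indispensable, and why one cannot naively replace $N$ by its left limits --- and, second, upgrading the bound from ``$|X|\le M^X$ up to evanescence'' to ``$-M^X\le X\le M^X$ for every $(\omega,t)$'', which relies on $\WM$-completeness of $\Lambda$; the measurable selection realising $\lsl X$ inside the relevant $\Lambda$-section in the proof of~(ii) is a further step that requires care.
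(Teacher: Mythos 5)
Your overall route is the same as the paper's: adapt \citing{BK04}{Lemma~4.11}{1050} for~(i) and \citing{BS77}{Theorem~II.1}{305} for~(ii) to the Meyer-$\sigma$-field setting. The paper itself just delegates the technical work to \citing{BB18_2}{Theorem~3.7}{10}, \citing{BB18_2}{Proposition~3.9}{11} and \citing{BB18_2}{Lemma~4.4}{21}, which your proposal essentially reconstructs. Two remarks on where your version falls short of a complete argument.

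First, a minor point in~(i): the Snell envelope $V$ of $|X|$ over $\Lambda$-stopping times is a \emph{strong} $\Lambda$-supermartingale and is only l\`adl\`ag, not c\`adl\`ag; the appropriate decomposition is the Mertens decomposition $V=N-A-B_-$ with both a c\`adl\`ag increasing $A$ and a second (right-continuous) increasing process $B$, not the classical Doob--Meyer $V=N-A$. This does not change the conclusion ($N\ge V$ still holds), but the cleaner route is simply to invoke, as the paper does, that \cite{BB18_2} already constructs the $\Lambda$-martingale bound directly in the l\`adl\`ag Meyer setting.

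Second, and more substantively, part~(ii) has a genuine gap. The announcing sequence $(S_n)$ of a predictable stopping time $S$ consists of $\FA$-stopping times, which for $\Lambda\subsetneq\mathcal O$ need \emph{not} be $\Lambda$-stopping times, so Definition~\ref{def_conditions_2}(a) cannot be applied to it directly. This is precisely the obstacle the paper flags in Remark~\ref{sto:frame_rem}(b) as the reason \cite{BS77} does not carry over verbatim and \citing{BB18_2}{Lemma~4.4}{21} is needed. Your suggested fix --- a measurable selection ``based on the Meyer section theorem'' producing $\Lambda$-stopping times that both announce $S_A$ from strictly below and realise $\lsl X$ --- is exactly the nontrivial content of that lemma, and you would at minimum need to exhibit the $\Lambda$-measurable graph set to which the section theorem is applied (noting $\lsl X$ is a priori only predictable, not $\Lambda$-measurable, and that the resulting times must still be $<S$ and increase to $S$). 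You correctly identify this as the delicate point, but as written it is asserted rather than proven, so the proof of~(ii) is incomplete without either supplying that construction or citing the companion-paper lemma as the paper does.
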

        		
        	\begin{proof}
        		Part (i) follows as the proof of \citing{BK04}{Lemma 4.11}{1050} with the help of
        		\citing{BB18_2}{Theorem 3.7}{10} and \citing{BB18_2}{Proposition 3.9}{11}.
        		Part (ii) follows by applying \citing{BB18_2}{Lemma 4.4}{21},
        		the left-upper-semicontinuity in expectation of $X$ at every $S\in \stp$ and $\lsl{X}_0=X_0$.
        	\end{proof} 
        	
        	The next part gives us a useful consequence of $\mu$-right-upper-semicontinuity in expectation:
	
        	\begin{Pro}\label{eq_rusc}
        		Assume we have a process $X$ of class($D^\Lambda$) with $X_\infty=0$ which is 
        		$\mu$-right-upper-semicontinuous in expectation in all $S\in \stm$.
        		
        		Then we have for any $S\in \stm$ and any sequence $(S_n)_{n\in \NZ}\subset
        				\stmg{S}$ such that $\mu([S,S_n))$ vanishes almost surely and such that $\lim_{n\rightarrow \infty}
        				\EW[X_{S_n}|\aFA_S]$ exists that almost surely
        					\[
        						X_S
        						\geq \lim_{n\rightarrow \infty}
        						\EW\left[X_{S_n}|\aFA_S\right].  
        					\]				
        	\end{Pro}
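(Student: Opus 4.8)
The plan is to reduce the conditional statement to the unconditional definition of $\mu$-right-upper-semicontinuity in expectation (Definition \ref{def_conditions_2} (b)) by a standard localization-and-testing argument. Fix $S\in\stm$ and a sequence $(S_n)_{n\in\NZ}\subset\stmg{S}$ with $\mu([S,S_n))\to 0$ almost surely and such that $Z:=\lim_{n\to\infty}\EW[X_{S_n}\mid\aFA_S]$ exists almost surely. We want $X_S\geq Z$ a.s. Since both $X_S$ and $Z$ are $\aFA_S$-measurable, it suffices to show $\EW[(Z-X_S)\mathbb{1}_A]\leq 0$ for every $A\in\aFA_S$.

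First I would observe that on $\{S=\infty\}$ we have $X_{S_n}=X_\infty=0$ for all $n$ (since $S_n\geq S$), so $Z=0=X_S$ there and we may assume $A\subset\{S<\infty\}$. For such $A$, I would "paste" the sequences along $A$: define $\tilde S:=S$ on $A$ and $\tilde S:=\infty$ on $A^c$, and $\tilde S_n:=S_n$ on $A$, $\tilde S_n:=\infty$ on $A^c$. Because $A\in\aFA_S$, these are again $\Lambda$-stopping times (this uses that $\aFA_S$ is generated by $\Lambda$-measurable processes evaluated at $S$ together with the defining property of $\Lambda$-stopping times; it is the same mechanism that underlies divided stopping times), and $\tilde S_n\geq\tilde S$ with $\mu([\tilde S,\tilde S_n))=\mu([S,S_n))\mathbb{1}_A\to 0$ a.s. Applying Definition \ref{def_conditions_2} (b) at $\tilde S$ with the sequence $(\tilde S_n)$ gives
\[
\EW[X_{\tilde S}]\geq\limsup_{n\to\infty}\EW[X_{\tilde S_n}].
\]
Now $X_{\tilde S}=X_S\mathbb{1}_A$ and $X_{\tilde S_n}=X_{S_n}\mathbb{1}_A$, and by the tower property $\EW[X_{S_n}\mathbb{1}_A]=\EW[\EW[X_{S_n}\mid\aFA_S]\mathbb{1}_A]$. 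Class($D^\Lambda$) makes the family $\{X_{S_n}\}$ uniformly integrable, hence also $\{\EW[X_{S_n}\mid\aFA_S]\}$ is uniformly integrable, so $\EW[\EW[X_{S_n}\mid\aFA_S]\mathbb{1}_A]\to\EW[Z\mathbb{1}_A]$. Therefore $\EW[X_S\mathbb{1}_A]\geq\EW[Z\mathbb{1}_A]$, which is exactly what we needed; since $A\in\aFA_S$ was arbitrary we conclude $X_S\geq Z$ a.s.

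The main obstacle I anticipate is the bookkeeping around the value $\infty$ and the verification that $\tilde S,\tilde S_n$ are genuine $\Lambda$-stopping times — i.e., that splicing a $\Lambda$-stopping time with $\infty$ on an $\aFA_S$-set stays inside $\stm$. This is where the Meyer-$\sigma$-field machinery, rather than a plain filtration, is actually being used, and one must invoke the structure of $\aFA_S$ (Definition \ref{Main:1}) carefully; it is routine but not completely trivial. The uniform integrability transfer to conditional expectations and the passage to the limit are standard once that point is settled. If one prefers to avoid the splicing, an alternative is to test directly: for $A\in\aFA_S$ consider the non-decreasing $\Lambda$-measurable increasing process $A_t:=\mathbb{1}_{\{t\geq S\}}\mathbb{1}_A$ and mimic the argument through the $\Lambda$-projection characterization of Definition and Theorem \ref{app:meyer_thm_3}, but the splicing route above is the most transparent.
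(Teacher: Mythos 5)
Your proposal is correct and follows essentially the same route as the paper: localize by splicing $S$ and $S_n$ with $\infty$ off an $\aFA_S$-measurable set, apply the unconditional $\mu$-right-upper-semicontinuity definition at the spliced $\Lambda$-stopping times, and use uniform integrability from class($D^\Lambda$) to pass to the limit in the conditioned expectations. The paper phrases it as a contradiction by testing against the single set $\Gamma_S:=\{\lim_n\EW[X_{S_n}\mid\aFA_S]>X_S\}$ rather than all $A\in\aFA_S$, but this is the same argument.
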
	
        	
        	\begin{proof}
        		Fix $S\in \stm$ and a sequence $(S_n)_{n\in \NZ}\subset \stmg{S}$ with
        		$\mu([S,S_n))$ converging to zero almost surely	and
        		such that $\lim_{n\rightarrow \infty}
        				\EW[X_{S_n}|\aFA_S]$ exists. Now define
        		\begin{align*}
        			\Gamma_S:=\left\{\lim_{n\rightarrow \infty}
        					\EW\left[X_{S_n}|\aFA_S\right]>X_S\right\},
        		\end{align*}
        		which is a subset of $\{S<\infty\}$ by $X_\infty=0$ and $S_n=\infty$ on $\{S=\infty\}$.
        		Assume by way of contradiction that $\WM(\Gamma_S)>0$. Since $\Gamma_S\in \aFA_S$, 
        		$\tilde{S}:=S_{\Gamma_S}$ and
        		$\tilde{S}_n:=(S_n)_{\Gamma_S}$ are in $\stm$. Hence,
        		we obtain by $\mu$-right-upper-semicontinuous in expectation and the class($D^\Lambda$) property of $X$
        		that
        		\begin{align*}
        			\EW\left[X_{\tilde{S}}\right]=\EW\left[X_{S}\mathbb{1}_{\Gamma_S}\right]
        			&<\EW\left[\lim_{n\rightarrow \infty}
        					\EW\left[X_{S_n}|\aFA_S\right]\mathbb{1}_{\Gamma_S}\right]\\
        			&= \lim_{n\rightarrow \infty} \EW\left[
        					X_{S_n}\mathbb{1}_{\Gamma_S}\right]
        			= \lim_{n\rightarrow \infty} \EW\left[
        					X_{\tilde{S}_n}\right]\leq \EW\left[X_{\tilde{S}}\right],
        		\end{align*}
        		which is a contradiction and proves our result.
        	\end{proof}
	
	        	The next result uses \cite{KP17}. Specifically, we will use \citing{KP17}{Section 5, Corollary 2}{324}
        	 for $d=1$, but for general Meyer-$\sigma$-fields rather than just for the optional and predictable-$\sigma$-field.
        	This is possible as in the proof of their results the authors of \cite{KP17} are not
        	 using special properties of the optional or predictable $\sigma$-field, but merely the
        	Optional and the Predictable Section Theorem and existence of respective projections.
        	 Hence with an application of the Meyer Section Theorem (see Theorem \ref{Main:4})
        	  and the definition of $\Lambda$-projections (see Definition and Theorem \ref{app:meyer_thm_3})
        	  one can use their results mutatis mutandis in our setting. Moreover to extend the
        	  pointwise convergence obtained by \cite{KP17} to uniform convergence we are using
        	  the idea of \citing{BK14}{Proof of Lemma C.1}{56-58} and the results on optional strong supermartingales  of \cite{DM82}, Appendix 1, properly adapted to $\Lambda$-measurable processes.

        	\begin{Lem}\label{sto:tools_lem_2}
        		The $\Lambda$-projections of
        		\[
        			h^\ell:=\int_{[0,\infty)} g_s(\ell) \mu(\md s), \quad \ell\in \RZ,
        		\]
        		can be chosen such that for all $\omega \in \Omega$ we have
        		\[
        			\lim_{\delta \downarrow 0}
        			\sup_{\overset{\ell,\ell'\in C}{|\ell'-\ell|
        			\leq \delta}}\sup_{t\in [0,\infty]}
        			 \left|{^\Lambda}{}{\mathop{(h^{\ell'})}}_t(\omega)-{^\Lambda}{}{\mathop{(h^{\ell})}}_t(\omega)\right|
        			=0
        		\]
        		for any compact set $C\subset \RZ$.
        	\end{Lem}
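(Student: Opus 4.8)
The plan is to reduce the claim to a monotone, non-negative situation, pass to monotone versions of the relevant $\Lambda$-projections, and then upgrade a pointwise-in-$t$ control in $\ell$ to the required control that is uniform in $t$, by exploiting Doob-type maximal inequalities for the $\Lambda$-martingales that $\Lambda$-projections of constant-in-time integrable processes are. Write $g_t(\ell)=g_t(\ell)^+-g_t(\ell)^-$ and set $a^\ell:=\int_{[0,\infty)}g_s(\ell)^+\,\mu(\md s)$ and $b^\ell:=\int_{[0,\infty)}g_s(\ell)^-\,\mu(\md s)$, regarded as constant-in-time, non-negative processes, which are $\WM$-integrable by Assumption~\ref{sto:frame_ass}. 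For $\omega$ outside a $\WM$-null set one has $\int_{[0,\infty)}|g_s(\omega,q)|\,\mu(\omega,\md s)<\infty$ for every $q$ in a fixed countable dense subset of $\RZ$, whence by monotonicity of $g$ in $\ell$ and dominated convergence the map $\ell\mapsto a^\ell(\omega)$ is non-decreasing and continuous, $\ell\mapsto b^\ell(\omega)$ is non-increasing and continuous, and both are therefore uniformly continuous on every compact. Since $h^\ell=a^\ell-b^\ell$ and the $\Lambda$-projection is additive on integrable processes (a consequence of its characterising identity in Definition and Theorem~\ref{app:meyer_thm_3}), it suffices to produce versions of ${}^\Lambda(a^\ell)$ and ${}^\Lambda(b^\ell)$ with the asserted modulus of continuity and to set ${}^\Lambda(h^\ell):={}^\Lambda(a^\ell)-{}^\Lambda(b^\ell)$; I treat $a^\ell$, the argument for $b^\ell$ being symmetric.

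Testing the characterising identity against the $\Lambda$-measurable increasing càdlàg process $A=\mathbb{1}_{[[T,\infty[[}\mathbb{1}_\Gamma$ for bounded $T\in\stm$ and $\Gamma\in\aFA_T$ shows ${}^\Lambda(a^q)_T=\EW[a^q\mid\aFA_T]$ a.s.; in particular ${}^\Lambda(a^q)\le{}^\Lambda(a^{q'})$ up to evanescence for rationals $q\le q'$, by Corollary~\ref{app:meyer_cor_1}. Taking right limits along the rationals, $\ell\mapsto\widetilde a^\ell:=\inf_{\QQ\ni q>\ell}{}^\Lambda(a^q)$ is jointly measurable in $(\omega,t,\ell)$, non-decreasing in $\ell$ for every $(\omega,t)$, and, by dominated convergence in the characterising identity, indistinguishable from ${}^\Lambda(a^\ell)$ for each fixed $\ell$; I keep denoting this version by ${}^\Lambda(a^\ell)$. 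Each ${}^\Lambda(a^\ell)$ is a non-negative $\Lambda$-martingale in the sense of \cite{BB18_2}, hence has làdlàg paths, and, being sandwiched between the predictable and optional projections of $a^\ell$, its running supremum is dominated pathwise by that of the càdlàg martingale $(\EW[a^\ell\mid\FA_t])_{t}$; it thus inherits Doob's $\mathrm{L}^1$- and $\mathrm{L}^2$-maximal inequalities, which is the input we take from the theory of optional strong supermartingales (\cite{DM82}, Appendix~1; see also \cite{BB18_2}). Continuity of $\ell\mapsto{}^\Lambda(a^\ell)_t(\omega)$ for fixed $(\omega,t)$ will be a byproduct of the next step — it is also obtainable from \citing{KP17}{Section 5, Corollary 2}{324}, which carries over to general Meyer-$\sigma$-fields once the section theorems and projections there are replaced by the Meyer Section Theorem~\ref{Main:4} and $\Lambda$-projections — but it is not needed separately.

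The crux is to show, for each compact $C=[\alpha,\beta]$, that $\Delta_C(\delta):=\sup\{\,\sup_{t\in[0,\infty]}\big({}^\Lambda(a^{\ell'})_t-{}^\Lambda(a^\ell)_t\big):\alpha\le\ell\le\ell'\le\beta,\ \ell'-\ell\le\delta\,\}$ tends to $0$ a.s. as $\delta\downarrow0$; since $\Delta_C(\delta)$ is non-increasing in $\delta$, it suffices to prove convergence in probability. For an equidistant grid $\alpha=s_0<\dots<s_M$ of mesh $\delta$ covering $[\alpha,\beta+2\delta]$, monotonicity in $\ell$ gives $\Delta_C(\delta)\le2\max_{0\le k<M}\sup_t N^{(k)}_t$ with $N^{(k)}:={}^\Lambda(a^{s_{k+1}})-{}^\Lambda(a^{s_k})={}^\Lambda(\eta_k)$, $\eta_k:=a^{s_{k+1}}-a^{s_k}\ge0$, so that $\sum_k\eta_k=\bar\eta:=a^{s_M}-a^\alpha\in\mathrm{L}^1$ and $m_\delta:=\max_k\eta_k$ is bounded by the modulus of continuity of $a^{\cdot}(\omega)$ on $C$ at scale $\delta$; hence $m_\delta\le\bar\eta$ and $m_\delta\downarrow0$ a.s., so $\EW[m_\delta]\to0$. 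Fix $\lambda,\epsilon>0$, choose $R$ with $\EW[\bar\eta\,\mathbb{1}_{\{\bar\eta>R\}}]<\lambda\epsilon/2$, and split $\eta_k=\eta_k\mathbb{1}_{\{\bar\eta\le R\}}+\eta_k\mathbb{1}_{\{\bar\eta>R\}}=:\eta_k'+\eta_k''$. For the bounded part, $\sum_k(\eta_k')^2\le m_\delta\sum_k\eta_k'\le m_\delta R$, so by Doob's $\mathrm{L}^2$-inequality $\EW\big[(\max_k\sup_t{}^\Lambda(\eta_k')_t)^2\big]\le\sum_k\EW[(\sup_t{}^\Lambda(\eta_k')_t)^2]\le4\sum_k\EW[(\eta_k')^2]\le4R\,\EW[m_\delta]$, which is $<\lambda^2\epsilon/2$ once $\delta$ is small. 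For the tail part, $\eta_k''\le\sum_j\eta_j''=\bar\eta\,\mathbb{1}_{\{\bar\eta>R\}}$ pathwise, hence ${}^\Lambda(\eta_k'')\le{}^\Lambda(\bar\eta\,\mathbb{1}_{\{\bar\eta>R\}})$ and, by Doob's $\mathrm{L}^1$-inequality, $\WM(\max_k\sup_t{}^\Lambda(\eta_k'')_t\ge\lambda)\le\EW[\bar\eta\,\mathbb{1}_{\{\bar\eta>R\}}]/\lambda<\epsilon/2$, uniformly in $\delta$. Combining the two estimates, $\WM(\Delta_C(\delta)\ge4\lambda)<\epsilon$ for $\delta$ small enough, which is the claimed convergence in probability.

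Running the same argument for $b^\ell$ and adding yields $\sup\{\sup_t|{}^\Lambda(h^{\ell'})_t-{}^\Lambda(h^\ell)_t|:\ell,\ell'\in C,\ |\ell-\ell'|\le\delta\}\le\Delta_C(\delta)+\Delta^b_C(\delta)\to0$ a.s.; intersecting the exceptional events over a countable family of compacts exhausting $\RZ$ leaves one $\WM$-null set $\mathcal N$ off which the limit holds for every compact $C$, and redefining the (already $\Lambda$-measurable) processes ${}^\Lambda(h^\ell)$ to vanish on $\mathcal N\times[0,\infty)$ — which preserves $\Lambda$-measurability by $\WM$-completeness of $\Lambda$ and does not affect the projection identity, $\mathcal N\times[0,\infty)$ being evanescent — gives the version valid for all $\omega\in\Omega$. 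The main obstacle is exactly this step: monotonicity together with continuity in $\ell$ pointwise in $t$ does \emph{not} force continuity uniform in $t$ (a decreasing sequence of làdlàg functions on a compact interval converging pointwise to $0$ need not converge uniformly), and the naïve $\mathrm{L}^1$-Doob bound is powerless here because $\sum_k\EW[\eta_k]=\EW[\bar\eta]$ does not shrink with the mesh. The resolution is the truncation above, which isolates a genuinely $\mathrm{L}^2$-controllable bounded part — where $\sum_k(\eta_k')^2\le m_\delta R$ converts the smallness of the \emph{maximal} mesh-increment into smallness of the error — from a uniformly small tail handled by uniform integrability; this is the "idea of \citing{BK14}{Proof of Lemma C.1}{56-58}" transcribed to $\Lambda$-martingales via \cite{DM82}, Appendix~1.
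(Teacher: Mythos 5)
Your proof is correct in substance, but it takes a genuinely different and considerably more elaborate route than the paper's. You split $g=g^+-g^-$ to get monotone families $a^\ell, b^\ell$, build pathwise-monotone versions of their $\Lambda$-projections via $\widetilde a^\ell:=\inf_{\QQ\ni q>\ell}{}^\Lambda(a^q)$, bound $\Delta_C(\delta)$ by a grid of mesh $\delta$, and then rescue the union bound over the $M\asymp\delta^{-1}$ grid cells with a truncation into an $\mathrm{L}^2$-controllable part ($\sum_k(\eta_k')^2\le m_\delta R$, Doob $\mathrm{L}^2$) plus a uniformly-integrable tail (Doob $\mathrm{L}^1$). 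You correctly diagnose that a naïve $\mathrm{L}^1$-Doob union bound over the grid is powerless because $\sum_k\EW[\eta_k]=\EW[\bar\eta]$ does not shrink. The paper, however, sidesteps the grid altogether: after invoking the Meyer-$\sigma$-field extension of \cite{KP17} for pointwise-in-$(\omega,t)$ continuity in $\ell$, it introduces the \emph{single} dominating random variable
\[
h(\delta):=\sup_{\substack{\ell,\ell'\in C\\ |\ell'-\ell|\le\delta}}\int_{[0,\infty)}|g_s(\ell')-g_s(\ell)|\,\mu(\md s),
\]
observes that $|h^{\ell'}-h^{\ell}|\le h(\delta)$ pathwise for every admissible pair, deduces $|{}^\Lambda(h^{\ell'})-{}^\Lambda(h^\ell)|\le{}^\Lambda h(\delta)$ up to evanescence via the Meyer Section Theorem (first for rational pairs, then for all pairs by the pointwise continuity), and then applies a \emph{single} $\mathrm{L}^1$-Doob maximal inequality to the one $\Lambda$-martingale ${}^\Lambda h(\delta)$, whose terminal value $h(\delta)$ goes to $0$ in $\mathrm{L}^1$; since both sides are non-increasing in $\delta$, convergence in probability upgrades to a.s.\ convergence. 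This dominating-process device converts the "uniformly in $\ell$" requirement into a single maximal inequality and makes the $\mathrm{L}^2$/truncation machinery unnecessary. Your route does have the modest advantage of not needing \cite{KP17} at all (the monotone version plays that role), but it pays for it with the grid and the $\mathrm{L}^2$ truncation. One small gap worth patching in your write-up: $\bar\eta=a^{s_M}-a^\alpha$ and hence your choice of $R$ depend on $\delta$ through the grid; you should dominate $\bar\eta$ uniformly for all small $\delta$ by a fixed $\bar\eta_0:=a^{\beta+1}-a^\alpha\in\mathrm{L}^1$, say, before selecting $R$, using $\{\bar\eta>R\}\subset\{\bar\eta_0>R\}$ in the tail estimate.
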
	
	    
	    	\begin{proof}
	    	    By \cite{KP17} the $\Lambda$-projections of $h^\ell$, $\ell \in \RZ$,
    		can be chosen such that
    		\begin{align}\label{continuity}
    			\lim_{\ell'\rightarrow \ell} {^\Lambda}{}{\mathop{(h^{\ell'})}}_t(\omega)
    			={^\Lambda}{}{\mathop{(h^{\ell})}}_t(\omega)\quad \text{ for all } \ell\in \RZ,
    			\ (\omega,t)\in \Omega\times [0,\infty]. 
    		\end{align}
    		Fix in the following a compact set $C\subset \RZ$ and
    		$\delta>0$. Then we define
    		\[
    			h(\delta):=\sup_{\overset{\ell,\ell'\in C}{|\ell'-\ell|
    			\leq \delta}} \int_{[0,\infty)}
    						|g_s(\ell')-g_s(\ell)| \mu(\md s)\geq 0,
    		\]
    		which converges to zero a.s. and in $\mathrm{L}^1(\WM)$ by Assumption \ref{sto:frame_ass}.
    		Furthermore for fixed $\ell,\ell'\in C$ with $|\ell-\ell'|\leq \delta$ 
    		and any $T\in \stm$ we have 
    		\begin{align*}
    			\left|{^\Lambda}{}{\mathop{(h^{\ell'})}}_T
    							-{^\Lambda}{}{\mathop{(h^{\ell})}}_T\right|
    				\leq \EW\left[|h^{\ell'}-h^\ell|\middle| \aFA_T\right]
    				\leq \EW\left[h(\delta)\middle| \aFA_T\right]
    				={^\Lambda}{}{\mathop{h(\delta)}}_T\,\text{ a.s. on } \{T<\infty\}.
    		\end{align*}
    		Hence, by the Meyer Section Theorem, 
    		\[
    			|{^\Lambda}{}{\mathop{(h^{\ell'})}}
    							-{^\Lambda}{}{\mathop{(h^{\ell})}}|
    							\leq {^\Lambda}{}{h(\delta)}
    		\]
    		up to an evanescent set for any fixed $\ell,\ell'\in C$ with $|\ell-\ell'|\leq \delta$.
    		Therefore, almost surely, for any rational $\ell,\ell'\in C$ with $|\ell-\ell'|\leq \delta$
    		\[
    			\sup_{t\in [0,\infty]} |{^\Lambda}{}{\mathop{(h^{\ell'})}}_t
    							-{^\Lambda}{}{\mathop{(h^{\ell})}}_t|
    				\leq \sup_{t\in [0,\infty]} {^\Lambda}{}{h(\delta)}_t,
    		\]
    		which implies almost surely
    		\[
    			\sup_{\overset{\ell,\ell'\in C\cap \mathbb{Q}}{|\ell'-\ell|
    		\leq \delta}} \sup_{t\in [0,\infty]} |{^\Lambda}{}{\mathop{(h^{\ell'})}}_t
    							-{^\Lambda}{}{\mathop{(h^{\ell})}}_t|
    				\leq \sup_{t\in [0,\infty]} {^\Lambda}{}{h(\delta)}_t.
    		\]
    		Interchanging its two suprema, the left-hand side can be rewritten as 
    		\begin{align*}
    			\sup_{\overset{\ell,\ell'\in C\cap \mathbb{Q}}{|\ell'-\ell|
    		\leq \delta}} \sup_{t\in [0,\infty]} |{^\Lambda}{}{\mathop{(h^{\ell'})}}_t
    							-{^\Lambda}{}{\mathop{(h^{\ell})}}_t|
    				&=\sup_{t\in [0,\infty]} \sup_{\overset{\ell,\ell'\in C\cap \mathbb{Q}}{|\ell'-\ell|
    		\leq \delta}}  |{^\Lambda}{}{\mathop{(h^{\ell'})}}_t
    							-{^\Lambda}{}{\mathop{(h^{\ell})}}_t|\\
    				&= \sup_{t\in [0,\infty]} \sup_{\overset{\ell,\ell'\in C}{|\ell'-\ell|
    		\leq \delta}} |{^\Lambda}{}{\mathop{(h^{\ell'})}}_t
    							-{^\Lambda}{}{\mathop{(h^{\ell})}}_t|,
    		\end{align*}
    		where we used the pointwise continuity \eqref{continuity} in the last equality.
    		Therefore we have almost surely
    		\[
    		 \sup_{\overset{\ell,\ell'\in C}{|\ell'-\ell|
    		\leq \delta}} 	\sup_{t\in [0,\infty]}|{^\Lambda}{}{\mathop{(h^{\ell'})}}_t
    							-{^\Lambda}{}{\mathop{(h^{\ell})}}_t|
    					\leq \sup_{t\in [0,\infty]} {^\Lambda}{}{h(\delta)}_t,
    		\]
    		and now it suffices to argue that ${^\Lambda}{}{h(\delta)}_t\rightarrow 0$ almost surely uniformly in $t\in [0,\infty]$.
    		For this, we will use Doob's maximal martingale inequality, suitably generalized for $\Lambda$-martingales
    		(\citing{DM82}{Appendix 1, (3.1)}{394}). More precisely, for any $\lambda \in [0,\infty)$ we have by dominated convergence that
    		\[
    			\lambda \WM\left(\sup_{t\in [0,\infty]} |{^\Lambda}{}{h(\delta)}_t|> \lambda\right)\leq \EW\left[|{^\Lambda h}(\delta)_\infty|\right]
    			=\EW\left[h(\delta)\right]\overset{\delta\downarrow 0}{\longrightarrow} 0,
    		\]
    		which finishes our proof.
    	\end{proof}
    	    
    	    
    	    \subsection{Proof of Lemma \ref{Lem:sp3}}\label{proof:4.1}
    	    
            We start by constructing in Proposition \ref{sto:useful_pro} below processes 
    		$\tilde{Y}^\ell$, $\ell\in \RZ$, which will fulfill the conditions (i)-(vii) of 
    		Lemma \ref{Lem:sp3} for fixed $\ell$. The random field $Y$ will then be constructed 
    		as a limit of the processes $\tilde{Y}^\ell$.
    		The idea to construct a process $\tilde{Y}^\ell$ for fixed $\ell$
    		is to use the optimal stopping
    		results of \cite{EK81}. Specifically, we will construct $\tilde{Y}^\ell$
    		as a Snell-envelope and the properties will follow
    		with the help of divided stopping times (see Definition \ref{app:optstop_def_4}).
	    
	    \subsubsection{First step for the Proof 
		of Lemma \ref{Lem:sp3}: Construction of the process 
		\texorpdfstring{$Y$}{Y} for fixed \texorpdfstring{$\ell$}{l}}
		
	 \begin{Pro}\label{sto:useful_pro}
	 	\begin{compactenum}[(i)]
	 		\item For each $\ell\in \RZ$, there is a $\Lambda$-measurable l\`adl\`ag process
			 	$\tilde{Y}^\ell:\Omega\times [0,\infty]\rightarrow \RZ$ of class($D^\Lambda$)
			 	with $\tilde{Y}_{\infty}^\ell=0$, unique up
			 	 to indistinguishability, such that for all $S\in \stm$ we have
			 	 almost surely
				\begin{align}\label{sto:eq_6}
					\tilde{Y}^\ell_S=\esssup_{T\in \stmg{S}}\EW\left[X_T+\int_{[S,T)}g_t(\ell)\mu(\md t)\midG \aFA_S\right].
				\end{align}
				Moreover, for $\ell\leq \ell'$, we have 
				\[
					\WM\left(\tilde{Y}^{\ell}_t\leq \tilde{Y}^{\ell'}_t \text{ for all } t\geq 0\right)=1.
				\]
			\item  Define the l\`adl\`ag $\Lambda$-measurable processes $E^\ell$ of class($D^\Lambda$), $\ell\in \RZ$, 
			 by
				\[
					E^\ell:=\int_{[0,\cdot)} g_t(\ell)\mu(\md t)
							+\sideset{^\Lambda}{}{\mathop{\left(\int_{[0,\infty)}
							 |g_t(\ell)|\mu(\md t)\right)}}+M^X+1
				\]
				and
				\[
					E^\ell_{\infty}:=\int_{[0,\infty)} g_t(\ell)\mu(\md t)
							+\int_{[0,\infty)}
							 |g_t(\ell)|\mu(\md t)+M^X_{\infty}+1
				\]
			with $M^X$ as in Lemma \ref{sto:tools_lem}. Then there is a version of the stochastic field $(E^\ell)_{\ell\in \RZ}$
			such that
			the following results hold for all $\omega \in \Omega$:
			\begin{enumerate}
				\item[(1)] Uniform continuity in $\ell \in \RZ$, i.e.  
						   \begin{align}\label{sto_conv_ass}
								\lim_{\delta \downarrow 0}\sup_{\overset{\ell,\ell'\in C}{|\ell'-\ell|
\leq \delta}}\sup_{t\in [0,\infty]}
								 |E^{\ell'}_t(\omega)-E_t^\ell(\omega)|=0
			\end{align}
			for all compact sets $C\subset \RZ$.
			\item[(b)] Monotonicity in $\ell\in \RZ$, i.e. 
			for all $\ell,\ell'\in \RZ$ with
			$\ell\leq \ell'$ and all
						$t\in [0,\infty]$,
						$E^{\ell}_t(\omega)\leq E^{\ell'}_t(\omega)$.
			\item[(c)] L\`adl\`ag paths for $\ell\in \RZ$, i.e. for any
			$\ell\in \RZ$ the mapping $t\mapsto 
					E_t^\ell(\omega)$ is real valued and l\`adl\`ag. In
					particular the paths $t\mapsto E^\ell_{t+}(\omega)$ 
					and $t\mapsto E^\ell_{t-}(\omega)$ are bounded on
					compact intervals.			
			\item[(d)]	We have
					\begin{align*}
						\tilde{Y}^\ell_t(\omega)+E_t^\ell(\omega)\geq 1 \quad \text{ for all } \ell\in \RZ \text{ and all }
						 t\in [0,\infty].
					\end{align*}
			\end{enumerate}
			\item For $\ell\in \RZ$ and $S\in \stm$ the family of $\aFA_+$-stopping times
			\[
				\tilde{T}_{S,\ell}^\lambda(\omega)
				:=\inf\left\{t\in [S(\omega),\infty] \midG X_t(\omega)\geq 
				\lambda \tilde{Y}^\ell_t(\omega) -(1-\lambda) E_t^\ell(\omega)\right\},\ \lambda\in [0,1),
			\]
			is non-decreasing in $\lambda$ for all $\omega\in \Omega$ with limit 
			\begin{align*}
				\lim_{\lambda
			\uparrow 1} \tilde{T}_{S,\ell}^\lambda=:\tilde{T}_{S,\ell}=\min 
					\left\{t\in [S,\infty] \midG 				\tilde{Y}_t^{\ell}=X_t
									 \text{ or } \tilde{Y}_{t-}^{\ell}=\lsl{X}_t
									\text{ or } \tilde{Y}_{t+}^{\ell}=\lsr{X}{t}
									\right\}.
			\end{align*}
			\item The following inclusions hold for any $S\in \stm$, $\ell\in \RZ$:
			\begin{align}
				 \tilde{H}_{S,\ell}^-&:=\left\{
								 {\tilde{T}_{S,\ell}^\lambda <
								\tilde{T}_{S,\ell}}
								\text{ for all } \lambda\in [0,1)\right\}\subset
					\left\{	\tilde{Y}^\ell_{\tilde{T}_{S,\ell}-}
				=\lsl{X}_{\tilde{T}_{S,\ell}}\right\},\label{sto:eq_10}\\
				 \tilde{H}_{S,\ell}&:=\left\{\tilde{Y}^\ell_{\tilde{T}_{S,\ell}}
							=X_{\tilde{T}_{S,\ell}}\right\}\cap
				\left({ \tilde{H}_{S,\ell}^-}\right)^c\subset
				\left\{\tilde{Y}^\ell_{\tilde{T}_{S,\ell}}
				=X_{\tilde{T}_{S,\ell}}\right\},\label{sto:eq_11}\\
				 \tilde{H}_{S,\ell}^+&:=\left\{  \tilde{Y}^\ell_{\tilde{T}_{S,\ell} }>X_{\tilde{T}_{S,\ell} } \right\}
				 \cap \left({ \tilde{H}_{S,\ell}^-}\right)^c
				\subset
				\left\{\tilde{Y}^\ell_{\tilde{T}_{S,\ell}+}
				=\lsr{X}{\tilde{T}_{S,\ell}}\right\}.\label{sto:eq_13}
			\end{align}
			The sets $\tilde{H}_{S,\ell}$ and $\tilde{H}_{S,\ell}^+$ are contained in $\mathcal{F}_{\tilde{T}_{S,\ell}}^\Lambda$ and
			$\tilde{H}_{S,\ell}^-\in \mathcal{F}_{\tilde{T}_{S,\ell}-}^\Lambda$. Moreover, we have up to a $\WM$-null set
			\begin{align*}
				{ \tilde{H}_{S,\ell}^-\cup \tilde{H}_{S,\ell}}=
				\left\{
				\tilde{Y}^\ell_{\tilde{T}_{S,\ell}}
				=X_{\tilde{T}_{S,\ell}}\right\},\quad
				\left\{  \tilde{Y}^\ell_{\tilde{T}_{S,\ell} }>X_{\tilde{T}_{S,\ell} } \right\}
				\subset\left({ \tilde{H}_{S,\ell}^-}\right)^c
			\end{align*} 
			and $(\tilde{T}_{S,\ell})_{\tilde{H}_{S,\ell}^-}$ is an $\FA$-predictable stopping
			time, $(\tilde{T}_{S,\ell})_{\tilde{H}_{S,\ell}}$ is a $\Lambda$-stopping
			time and $(\tilde{T}_{S,\ell})_{H_{S,\ell}^+}$ is an $\aFA_+$-stopping time.
		\item For $S\in \stm$, $\ell\in \RZ$, the quadrupel
			\begin{align*}	
				\tilde{\tau}_{S,\ell}:=(\tilde{T}_{S,\ell}, \tilde{H}_{S,\ell}^-, \tilde{H}_{S,\ell}, \tilde{H}_{S,\ell}^+)
			\end{align*}					
			 is a divided 
			stopping time (see Definition \ref{app:optstop_def_4}).
			
		\item For $S\in \stm$, the mapping $\ell\mapsto \tilde{\tau}_{S,\ell}\in \stmd$
			is increasing in the sense that for all $\ell,\ell'\in \RZ$
			with $\ell\leq \ell'$ we have $[S,\tilde{\tau}_{S,\ell})
			\subset [S,\tilde{\tau}_{S,\ell'})$ with the definition of
			$[S,\tilde{\tau}_{S,\ell})$ given in \eqref{sto:eq_4}.
			Furthermore for $\ell\in \RZ$ the divided stopping time
			$\tilde{\tau}_{S,\ell}$ attains the value of the optimal stopping problem
			in \eqref{sto:eq_5}, i.e. almost surely
			\begin{align*}
				\tilde{Y}^\ell_S=\ &\EW\left[X_{\tilde{\tau}_{S,\ell}}
						+\int_{[S,\tilde{\tau}_{S,\ell})} g_t(\ell)\mu(\md t)
						\midG \aFA_S\right].
			\end{align*}
		\item For $S\in \stm$ and $\ell,\ell'\in \RZ$ we have almost surely
			\begin{align}\label{sto:useful_gl_3}
				\tilde{Y}^\ell_S\geq \EW\left[X_{\tilde{\tau}_{S,\ell'}}
									+\int_{[S,\tilde{\tau}_{S,\ell'})} g_t(\ell)\mu(\md t)\midG \aFA_S\right].
			\end{align}
			 Moreover, for fixed $\ell\leq \ell'$, we have
			\begin{align}\label{sto:useful_gl_4}
				\tilde{Y}^{\ell'}_s\geq \tilde{Y}^{\ell}_s \geq \tilde{Y}^{\ell'}_s
					+\ {^\Lambda \left(\int_{[0,\infty)}
						 g_t(\ell)\mu(\md t)\right)}_s
						 -\ {^\Lambda \left(\int_{[0,\infty)} 
						 g_t(\ell')\mu(\md t)\right)}_s
			\end{align}
			 for all $s\in [0,\infty]$ $\WM$\text{-almost surely}, where the latter $\Lambda$-projections
			 are chosen such that
			 for all $\omega\in \Omega$ we have:
			\begin{align*}
								\lim_{\delta \downarrow 0}\sup_{\overset{\ell,\ell'\in C}{|\ell'-\ell|\leq \delta}}\sup_{t\in [0,\infty]}
								 \left|{^\Lambda \left(\int_{[0,\infty)}
						 g_t(\ell)\mu(\md t)\right)}_t-{^\Lambda \left(\int_{[0,\infty)}
						 g_t(\ell')\mu(\md t)\right)}_t\right|=0
			\end{align*}
			for all compact sets $C\subset \RZ$.
			\end{compactenum}
		\end{Pro}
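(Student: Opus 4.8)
The plan is to realise each $\tilde Y^\ell$ as a deterministic, time-dependent shift of a Snell envelope and to read off every stated property from the Snell-envelope and optimal-divided-stopping-time theory for Meyer-$\sigma$-fields of \cite{EK81}. Fix $\ell\in\RZ$ and set $G^\ell_t:=X_t+\int_{[0,t)}g_s(\ell)\,\mu(\md s)$ for $t<\infty$, and $G^\ell_\infty:=\int_{[0,\infty)}g_s(\ell)\,\mu(\md s)$. Then $G^\ell$ is $\Lambda$-measurable (the integral is left-continuous and adapted, hence predictable), of class($D^\Lambda$) (the integral is dominated by $\int_{[0,\infty)}|g_s(\ell)|\,\mu(\md s)\in\mathrm L^1(\WM)$, cf.\ Assumption \ref{sto:frame_ass}), left-upper-semicontinuous in expectation at every $S\in\stp$, and $\mu$-right-upper-semicontinuous in expectation at every $S\in\stm$ --- the $X$-contributions by the hypotheses of Theorem \ref{sto:results_thm_2} (the latter via Proposition \ref{eq_rusc}), the integral being left-continuous and, by absolute continuity of the finite measure $A\mapsto\int_A|g_s(\ell)|\,\mu(\md s)$ together with dominated convergence, $\mu$-right-continuous in expectation. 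After the routine reduction to a nonnegative gain process (adding the positive class($D^\Lambda$) process $M^X+{^\Lambda\!\left(\int_{[0,\infty)}|g_s(\ell)|\,\mu(\md s)\right)}$, with $M^X$ as in Lemma \ref{sto:tools_lem}(i)), the cited theory produces a $\Lambda$-measurable l\`adl\`ag class($D^\Lambda$) process, unique up to indistinguishability, equal at each $S\in\stm$ to $\esssup_{T\in\stmg S}\EW[G^\ell_T\mid\aFA_S]$; subtracting the predictable process $\int_{[0,\cdot)}g_s(\ell)\,\mu(\md s)$ and putting $\tilde Y^\ell_\infty:=0$ gives the process of (i) together with \eqref{sto:eq_6}, and $\tilde Y^\ell\le\tilde Y^{\ell'}$ for $\ell\le\ell'$ follows from $g_t(\ell)\le g_t(\ell')$ and Corollary \ref{app:meyer_cor_1}.

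Turning to (ii), the point is that $E^\ell$ is a strictly positive floor for $\tilde Y^\ell$: at any $S\in\stm$ one has $\tilde Y^\ell_S\ge X_S\ge-M^X_S$ (take $T=S$ in \eqref{sto:eq_6} and use Lemma \ref{sto:tools_lem}(i)), while ${^\Lambda\!\left(\int_{[0,\infty)}|g_s(\ell)|\,\mu(\md s)\right)}_S=\EW[\int_{[0,\infty)}|g_s(\ell)|\,\mu(\md s)\mid\aFA_S]\ge\int_{[0,S)}|g_s(\ell)|\,\mu(\md s)\ge-\int_{[0,S)}g_s(\ell)\,\mu(\md s)$, so $\tilde Y^\ell_S+E^\ell_S\ge1$ at every $\Lambda$-stopping time, which extends to all $(\omega,t)$ by the Meyer Section Theorem \ref{Main:4} after passing to a version --- this is the lower bound of (ii). Uniform continuity in $\ell$ on compacts splits into the path-integral summand of $E^\ell$, handled by dominated convergence (for $\ell,\ell'$ in a compact $C$ with $|\ell-\ell'|\le\delta$, $|g_s(\ell)-g_s(\ell')|$ is $\mu(\md s)$-dominated by $g_s(\sup C)-g_s(\inf C)\in\mathrm L^1(\mu)$ and its modulus tends to $0$), and the $\Lambda$-projection summand, handled exactly as in Lemma \ref{sto:tools_lem_2} applied to $|g(\cdot)|$ in place of $g(\cdot)$. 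The l\`adl\`ag property in $t$ and the monotonicity in $\ell$ of the first summand are immediate; choosing the version of the $\Lambda$-projection monotone and continuous in $\ell$ completes (ii).

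The substance of the proof is (iii)--(vi). Writing the obstacle as $\lambda\tilde Y^\ell_t-(1-\lambda)E^\ell_t=\lambda(\tilde Y^\ell_t+E^\ell_t)-E^\ell_t$ and using $\tilde Y^\ell+E^\ell\ge1>0$ shows it is non-decreasing in $\lambda$, so the entry times $\tilde T^\lambda_{S,\ell}$ are non-decreasing with a limit $\tilde T_{S,\ell}$; identifying $\tilde T_{S,\ell}$ with the first time that $\tilde Y^\ell_t=X_t$, $\tilde Y^\ell_{t-}=\lsl X_t$ or $\tilde Y^\ell_{t+}=\lsr X t$ follows the scheme of \cite{BK04} and \cite{EK81}, now using the l\`adl\`ag regularity of $\tilde Y^\ell$ and $E^\ell$, the bounds $\tilde Y^\ell_{t-}\ge\lsl X_t$ and $\tilde Y^\ell_{t+}\ge\lsr X t$ (from $\tilde Y^\ell\ge X$ and one-sided limits), and the $\mu$-right-upper-semicontinuity in expectation through Proposition \ref{eq_rusc}. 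Splitting $\{\tilde T_{S,\ell}<\infty\}$ according to which alternative holds and to whether the $\tilde T^\lambda_{S,\ell}$ actually reach $\tilde T_{S,\ell}$ gives $\tilde H^-_{S,\ell}$, $\tilde H_{S,\ell}$, $\tilde H^+_{S,\ell}$ and the inclusions \eqref{sto:eq_10}--\eqref{sto:eq_13}. The \textbf{main obstacle} I expect is the measurability and stopping-time type of these three pieces: that $(\tilde T_{S,\ell})_{\tilde H^-_{S,\ell}}$ is $\FA$-predictable is where Lemma \ref{sto:tools_lem}(ii), i.e.\ ${^\mathcal{P}X}\ge\lsl X$, is genuinely needed --- on $\tilde H^-_{S,\ell}$ the time $\tilde T_{S,\ell}$ is announced by the strictly smaller $\tilde T^\lambda_{S,\ell}$ while the left-limit identity forces it into the predictable class --- whereas $(\tilde T_{S,\ell})_{\tilde H_{S,\ell}}$ being a $\Lambda$-stopping time and $(\tilde T_{S,\ell})_{\tilde H^+_{S,\ell}}$ an $\aFA_+$-stopping time are the Meyer-$\sigma$-field analogues of the corresponding steps in \cite{EK81}, whose proofs rely only on section theorems and projections and so carry over via the Meyer Section Theorem \ref{Main:4} and Definition and Theorem \ref{app:meyer_thm_3}. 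Assembling the pieces verifies Definition \ref{app:optstop_def_4} and gives (v); the optimality identity in (vi), $\tilde Y^\ell_S=\EW[X_{\tilde\tau_{S,\ell}}+\int_{[S,\tilde\tau_{S,\ell})}g_t(\ell)\,\mu(\md t)\mid\aFA_S]$, is the statement that these penalised divided stopping times attain the Snell envelope of $G^\ell$, again from \cite{EK81}, transported through the shift.

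It remains to record the comparisons. For $[S,\tilde\tau_{S,\ell})\subset[S,\tilde\tau_{S,\ell'})$ when $\ell\le\ell'$ (part of (vi)): from $\tilde Y^\ell\le\tilde Y^{\ell'}$ together with $\tilde Y^\ell\ge X$ one gets $\{\tilde Y^{\ell'}_t=X_t\}\subset\{\tilde Y^\ell_t=X_t\}$ and, passing to one-sided limits, $\{\tilde Y^{\ell'}_{t-}=\lsl X_t\}\subset\{\tilde Y^\ell_{t-}=\lsl X_t\}$ and the analogue with $\lsr X t$, hence $\tilde T_{S,\ell}\le\tilde T_{S,\ell'}$; a short inspection of the boundary case via the convention \eqref{sto:eq_4} and the descriptions of $\tilde H^{\pm}_{S,\ell}$ upgrades this to the stated inclusion of divided intervals. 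Inequality \eqref{sto:useful_gl_3} is immediate, since $\tilde\tau_{S,\ell'}\in\stmd$ with $\tilde\tau_{S,\ell'}\ge S$ and $\tilde Y^\ell_S$ dominates the reward of every such divided stopping time. The left inequality in \eqref{sto:useful_gl_4} is the monotonicity from (i); for the right one, write $G^\ell_T=G^{\ell'}_T+\int_{[0,T)}(g_t(\ell)-g_t(\ell'))\,\mu(\md t)$, use $g_t(\ell)-g_t(\ell')\le0$ to bound the last integral below by $\int_{[0,\infty)}(g_t(\ell)-g_t(\ell'))\,\mu(\md t)$, take $\esssup$ over $T$ and re-expand, choosing the $\Lambda$-projections with the uniform-in-$\ell$ continuity of Lemma \ref{sto:tools_lem_2}.
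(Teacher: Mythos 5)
Your plan follows the paper's own approach for this proposition: shift $X$ by the running cost, add the martingale normalisation $M^X+{^\Lambda\!\left(\int_{[0,\infty)}|g_s(\ell)|\mu(\md s)\right)}+1$ to reduce to the positive, class($D^\Lambda$) process $Z^\ell:=X+E^\ell\ge 1$, and invoke the Meyer-$\sigma$-field Snell envelope and divided-stopping-time theory of \cite{EK81}/\cite{BB18_2}. The floor argument at $\Lambda$-stopping times with extension by the Meyer Section Theorem, the uniform-in-$\ell$ continuity via Lemma \ref{sto:tools_lem_2}, and the comparison \eqref{sto:useful_gl_4} by peeling off the nonpositive increment $g(\ell)-g(\ell')$ are all essentially as the paper does them, and your appeal to the supermartingale/Mertens decomposition of the Snell envelope for \eqref{sto:useful_gl_3}, though elided as ``immediate,'' is the same mechanism the paper makes explicit via $\bar Z^\ell=\bar M^\ell-\bar A^\ell-\bar B^\ell_-$ and \cite{BB18_2}.

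The one genuine gap is in the monotonicity of $\ell\mapsto\tilde\tau_{S,\ell}$ in (vi). Deriving $\tilde T_{S,\ell}\le\tilde T_{S,\ell'}$ from $\tilde Y^\ell\le\tilde Y^{\ell'}$ and $\tilde Y^\ell\ge X$ is as you say, but the boundary case cannot be dispatched by ``a short inspection via the convention \eqref{sto:eq_4} and the descriptions of $\tilde H^\pm_{S,\ell}$.'' Concretely, if $\tilde T_{S,\ell}(\omega)=\tilde T_{S,\ell'}(\omega)=:t$ and $\omega\in\tilde H^+_{S,\ell}$, you must show $\omega\in\tilde H^+_{S,\ell'}$; the factor $X_t<\tilde Y^\ell_t\le\tilde Y^{\ell'}_t$ is immediate, but for $\omega\in(\tilde H^-_{S,\ell'})^c$ the inclusion \eqref{sto:eq_10} only goes one way, so knowing the set descriptions does not let you conclude. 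What works, and what the paper does, uses two ingredients you prove but do not deploy here: since $\omega\in(\tilde H^-_{S,\ell})^c$ there is $\lambda\in[0,1)$ with $\tilde T^\lambda_{S,\ell}(\omega)=t$; for $s\in[S(\omega),t)$ then
\[
X_s(\omega)<\lambda\tilde Y^\ell_s(\omega)-(1-\lambda)E^\ell_s(\omega)
=\tilde Y^\ell_s(\omega)-(1-\lambda)\bigl(\tilde Y^\ell_s(\omega)+E^\ell_s(\omega)\bigr)
\le\tilde Y^\ell_s(\omega)-(1-\lambda)
\]
by the floor from (ii)(d); letting $s\uparrow t$ gives $\lsl{X}_t(\omega)\le\tilde Y^\ell_{t-}(\omega)-(1-\lambda)<\tilde Y^{\ell'}_{t-}(\omega)$, which with \eqref{sto:eq_10} for $\ell'$ places $\omega$ in $(\tilde H^-_{S,\ell'})^c$. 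The $\lambda$-approximating entry times and the floor $\ge1$ are thus not merely used to define the divided stopping time --- they are essential for this monotonicity claim, and that is the step your proposal leaves unproven.
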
	
		
		\begin{proof}
			\textbf{Part (i)-(iv):}
			Fix $\ell\in \RZ$. Lemma \ref{sto:tools_lem_2}, monotonicity and continuity of $\ell \mapsto g_t(\omega,\ell)$ for any 
			$(\omega,t)\in \Omega\times [0,\infty)$ ensure that the processes $E^\ell$, $\ell \in \RZ$, can be chosen such that \eqref{sto_conv_ass} holds. Furthermore each $E^\ell$ is l\`adl\`ag as $M^X$ and the $\Lambda$-projection
			are $\Lambda$-martingales and hence l\`adl\`ag by \citing{BB18_2}{Proposition 3.6}{10}. This also gives us the boundedness 
			on compact intervals of $E^{\ell}_+$ and $E^{\ell}_-$.
			The class($D^\Lambda$) property of $E^\ell$ follows because $M^X$ is of class($D^\Lambda$) and
			because $\EW[\int_{[0,\infty)} |g_t(\ell)|\mu(\md t)]<\infty$ by
			Assumption \ref{sto:frame_ass}. 
			
			Now consider the $\Lambda$-measurable process of class($D^\Lambda$)
			defined by $Z^\ell:=X+E^\ell\geq 1$.
			By Theorem \citing{BB18_2}{Theorem 3.7}{10}
			and \citing{BB18_2}{Proposition 3.9}{11} we can
			define the Snell envelope
			$\bar{Z}^\ell$ of $Z^\ell$, i.e. the $\Lambda$-supermartingale $\bar{Z}^\ell$ such that
			\begin{align*}
				\bar{Z}^{\ell}_S=\esssup_{T\in \stmg{S}}\EW\left[Z^\ell_T\midG \aFA_S\right],
				\quad S\in \stm,
			\end{align*}
			and the envelope $\bar{Z}^\ell$ is again of class($\text{D}^\Lambda$). Here we can assume
			that for any $(\omega,t)\in \Omega\times [0,\infty)$ we have $\bar{Z}_t^\ell(\omega)
			\geq Z_t(\omega)$.
			Now we have, for $S \in \stm$, that
			\begin{align*}
				\bar{Z}^\ell_S=\esssup_{T\in \stmg{S}}\EW\left[Z^\ell_T\midG \aFA_S\right]
				=&\esssup_{T\in \stmg{S}}\EW\left[
				X_T+\int_{[S,T)} g_t(\ell)\mu(\md t)\midG \aFA_S\right]
				+E^\ell_S,
			\end{align*}
			which shows that $\tilde{Y}^\ell:=\bar{Z}^\ell-E^\ell$ satisfies (i) by a corollary
			of the Meyer Section Theorem (see Corollary \ref{app:meyer_cor_1}). Here we also see
			that $\tilde{Y}^\ell$ and $E^\ell$ will satisfy the last part of (ii).
			Furthermore, we obtain part (iii) by applying \citing{BB18_2}{Proposition 3.11}{11} and \citing{BB18_2}{Proposition 3.13}{12} to $Z$
			and using afterwards $\tilde{Y}^\ell=\bar{Z}^\ell-E^\ell$.
			Finally part (iv) follows from \citing{BB18_2}{Proposition 3.13}{12}
			and \citing{BB18_2}{Proposition 4.6}{24}.
			
			\textbf{Proof of Part (v) and (vi):} That $\tilde{\tau}_{S,\ell}$
			is a divided stopping time
			follows by \citing{BB18_2}{Theorem 3.17}{14}. 
			To prove the monotonicity fix $\ell\leq \ell'$
			and $(\omega,t)\in \Omega \times [0,\infty)$. 
			If $\tilde{T}_{S,\ell}(\omega)<\tilde{T}_{S,\ell'}(\omega)$ 
			or $\tilde{T}_{S,\ell}(\omega)=\tilde{T}_{S,\ell'}(\omega)$
				and  $\omega \notin H_{S,\ell}^+$  there
			is nothing to show. Therefore
			let $t=\tilde{T}_{S,\ell}(\omega)=\tilde{T}_{S,\ell'}(\omega)$
			and assume $\omega \in H_{S,\ell}^+=\{X_{\tilde{T}_{S,\ell}}
			<\tilde{Y}_{\tilde{T}_{S,\ell}}^\ell\}\cap (\tilde{H}_{S,\ell}^-)^c$.
			We now have to show 
			$\omega \in \tilde{H}_{S,\ell'}^+$. By monotonicity of $\ell\mapsto Y_t^\ell(\omega)$ and $T_{S,\ell}(\omega)=t=\tilde{T}_{S,\ell'}(\omega)$ we get 
			\begin{align}\label{sto:eq_7}
				\omega \in \{X_{\tilde{T}_{S,\ell'}}<Y_{\tilde{T}_{S,\ell'}}^{\ell'}\}.
			\end{align}
			On the other hand, as $\omega \in (H_{S,\ell}^-)^c$,
			there exists $\lambda\in [0,1)$ such that $\tilde{T}_{S,\ell}^\lambda(\omega)=\tilde{T}_{S,\ell}(\omega)=t=\tilde{T}_{S,\ell'}(\omega)$. In particular,
			for all $s\in [S(\omega),\tilde{T}_{S,\ell}(\omega))$ we achieve with $Y^\ell_s(\omega)+E_s^\ell(\omega)\geq 1$ by the definition
			of $T_{S,\ell}^\lambda$
			\[
				X_s(\omega)<
				Y_s^\ell(\omega)-(1-\lambda)(E_s^\ell(\omega)+Y_s^\ell(\omega))\leq  Y_s^\ell(\omega)-(1-\lambda)
			\]
			and therefore, recalling  $\tilde{T}_{S,\ell}(\omega)=t=\tilde{T}_{S,\ell'}(\omega)$,
			\[
				\lsl{X}_{\tilde{T}_{S,\ell'}}(\omega)=\lsl{X}_{\tilde{T}_{S,\ell}}(\omega)<\tilde{Y}^{\ell}_{\tilde{T}_{S,\ell}-}(\omega)
				\leq \tilde{Y}^{\ell'}_{\tilde{T}_{S,\ell'}-}(\omega).
			\]
			Hence $\omega \in \left\{	\tilde{Y}^{\ell'}_{\tilde{T}_{S,\ell'}-}
					>\lsl{X}_{\tilde{T}_{S,\ell'}}\right\}
					\subset (\tilde{H}_{S,\ell}^-)^c$ by \eqref{sto:eq_10}.
					Together with 
					\eqref{sto:eq_7} we get
					$\omega\in H_{S,\ell'}^+$. This establishes 
					the claimed monotonicity of $\ell\mapsto \tau_{S,\ell}$.

			Next we have by \citing{BB18_2}{Proposition 3.13}{12}
			\begin{align*}
				\bar{Z}^\ell_S
				=\EW\left[Z^\ell_{\tilde{\tau}_{S,\ell}}\midG \aFA_S\right],
			\end{align*}
			which, $\tilde{Y}^l$ and $E^l$ being l\`adl\`ag, is equivalent to
			\begin{align*}
				\tilde{Y}^\ell_S
				=&\EW\left[X_{\tilde{\tau}_{S,\ell}}
							+\int_{[S,\tilde{\tau}_{S,\ell})} g_t(\ell)\mu(\md t) \midG \aFA_S\right]-
							 		G_S
					+\EW\left[G_{\tilde{\tau}_{S,\ell}}\midG \aFA_S\right],
			\end{align*}
			where $G$ is the $\Lambda$-martingale
			\[
				G:=\sideset{^\Lambda}{}{\mathop{\left(\int_{[0,\infty)} |g_t(\ell)|
				\mu(\md t)\right)}}+M^X, \quad
				G_{\infty}:=\int_{[0,\infty)} |g_t(\ell)|
				\mu(\md t)+M^X_{\infty}.
			\]
			This shows the final part of $(v)$ as $G_S=\EW\left[G_{\tilde{\tau}_{S,\ell}}\midG \aFA_S\right]$
			by \citing{BB18_2}{Lemma 3.16}{13}.
							
			\textbf{Proof of Part (vii):} 
			By \citing{BB18_2}{Lemma 3.9}{11} we can write
			$\bar{Z}^\ell=\bar{M}^\ell-\bar{A}^\ell-\bar{B}^\ell_-$, where $\bar{M}^\ell$ is a $\Lambda$-martingale
			and $\bar{A}^\ell$, $\bar{B}^\ell$ are positive $\Lambda$-measurable
			and non-decreasing. Hence, by \citing{BB18_2}{Lemma 3.16}{13},
			\begin{align}\label{sto:useful_gl_44}
			\bar{Z}^\ell_S=\ & \bar{M}^\ell_S-\bar{A}^\ell_{S}-\bar{B}^\ell_{S-}
			 =\ \EW\left[\bar{M}_{\tilde{\tau}_{S,\ell'}}^\ell	\midG \aFA_S\right]
										-\bar{A}^\ell_{S}-\bar{B}^\ell_{S-}\\
						\geq\ & 		
				\EW\left[{\bar{Z}}_{\tilde{\tau}_{S,\ell'}}	\midG \aFA_S\right]
				\geq\ 
				\EW\left[Z_{\tilde{\tau}_{S,\ell'}}^\ell		\midG \aFA_S\right],
			\end{align}
			where we have used that on $\tilde{H}_{S,\ell'}^-$ holds $\tilde{T}_{S,\ell'}>S$
			 and therefore also $\bar{B}_{\tilde{T}_{S,\ell'}-}^\ell\geq 
			\bar{B}_S^\ell$.						
			By \citing{BB18_2}{Lemma 3.16}{13}, the inequality (\ref{sto:useful_gl_44}) is equivalent 
			to (\ref{sto:useful_gl_3}).

			Next we show  (\ref{sto:useful_gl_4}),which follows
			as in the proof of \citing{BK04}{Lemma 4.12 (i)}{1050}: For $S\in \stm$, $\ell\leq \ell'$, we have
			\begin{align*}
				\tilde{Y}^{\ell'}_S\geq \tilde{Y}^\ell_S
					\geq \tilde{Y}^{\ell'}_S
						+ \EW\left[\int_{[0,\infty)}g_t(\ell)
					\mu(\md t)\midG \aFA_S\right]
					-\EW\left[\int_{[0,\infty)}g_t(\ell')
					\mu(\md t)\midG \aFA_S\right].
			\end{align*}
			As $S\in \stm$ is arbitrary, 
			$\Lambda$-measurability of $\tilde{Y}^\ell$ and $\tilde{Y}^{\ell'}$
			gives the pathwise estimate
			\begin{align*}
				\tilde{Y}^{\ell'}_s\geq \tilde{Y}^{\ell}_s\geq \tilde{Y}^{\ell'}_s
					+\ {^\Lambda \left(\int_{[0,\infty)}
						 g_t(\ell)\mu(\md t)\right)}_s
						 -\ {^\Lambda \left(\int_{[0,\infty)} 
						 g_t(\ell')\mu(\md t)\right)}_s, 
			\end{align*}
			 for all $s\in [0,\infty]$ $\WM$\text{-a.s.} 
				 by a corollary of the Meyer Section Theorem (see 
				Corollary \ref{app:meyer_cor_1}). This finishes the proof of (vii).
		\end{proof}
		
		
		\subsubsection{Proof of Lemma \ref{Lem:sp3}} 	With the help of Proposition \ref{sto:useful_pro}, we
		 can now construct the random field $Y$ with the stated properties.	
			
			\textbf{Construction of $Y$:}
				First we can choose for all $q\in \mathbb{Q}$ the processes $(\tilde{Y}^q)_{q\in \mathbb{Q}}$
				 from Proposition \ref{sto:useful_pro} such that (\ref{sto:useful_gl_4}),
			$\tilde{Y}^{q'}_t(\omega)\geq \tilde{Y}^{q}_t(\omega)\geq X_t(\omega)$
			 and $\tilde{Y}^q_t(\omega)+E_t^q(\omega)\geq 1$ holds true
			 simultaneously 
		for all $q,q'\in \mathbb{Q}$ with $q\geq q'$
			and any $(\omega,t)\in \Omega\times [0,\infty]$. 
			$E^\ell$ is the process from Proposition \ref{sto:useful_pro} (ii)
			 and satisfies therefore equation \eqref{sto_conv_ass_2}.				
			Now we define 
			\[
				Y^\ell_t(\omega):=\lim_{\mathbb{Q}\ni q\uparrow \ell} \tilde{Y}^q_t(\omega)
				=\sup_{\mathbb{Q}\ni q<\ell} \tilde{Y}_t^q(\omega).
			\]			
			As in the proof of \citing{BK04}{Lemma 4.12 (i)}{1050} we show now that $Y^\ell$
			and $\tilde{Y}^\ell$ are indistinguishable for all $\ell\in \RZ$.	By the Meyer Section Theorem it
			suffices to show $Y_S^\ell=\tilde{Y}^{\ell}$ for any $S\in \stm$.			
		Fix $S\in \stm$. Here ``$\leq$'' follows by $\tilde{Y}^q_S\leq \tilde{Y}^\ell_S$ for all
		rational $q<\ell$, which implies $Y_S^\ell\leq \tilde{Y}_S^\ell$. 
		For ``$\leq$'' we use that for
		any $T\in \stmg{S}$ we have
		\begin{align*}
			Y^\ell_S=\lim_{\mathbb{Q}\ni q\uparrow \ell} \tilde{Y}^q_S&\geq
					\limsup_{\mathbb{Q}\ni q\uparrow l} \EW\left[X_T+\int_{[S,T)}g_t(q)\mu(\md t)\midG \aFA_S\right]\\
					&= \EW\left[X_T+\int_{[S,T)}g_t(\ell)\mu(\md t)\midG \aFA_S\right].
		\end{align*}
		Since
		this estimate holds true for all $T\in \stmg{S}$, we may pass to the essential
		supremum on its right-hand side to obtain $Y^\ell_S\geq \tilde{Y}^\ell_S$ and therefore by the
		first part $Y^\ell_S=\tilde{Y}^\ell_S$.
		As $S\in \stm$ was arbitrary and 
		$\Lambda$-measurability  of both process, this entails with a corollary of the Meyer Section Theorem
		(see Corollary \ref{app:meyer_cor_1}) the claimed result.

		\textbf{Proof of Part (i), (ii) and the last part of (vi):}
			Result (i) and the property in (vi) that the divided stopping time
			attains the value of the optimal stopping problem 
			are stated for fixed $\ell \in \RZ$ and thus 
			follow directly as $\tilde{Y}^\ell$ is indistinguishable from $Y^\ell$.
			For (ii) we fix $\omega\in \Omega$, $t\in [0,\infty)$
			 and obtain a sequence $(q_n)_{n\in \NZ}\subset \mathbb{Q}$
			converging strictly from below to $\ell$ such that $\lim_{n\rightarrow \infty} \tilde{Y}^{q_n}_t(\omega)
			=Y^\ell_t(\omega)$. As $\tilde{Y}^{q_n}_t(\omega)+E^{q_n}_t(\omega)\geq 1$ 
			for any $n\in \NZ$ this leads by \eqref{sto_conv_ass_2} to 
			$Y^{\ell}_t(\omega)+E^{\ell}_t(\omega)\geq 1$. 
				
		\textbf{Proof of (vii) and monotonicity and continuity of $\ell\mapsto Y^\ell$:}	
			By taking again non-increasing rational limits in \eqref{sto:useful_gl_4}, we see that
			$Y^\ell$ will fulfill
			\eqref{sto:useful_gl_24} and 
			$Y^\ell_t(\omega)\geq X_t(\omega)$ for all $\ell,\ell'\in \mathbb{R}$
			and any $(\omega,t)\in \Omega\times [0,\infty]$. Hence by \eqref{sto:useful_gl_24} 
			and 	the convergence property of the $\Lambda$-projections			
			we obtain the continuity and monotonicity of the mapping $\ell\mapsto Y^\ell_t(\omega)$
			for any $(\omega,t)\in \Omega\times [0,\infty]$.
				
		\textbf{Proof of Part (iii), (iv), (v) and the rest of (vi):}
			First of all one can adapt the proof of \citing{EK81}{Proposition 2.35}{133}, to show the inclusions
			 \eqref{sto:eq_277}, \eqref{sto:eq_1277}, \eqref{sto:eq_77}. For \eqref{sto:useful_gl_8}, we have
			 that ``$\geq$'' follows from $H_{S,\ell}^-\cup H_{S,\ell}\cup H_{S,\ell}^+=\Omega$.
			Furthermore we get for $0\leq \lambda<1$ that
			\begin{align*}
				\inf \left\{t\in [S(\omega),\infty] \midG 
										Y_t^{\ell}(\omega)=X_t (\omega)
										 \text{ or } Y_{t-}^{\ell}(\omega)=\lsl{X}_t(\omega)
										\text{ or } Y_{t+}^{\ell}(\omega)=\lsr{X}{t}(\omega)
										\right\}\\
				\geq \inf 
				\left\{t\in [S(\omega),\infty] \midG X_t(\omega)\geq 
					\lambda Y^\ell_t(\omega) -(1-\lambda) E_t^\ell(\omega)\right\}=T_{S,l}^\lambda.
			\end{align*}
			This leads to ``$\leq$'' in \eqref{sto:useful_gl_8} by $T_{S,\ell}=\lim_{\lambda\uparrow 1} T_{S,\ell}^\lambda$.
			Finally as $H_{S,\ell}^-\cup H_{S,\ell}\cup H_{S,\ell}^+=\Omega$
			and (iv) we get for any $\omega \in \Omega$ that 
			$T_{S,l}(\omega)$ is contained in the set			
			\begin{align*}
				\left\{t\in [S(\omega),\infty] \midG 
										Y_t^{\ell}(\omega)=X_t (\omega)
										 \text{ or } Y_{t-}^{\ell}(\omega)=\lsl{X}_t(\omega)
										\text{ or } Y_{t+}^{\ell}(\omega)=\lsr{X}{t}(\omega)
										\right\}
			\end{align*}
			and hence the infimum is a minimum.
			
			 Next we have
			by construction of $Y$ that $Y^{\ell'}_s(\omega)\geq Y^\ell_s(\omega)\geq X_s(\omega)$
			for all 	$\ell\leq \ell'$, $s\in [0,\infty]$ and all $\omega\in \Omega$. Hence,
			for fixed $\omega$ and $\ell\leq \ell'$,
			\begin{align*}
				& \left\{t\geq S(\omega)\midG 				Y_t^{\ell'}(\omega)=X_t(\omega) 
											\text{ or } Y_{t-}^{\ell'}(\omega)=\lsl{X}_t(\omega)
											\text{ or } Y_{t+}^{\ell'}(\omega)=\lsr{X}{t}(\omega)  \right\}\\
				& \qquad \subset 				\left\{t\geq S(\omega)\midG 	Y_t^{\ell}(\omega) =X_t(\omega)
											\text{ or } Y_{t-}^{\ell}(\omega)=\lsl{X}_t(\omega)
											\text{ or } Y_{t+}^{\ell}(\omega)=\lsr{X}{t}(\omega)  \right\}.
			\end{align*}
			This shows that $T_{S,\ell'}(\omega)\geq T_{S,\ell}(\omega)$ for all $\omega\in \Omega$
			as we have shown that those stopping times are respectively the minimum
			over the left and right hand side of the previous inclusion. For the properties
			of $(T_{S,\ell})_{H_{S,\ell}^-}$, $(T_{S,\ell})_{H_{S,\ell}}$ and $(T_{S,\ell})_{H_{S,\ell}^+ }$
			one can again adapt the proof of \citing{EK81}{Proposition 2.35}{133}, and
			\eqref{sto:eq_12} follows as $Y^\ell$ is indistinguishable from $\tilde{Y}^\ell$. Having proven (i)-(iv)
			we can adopt the proof of (v) and (vi)
			in Proposition \ref{sto:useful_pro} to prove (v) and the rest of
			(vi) here.
				
		\textbf{Proof of Part (viii):} It remains to prove for $S\in \stm$ that $Y_S^{-\infty}=X_S$ almost surely.	
			By $X_S\leq Y_S^\ell$ for all $\ell\in \RZ$ and the monotonicity of $\ell \mapsto
			Y^\ell$, we obtain $X_S\leq Y_S^{-\infty}$. Hence it suffices to show
				$\EW[Y_S^{-\infty}]\leq \EW[X_S]$.
			We will follow at the beginning the argument of \citing{BK04}{Lemma 4.12}{1050}.
			By part (vi) we get
			\begin{align}\label{sto:useful_gl_5}
				X_S\leq Y^\ell_S= 	\EW\left[X_{\tau_{S,\ell}}
										+\int_{[S,\tau_{S,\ell})}
				g_t({\ell})\mu(\md t)		 \midG \aFA_S\right]
			\end{align}
			for any $\ell\in \RZ$. Furthermore we obtain from the monotonicity of $\ell \mapsto { T_{S,\ell}}$ that the limit
				$ T_{S,-\infty}:=	\lim_{\ell \downarrow -\infty} { T_{S,\ell}}$ exists
		pointwise. Now the following results hold:
		\begin{align}
			&\WM(\mu([S,T_{S,-\infty}))=0)=1,\label{sto:eq_111}\\
			&\WM\left(\{\mu(\{T_{S,-\infty}\})>0\}\cap \Gamma\right)=0, \label{sto:eq_113}\\
			&\WM\left(\{\mu(\{T_{S,-\infty}\})>0\}\cap \Gamma^c\cap
			\left\{X_{T_{S,-\infty}}<Y_{T_{S,-\infty}}^\ell \text{ for all $\ell\leq 0$}\right\}\right)=0.
				\label{sto:eq_114}
		\end{align}			
		
		\textbf{Proof of \eqref{sto:eq_111}:}
			By taking expectations on both sides in (\ref{sto:useful_gl_5})
			we get,
			\begin{align}\label{sto:eq_110}
				\EW[X_S]\leq \EW[M_{S}^X]+\EW\left[\int_{[S,T_{S,\ell})}
				g_t({\ell})\mu(\md t)\right]+\EW\left[g_{T_{S,\ell}}({\ell}) \mu(\{T_{S,\ell}\})
				\mathbb{1}_{H_{S,\ell}^+}\right],
			\end{align}
			where we have used \citing{BB18_2}{Lemma 3.16}{13} for 
			the $\Lambda$-martingale $M^X$ of Lemma \ref{sto:tools_lem}.
			Furthermore, for $\ell<0$, \eqref{sto:eq_110} leads to
			\begin{align*}
				\EW[X_S]\leq \EW[M_{S}^X]+\EW\left[\int_{[S,T_{S,\ell})}
				g_t({\ell})\mu(\md t)\right],
			\end{align*}			
			by $g_t(\ell)\leq 0$ (see (\ref{sto:sep_gl})) for any $t\in [0,\infty)$. 
			This implies that we get by Fatou's Lemma for any $\ell_0<0$ that
			\begin{align*}
				-\infty<\limsup_{\mathbb{Q}\ni \ell\downarrow  -\infty} \EW\left[\int_{[S,T_{S,\ell})}
				g_t({\ell})\mu(\md t)\right]
				&\leq \EW\left[\int_{[S,T_{S,-\infty})} g_t(\ell_0)
							\mu(\md t)\right]\\
			&\overset{\ell_0\downarrow -\infty}{\longrightarrow}
							\EW\left[(-\infty)
							\mathbb{1}_{\{\mu([S,T_{S,-\infty}))>0\}}
							\right].\nonumber
			\end{align*}
			Hence, 
			\begin{align*}
				\mu([S,T_{S,-\infty}))=0, \quad \text{$\WM$-almost surely}.			
			\end{align*}	
		
		\textbf{Proof of \eqref{sto:eq_113}, \eqref{sto:eq_114}:}			
		Define
		$\Gamma:=\bigcap_{n=1}^\infty  \left\{T_{S,{-n}}>T_{S,-\infty}\right\}$.
		Plugging \eqref{sto:eq_111} into \eqref{sto:eq_110} gives us
		again for $\ell_0<0$ and therefore $g_t(\ell_0)<0$ for all $t\in [0,\infty)$ with Fatou's Lemma
		\begin{align}\label{sto:eq_112}
			-\infty&< \limsup_{\mathbb{Q}\ni \ell \downarrow -\infty}
			 \EW\left[g_{T_{S,\ell}}({\ell}) \mu(\{T_{S,\ell}\})\nonumber
			\mathbb{1}_{H_{S,\ell}^+}\mathbb{1}_{\Gamma^c}\right.\\
			&\hspace{15ex}\left.+\left(\int_{[T_{S,-\infty},T_{S,\ell})}
			g_t({\ell})\mu(\md t)+g_{T_{S,\ell}}({\ell}) \mu(\{T_{S,\ell}\})
			\mathbb{1}_{H_{S,\ell}^+}\right)\mathbb{1}_{\Gamma}
			\right]\nonumber\\
			&\leq \limsup_{\mathbb{Q}\ni \ell\downarrow -\infty}
			 \EW\left[g_{T_{S,\ell}}({\ell_0}) \mu(\{T_{S,\ell}\})\nonumber
			\mathbb{1}_{H_{S,\ell}^+}\mathbb{1}_{\Gamma^c}
			+\left(\int_{[T_{S,-\infty},T_{S,\ell})}
			g_t({\ell_0})\mu(\md t)\right)\mathbb{1}_{\Gamma}
			\right]\\
			&\leq 
			 \EW\left[\mu(\{T_{S,-\infty}\})g_{T_{S,-\infty}}({\ell_0})\left(
			\liminf_{\mathbb{Q}\ni \ell\downarrow -\infty}\mathbb{1}_{H_{S,\ell}^+\cap \Gamma^c}
				+\mathbb{1}_{\Gamma}\right)\right]\nonumber\\
			&\overset{l_0\downarrow -\infty}{\longrightarrow}
			\EW\left[(-\infty)\mu(\{T_{S,-\infty}\})\left(
			\liminf_{\mathbb{Q}\ni \ell\downarrow -\infty}\mathbb{1}_{H_{S,\ell}^+\cap \Gamma^c}
			+\mathbb{1}_{\Gamma}\right)\right].
		\end{align}		
		One can see that the limes inferior inside of the latter
		expectation can actually be replaced by a limes and is a.s. given by
		\begin{align}\label{sto:eq_121}
			\lim_{\mathbb{Q}\ni \ell \downarrow -\infty}\mathbb{1}_{H_{S,\ell}^+\cap \Gamma^c}
			=\mathbb{1}_{\left\{X_{T_{S,-\infty}}<Y_{T_{S,-\infty}}^\ell \text{ for all $\ell\leq 0$}\right\}\cap \Gamma^c},
		\end{align}
		which follows from $H_{S,\ell}^+=\{X_{T_{S,\ell}}<Y_{T_{S,\ell}}^\ell\}$ up to a $\WM$-null set 
		(see \eqref{sto:eq_77}) 
		because by definition $\Gamma^c=\{T_{S,\ell}=T_{S,-\infty} \text{ for some $\ell<0$}\}$. 
		Passing to complements yields a.s.
		\begin{align}\label{sto:eq_211}
			\lim_{\mathbb{Q}\ni l\downarrow -\infty}\mathbb{1}_{(H_{S,\ell}^-\cup H_{S,\ell})\cap \Gamma^c}
			=\mathbb{1}_{\left\{X_{T_{S,-\infty}}=Y_{T_{S,-\infty}}^l \text{ for some $\ell\leq 0$}\right\}\cap \Gamma^c},
		\end{align}
		which we will use later.
		With \eqref{sto:eq_121} result \eqref{sto:eq_112} leads to \eqref{sto:eq_113} and \eqref{sto:eq_114}.

		With the help of \eqref{sto:eq_111}, \eqref{sto:eq_113} and \eqref{sto:eq_114}
		 we will now prove $X_S=Y_S^{-\infty}$:			
			
	\emph{\textbf{Establishing $X_S=Y_S^{-\infty}$:}}			
		First we have that $(T_{S,\ell})_{H_{S,\ell}^-}$ is a predictable stopping time (see 
		Lemma \ref{Lem:sp3} (iv)) and therefore we get by Lemma \ref{sto:tools_lem} (ii) that
		\begin{align}\label{sto:eq_119}
			\EW\left[\lsl{X}_{(T_{S,\ell})_{H_{S,\ell}^-}}\right]
			\leq \EW\left[{^\mathcal{P} X}_{(T_{S,\ell})_{H_{S,\ell}^-}}\right]
			=\EW\left[{X}_{T_{S,\ell}}\mathbb{1}_{H_{S,\ell}^-}\right].
		\end{align}
		Now we obtain by \eqref{sto:useful_gl_5}, \eqref{sto:eq_119}
		and $g_t(\ell)\leq 0$ for $\ell \leq 0$ (see (\ref{sto:sep_gl})) 
		for any $t\in [0,\infty)$ with the help of Fatou's Lemma
		that
		\begin{align}\label{sto:eq_125}
			\EW[X_S]&\overset{}{\leq} 
			\limsup_{\mathbb{Q}\ni \ell\downarrow -\infty}\EW[Y_S^\ell]\nonumber\\
			&\overset{\eqref{sto:useful_gl_5}}{=}	\limsup_{\mathbb{Q}\ni \ell\downarrow -\infty}	
			\EW\left[\lsl{X}_{T_{S,\ell}}  \mathbb{1}_{H_{S,\ell}^-}
									+X_{T_{S,\ell}}\mathbb{1}_{H_{S,\ell}}
									+\lsr{X}{T_{S,\ell}}\mathbb{1}_{H_{S,\ell}^+}
									+\int_{[S,\tau_{S,\ell})}
			g_t({\ell})\mu(\md t)	\right]\nonumber\\	
			&\overset{\eqref{sto:eq_119}}{\leq}	\limsup_{\mathbb{Q}\ni \ell\downarrow -\infty}	
			\EW\left[X_{T_{S,\ell}}\mathbb{1}_{H_{S,\ell}^-\cup H_{S,\ell}}
									+\lsr{X}{T_{S,\ell}}\mathbb{1}_{H_{S,\ell}^+}
									+\int_{[S,\tau_{S,\ell})}
			g_t({\ell})\mu(\md t)	\right]\nonumber\\	
			&\leq		\EW\left[	\limsup_{\mathbb{Q}\ni \ell\downarrow -\infty}\left(
			X_{T_{S,\ell}}\mathbb{1}_{H_{S,\ell}^-\cup H_{S,\ell}}
									+\lsr{X}{T_{S,\ell}}\mathbb{1}_{H_{S,\ell}^+}
									\right)	\right]\nonumber\\
				&\overset{\eqref{sto:eq_121},\eqref{sto:eq_211}}{=}
						\EW\left[	\limsup_{\mathbb{Q}\ni \ell\downarrow -\infty}\left(
			X_{T_{S,\ell}}\mathbb{1}_{H_{S,\ell}^-\cup H_{S,\ell}}
									+\lsr{X}{T_{S,\ell}}\mathbb{1}_{H_{S,\ell}^+}
									\right)\mathbb{1}_{\Gamma}	\right]\nonumber\\
			&\hspace{10ex}+\EW\left[X_{T_{S,-\infty}}
					\left(\lim_{\mathbb{Q}\ni \ell\downarrow -\infty}\mathbb{1}_{(H_{S,\ell}^-\cup H_{S,\ell})\cap \Gamma^c}\right)
					+\lsr{X}{T_{S,-\infty}}\left(\lim_{\mathbb{Q}\ni \ell\downarrow -\infty}
					\mathbb{1}_{H_{S,\ell}^+\cap \Gamma^c}\right)\right]\nonumber\\
			&\overset{}{\leq} \EW\left[\lsr{X}{T_{S,-\infty}}\mathbb{1}_{\Gamma}+
			 X_{T_{S,-\infty}}
					\left(\lim_{\mathbb{Q}\ni \ell\downarrow -\infty}\mathbb{1}_{(H_{S,\ell}^-\cup H_{S,\ell})\cap \Gamma^c}\right)
					+\lsr{X}{T_{S,-\infty}}\left(\lim_{\mathbb{Q}\ni \ell\downarrow -\infty}
					\mathbb{1}_{H_{S,\ell}^+\cap \Gamma^c}\right)\right],
		\end{align}	
		where in the last step we have used that $T_{S,\ell}$ is converging strictly from above
		 to $T_{S,-\infty}$ on $\Gamma$ and therefore $X_{T_{S,\ell}}$ and $\lsr{X}{T_{S,\ell}}$ converge both
		 to $\lsr{X}{T_{S,-\infty}}$.
					
        By \eqref{sto:eq_121}, \eqref{sto:eq_211} and  \eqref{sto:eq_125} it remains to show that
        \begin{align}\label{sto:eq_123}
        	\EW\left[\lsr{X}{T_{S,-\infty}}\mathbb{1}_{E}+
        	 X_{T_{S,-\infty}}
        			\mathbb{1}_{E^c}\right]\leq \EW[X_S],
        \end{align}
        where we use for ease of notation
        \[
        	E:=\Gamma\cup
        	\left(\left\{X_{T_{S,-\infty}}<Y_{T_{S,-\infty}}^\ell \text{ for all $\ell\leq 0$}\right\}
        	\cap \Gamma^c\right)
        \]
        with
        \[
        	E^c=\left\{X_{T_{S,-\infty}}=Y_{T_{S,-\infty}}^\ell
        	 \text{ for some $\ell\leq 0$}\right\}\cap \Gamma^c.
        \]

        For the next steps we will need the following claim, which we will prove below:
        
        \emph{\textbf{Claim:}} The $\FA$-stopping time $(T_{S,-\infty})_{E^c}$ 
        		is actually a $\Lambda$-stopping time.
        
        Now we get with the help of  \citing{BB18_2}{Proposition 4.2}{19}
        a	non-increasing sequence $\tilde{T}_n\in 
        \stmg{(T_{S,-\infty})_{E}}$ 
        with limit $(T_{S,-\infty})_{E}$ and $\tilde{T}_n>
        (T_{S,-\infty})_{E}$ on $\{(T_{S,-\infty})_{E}<\infty\}
        =E$ such that
        \[
        					\lsr{X}{(T_{S,-\infty})_{E}}=
        					\lim_{n\rightarrow \infty} X_{\tilde{T}_n}\quad 
        					\text{ $\WM$-almost surely.}
        \]
        By the claim we can define the sequence
        of $\Lambda$-stopping times
        $T_n:=\tilde{T}_n\wedge (T_{S,-\infty})_{E^c}$.
        Now we see that $\lim_{n\rightarrow \infty}\mu([S,T_n))=0$
        a.s.. Indeed, 
        by \eqref{sto:eq_111}, \eqref{sto:eq_113}
        and \eqref{sto:eq_114} we have 
        $\mu([S,T_{S,-\infty}))=0$ a.s. and
        $E\subset \{\mu(\{T_{S,-\infty}\})=0\}$. As $T_n$ converges strictly from above to $T_{S,-\infty}$ on 
        $E$ and is equal $T_{S,-\infty}$ on $E^c$ we get
        \[
        	\lim_{n\rightarrow \infty}\mu([S,T_n))=
        	\mu([S,T_{S,-\infty}])\mathbb{1}_E+\mu([S,T_{S,-\infty}))\mathbb{1}_{E^c}=0.
        \]
        Therefore by $\mu$-right-upper-semicontinuity in expectation of $X$ and 
        because $X$ is of class($D^\Lambda$) we get \eqref{sto:eq_123}
        by
        \begin{align*}
        	\EW\left[\lsr{X}{T_{S,-\infty}}\mathbb{1}_{E}+
        	 X_{T_{S,-\infty}}\mathbb{1}_{E^c}\right]
        			= \EW[\lim_{n\rightarrow \infty} X_{T_n}]
        			= \lim_{n\rightarrow \infty} \EW[X_{T_n}]
        			\leq \EW[X_S].
        \end{align*}				
        \emph{\textbf{Proof of the above claim:}}
			First we can calculate that
			\begin{align*}
				E^c&=\left\{T_{S,-\infty}=T_{S,-n} \text{ and } X_{T_{S,-\infty}}
							=Y_{T_{S,-\infty}}^{-n} \text{ for some $n\in \NZ$}\right\}\\
				&=\bigcup_{n=1}^\infty \left(\left\{T_{S,-\infty}=T_{S,-n}\right\}
					\cap \left\{X_{T_{S,-\infty}}=Y_{T_{S,-\infty}}^{-n}\right\}\right)\\
				&=\bigcup_{n=1}^\infty \left(\left\{T_{S,-\infty}=T_{S,-n}\right\}
					\cap (H_{S,-n}^-\cup H_{S,-n})\right).
			\end{align*}
			The first equality follows by monotonicity of $\ell\mapsto Y^\ell$, $\ell \mapsto T_{S,\ell}$ and the
			third by \eqref{sto:eq_12}.
			Now we define for $n\in \NZ$ the sets
			 \[
			 	A^{-n}:=\left\{T_{S,-\infty}=T_{S,-n}\right\}\cap (H_{S,-n}^-\cup H_{S,-n})
			 \]
			 and we claim that $(T_{S,-n})_{A^{-n}}$ is a $\Lambda$-stopping time, i.e.
			 $\stsetRO{(T_{S,-n})_{A^{-n}}}{\infty}\in \Lambda$.
			Indeed, we have by Lemma \ref{Lem:sp3} (iv), for $n\in \NZ$ that
			$(T_{S,-n})_{H_{S,-n}^-\cup H_{S,-n}}$ is a $\Lambda$-stopping time
			 and as
			 \begin{align*}
			 	A^{-n}
			 	 	\in \aFA_{(T_{S,-n})_{(H_{S,-n}^-\cup H_{S,-n})}}
			 \end{align*}
			 also $(T_{S,-n})_{A^{-n}}$ is a $\Lambda$-stopping time. Finally we get
			\begin{align*}
			 	\stsetRO{(T_{S,-\infty})_{E^c}}{\infty}
			 		&=\bigcup_{n=1}^\infty 
			 	\left\{(\omega,t)\midG T_{S,-\infty}(\omega)\leq t,\ \omega \in 
			 	 A^{-n}\right\}\\
			 	 &=\bigcup_{n=1}^\infty 
			 	\left\{(\omega,t)\midG T_{S,-n}(\omega)\leq t,\ \omega \in 
			 	 A^{-n}\right\}\\
			 &=	\bigcup_{n=1}^\infty
			 				\stsetRO{(T_{S,-n})_{A^{-n}}}{\infty}\in \Lambda.
			 \end{align*}
				
			\emph{Proof of (ix)}: By Proposition \ref{sto:Pro_46} below we have, up to a $\WM$-null set,
			\[
				E:=\{\WM\left(\mu([S,\infty))>0 \midG \aFA_S\right)=0\}
				\subset \{\mu([S,\infty))=0\}.
			\]			
			Hence we have for $S_E$ that $\mu([S_E,\infty))=0$ a.s.
			and for any $T\in \stmg{S}$ that $\mu([T_E,\infty))=0$ a.s..
			Therefore we have by the properties of $X$ that $X_{T_E}
			=0$ and consequently we have $Y_{S_E}^\ell=0=X_{S_E}$
			for all $\ell$.
	    
		
		\subsection{Proof of Lemma \ref{Lem:s4}}\label{proof:4.2}
			We first note that $L$ is $\Lambda$-measurable and $L_S$ for $S\in \stm$
			takes the value $\infty$ at most on the set 
			$\{\WM\left(\mu([S,\infty))>0\midG \aFA_S\right)=0\})$ almost surely, 
			which can be verified readily as in \citing{BK04}{Proof of Lemma 4.14}{1066}.
			
			Next fix $S\in \stmr$ and assume by way of contradiction
			 that $\WM(\{L_S=-\infty\}\cap \{\WM\left(\mu(\{S\})>0
				\midG \aFA_S\right)>0\})>0$.
			Then we can use the following claim, to be proven at the end:
			
			\textbf{Claim:}
			 There exists a sequence
			 $(T_k)_{k\in \NZ}\subset \stmsg{S}$ and a 
			corresponding non-increasing sequence $(E_k)_{k\in \NZ}\in \aFA_S$ such that
			\begin{align}\label{sto:eq_337}
				X_{S}<  \EW\left[X_{T_k}+\int_{[S,T_k)} 
				g_t\left(-k\right)\mu(\md t)\midG \aFA_S\right]
				\quad \text{ on $E_k$}
			\end{align}
			with
			\begin{align}\label{sto:eq_338}
				E_k\subset \{S<\infty\}\cap\{\WM\left(\mu(\{S\})>0
				\midG \aFA_S\right)>0\}
			\end{align}
			and
			\[
				\WM\left(\bigcap_{k=1}^\infty E_k \right)>0.
			\]					
			Using this claim and recalling that $g_S(-n)<0$ for all $n\in \NZ$ 	
		 we get on $\bigcap_{k=1}^\infty E_k$ 
					that for all $n\in \NZ$
			\begin{align}
				-\infty<-M_{S}^X\leq X_S<
				&\ \EW\left[X_{T_n}+\int_{[S,T_n)} 
				g_t\left(-n\right)\mu(\md t)\midG \aFA_S\right]\\
				&\leq  M^X_S+\EW\left[\int_{[S,T_n)} 
				g_t\left(-n\right)\mu(\md t)\midG \aFA_S\right]\\
				&\leq M^X_S+\EW\left[
				g_S\left(-n\right)\mu(\{S\})\midG \aFA_S\right]\label{sto:sol_gl_4}
			\end{align}
			with $M^X$ the martingale from Lemma \ref{sto:tools_lem}. 
			Observing that $0>g_S(-n)\downarrow -\infty$ for $n\rightarrow
			\infty$ we deduce from (\ref{sto:sol_gl_4}) that 
			$\WM\left(\mu(\{S\})>0
				\midG \aFA_S\right)=0$ a.s. on 
			$\bigcap_{k=1}^\infty E_k$. This contradicts the properties of the sets $(E_k)_{k\in \NZ}$
			 and finishes our proof once the above claim is proven.	
				
			\textbf{Proof of the above Claim:}					
			The family
			\[
				\left\{\EW\left[X_{T}+\int_{[S,T)} g_t\left(-k\right)
									\mu(\md t)\midG \aFA_S\right]\midG T\in \stmg{S}\right\}
			\]
			is upwards directed for any fixed $k\in \NZ$. Hence
			there exists by \citing{NE75}{Proposition VI-1-I}{121} for every $k\in \NZ$ a sequence  
			$(R_m^k)_{m\in \NZ}\subset \stmg{S}$, such that
			\[
				Y_S^{-k}=\lim_{m\rightarrow \infty} 
				\EW\left[X_{R_m^k}+\int_{[S,R_m^k)} g_t\left(-k\right)
									\mu(\md t)\midG \aFA_S\right],
			\]					
			where the limit on the right hand side is non-decreasing. Note furthermore that
			\begin{align}\label{sto:eq_91}
				\left\{L_S=-\infty\right\}=
				\left\{X_S<Y^{-k}_S \text{ for all } k\in \NZ\right\}.
			\end{align}
			Now we can construct $T_k$ and $E_k$:
			\begin{compactitem}
				\item Construction of $T_1$ and $E_1$. Consider for $m=1,2,\dots$ the sets
					\begin{align*}
						\tilde{E}_m^1&:=\left\{
						X_S< \EW\left[X_{R_m^1}+\int_{[S,R_m^1)} g_t\left(-1\right)\mu(\md t)
						\midG \aFA_S\right]\right\}\\
						&\hspace{20ex}\cap \{L_S=-\infty\}\cap 
						\left\{\WM\left(\mu(\{S\})>0
				\midG \aFA_S\right)>0\right\}\in \aFA_S,
					\end{align*}
					which, up to a $\WM$-null set, grow to 
					\[\{L_S=-\infty\}\cap 
					\left\{\WM\left(\mu(\{S\})>0	\midG \aFA_S\right)>0\right\},\]
					because of \eqref{sto:eq_91}. Now
					just choose $m$ large enough  to ensure that 
					\[
						\WM\left(\tilde{E}_m^1\right)>\frac12\left(\WM
						\left[\left\{L_S=-\infty\right\}\cap
						\left\{\WM\left(\mu(\{S\})>0
				\midG \aFA_S\right)>0\right\}\right]\right),
					\]
					and set $E_1:=\tilde{E}_m^1$ and $T_1:=R_m^1$.
				\item Fix $k\in \NZ$, $k\geq 1$ and assume $E_{k}$ and $T_{k}$ have been constructed already.
					Now we construct $T_{k+1}$ and $E_{k+1}$. 
					Similarly to above, consider the sequence of sets
					\begin{align*}
						\tilde{E}_m^{k+1}&:=\left\{
						X_S< \EW\left[X_{R_m^{k+1}}+\int_{[S,R_m^{k+1})} g_t\left(-(k+1)\right) \mu( \md 
						t)\midG \aFA_S\right]\right\}\cap E_k,
					\end{align*}
					which, up to a $\WM$-null set, grows to $E_k$ 
					again by \eqref{sto:eq_91} and because by construction
					\[
						E_k\subset \{L_S=-\infty\}\cap 
					\{\WM\left(\mu(\{S\})>0
				\midG \aFA_S\right)>0\}.
					\]
					For $m$ large enough with
					\[
						\WM\left(\tilde{E}_m^{k+1}\right)>
						\frac12\WM(\{L_S=-\infty\}\cap\{\WM\left(\mu(\{S\})>0
				\midG \aFA_S\right)>0\}),
					\]
					 we set $E_{k+1}:=\tilde{E}_{m}^{k+1}$
					and $T_{k+1}:=R_{m}^{k+1}$.
				\item The stopping times $T_k$ and sets $E_k$, $k\in \NZ$,
				are as requiered by \eqref{sto:eq_337} and \eqref{sto:eq_338}
					by construction. We also obtain
				\begin{align*}
					\WM\left(\bigcap_{k=1}^\infty E_k \right)
					&= \lim_{k\rightarrow \infty}\WM\left(E_k\right)\\
					&\geq \frac12 \left(\WM(\{L_S=-\infty\}\cap\{\WM\left(\mu(\{S\})>0
				\midG \aFA_S\right)>0\})\right)>0,
				\end{align*}
				completing the proof of our claim.
			\end{compactitem}

		
		\subsection{Proof of Lemma \ref{Lem:s6}}\label{proof:4.3}
	    	
			Fix $S\in \stmr$. On the set $\{\WM\left(\mu([S,\infty))>0
			\midG \aFA_S\right)=0\}$
			we have by Lemma \ref{Lem:sp3} (ix) that $X_S=Y_S^\ell$ and $S=T_{S,\ell}$ for all $\ell\in \RZ$
			and hence $L_S=\infty$. On the other hand, we have by definition of $\ell_{S,T}$ that also
			$\ell_{S,T}=\infty$ for all $T\in \stmg{S}$, which shows 
			\eqref{sto:sol_gl_1} on $\{\WM\left(\mu([S,\infty))>0
			\midG \aFA_S\right)=0\}$.
			From now on we focus on the set $\{\WM\left(\mu([S,\infty))>0
			\midG \aFA_S\right)>0\}$. 	
			We start by noting that for an $\aFA_S$-measurable  $R:\Omega \rightarrow 
			\RZ\cup \{-\infty\}$ 
			we have
			\begin{align}\label{sto:gl_77}
				Y^R_S-X_S=\esssup_{T\in \stmg{S}} \EW\left[X_T-X_S
				+\int_{[S,T)}g_t(R)\mu(\md t)\midG \aFA_S\right].
			\end{align}					
			
			Next we state the following first claim:
			
			\textbf{Claim 1:} Fix $n\in \NZ$ and define the $\aFA_S$-measurable random variable
			\[
				K^n:=\left(L_S+\frac{1}{n}\right)
				\mathbb{1}_{\{L_S>-\infty\}}-n\mathbb{1}_{\{L_S=-\infty\}}
				>L_S \quad \text{ on } \{L_S<\infty\}
			\]
			and $K^n:=\infty$ on $\{L_S=\infty\}$ with $K^n>-\infty$. Then
			there exists for $n=1,2,\dots$ 
			a sequence $(T_m^n)_{m\in \NZ} \subset \stmg{S}$ such that
			\begin{align*}
				Y^{K^n}_S-X_S
						&=\lim_{m\rightarrow\infty} \EW\left[X_{T_m^n}-X_S+\int_{[S,
						T_m^n)}g_t(K^n)\mu(\md t)\midG \aFA_S\right],
			\end{align*}
			where the preceding limit is non-decreasing.
			
			\textbf{Proof of Claim 1:}
			Indeed, by \eqref{sto:gl_77} we have					
			\begin{align*}
				Y^{K^n}_S-X_S
				=\esssup_{T\in \stmg{S}} \EW\left[X_T-X_S
				+\int_{[S,T)}
						g_t(K^n)\mu(\md t)\midG \aFA_S\right].
			\end{align*}					
			As the family
			\[
				\left\{\EW\left[X_T-X_S+\int_{[S,T)}g_t(K^n)\mu(\md t)\midG \aFA_S\right]
				\midG T\in \stmg{S}\right\}
			\]					
			is upwards directed the rest follows from \citing{NE75}{Proposition VI-1-I}{121}.
			
			Before concluding the proof let us state one more claim,
			which we will prove at the end of this section:
			
		\textbf{Claim 2:}
				We have 
				\begin{align}
					K^n\geq \essinf_{T\in \stmsg{S}} \ell_{S,T} \label{sto:gl_70}
				\end{align}			
				for all $n\in \NZ$.
			
			\textbf{Conclusion of the proof:}			
				First we prove \eqref{sto:eq_1}. Indeed, on $\{L_S>-\infty\}$
			 we obtain by continuity of $\ell\mapsto Y^\ell_S$
			(Lemma \ref{Lem:sp3} (viii)) that
			$X_S=Y_S^{L_S}$. On the other hand we get by 
			Lemma \ref{Lem:sp3} (viii) that $X_S=Y_S^{-\infty}=Y_S^{L_S}$ a.s.
			on $\{L_S=-\infty\}$, which proves the claimed equation. 			
			Second we get ``$\geq$'' in \eqref{sto:sol_gl_1} by letting 
			$n\uparrow \infty$ in \eqref{sto:gl_70}.			
			Next we get for any $T'\in \stmsg{S}$ by \eqref{sto:gl_77} that
				\begin{align*}
					X_S&=Y^{L_S}_S
						=\esssup_{T\in \stmg{S}}\EW\left[X_T+\int_{[S,T)}g_t(L_S)\mu(\md t)\midG
						 \aFA_S\right]\\
						&\geq 
						\EW\left[X_{T'}+\int_{[S,T')}g_t(L_S)\mu(\md t)\midG
						 \aFA_S\right].
				\end{align*}
			 As $L_S\in \aFA_S$ we obtain $L_S\leq \ell_{S,T'}$
			for any such $T'$,
			 which shows  ``$\leq$'' in \eqref{sto:sol_gl_1}
			and finishes the proof once we have proven Claim 2.					
				
			\textbf{Proof of Claim 2:}
			By $K^n:=\infty$ on $\{L_S=\infty\}$ the inequality 
			\eqref{sto:gl_70} is only to be proven on 
				$\{L_S<\infty\}\subset \{\WM\left(\mu([S,\infty))>0
			\midG \aFA_S\right)>0\}$.
			On this set we have $\ell_{S,\infty}<\infty$  and hence also $\essinf_{T\in \stmsg{S}} \ell_{S,T}<\infty$.					
			 For the proof of Claim 2 we fix $n\in \NZ$. Then we set $(T_m^n)_{m\in \NZ}$ to be the sequence in $\stmg{S}$
			  given by Claim 1 and we define $A_m:=\left\{\mathcal{E}_{T_m^n}>0\right\}$
			  with
			  \[
			  	\mathcal{E}_{T_m^n}:=\EW\left[X_{T_m^n}-X_S+\int_{[S,{T_m^n})}g_t(K^n)\mu(\md t)\midG \aFA_S\right].
			  \]
				Now we have that 
				\begin{align*}\label{sto:eq_13}
						A_m\subset 
				\{\WM\left(\mu([S,T_m^n))>0
			\midG \aFA_S\right)>0\}=:E_m,
			\end{align*}
			which follows by $\mu$-right-upper-semicontinuity in expectation of $X$, Proposition \ref{eq_rusc} and Proposition \ref{sto:Pro_46} below.
			
			By definition of $K$ and $L$ we have 
			\[
				\{L_S<\infty\}\subset \{0<\lim_{m\rightarrow \infty} \mathcal{E}_{T_m^n}\}
				\]
			and with the monotonicity of $\mathcal{E}_{T_m^n}$ in $m$ we
			also have 
			$A_m\subset A_{m+1}$ for any $m\in \NZ$. As we have by Lemma \ref{Lem:s6} that
			\[
				\WM(\{L_S=\infty\}\cap \WM(\mu([S,\infty)>0|\aFA_S)>0\})=0
			\]
			we get
			\begin{align*}
				\WM(\{\WM(\mu([S,\infty)>0|\aFA_S)>0\})&\leq 
				\WM(\{L_S<\infty\}
				\leq  \WM(\{0<\lim_{m\rightarrow \infty} \mathcal{E}_{T_m^n}\})\\
				&=\lim_{m\rightarrow \infty} \WM(A_m)
				\leq \WM(\{\WM(\mu([S,\infty)>0|\aFA_S)>0\}),
			\end{align*}
			which actually shows
			\begin{align}\label{sto:gl_71}
				\lim_{m\rightarrow \infty} \WM(A_m)=
				\WM(\{\WM(\mu([S,\infty)>0|\aFA_S)>0\}).
			\end{align}
			Furthermore we have $A_m\subset \{T_m^n>S\}$ and therefore 
			we get for any $m\in \NZ$ on $A_m$
			\begin{align*}
				\EW\left[\int_{[S,T_m^n)} g_t(\ell_{S,T_m^n})\mu(\md t)\midG \aFA_S\right]
				&=\EW\left[X_S-X_{T_m^n}\midG \aFA_S \right]\\
				&<\EW\left[\int_{[S,T_m^n)} g_t(K^n)\mu(\md t)\midG \aFA_S\right].
			\end{align*}		
			 As $K^n$ is $\aFA_S$-measurable this leads for any $m\in \NZ$
			to
			\[
				K^n\geq \ell_{S,T_m^n}\geq \essinf_{T\in \stmsg{S}} \ell_{S,T} \quad \text{ on $A_m$},
			\]
			and, so, by (\ref{sto:gl_71}), the inequality (\ref{sto:gl_70}) holds almost surely on
			$\{\WM\left(\mu([S,\infty))>0\midG \aFA_S\right)>0\}$,
			which finishes the proof as we already proven that we only
			have to focus on the latter set.
	    
    \subsection{Proof of Lemma \ref{Lem:s5}}\label{proof:4.4}
    
        \subsubsection{Preliminary results for the Proof of Lemma \ref{Lem:s5}}
            
            We start with the following simple observation:
	
    		\begin{Pro}\label{sto:Pro_46}
    			Let $\mathcal{G}_1\subset\mathcal{G}_2$ be nested $\sigma$-fields on $\Omega$.
    			Then for any $\mathcal{G}_2$-measurable random variable $X\geq 0$ and $B\in \mathcal{G}_1$
    			with $\{X>0\}\subset B$ we have
    			\begin{align}\label{sto:eq_33}
    				\left\{X>0\right\} \subset \left\{\EW\left[X\midG \mathcal{G}_1\right]>0\right\}
    				\subset B
    			\end{align}
    			up to a $\WM$-null set.
    			In particular, for $A\in \mathcal{G}_2$, $B\in \mathcal{G}_1$ and $A\subset B$ we have
    			\begin{align}\label{sto:eq_44}
    				A\subset \left\{\WM\left(A\midG \mathcal{G}_1\right)
    								>0\right\}\subset B
    			\end{align}
    			up to a $\WM$-null set.
    		\end{Pro}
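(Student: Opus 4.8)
The plan is to establish the two inclusions in \eqref{sto:eq_33} independently by short integrability arguments, and then to read off \eqref{sto:eq_44} as the special case $X:=\mathbb{1}_A$. For the left inclusion $\{X>0\}\subset\{\EW\left[X\midG\mathcal{G}_1\right]>0\}$, I would set $N:=\{\EW\left[X\midG\mathcal{G}_1\right]=0\}\in\mathcal{G}_1$ and observe that $\EW\left[X\mathbb{1}_N\right]=\EW\left[\EW\left[X\midG\mathcal{G}_1\right]\mathbb{1}_N\right]=0$. Since $X\geq 0$, this forces $X\mathbb{1}_N=0$ $\WM$-a.s., hence $\WM(\{X>0\}\cap N)=0$, which is exactly the asserted inclusion up to a $\WM$-null set.

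For the right inclusion $\{\EW\left[X\midG\mathcal{G}_1\right]>0\}\subset B$, I would use that $B\in\mathcal{G}_1$ together with $\{X>0\}\subset B$ implies $X=X\mathbb{1}_B$ $\WM$-a.s. (indeed $X=0$ $\WM$-a.s. on $B^c$). Taking $\mathcal{G}_1$-conditional expectations and pulling out the $\mathcal{G}_1$-measurable indicator gives $\EW\left[X\midG\mathcal{G}_1\right]=\mathbb{1}_B\,\EW\left[X\midG\mathcal{G}_1\right]$ $\WM$-a.s., so that $\EW\left[X\midG\mathcal{G}_1\right]=0$ $\WM$-a.s. on $B^c$, i.e. $\WM(\{\EW\left[X\midG\mathcal{G}_1\right]>0\}\cap B^c)=0$. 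Together with the first part this proves \eqref{sto:eq_33}.

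Finally, \eqref{sto:eq_44} follows by applying \eqref{sto:eq_33} to the nonnegative $\mathcal{G}_2$-measurable random variable $X:=\mathbb{1}_A$: then $\{X>0\}=A\subset B$, and $\EW\left[\mathbb{1}_A\midG\mathcal{G}_1\right]=\WM\left(A\midG\mathcal{G}_1\right)$, so the three sets in \eqref{sto:eq_33} specialize precisely to those in \eqref{sto:eq_44}. I do not anticipate a genuine obstacle; the only point to keep in mind is that the exceptional sets here are $\WM$-negligible but need not be empty, so every inclusion is understood modulo $\WM$-null sets, in agreement with the statement.
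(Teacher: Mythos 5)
Your proof is correct and follows essentially the same approach as the paper: both inclusions in \eqref{sto:eq_33} are obtained by integrating against the complementary $\mathcal{G}_1$-set and using $X\geq 0$, and \eqref{sto:eq_44} is obtained by specializing to $X=\mathbb{1}_A$.
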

    		
    		\begin{proof}
    		    For the first inclusion in \eqref{sto:eq_33}, note that
    				\[
    				\EW\left[X\mathbb{1}_{\left\{\EW\left[X\,|\, \mathcal{G}_1\right]
    								=0\right\}}\right]
    				=\EW\left[\EW\left[X\midG \mathcal{G}_1\right]\mathbb{1}_{\left\{	\EW\left[X\, |\, \mathcal{G}_1\right]
    								=0\right\}}\right]
    				=0,
    			\]
    			which proves 	$\left\{X>0\right\} \subset \left\{\EW\left[X\midG \mathcal{G}_1\right]
    								>0\right\}$ up to a $\WM$-null set. 
    								
    			For the second inclusion in \eqref{sto:eq_33}, observe that $\{X>0\}
    			\subset B$ and $B\in \mathcal{G}_1$ yields 
    			\[
    				\EW\left[\EW\left[X\midG \mathcal{G}_1\right]\mathbb{1}_{B^c}\right]
    				=\EW\left[X\mathbb{1}_{B^c}\right]=0,
    			\]
    			which implies by $X\geq 0$ that 
    			$\EW\left[X\midG \mathcal{G}_1\right]\mathbb{1}_{B^c}=0$
    			almost surely and therefore 
    			\begin{align}
    			    \left\{\EW\left[X\midG \mathcal{G}_1\right]
    								>0\right\}\subset B \quad \text{ up to a $\WM$-null set.}
    			\end{align}
    		\end{proof}

		Next we need to prove some technical result, which is used in the proof of Lemma 
		\ref{Lem:s5}.
	
		\begin{Pro}\label{sto:pro_70}
			Consider $S\in \stm$, $\ell\in \RZ$ and let $T\in \stmg{S}$ with $\WM(T<\infty)>0$
			  be such that $\bar{L}_{S,T}<\ell$
					on $\{T<\infty\}$ where
			\begin{align}\label{sto:eq_2}
						\bar{L}_{S,t}(\omega):=\begin{cases}
													\sup\limits_{v\in [S(\omega),t]} L_v(\omega)
														&\text{ for } t\geq S(\omega),\\
													-\infty 									&\text{ for } t< S(\omega),
												\end{cases}
					\end{align}		
			with $L$ as in Lemma \ref{Lem:s4}.
			
			Then for any $U\in \stmgs{S}{T}$ 
			the set $E^U :=\{\WM(T<\infty|\aFA_U)>0\}$ has strictly positive
			probability and there exists $R^U\in \stmg{U}$ such that $R^U=\infty$
			on $(E^U)^c$ and $R^U>T$, $\WM(\mu([U,R^U))>0|\aFA_U)>0$, 
			$X_U\leq  \EW\left[X_{R^U}+\int_{[U,R^U)} 
								g_t\left(\ell\right)\mu(\md t)\midG \aFA_U\right]$
 on $E^U$.
			\end{Pro}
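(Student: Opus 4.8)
The plan is to read the required stopping time off the characterisation $L_U=\essinf_{R\in\stmsg{U}}\ell_{U,R}$ of Lemma~\ref{Lem:s6}, exploiting that the hypothesis $\bar L_{S,T}<\ell$ on $\{T<\infty\}$ places all of $\stsetC{S}{T}$ strictly inside the ``continuation region'' $\{Y^\ell>X\}$ of the auxiliary stopping problem.

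First I would prove $\WM(E^U)>0$. Since $U\leq T$ we have $\aFA_U\subset\aFA_T$ and $\{T<\infty\}\in\aFA_T$, so Proposition~\ref{sto:Pro_46} applied with $\mathcal G_1=\aFA_U$, $\mathcal G_2=\aFA_T$, $X=\mathbb{1}_{\{T<\infty\}}$ and $B=\Omega$ gives $\{T<\infty\}\subset E^U$ up to a $\WM$-null set, hence $\WM(E^U)\geq\WM(T<\infty)>0$; the same argument shows $E^U\cap\{U=\infty\}$ is $\WM$-null, so $U_{E^U}\in\stmr$ and Lemma~\ref{Lem:s6} may be invoked at it. Next, on $\{T<\infty\}$ we have $L_U\leq\bar L_{S,U}\leq\bar L_{S,T}<\ell$ because $\stsetC{S}{U}\subset\stsetC{S}{T}$; since $\ell'\mapsto Y^{\ell'}_U$ is continuous and non-decreasing and $Y^{-\infty}_U=X_U$ (Lemma~\ref{Lem:sp3}~(viii)), this forces $Y^\ell_U>X_U$ on $\{T<\infty\}$. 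Observing that the family $\{\ell_{U,R}:R\in\stmsg{U}\}$ is downward directed (given $R_1,R_2$, split along the $\aFA_U$-set $\{\ell_{U,R_1}\leq\ell_{U,R_2}\}$ to obtain $R_3\in\stmsg{U}$ with $\ell_{U,R_3}=\ell_{U,R_1}\wedge\ell_{U,R_2}$, using uniqueness in Lemma~\ref{Lem:um1}), Lemma~\ref{Lem:s6} yields $(R_m)_m\subset\stmsg{U}$ with $\ell_{U,R_m}\downarrow L_U$ almost surely, so the $\aFA_U$-sets $A_m:=\{\ell_{U,R_m}<\ell\}$ increase to a set containing $\{T<\infty\}$ up to a null set. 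On each $A_m$ the value $\ell_{U,R_m}<\infty$ forces $\WM(\mu([U,R_m))>0\mid\aFA_U)>0$ by the definition of $\ell_{U,R_m}$, and the defining relation~\eqref{sto:results_gl_2} together with the strict monotonicity of $g$ in $\ell$ (Lemma~\ref{technical_result_1}, decomposed over rational levels to cope with the randomness of $\ell_{U,R_m}$) gives $X_U<\EW[X_{R_m}+\int_{[U,R_m)}g_t(\ell)\mu(\md t)\mid\aFA_U]$. Pasting the $R_m$ over the disjoint $\aFA_U$-pieces $A_m\setminus A_{m-1}$ and setting the value $\infty$ off $\bigcup_mA_m$ produces a $\Lambda$-stopping time with the required conditional $\mu$-mass and the strict value inequality on $\bigcup_mA_m\supset\{T<\infty\}$.

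The remaining, and genuinely delicate, point is to upgrade this to a stopping time $R^U$ that in addition exceeds $T$ (strictly where $T<\infty$, and equal to $\infty$ elsewhere) and delivers the inequality on all of $E^U$ rather than merely on $\{T<\infty\}$. For the first part one uses that $\bar L_{S,\cdot}<\ell$ throughout $\stsetC{S}{T}$ makes $Y^\ell_t>X_t$ there, so the optimal stopping problem started at $U$ wants to continue past $T$ and the approximating $R_m$ can be taken in $\stmsg{T}$; the case one must exclude is that $T_{U,\ell}$ is pinned to $T$ through the predictable-approach-from-below set $H^-_{U,\ell}$ with $\mu(\{T\})>0$, which is ruled out exactly as in the proof of Lemma~\ref{Lem:s4}, using $\mu$-right-upper-semicontinuity in expectation of $X$ (Proposition~\ref{eq_rusc}) and Proposition~\ref{sto:Pro_46}. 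For the second part one extends $R^U$ by $\infty$ to $E^U\setminus\{T<\infty\}$ (and to $(E^U)^c$), and, conditioning on $\aFA_T$ and using $\WM(T<\infty\mid\aFA_U)>0$ on $E^U$, spreads the \emph{strict} conditional gain obtained on $\{T<\infty\}$ across all of $E^U$ -- this is precisely where the strictness $X_U<\EW[\dots]$, rather than a mere inequality, is indispensable, and where one chooses the $R_m$ large enough that $\{T<\infty\}\subset\{\mu([U,R_m))>0\}$ to carry the $\mu$-mass estimate through to $E^U$. I expect this last step -- simultaneously forcing $R^U>T$, retaining positive conditional $\mu$-mass on $[U,R^U)$, and propagating the estimate from $\{T<\infty\}$ to the whole of $E^U$ -- to be the main obstacle, with the divided-stopping-time bookkeeping near $T$ (the $H^-$/$H$/$H^+$ split) as the most technical ingredient.
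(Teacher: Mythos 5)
Your proposal takes a genuinely different route from the paper's, and it contains a genuine gap at exactly the point you yourself flag as delicate. The paper does not go through the $\essinf_{R}\ell_{U,R}$ characterisation of Lemma~\ref{Lem:s6} at all: it works directly with the Snell-type value $Y^\ell_U$, constructs an upward-directed family of candidate stopping times $\Theta_m^U$ (those $R\in\stmg{U}$ delivering $X_U\leq\EW[X_R+\int_{[U,R)}g_t(\ell)\mu(\md t)\mid\aFA_U]$ together with positive conditional $\mu$-mass on an $\aFA_U$-set $E_m^U$), invokes the Hausdorff Maximality Theorem, and then shows by contradiction that any maximal element must exceed $T$ --- if it were $\leq T$ on a set of positive probability, the hypothesis $\bar L_{S,T}<\ell$ would make $X<Y^\ell$ there, allowing a strict extension that contradicts maximality. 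Your preliminary steps (positivity of $\WM(E^U)$ via Proposition~\ref{sto:Pro_46}; downward-directedness of $\{\ell_{U,R}\}$; the strict value gain on $\{\ell_{U,R_m}<\ell\}$ from Lemma~\ref{technical_result_1}; and the $\aFA_U$-measurable pasting) are sound, and indeed produce a $\Lambda$-stopping time with the right conditional-value and $\mu$-mass properties on $E^U$.

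The gap is that nothing in this construction forces $R^U>T$ on $E^U$, and your sketch of how to repair this does not work. The sequence $(R_m)$ you extract from Lemma~\ref{Lem:s6} realises $\essinf_{R\in\stmsg{U}}\ell_{U,R}$; there is no reason any such minimising $R_m$ lies in $\stmsg{T}$, and you cannot simply replace $R_m$ by $R_m\vee T'$ for some $T'>T$ because doing so changes both $X_{R_m}$ and the integration range $[U,R_m)$, hence changes $\ell_{U,R_m}$ in an uncontrolled way --- the replacement may well destroy $\ell_{U,R_m}<\ell$. The claim that ``the approximating $R_m$ can be taken in $\stmsg{T}$'' is precisely the statement that needs proof, and your appeal to ``rule out $H^-_{U,\ell}$ pinning $T_{U,\ell}$ to $T$ as in Lemma~\ref{Lem:s4}'' conflates two unrelated issues (Lemma~\ref{Lem:s4}'s argument rules out $L_S=-\infty$, it does not push a stopping time past $T$) and introduces the divided stopping time $\tau_{U,\ell}$ whose relation to $T$ is exactly what Lemma~\ref{Lem:s5} --- which rests on this Proposition --- is meant to establish, so that route is circular. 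Moreover, the final paragraph about ``spreading'' the strict gain from $\{T<\infty\}$ to $E^U$ is superfluous (the inequality is already available on all of $\{L_U<\ell\}\supset E^U$ via the $A_m$) and ill-posed as written, since $E^U\setminus\{T<\infty\}$ is not $\aFA_U$-measurable. The real work in this Proposition is the maximality-and-contradiction argument that the paper uses to push the stopping time strictly beyond $T$, and your proposal is missing a substitute for it.
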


			\begin{Rem}
			In the stochastic setting we have to manage two difficulties. First we have no total order relation  on the set
			of stopping times, which makes it difficult to choose a \emph{largest} stopping time satisfying a specific property. We will overcome this by a partial order and an application of the Hausdorff Maximality Theorem.
			Second everything is conditioned on the information known up to time $U$, which makes it more difficult to prove
			that the obtained maximal element is larger than $T$ as the objects we want to compare are not known at $U$.
		\end{Rem}		
	
			\begin{proof}[Proof of Proposition \ref{sto:pro_70}]
					\emph{Constructing a sequence of sets $E_m^U$ exhausting $E^U$:}
						As the family
					\[
						\left\{\EW\left[X_R+\int_{[U,R)}g_t(\ell)\mu(\md t)\midG 
						\aFA_U\right]
						\midG R\in \stmg{U}\right\}
					\]					
					is upwards directed 
					there exists by \citing{NE75}{Proposition VI-1-I}{121}
					 a sequence of stopping times
					$(\tilde{R}_m^U)_{m\in \NZ}\subset \stmg{U}$ such that
					with $Y_U^\ell$ from Lemma \ref{Lem:sp3} we have
					\begin{align}\label{sto:eq_23}
					\EW\left[X_{\tilde{R}_m^U}+\int_{[U,\tilde{R}_m^U)} g_t\left(\ell\right)
											\mu(\md t)\midG \aFA_U\right]\nearrow Y_U^\ell
											\quad \text{ as $m\uparrow \infty$.}
					\end{align}			
					 Let us define for $m\in \NZ$
					\begin{align}\label{sto:eq_22}
						E_m^U:=\ \left\{
						X_U< \EW\left[X_{\tilde{R}_m^U}+\int_{[U,\tilde{R}_m^U)}
						g_t\left(\ell\right)\mu( \md t)
						\midG \aFA_U\right]\right\}
						\cap E^U \in \aFA_U.
					\end{align}			
					This gives us a non-decreasing sequence of sets whose union
					is $\{X_U<Y_U^{\ell}\}\cap E^U$, up to a $\WM$-null set. Indeed,
					as $U\leq  T$,  we know that $X_U<Y_U^{\ell}$ 
					at least on the set $\{T<\infty\}$, because by assumption on $T$,
					\begin{align*}
						\left\{T<\infty\right\}
						\subset \left\{\bar{L}_{S,T}<\ell\right\}
						\subset \left\{\bar{L}_{S,U}<\ell\right\}
						\subset \left\{X_v<Y_v^\ell \text{ for } v\in [S,U] \right\}.
					\end{align*}
					Hence, $\left\{T<\infty\right\}
						\subset \left\{X_U<Y_U^\ell\right\}\in \aFA_U$
					and so by Proposition \ref{sto:Pro_46} we have					$\left\{T<\infty\right\}\subset
						E^U \subset \left\{X_U<Y_U^\ell\right\}$.
					Thus, $(E_m^U)_{m\in \NZ}$ grows to $E^U$ and 
					\begin{align}\label{sto:eq_24}
						0<\WM(T<\infty)\leq \WM\left(E^U\right)
						=\WM\left(\bigcup_{m=1}^\infty E_m^U\right).
					\end{align}
					Therefore, there exists $M\in \NZ$ such that $\WM(E_m^U)>0$
					for $m\geq M$
					and we assume without loss of generality
					that in fact $\WM(E_m^U)>0$ holds for all $m\in \NZ$.

					\emph{Fix $m\in \NZ$ and construct 
					$R_m^U$ corresponding to $E_m^U$ such that $R_m^U$ satisfies the desired conditions for
					$R^U$ on $E_m^U$:} 
					Define
					\begin{align*}
						\Theta_m^U :=\left\{R\in \stmg{U} \midG 
						X_U\leq \EW\left[X_R+\int_{[U,R)} g_t\left(\ell
						\right)\mu(\md t)\midG \aFA_U\right]\text{ on } E_m^U,\right.\\ 
						\WM\left(\mu([U,R))>0\,\middle|\, \aFA_U\right)>0 \text{ on } E_m^U 
						\left.\text{ and } R=\infty \text{ on } (E_m^U)^c\right\}.
					\end{align*}
					For $T_1,T_2\in \stm$ we define $
					T_1\leq T_2:\Leftrightarrow
					T_1(\omega)\leq T_2(\omega)$ for a.e. $\omega\in \Omega$.
					 This defines a partial order (see, e.g.,
					\citing{RU64}{4.20}{87}).					
					By definition of $E_m^U$ the set $\Theta_m^U$ is 
					a partially ordered set, which is nonempty as it contains 
					$(\tilde{R}_m^U)_{E_m^U}$ from \eqref{sto:eq_23}. Indeed, by \eqref{sto:eq_22} we just have to show 
					$\WM (\mu([U,\tilde{R}^U_m))>0 | \aFA_U)>0$
					on $E_m^U$. On the
					set $\Psi:=\{\WM(\mu([U,\tilde{R}_m^U))>0 | \aFA_U)=0\}
					\cap E_m^U\in \aFA_U$
					we have by Proposition \ref{sto:Pro_46} that
					$\mu([U,\tilde{R}_m^U))=0$. Hence we get on $\Psi$ by definition
					of $E_m^U$ that 
					\[
						X_U< \EW\left[X_{\tilde{R}_m^U}+\int_{[U,\tilde{R}_m^U)}
						g_t\left(\ell\right)\mu( \md t)
						\midG \aFA_U\right]
						=\EW\left[X_{\tilde{R}_m^U}
						\midG \aFA_U\right]\leq X_U,
					\]
					which is a contradiction. Here the first inequality
										came from the definition of $E_m^U$,
										the first equality follows from 
										$\mu([U,\tilde{R}_m^U))=0$ and the last inequality
									is due to the	$\mu$-right-upper-semicontinuity 
									in expectation of $X$ and Proposition 
									 \ref{eq_rusc}.
										
					Now we have seen that $\Theta_m^U$ is a partially ordered 
					non-empty set and hence we obtain by the Hausdorff Maximality
					Theorem
					(e.g. \citing{RU64}{4.21}{87}),
					that there exists a maximal totally ordered
					subset $\tilde{\Theta}_m^U$:
					 For any two elements $\bar{R}_1,\bar{R}_2$ of $\tilde{\Theta}_m^U$, we have
					 $\bar{R}_1\leq \bar{R}_2$ or $\bar{R}_2\leq \bar{R}_1$ and, if we add an element
					of $\Theta_m^U\backslash \tilde{\Theta}_m^U$, then the resulting set is not totally ordered any more.
					
					Now set $R_m^U:=\esssup_{\bar{R}\in \tilde{\Theta}_m^U}
						 \bar{R}$. 
						 As the set $\tilde{\Theta}_m^U$ is totally ordered, 
						 it is in particular upwards directed and hence
						 by \citing{NE75}{Proposition VI-1-I}{121}
						there exists a non-decreasing
						  sequence $(\bar{R}_{m,k}^{U})_{k\in \NZ}$
							in $\tilde{\Theta}_m^U$ with
						 $R_m^U=\lim_{k\rightarrow \infty}\bar{R}_{m,k}^U$.
						 This shows $R_m^U=\infty$ on $(E_m^U)^c$. 					  
						  Observing
						\[						 
						 	\stsetRO{R_m^U}{\infty}\ =\bigcap_{k=1}^{\infty} \stsetRO{\bar{R}_{m,k}^{U}}{\infty}
						 \]
						 we see that $R_m^U\in \stmg{U}$. 
						In the following we will omit 
						 for notational simplicity to emphasize the dependence of $R_m^U$, $\bar{R}_{m,k}^U$,
						 $E_m^U$, $\Theta_m^U$, $\tilde{\Theta}_m^U$
						 on $m$, $U$ and will instead work with $R$, $\bar{R}_k$, $E$, $\Theta$, $\tilde{\Theta}$.	
						 Now define  for $k\in \NZ$
						\begin{align}\label{sto:eq_40}
							I_k&:=\left\{\bar{R}_k<R\right\}\in \aFA_{R-},\nonumber\\
							I&:=\left\{\bar{R}_k<R \text{ for all $k$}\right\}
							=\bigcap_{k=1}^\infty I_k							\in \aFA_{R-}
						\end{align}
						with $I_k\subset I_{k-1}$ for all $k\in \NZ$.
						The sequence $\hat{R}_k:=(\bar{R}_k)_{I_k}\wedge k$, 
						$k\in \NZ$, announces $R_{I}$ 
						and $\{R_I=0\}=\emptyset\in \aFA_{0-}$, which shows
						by \citing{DM82}{Theorem 71}{128}, that $R_I$ is an $\FA$-predictable 
						stopping time.
						 As $R\leq R_I$ we also get $I\in \aFA_{R_I-}$ and by
						 Lemma \ref{sto:tools_lem} (ii) we have $\limsup_{k\rightarrow \infty} X_{\bar{R}_k}
						 \leq X_{R}$ on $\{R=\infty\}$. Hence, with Fatou's Lemma  this gives us 	on $E$ that
						\begin{align}\label{sto:gl_73}
							X_U&\leq \limsup_{k\rightarrow \infty}
							\EW\left[X_{\bar{R}_k}+\int_{[U,\bar{R}_k)} \nonumber
								g_t\left(\ell\right)\mu(\md t)\midG \aFA_U\right]\\
								&\overset{}{\leq} \EW\left[\lsl{X}_{R_I}\mathbb{1}_{
								\{R_{I}<\infty \}}
								+X_{R}\mathbb{1}_{I^c\cup\{R=\infty\}}
								+\int_{[U,R)} \nonumber
								 g_t\left(\ell\right)\mu(\md t)\midG \aFA_U\right]\\
								&\overset{\text{Lemma } \ref{sto:tools_lem} (ii)}{\leq} 
								\EW\left[{^\mathcal{P} X_{R_{I}}}\mathbb{1}_{	\{R_{I}<\infty \}}+X_{R}
								\mathbb{1}_{{I^c\cup\{R=\infty\}}}
								+\int_{[U,R)} \nonumber
								 g_t\left(\ell\right)\mu(\md t)\midG \aFA_U\right]\\
								&\overset{\text{Thm. } \ref{app:meyer_thm_3}}{=}
								\EW\left[\EW\left[{ X_{R_I}}\nonumber
								\mathbb{1}_{\{R_{I}<\infty \}}
								\midG \aFA_{R_{I}-}\right]								
								+X_{R}\mathbb{1}_{{I^c\cup\{R=\infty\}}}
								+\int_{[U,R)}\nonumber
								 g_t\left(\ell\right)\mu(\md t)\midG \aFA_U\right]\\
								&= \EW\left[X_{R}+\int_{[U,R)}
								 g_t\left(\ell\right)\mu(\md t)\midG \aFA_U\right]
						\end{align}
						  and hence $R \in \Theta$ as 
							$\WM(\mu([U,R))>0|
							\aFA_U)\geq \WM(\mu([U,\bar{R}_k))>0|\aFA_U)>0$
							already for $k=1$. 
						Here we have used in the last equality in \eqref{sto:gl_73} 
						that $\aFA_U \subset \aFA_{R_I-}$ holds.
						Indeed, let $A
						\in \aFA_U$. By \citing{DM78}{Theorem 56 (c), (56.2)}{118}, we have
						\begin{align}\label{sto:eq_3}
							A\cap \{U<R_I\}\in \aFA_{R_I-}.
						\end{align}			
						Since $R_I=R$ on $I$ and by \eqref{sto:eq_40} also $R>\bar{R}_k\geq U$ 
						for an arbitrary $k\in \NZ$ on $I$ we get
						\begin{align*}
							\left\{U<R_I\right\}
							=\left(\left\{U<\infty\right\}\cap I^c\right)
							\cup I.
						\end{align*}
						Hence,  $\left\{U<R_I\right\}^c
							=\{U=\infty\} \cap I^c$
						and so, in view of \eqref{sto:eq_3},
						it remains to show that we have
						$A\cap \{U=\infty\}\cap I^c\in \aFA_{R_I-}$.
						Actually we even have
						\[
							A\cap \{U=\infty\}\cap I^c\in \aFA_{U-}.
						\]
						This follows by \citing{DM78}{Theorem 56 (e)}{118},
						 if $A\cap I^c\in \aFA_{\infty-}$, which is clear by
						 $\aFA_\infty=\aFA_{\infty-}$.					
						Therefore by (\ref{sto:gl_73}) and $R\geq \bar{R}_k$
						  for all $k$ we get
						that $R \in \tilde{\Theta}$ by maximality of $\tilde{\Theta}$.
												
					By way of contradiction suppose now that $\WM(\{R\leq  T\}\cap E)>0$. 
					 As, by the properties of $T$,
					  \[
						\{R\leq T\}\cap \{T<\infty\}\subset \{R\leq T\}\cap\{X_{R}<Y^{l}_{R}\}\in \aFA_{R}
						\]
					  we obtain from Proposition \ref{sto:Pro_46} that even
						\[
							\{R\leq T\}\cap \left\{\WM\left(T<\infty\midG \aFA_{R}\right)
								>0\right\}\subset \{R\leq T\}\cap\{X_{R}<Y^{l}_{R}\}.
								\]
					Now we
					can construct analogously to the set $E$ 
					a set $\Gamma \in \aFA_{R}$ with $\WM(\Gamma)>0$,
					$\Gamma \subset \{R\leq T\}\cap 
					\{X_{R}<Y^{\ell}_{R}\}$
					and a stopping time $R_2 \geq R$ with
					\begin{align*}
					X_{R}< \EW\left[X_{R_2}+\int_{[R,R_2)}
					g_t\left(\ell\right)\mu(\md t)\midG 
					\aFA_{R}\right]\quad \text{on}\quad \Gamma,
					\end{align*}
					 which also implies $R_2>R$ there. 
					We set 
					\[
						\hat{R}:=R_{\Gamma^c}\wedge
									(R_2)_{\Gamma}=	\begin{cases}
										R & \text{ on } \quad\Gamma ^c,\\
										R_2 & \text{ on }\quad \Gamma,
										\end{cases}
					\]
					which gives us
					\begin{align*}
						X_{R}\leq \EW\left[X_{\hat{R}}+\int_{[R,\hat{R})}
						g_t\left(\ell\right)\mu(\md t)\midG 
						\aFA_{R}\right]\quad \text{ on $\Omega$.}
					\end{align*}
					This leads on $E$ to
					\begin{align*}
					X_U&\leq \EW\left[X_{R}+\int_{[U,R)} g_t\left(\ell\right)\mu(\md t)\midG 
					\aFA_{U}\right]\\
					&\leq \EW\left[\EW\left[X_{\hat{R}}
					+\int_{[R,\hat{R})} g_t\left(\ell\right)\mu(\md t)\midG 
					\aFA_{R}\right]+\int_{[U,R)} g_t\left(\ell\right)\mu(\md t)\midG 
					\aFA_{U}\right]\\
					&=\EW\left[X_{\hat{R}}+\int_{[U,\hat{R})} g_t\left(\ell\right)\mu(\md t)\midG 
					\aFA_{U}\right].
					\end{align*}										
					Hence $\hat{R}\in \Theta$, but as $\hat{R}\geq  R
					$ and $\hat{R}>R$ on $\Gamma$
					we could extend $\tilde{\Theta}$. This is a contradiction
					to the maximality of the totally ordered
					set $\tilde{\Theta}$. It follows that $\WM(\{R \leq T\}
					\cap E)=0$.
					
					\emph{Construct $R^U$ with the help of $R_m^U$ and $E_m^U$, $m\in \NZ$:} 
					From the previous steps we get a sequence of $\Lambda$-stopping times 
					$(R_m^U)_{m\in \NZ}$ and an increasing sequence of $\aFA_U$-measurable 
					sets $(E_m^U)_{m\in \NZ}$ with $R^U_m> T$ on $E_m^U$, 
					$\WM\left(\mu([U,R^U_m))>0\,\middle|\, \aFA_U\right)>0$ on $E^U_m$,					
						\[
						X_U\leq  \EW\left[X_{R_m^U}+\int_{[U,R_m^U)} 
						g_t\left(\ell\right)\mu(\md t)\midG \aFA_U\right]
						\quad \text{ on } \quad E^U_m,
					\]
					$R^U_m=\infty$ on $(E^U_m)^c$ and 
					\[
						0< \WM\left(E^U\right)
						=\WM\left(\bigcup_{m=1}^\infty E_m^U\right).
					\]
					Now define 
					\[
						R_U:=\bigwedge_{m=1}^\infty \left(R_m^U\right)_{E_m^U\backslash E_{m-1}^U}.
					\]
					As for any $m\in \NZ$ we have $E_m^U\backslash E_{m-1}^U\in \aFA_U\subset \aFA_{R_m^U}$ 
					the random variable $R_U$ is a $\Lambda$-stopping time as a countable minimum of 
					$\Lambda$-stopping times. Furthermore as for any $m\in \NZ$ we have $R_m^U>T$ on $E_m^U$ we have
					$R_U>T$ on $\cup_{m\in \NZ}E_m^U=E^U$
					and $R_U=\infty$ on $(E^U)^c$. Additionally we have for any $m\in \NZ$ by $E_m^U\backslash E_{m-1}^U\in \aFA_U$
					that
					\[
						X_U\leq  \EW\left[X_{R_m^U}+\int_{[U,R_m^U)} 
						g_t\left(\ell\right)\mu(\md t)\midG \aFA_U\right]=
						\EW\left[X_{R^U}+\int_{[U,R^U)} 
						g_t\left(\ell\right)\mu(\md t)\midG \aFA_U\right]
						\quad 
					\]
					on $E^U_m$. This implies	
					\[
						X_U\leq
						\EW\left[X_{R^U}+\int_{[U,R^U)} 
						g_t\left(\ell\right)\mu(\md t)\midG \aFA_U\right]
						\quad \text{ on } E^U
					\]			
					and analogously we get $\WM(\mu([U,R^U))>0|\aFA_U)>0$ on $E^U$, which shows that $R^U$ has the desired properties 
					and this finishes the proof of our Proposition.				
		    \end{proof}
		    
		   \subsubsection{Proof of Lemma \ref{Lem:s5}}
		   
				First we get from \eqref{sto:useful_gl_8} that for \emph{any} choice of $\WM$-null set $\mathcal{N}$
					\begin{align*}
						B
							\subset \ \left\{(\omega,t,\ell)\in \bar{\Omega}_S^\mathcal{N}
						\midG
						Y^\ell_v(\omega)>X_v(\omega)
							\text{ for all } v\in [S(\omega),t)\right\}
							\subset C,
					\end{align*}
					and analogously $\tilde{B}\subset \tilde{C}$.
					To see that $\tilde{A}\subset \tilde{B}$, note first that again for \emph{any} choice of $\WM$-null set
					$\mathcal{N}$
					\begin{align}\label{sto:sol_gl_2}
						A
					\subset  &\left\{(\omega,t,\ell)\in \bar{\Omega}_S^\mathcal{N}
						\midG Y^\ell_v(\omega)>X_v(\omega)
					\text{ for all } v\in [S(\omega),t]\right\}.
					\end{align}
					Hence, for $(\omega,t,\ell)\in A$ with $X_{T_{S,\ell}(\omega)}
					(\omega)=Y_{T_{S,\ell}(\omega)}^\ell
						(\omega)$, i.e. for $(\omega,t,\ell)\in \tilde{A}$ ,
						we have $t<T_{S,\ell}(\omega)$, i.e. 
						$(\omega,t,\ell)\in \tilde{B}$.
						For the proof of $A\subset B$ we need the following
						auxiliary result, which is proven below:

					\textbf{Claim 1:}  For $\ell\in \RZ$ 
					we have outside an evanescent set, possibly depending 
					on $\ell$, that
					\begin{align}\label{sto:sol_gl_5}
						\stsetRO{S}{\infty}\ \cap\left\{\bar{L}_{S,\cdot}<\ell\right\}
						\subset \stsetC{S}{T_{S,\ell}},
					\end{align}
					where $\bar{L}_{S,\cdot}$ is defined in \eqref{sto:eq_2}.
					
					One can see that the left hand side (respectively right hand side)
					 of \eqref{sto:sol_gl_5} is the section of $A$ 
					(respectively $B$) for fixed $\ell\in \RZ$. 
					Therefore we obtain by Claim 1 a set $\mathcal{N}$ with $\WM(\mathcal{N})=0$ such that
					for $\omega\in \mathcal{N}^c$, $t\geq S(\omega)$ we have for all rational $\ell$
					that $\bar{L}_{S,t}(\omega)\leq \ell$ implies $t\leq T_{S,\ell}(\omega)$.
					Hence, for this choice of $\mathcal{N}$, 
					\begin{align}\label{sto:eq_133}
						A\cap
					(\Omega\times [0,\infty)\times \mathbb{Q})
						&\subset B\cap
					(\Omega\times [0,\infty)\times \mathbb{Q}).
					\end{align}
					Let us argue that in fact even $A\subset B$ holds for this choise
					of $\mathcal{N}^c$. Fix 
					$(\omega,t,\ell)\in A\subset \bar{\Omega}_S^\mathcal{N}$.
					Consider $(q_n)_{n\in \NZ} \subset \mathbb{Q}$
					 a sequence which increases
					strictly to $\ell$. Without loss of generality 
					$\bar{L}_{S,t}(\omega)<q_n$ for all $n\in \NZ$. 
					Therefore $(\omega,t,q_n)\in A$ and thus, by the choice of $\mathcal{N}$,
					we obtain $(\omega,t,q_n)\in B$, i.e. $t\leq T_{S,q_n}(\omega)$.
					As the sequence $(T_{S,q_n}(\omega))_{n\in \NZ}$
					is non-decreasing we obtain
					\[
						t\leq T_{S,q_n}(\omega)\leq T_{S,\ell}(\omega),
					\]
					which shows $(\omega,t,\ell)\in B$. 
					Hence, $A\subset B$ is proven once
					we have established Claim 1.					
									
					\textbf{Proof of Claim 1:}				
					Assume by way of contradiction that \eqref{sto:sol_gl_5}
					is not true. By Lemma \ref{Lem:sp3} (iii), $T_{S,\ell}$ is an 
					$\mathcal{F}_+^\Lambda$-stopping time and by
					\citing{EL80}{Theorem 2}{503}, we have $\stsetC{S}{T_{S,\ell}}\in 
					\Lambda$. Hence if the Claim 1 fails for some $\ell\in \RZ$
					 then 
					the Meyer Section Theorem 
					(see Theorem \ref{Main:4}) yields
					a stopping time $T\in \stmg{S}$ with $\WM(T<\infty)>0$,
					such that  $\bar{L}_{S,T}<\ell$ and $T_{S,\ell}<T$
					on $\{T<\infty\}$.				
					As we have
					\[
						\left\{\bar{L}_{S,T}
						<\ell\right\}
						=\bigcup\limits_{\ell'\in \mathbb{Q},\, \ell'<\ell} 
						\left\{  
						\bar{L}_{S,T} <\ell' \right\}
					\]
					 we can assume without loss of generality that there is an $\ell'\in \mathbb{Q}$ such that
					  $\bar{L}_{S,T}<\ell'$ 
					on $\{T<\infty\}$ for a fixed $\ell'\in \mathbb{Q}$ with $\ell'<\ell$.

					We will prove that the existence of such a $T$
					leads to a contradiction. 
					Define $(\tilde{U}_n)_{n\in \NZ}\subset \stmg{S}$ with $\tilde{U}_n\geq T_{S,\ell}$
					 as the non-increasing sequence from \citing{BB18_2}{Proposition 4.2 (i)}{19} such that
					\begin{align}\label{sto:eq_340}
						\tilde{U}_n\downarrow T_{S,l}\quad \text{ and }
						\quad 
						\lsr{X}{T_{S,\ell}}=\lim_{n\rightarrow \infty} X_{\tilde{U}_n}.
					\end{align}										
					With the help of this sequence we define the sequence of $\Lambda$-stopping times
					$(U_n)_{n\in \NZ}$, $U_n:=\tilde{U}_n\wedge T$ for $n\in \NZ$.
					Recall, that by assumption $\bar{L}_{S,T}<\ell$ and $T>T_{S,\ell}$ 
					on $\{T<\infty\}$, and by (\ref{sto:sol_gl_2})  this implies 
					$X_{T_{S,\ell}}<Y_{T_{S,\ell}}^\ell$ on $\{T<\infty\}$. Hence
					Proposition \ref{sto:Pro_46} and \eqref{sto:eq_77} in
					conjunction with \eqref{sto:eq_12}
					implies up to $\WM$-null sets the inclusions
					\begin{align}\label{sto:gl_78}
						\{T<\infty\}&\subset \left\{
						\WM\left(T<\infty\midG \aFA_{T_{S,\ell}+}\right)>0\right\}
						\subset 
						\left\{	
						X_{T_{S,\ell}}
						<Y^\ell_{T_{S,\ell}}\right\}\nonumber\\
						&\subset 
						\left\{	Y^\ell_{T_{S,\ell}+}
						=\lsr{X}{T_{S,\ell}}\right\}
						=\left\{	\lim_{n\rightarrow \infty} Y_{U_n}^\ell
						= \lim_{n\rightarrow \infty} X_{U_n}\right\}.
					\end{align}
					The desired contradiction will be deduced using the following
					result, which we
						will prove at the end:

				\textbf{Claim $2$:} For all $U\in \stmgs{S}{T}$, we have 
					\begin{align*}
						X_U
						\leq\ Y_U^\ell- \EW\left[
						\int_{[U,T]} \left(g_t(\ell)-g_t\left(\ell'\right)\right)
						\mu(\md t)\midG \aFA_U\right]
						\quad \text{on}\quad  \{\WM(T<\infty\, |\, \aFA_U)>0\}.
					\end{align*}

					Now define
					\[
						\Gamma:=\bigcap_{n=1}^\infty 
					\{\WM(T<\infty|\aFA_{U_n})>0\}						
					\]
					and by Proposition \ref{sto:Pro_46} we have
					\[
						\{T<\infty\}\subset \Gamma\subset \left\{\WM(T<\infty| \aFA_{T_{S,\ell}+})>0\right\}.
					\]					
					By Claim $2$, 
					for $U=U_n$, $n=1,2,\dots$, we get on $\Gamma$ that
					\begin{align}\label{sto:gl_79}
						\lsr{X}{T_{S,\ell}}
						&\leq\ Y^\ell_{T_{S,\ell}+}- \liminf_{n\rightarrow \infty}
						\EW\left[
						\int_{[U_n,T]} \left(g_t(\ell)-g_t\left(\ell'\right)\right)
						\mu(\md t)\midG \aFA_{U_n}\right]\nonumber\\
						&\overset{\eqref{sto:gl_78}}{=}\ \lsr{X}{T_{S,\ell}}-
						\liminf_{n\rightarrow \infty}
						\EW\left[
						\int_{[U_n,T]} \left(g_t(\ell)-g_t\left(\ell'\right)\right)
						\mu(\md t)\midG \aFA_{U_n}\right].
					\end{align}	
					The latter sequence of conditional expectations defines a backward supermartingale, which 
					converges almost surely to a random variable 
					$Z$ with
					\[
						Z\geq						
						\EW\left[
						\int_{(T_{S,\ell},T]} \left(g_t(\ell)-g_t\left(\ell'\right)\right)
						\mu (\md t)\midG \cap_{n=1}^{\infty} \aFA_{U_n}\right]\geq 0
					\]
						by \citing{DM82}{Theorem 30}{24}. In detail
					the family $(Z_{\tilde{n}})_{\tilde{n}\in \mathbb{Z}_{\leq -1}}$
					 adapted to $\mathcal{G}_{\tilde{n}}:=\aFA_{U_{-\tilde{n}}}$ for $\tilde{n}
					 \in \mathbb{Z}_{\leq -1}$,
					 defined by 
					\[
						Z_{\tilde{n}}:=\EW\left[
						\int_{[U_{-\tilde{n}},T]} \left(g_t(\ell)-g_t\left(\ell'\right)\right)
						\mu(\md t)\midG \mathcal{G}_{\tilde{n}}\right]\geq 0
					\]
					defines a backward supermartingale with
					\[
						\sup_{\tilde{n}\in \mathbb{Z}_{\leq -1}}\EW[|Z_{\tilde{n}}|]<\infty,
					\] 
					i.e. $\EW[Z_{\tilde{n}}|\mathcal{G}_{\tilde{n}-1}]\leq Z_{\tilde{n}-1}$ for
					$\tilde{n}\in \mathbb{Z}_{\leq -1}$.
					Therefore \citing{DM82}{Theorem 30}{24} implies 
					that $Z:=\lim_{\tilde{n}\rightarrow -\infty} Z_{\tilde{n}}$ almost surely
					exists and
					\begin{align}\label{sto:gl_80}
						Z\geq \EW\left[Z_{\tilde{n}}\midG \mathcal{G}_{-\infty}\right]
					\end{align}
					for all $\tilde{n}\in \mathbb{Z}_{\leq -1}$ with
					$\mathcal{G}_{-\infty}:=\cap_{\tilde{n}=-1}^{-\infty} 
					\mathcal{G}_{\tilde{n}}
					=\cap_{n=1}^{\infty} \aFA_{U_n}$.
					Estimate (\ref{sto:gl_80}) leads with Fatou's Lemma to
					\begin{align}\label{sto:gl_81}
						Z\geq 
						\liminf_{\tilde{n}\rightarrow -\infty} \EW\left[Z_{\tilde{n}}
						\midG \mathcal{G}_{-\infty}\right]
						\geq						
						\EW\left[
						\int_{(T_{S,\ell},T]} \left(g_t(\ell)-g_t\left(\ell'\right)\right)
						\mu (\md t)\midG \mathcal{G}_{-\infty}\right]\geq 0	 .
					\end{align}
					From \eqref{sto:gl_79} we thus infer that $Z=0$ on $\Gamma$
					and, hence, by \eqref{sto:gl_81} and $g_t(\ell)-g_t(\ell')>0$,
					\begin{align}\label{eq_200}
						\Gamma \subset \left\{\WM\left(\mu((T_{S,\ell},T])>0\midG
						\mathcal{G}_{-\infty}\right)=0\right\}
					\end{align}
					up to $\WM$-null sets.
					Finally this leads by Proposition \ref{sto:Pro_46} for any $n\in \NZ$ to
					\begin{align}\label{eq_201}
						\Gamma &\subset \left\{\WM\left(\mu((T_{S,\ell},T])>0\midG
						\mathcal{G}_{-\infty}\right)=0\right\}\nonumber\\
						&\subset \left\{\WM\left(\mu((T_{S,\ell},T])>0\midG
						\aFA_{U_n}\right)=0\right\}						
						\subset 
						\left\{\mu((T_{S,\ell},T])=0\right\}.
					\end{align}
					
					Let us now show that $\Gamma \subset \left\{\mu((T_{S,\ell},T])=0\right\}$
					yields a contradiction. For that we have to use another sequence of
					$\Lambda$-stopping times given by the following claim
					whose proof is deferred until the end:
							
					\textbf{Claim 3:}					
					There exists a sequence $(R_m)_{m\in \NZ}
					\subset \stmg{U_1}$ such that we have for
					all $n\in \NZ$
					\begin{align}\label{eq_202}
						Y_{U_n}^{\ell'}=\lim_{m\rightarrow \infty}
						\EW\left[X_{R_m}+\int_{(T,R_m)}g_t(\ell')\mu(\md t)
						\midG \aFA_{U_n}\right]\quad 
						\text{ on }\quad \Gamma.
					\end{align}
					With 
					$(R_m)_{m\in \NZ}$ from Claim 3 we get on $\Gamma$
					again by \eqref{sto:eq_340} and \eqref{sto:gl_78} that
					\begin{align}\label{eq_204}
						\lsr{X}{T_{S,\ell}}&=\lim_{n\rightarrow \infty} X_{U_n}
						\leq \lim_{n\rightarrow \infty} Y_{U_n}^{\ell'}\nonumber\\
						&=\lim_{n\rightarrow\infty}
						\left(\lim_{m\rightarrow \infty}
						\EW\left[X_{R_m}+\int_{(T,R_m)}g_t(\ell')\mu(\md t)
						\midG \aFA_{U_n}\right] \right)\nonumber\\
						&\leq\ Y^\ell_{T_{S,\ell}+}- \liminf_{n\rightarrow \infty}
						\liminf_{m\rightarrow \infty}
						\EW\left[
						\int_{(T,R_m)} \left(g_t(\ell)-g_t\left(\ell'\right)\right)
						\mu(\md t)\midG \aFA_{U_n}\right]\nonumber\\
						&=\ \lsr{X}{T_{S,\ell}}- \liminf_{n\rightarrow \infty}
						\liminf_{m\rightarrow \infty}
						\EW\left[
						\int_{(T,R_m)} \left(g_t(\ell)-g_t\left(\ell'\right)\right)
						\mu(\md t)\midG \aFA_{U_n}\right].
					\end{align}						
					Define for $\tilde{n}\in \mathbb{Z}_{\leq -1}$
					\[
						\tilde{Z}_{\tilde{n}}:=\liminf_{m\rightarrow \infty}
						\EW\left[
						\int_{(T,R_m)} \left(g_t(\ell)-g_t\left(\ell'\right)\right)
						\mu(\md t)\midG \aFA_{U_{-\tilde{n}}}\right].
					\]
					Then this defines again a backward supermartingale by 
					Fatou's Lemma with
					\[
						\sup_{\tilde{n}\in \mathbb{Z}_{\leq -1}}
						\EW\left[|\tilde{Z}_{\tilde{n}}|\right]
						\leq 2\EW\left[\int_{[0,\infty)}|g_t(\ell)|\mu(\md t)\right]
						<\infty.
					\]
					Hence again by \citing{DM82}{Theorem 30}{24} the
					limit $\tilde{Z}:=\lim_{\tilde{n}\rightarrow -\infty} \tilde{Z}_{\tilde{n}}$
					exists almost surely and
					\begin{align}\label{eq_205}
						\tilde{Z}\geq \EW\left[\tilde{Z}_{\tilde{n}}\midG 
						\bigcap_{n=1}^{\infty} \aFA_{U_n}\right]
					\end{align}
					for all $\tilde{n}\in \mathbb{Z}_{\leq -1}$.
					Finally we need the following result, which we also prove 
					at the end:
					
					\textbf{Claim 4}: There exists $\tilde{\Gamma}
					\subset \Gamma$ with $\WM(\tilde{\Gamma})>0$
					such that on $\tilde{\Gamma}$
					\[
						\EW\left[\tilde{Z}_{-1}\midG 
						\bigcap_{n=1}^{\infty} \aFA_{U_n}\right]>0.
					\]
					Combining Claim 4 with \eqref{eq_204} and \eqref{eq_205}
					gives us on $\tilde{\Gamma}$
					\begin{align*}
						\lsr{X}{T_{S,\ell}}
						\leq  \lsr{X}{T_{S,\ell}}- \tilde{Z}<\lsr{X}{T_{S,\ell}},
					\end{align*}						
					the	contradiction	needed to establish Claim 1.
					It remains to prove Claims 2,3 and 4.
					
					\textbf{Proof of Claim 2:} By Proposition \ref{sto:pro_70},
					there is a 	
					stopping time $R^U\in \stmg{T}$ such that on $E^U:=\{\WM(T<\infty|\aFA_U)>0\}$ we have
					\begin{align*}
						X_U
						&\leq \EW\left[X_{R^U}
						+\int_{[U,R^U)} g_t\left(\ell'\right)\mu(\md t)\midG \aFA_U\right]\\
						&\leq\ Y_U^\ell - \EW\left[\int_{[U,T]} 
						\left(g_t(\ell)-g_t\left(\ell'\right)\right)\mu(\md t)\midG
						\aFA_U\right].
					\end{align*}
					Here we have used $E^U\in \aFA_U$, $R^U> T$ on $E^U$
					and $g_t(\ell)-g_t(\ell')>0$ by $\ell'<\ell$.

				\textbf{Proof of Claim 3:}					
					As the family
					\[
						\left\{\EW\left[X_R+\int_{[U_1,R)}g_t(\ell')\mu(\md t)\midG 
						\aFA_{U_1}\right]
						\midG R\in \stmg{U_1}\right\}
					\]					
					is upwards directed 
					there exists by \citing{NE75}{Proposition VI-1-I}{121} a sequence of
					stopping times
					$(R_m)_{m\in \NZ}\subset \stmg{U_1}$, such that
					\begin{align}\label{sto:eq_25} 
					\EW\left[X_{R_m}+\int_{[U_1,R_m)} g_t\left(\ell'\right)
											\mu(\md t)\midG \aFA_{U_1}\right]\nearrow 
											Y_{U_1}^\ell
											\quad \text{ as $m\uparrow \infty$.}
					\end{align}
					As by \eqref{eq_201} 
					\[
						\Gamma \subset \left\{\WM\left(\mu((U_1,T])>0\midG
						\aFA_{U_1}\right)=0\right\}
						\subset \left\{\mu((U_1,T])=0\right\}
					\]
					equation \eqref{eq_202} holds for $n=1$.
					Let us now  show that
					the sequence $(R_m)_{m\in \NZ}$ from the previous
					step will also fulfil
					\eqref{eq_202} for arbitrary $n$. On $\Gamma$ we have by \eqref{eq_201} that
					$\mu([U_n,U_1])=0$ and therefore also
						\begin{align}\label{eq_203}
							Y_{U_n}^{\ell'}
							&=\esssup_{R\in \stmg{U_n}}
							\EW\left[X_R+\int_{[U_n,R)}g_t(\ell')\mu(\md t)
							\midG \aFA_{U_n}\right]\nonumber\\
							&=\esssup_{R\in \stmg{U_n}}
							\EW\left[
							\EW\left[X_R+\int_{[U_1,R)}g_t(\ell')\mu(\md t)
							\midG \aFA_{U_1}\right]
							\midG \aFA_{U_n}\right]\nonumber\\
							&\leq \EW\left[
							Y_{U_1}^{\ell'}
							\midG \aFA_{U_n}\right]=\EW\left[
						 \lim_{m\rightarrow \infty}
						\EW\left[X_{R_m}+\int_{(T,R_m)} g_t\left(\ell'\right)
											\mu(\md t)\midG \aFA_{U_1}\right]
							\midG \aFA_{U_n}\right]\nonumber\\
							&\overset{\eqref{eq_201}}{=}
							\lim_{m\rightarrow \infty}\EW\left[
						X_{R_m}+\int_{[U_1,R_m)} g_t\left(\ell'\right)
											\mu(\md t)
							\midG \aFA_{U_n}\right]
							\leq Y_{U_n}^{\ell'} \quad \text{ on }\quad \Gamma.
						\end{align}
						Here we have used additionally 
						dominated convergence in the fifth step, which
						is possible as 
						\[
							\left|\EW\left[X_{R_m}+\int_{(T,R_m)} g_t\left(\ell'\right)
											\mu(\md t)\midG \aFA_{U_1}\right]\right|
								\leq M_{U_1}^X+\EW\left[
											\int_{[0,\infty)}|g_t(\ell')|\mu(\md t)
											\midG \aFA_{U_1}\right]
						\]
						with $M^X$ the $\Lambda$-martingale of Lemma 
						\ref{sto:tools_lem}. Hence equation \eqref{eq_203}
						leads to
						\[
							Y_{U_n}^{\ell'}=
							\lim_{m\rightarrow \infty}\EW\left[
						X_{R_m}+\int_{(T,R_m)} g_t\left(\ell\right)
											\mu(\md t)
							\midG \aFA_{U_n}\right],
						\]
						which we wanted to prove.
						
		\textbf{Proof of Claim 4:} 
		We assume by way of contradiction that we have on $\Gamma$
		\[
			\EW\left[\tilde{Z}_{-1}\midG 
						\bigcap_{n=1}^{\infty} \aFA_{U_n}\right]=0,
		\]
		which leads by Proposition \ref{sto:Pro_46} to
		$\tilde{Z}_{-1}=0$ on $\Gamma$. With the short hand notation
		\[
			Q_m:=	\EW\left[
						\int_{(T,R_m)} \left(g_t(\ell)-g_t\left(\ell'\right)\right)
						\mu(\md t)\midG \aFA_{U_{1}}\right]
		\]
		this is equivalent to
		\begin{align}\label{eq_300}
			\liminf_{m\rightarrow \infty} Q_m = 0 \quad \text{ on } \quad \Gamma.
		\end{align}
		The rest of the proof is structured in the following way:
		\begin{compactenum}[(i)]
			\item First we construct a sequence
						$(\ubar{Q}_m)_{m\in \NZ}$ connected
						to some new constructed $\Lambda$-stopping	times $(\ubar{R}_m)_{m\in \NZ}$
						such that $\ubar{Q}_m=\min_{n=1,\dots,m} Q_n$
						and $\ubar{Q}_m>0$ on some set $\Gamma_2\in \aFA_{U_1}$
						with $\WM(\Gamma_2)>0$ and $\Gamma_2\subset \Gamma$.
			\item We show that there exists a subsequence 
						$(\ubar{R}_{m_k})_{k\in \NZ}$ of 
						$(\ubar{R}_m)_{m\in \NZ}$ such that on $\Gamma_2$
						we have
						\begin{compactenum}[(1)]
							\item $\lim_{k\rightarrow \infty} \mu([U_1,\ubar{R}_{m_k}))=0$,
							\item \begin{align}\label{sto:eq_344}
										Y_{U_1}^{\ell'}= \lim_{k\rightarrow \infty}
										\EW\left[X_{\ubar{R}_{m_k}}+\int_{(T,\ubar{R}_{m_k})}
										g_t(\ell')\mu(\md t)\midG
										\aFA_{U_1}\right]=\lim_{k\rightarrow \infty} \EW[X_{\ubar{R}_k}
									|\aFA_{U_1}].
									\end{align}
						\end{compactenum}
				\item We combine the previous points to obtain by
				$\mu$-right-upper-semicontinuity in expectation of $X$
				our desired contradiction.
		\end{compactenum}
		
		\textbf{Construction of $(\ubar{Q}_m)_{m\in \NZ}$ and $(\ubar{R}_m)_{m\in \NZ}$:} We define
		as in the proof of Proposition \ref{sto:pro_70} the following increasing sequence of sets
		\[
			E_m:=\left\{
						X_{U_1}< \EW\left[X_{R_m}+\int_{[U_1,R_m)}
						g_t\left(\ell'\right)\mu( \md t)
						\midG \aFA_{U_1}\right]\right\}\\
						\cap \left\{\WM\left(T<\infty\midG \aFA_{U_1}\right)
								>0\right\},
		\]
		where $(R_m)_{m\in\NZ}$ is the sequence of $\Lambda$-stopping times
		constructed in Claim 3.
		Here we can assume
		without loss of generality that for 
		\[
			\Gamma_2:=\Gamma \cap E_1\in \aFA_{U_1}
		\]		
		we have $\WM(\Gamma_2)>0$. Indeed, as on $\{T<\infty\}$ we 
		have $X_{U_1}<Y_{U_1}^{\ell'}$ and the convergence property of the 
		sequence $(R_m)_{m\in \NZ}$ we see that 
		\[
			\bigcup_{m\in \NZ} E_m=\left\{\WM\left(T<\infty\midG \aFA_{U_1}\right)
								>0\right\}\supset \{T<\infty\}.
		\]
		Next we argue that on $\Gamma_2$ we have $Q_m>0$ for all $m\in \NZ$.
		Indeed, by Proposition \ref{sto:Pro_46}  the equation $Q_m=0$ for some $m\in \NZ$
		implies
		\[
			\int_{(T,R_m)}\left(g_t(\ell)-g_t(\ell')\right)\mu(\md t)=0
		\]		
		and by $g_t(\ell)-g_t(\ell')>0$ we get $\mu((T,R_m))=0$. 
		As $\Gamma_2\cap \{Q_m=0\}\subset E_1\subset E_m$  we get 
		by $\mu$-right-upper-semicontinuity in expectation of $X$ 
		combined with Proposition \ref{eq_rusc} 
		  the contradiction
		\begin{align*}
			X_{U_1}<\EW\left[X_{R_m}+\int_{[U_1,R_m)}
						g_t\left(\ell'\right)\mu( \md t)
						\midG \aFA_{U_1}\right]
						&\overset{\eqref{eq_201}}{=}\EW\left[X_{R_m}
						\midG \aFA_{U_1}\right]\\&\leq X_{U_1} \quad 
						\text{ on } \quad \Gamma_2\cap \{Q_m=0\}.
		\end{align*}
		Hence, $Q_m>0$ for all $m\in \NZ$ on $\Gamma_2$.

		Let us now define the sequence $(\ubar{R}_m)_{m\in \NZ}
		\subset \stmg{U_1}$
		by
		\begin{align*}
			\ubar{R}_m:=\sum_{p=1}^m (R_p)_{
			\{Q_p=\min_{k\in\{1,\dots, m\}} Q_k\}
			\cap \bigcap_{r=1}^{p-1}\{Q_r>\min_{k\in\{1,\dots, m\}} Q_k\}}
			\in \stmg{U_1},
		\end{align*}
		which means $\ubar{R}_m$ is equal to $R_j$, where $j$ is the first
		index for which $Q_j$ attains the	minimal value $\min_{k\in \{1,\dots,m\}} Q_k$.
		One can see that for $m\in \NZ$
		\[
			\ubar{Q}_m:=\EW\left[
						\int_{(T,\ubar{R}_m)} \left(g_t(\ell)-g_t\left(\ell'\right)\right)
						\mu(\md t)\midG \aFA_{U_{1}}\right]
			=\min_{k\in\{1,\dots, m\}}Q_k
		\]
			is non-increasing and hence $
			\lim_{m\rightarrow \infty} \ubar{Q}_m$
							exists. By \eqref{eq_300} we have
			\[
				\Gamma_2\subset \left\{\liminf_{m\rightarrow \infty}
				Q_m=0\right\}
				=\left\{\lim_{m\rightarrow \infty}
				\ubar{Q}_m=0\right\}=:E\in \aFA_{U_1}.
			\]
			We can assume without loss of generality
			$\WM(E)=1$, because we can
		replace $U_1$, $(R_m)_{m\in \NZ}$, $(\ubar{R}_m)_{m\in \NZ}$
		and $T$
		by the $\Lambda$-stopping times $(U_1)_E$, $((R_m)_E)_{m\in \NZ}$,
		$((\ubar{R}_m)_E)_{m\in \NZ}$ and $T_E$.
		
		\textbf{There exists a subsequence of $(\ubar{R}_m)_{m\in \NZ}$
		with the desired conditions:} 
		The values $\ubar{Q}_m$ are decreasing in $m\in \NZ$ to zero and
		therefore
		we get by the monotone convergence theorem that
		$\int_{(T,\ubar{R}_m)} \left(g_t(\ell)-g_t\left(\ell'\right)\right)
						\mu(\md t)$ converges 
		in $\mathrm{L}^1(\WM)$ to zero. By possibly passing to
		a subsequence we can assume that also the sequence
		$(\int_{(T,\ubar{R}_m)} \left(g_t(\ell)-g_t\left(\ell'\right)\right)
						\mu(\md t))_{m\in \NZ}$ converges to zero almost surely.
		As $g_t(\ell)-g_t(\ell')>0$ we also get that 
		\begin{align}\label{sto:eq_26}
			\lim_{m\rightarrow \infty} \mu([U_1,\ubar{R}_m))=0\quad 
			\text{a.s..}
		\end{align}	
		
		Next we get for $m,p\in \NZ$ and $1\leq p\leq m$
		\begin{align}\label{eq_269}
			\left\{Q_p=\ubar{Q}_{m}\right\}
			\cap \bigcap_{r=1}^{p-1}
			\left\{Q_r>\ubar{Q}_{m}\right\}
			 \supset \left\{Q_p=\ubar{Q}_{m+1}\right\}
			\cap \bigcap_{r=1}^{p-1}
			\left\{Q_r>\ubar{Q}_{m+1}\right\}.
		\end{align}
		Furthermore we want to remind that $(R_m)_{m\in \NZ}$ 
		satisfies by \eqref{sto:eq_25}
		\begin{align}\label{eq_231}
			\EW\left[X_{R_m}+\int_{(T,R_m)}g_t(\ell')\mu(\md t)\midG
			\aFA_{U_1}\right]
			\leq \EW\left[X_{R_{n}}+\int_{(T,R_{n})}g_t(\ell')\mu(\md t)\midG
			\aFA_{U_1}\right]
		\end{align}		
		for any $m,n\in \NZ$ with $m\leq n$. Hence combing \eqref{eq_269} and \eqref{eq_231}   
		gives us
		\begin{align}\label{eq_208}
			\EW\left[X_{\ubar{R}_m}+\int_{(T,\ubar{R}_m)}g_t(\ell')\mu(\md t)\midG
			\aFA_{U_1}\right]
			\leq \EW\left[X_{\ubar{R}_{m+1}}+\int_{(T,\ubar{R}_{m+1})}
			g_t(\ell')\mu(\md t)\midG
			\aFA_{U_1}\right].
		\end{align}				
		We will show next the following result:
		
		\textbf{Claim 5:} 
		For fixed $m\in \NZ$ we have  on $\Gamma_2$ that
		\[
			\EW\left[X_{R_m}+\int_{(T,R_m)}g_t(\ell')\mu(\md t)\midG
			\aFA_{U_1}\right]\nonumber
			\leq \sup_{p\in \NZ}
			\EW\left[X_{\ubar{R}_p}+\int_{(T,\ubar{R}_p)}
			g_t(\ell')\mu(\md t)\midG
			\aFA_{U_1}\right]
		\]
		
		\textbf{Proof of Claim 5:}		
			As we have shown $Q_m>0$ for all $m\in \NZ$ on $\Gamma_2$ we
			also have $\ubar{Q}_m>0$ for all $m\in \NZ$ on $\Gamma_2$.
		On the other hand we know that $\lim_{m\rightarrow \infty}
		\ubar{Q}_m=0$ on $\Gamma_2$. Fix now $m\in \NZ$. Then we have
		\begin{align}\label{sto:eq_341}
			\Gamma_2\subset \bigcup_{p=m+1}^\infty
			\left\{Q_p=\ubar{Q}_p<\ubar{Q}_m\right\}.
		\end{align}
		and actually we can rewrite \eqref{sto:eq_341} as a disjoint union of 
		sets by restricting the right-hand-side to the first 
		time point being strictly smaller than $\ubar{Q}_m$, i.e.
		\begin{align*}
			\Gamma_2\subset \bigcup_{p=m+1}^\infty\left(
			\left\{Q_p=\ubar{Q}_p<\ubar{Q}_m\right\}
			\cap \bigcap_{s=m+1}^{p-1} 
			\left\{\ubar{Q}_s=\ubar{Q}_m\right\}\right)
			=:\bigcup_{p=m+1}^\infty H^{(p)}.
		\end{align*}
		But on $H^{(p)}$ we have $\ubar{R}_p=R_p$ and hence by
		\eqref{eq_231}
		\begin{align*}
			\EW\left[X_{R_m}+\int_{(T,R_m)}g_t(\ell')\mu(\md t)\midG
			\aFA_{U_1}\right]
			\leq \EW\left[X_{\ubar{R}_p}+\int_{(T,\ubar{R}_p)}
			g_t(\ell')\mu(\md t)\midG
			\aFA_{U_1}\right] \text{ on } H^{(p)},
		\end{align*}
		which finishes the proof of Claim 5.
		
		\textbf{Continuation of the Proof of Claim 4:}
		Now we have by Claim $3$ and $5$ on $\Gamma_2$
		\begin{align*}
			Y_{U_1}^{\ell'}&=
			\lim_{m\rightarrow \infty}
			\EW\left[X_{R_m}+\int_{(T,R_m)}g_t(\ell')\mu(\md t)\midG
			\aFA_{U_1}\right]\nonumber\\
			&\leq \sup_{p\in \NZ}
			\EW\left[X_{\ubar{R}_p}+\int_{(T,\ubar{R}_p)}
			g_t(\ell')\mu(\md t)\midG
			\aFA_{U_1}\right]\nonumber\\
			&=\lim_{p\rightarrow \infty}
			\EW\left[X_{\ubar{R}_p}+\int_{(T,\ubar{R}_p)}
			g_t(\ell')\mu(\md t)\midG
			\aFA_{U_1}\right]
			\leq Y_{U_1}^{\ell'}, 
		\end{align*}
		where we have used that the supremum is equal to the limit
		by  \eqref{eq_208}.		In particular we have on $\Gamma_2$ that
		\begin{align}\label{eq_210}
			Y_{U_1}^{\ell'}=\lim_{p\rightarrow \infty}
			\EW\left[X_{\ubar{R}_p}+\int_{(T,\ubar{R}_p)}
			g_t(\ell')\mu(\md t)\midG
			\aFA_{U_1}\right].		
		\end{align}
		Furthermore we have by \eqref{sto:eq_26}
		\begin{align*}
			\lim_{p\rightarrow \infty}
			\EW\left[\int_{(T,\ubar{R}_p)}
			g_t(\ell')\mu(\md t)\midG
			\aFA_{U_1}\right]=0	\quad \text{ on } \quad \Gamma_2	
		\end{align*}
		 that
		\begin{align}\label{sto:eq_28}
			\lim_{p\rightarrow \infty}
			\EW\left[X_{\ubar{R}_p}\midG
			\aFA_{U_1}\right]=Y_U^{\ell_1}
			\quad \text{ on } \quad \Gamma_2.		
		\end{align}
		
		\textbf{Combining the previous results:}
		We have on $\Gamma_2$
		\begin{align*}
					X_{U_1}&<Y_{U_1}^{\ell'}
			= \lim_{p\rightarrow \infty}
			\EW\left[X_{\ubar{R}_p}\midG \aFA_{U_1}\right]
			\leq X_{U_1},
		\end{align*}
		which is a contradiction. Here we have used in the first inequality
		\eqref{sto:sol_gl_2}, in the first
		equality \eqref{sto:eq_344} and in the second inequality
		$\mu$-right-upper-semicontinuity
		 in expectation of $X$ and Proposition
		 \ref{eq_rusc}.

		
	    \subsection{Proof of Proposition \ref{sto:dis_pro}}\label{proof:4.5}
			We follow the idea of the proof in \citing{BK04}{Lemma 4.12 (iv)}{1050}.
				Fix $S\in \stm$. It suffices to consider, for $\ubar{\ell}
				<\bar{\ell}$, $\phi=\mathbb{1}_{[\ubar{\ell},\bar{\ell})}$ in 
				(\ref{sto:dis_gl_1}) and prove accordingly
				\begin{align}\label{sto:eq_37}
					Y^{\bar{\ell}}_S-Y^{\ubar{\ell}}_S=
					\EW\left[\int_{[S,\infty)}\left\{\int_{[\ubar{\ell},\bar{\ell})} 
									\mathbb{1}_{[S,\tau_{S,\ell})}(t)\
									g_t(\md \ell)\right\}
									\mu(\md t)\midG \aFA_S\right].
				\end{align}
				To this end, fix a set $A\in \aFA_S$ and consider a rational partition
				$\pi_n=\{\ubar{\ell}=\ell_0<\ell_1<\dots<\ell_{n+1}=\bar{\ell}\}$ of 
				the interval $[\ubar{\ell},\bar{\ell}]$.
				First we get
				\begin{align}\label{sto:gl_75}
					\EW\left[\left(Y^{\bar{\ell}}_S-Y^{\ubar{\ell}}_S\right)
					\mathbb{1}_A\right]	=\sum\limits_{i=0}^n
					\EW\left[\left(Y^{\ell_{i+1}}_S-Y^{\ell_{i}}_S\right)\mathbb{1}_A\right].
				\end{align}
				Now we have by Lemma \ref{Lem:sp3} (vi), that
				\begin{align}
					Y^{\ell_i}_S
					=\EW\left[X_{\tau_{S,\ell_i}}
							+\int_{[S,\tau_{S,\ell_i})} g_t({\ell_i})\mu(\md t) \midG \aFA_S\right]
				\end{align}			
				and by Lemma \ref{Lem:sp3} (vii) we get
				\begin{align*}
					Y^{\ell_{i+1}}_S
					\geq\EW\left[X_{\tau_{S,\ell_i}}
							+\int_{[S,\tau_{S,\ell_i})} g_t({\ell_{i+1}})\mu(\md t) \midG \aFA_S\right].
				\end{align*}
				This implies with (\ref{sto:gl_75})
				\begin{align}\label{sto:eq_54}
					\EW\left[\left(Y^{\bar{\ell}}_S
					-Y^{\ubar{\ell}}_S\right)\mathbb{1}_A\right]
				\geq 	\sum\limits_{i=0}^n\EW\left[\int_{[S,\tau_{S,\ell_i})}
				(g_t(\ell_{i+1})-g_t(\ell_i))\mu(\md t)
					\ \mathbb{1}_A\right]=:\RM{1}^{\pi_n}_1.
				\end{align}
				Similarly we obtain 
				\begin{align}\label{sto:eq_53}
					\EW\left[\left(Y^{\bar{\ell}}_S-Y^{\ubar{\ell}}_S\right)\mathbb{1}_A\right] 
					\leq \sum\limits_{i=0}^n\EW\left[\int_{[S,\tau_{S,\ell_{i+1}})}
					(g_t(\ell_{i+1})-g_t(\ell_i))\mu(\md t)
					\ \mathbb{1}_A\right]=:\RM{1}^{\pi_n}_2
				\end{align}
				and we see $\RM{1}^{\pi_n}_2\geq \RM{1}^{\pi_n}_1$. 
				Now we consider a refining sequence of such partitions $\pi_n$ with mesh
				$\left\|\pi_n\right\|\rightarrow 0$ as $n\rightarrow \infty$ and we will prove
				\begin{align}\label{sto:eq_55}
					\limsup_{n\rightarrow \infty} \RM{1}^{\pi_n}_2
					\leq \EW\left[\int_{[S,\infty)} 
								\left\{\int_{[\ubar{\ell},\bar{\ell})}  \mathbb{1}_{[S,\tau_{S,\ell})}(t)
								g_t(\md \ell)\right\}\mu(\md t)\mathbb{1}_A\right]
								\leq \liminf_{n\rightarrow \infty} \RM{1}^{\pi_n}_1,
				\end{align}	
				which proves \eqref{sto:eq_37} as $A\in \aFA_S$ was arbitrary.
				
				First we will consider $\RM{1}^{\pi_n}_1$ from \eqref{sto:eq_54}.
				By
				\begin{align*}
					g_t(\ell_{i+1})-g_t(\ell_{i})
					&= \int_{\RZ} \mathbb{1}_{[\ell_i,\ell_{i+1})}(\ell)  g_t(\md \ell)
				\end{align*}
				we may rewrite $\RM{1}^{\pi_n}_1$  as
				\begin{align*}
					\RM{1}^{\pi_n}_1
						=\EW\left[\int_{[S,\infty)} \left\{\int_{[\ubar{\ell},\bar{\ell})} 
									\mathbb{1}_{[S,\tau_{S,\ell_n(\ell)})}(t)		
									  g_t(\md \ell)\right\}\mu(\md t)\mathbb{1}_A\right]			 
				\end{align*}
				with
				\begin{align}\label{sto:eq_331}
					\ell_n(\ell):=\max\left\{\ell_i\in \pi_n\midG \ell_i\leq \ell\right\}
					\quad (n=1,2,\dots).
				\end{align}	
				
				The liminf estimate in \eqref{sto:eq_55} follows thus from 
				Fatou's Lemma and the following claim,
				which we will prove at the end:
				 				
				\textbf{Claim 1:} For $\WM$-a.e. $\omega\in \Omega$ we have
					\begin{align}\label{sto:dis_gl_2}
						&\int_{[S(\omega),\infty)}\int_{[\ubar{\ell},\bar{\ell})}
					\liminf_{n\rightarrow \infty} \mathbb{1}_{[S,\tau_{S,\ell_n(\ell)})}(\omega,t)
					g_t(\omega,\md \ell)	\mu(\omega,\md t)\nonumber\\
									\geq&\ \int_{[S(\omega),\infty)}
									\int_{[\ubar{\ell},\bar{\ell})} 
									\mathbb{1}_{[S,\tau_{S,\ell})}(\omega,t) 
								g_t(\omega,\md \ell)	\mu(\omega,\md t),
					\end{align}
					where $(\ell_n(\ell))_{n\in \NZ}$ are defined in \eqref{sto:eq_331}. 
				
				For the limsup estimate, we similarly write
				$\RM{1}^{\pi_n}_2$ from \eqref{sto:eq_53} as
				\begin{align*}
					\RM{1}^{\pi_n}_2
						&=\EW\left[\int_{[S,\infty)} \left\{\int_{[\ubar{\ell},\bar{\ell})} 
									\mathbb{1}_{[S,\tau_{S,r_n(\ell)})}(t)		
									  g_t(\md \ell)\right\}\mu(\md t)\mathbb{1}_A\right]				 
				\end{align*}
				with
				\begin{align}\label{sto:eq_332}
					r_n(\ell):=\min\left\{\ell_i\in \pi_n\midG \ell_i> \ell\right\}.
				\end{align}
				
				Again we use Fatou's Lemma 
				to estimate $\limsup_{n\rightarrow \infty} \RM{1}^{\pi_n}_2$ in \eqref{sto:eq_55}
				from above.	Here we are allowed to use Fatou's Lemma because the 
				latter integrand is bounded
				by $\mathbb{1}_{[S,\infty)}\mathbb{1}_{[\ubar{\ell},\bar{\ell})}$,
				 which is $\WM\otimes \mu \otimes \md g$ - integrable
				 by Assumption \ref{sto:frame_ass}.									
				With the help of the following claim, which we will prove at the end, 
				this proves the first inequality in \eqref{sto:eq_55}.

				\textbf{Claim 2:} For $\WM$-a.e. $\omega\in \Omega$ we have 
				\begin{align}\label{sto:dis_gl_4}					
					&\int_{[S(\omega),\infty)}\int_{[\ubar{\ell},\bar{\ell})}
					\limsup_{n\rightarrow \infty} \mathbb{1}_{[S,\tau_{S,r_n(\ell)})}(\omega,t)
					g_t(\omega,\md \ell)	\mu(\omega,\md t)\nonumber\\
									\leq&\ \int_{[S(\omega),\infty)}
									\int_{[\ubar{\ell},\bar{\ell})} 
									\mathbb{1}_{[S,\tau_{S,\ell})}(\omega,t) 
								g_t(\omega,\md \ell)	\mu(\omega,\md t),
				\end{align}
				where $(r_n(\ell))_{n\in \NZ}$ is defined in \eqref{sto:eq_332}.
						
				 It remains to prove Claim 1 and 2.
				
				\textbf{Proof of Claim 1:} For reasons that will become clear later, we will
				establish \eqref{sto:dis_gl_2} only for $\omega\in \tilde{\Omega}$, 
				where $\tilde{\Omega}\subset \Omega$
				 with $\WM(\tilde{\Omega})=1$ such that	for all $q\in \mathbb{Q}$
				\begin{align}\label{sto:gl_64_2}
					H_{S,q}^+\cap \tilde{\Omega} = 
					\left\{X_{T_{S,q}}<Y_{T_{S,q}}^q\right\}\cap \tilde{\Omega}.
				\end{align}
				Notice that such an $\tilde{\Omega}$ can be found by \eqref{sto:eq_12}.
				
				Now fix $\omega\in \tilde{\Omega}$. 
				For $(t,\ell)\in [0,\infty)\times [\ubar{\ell},\bar{\ell})$ with 
				$t\neq T_{S,\ell}(\omega)$
				and $T_{S,\ell-}(\omega)=T_{S,\ell}(\omega)$ we have by 
				$\lim_{n\rightarrow \infty} \ell_n(\ell)=\ell$ 
				that $\lim_{n\rightarrow \infty} T_{S,\ell_n(\ell)}(\omega)=T_{S,\ell}(\omega)$ 
				and
				\[
					\liminf_{n\rightarrow \infty} \mathbb{1}_{[S,\tau_{S,\ell_n(\ell)})}(\omega,t)
					= \mathbb{1}_{[S,\tau_{S,\ell})}(\omega,t).
				\]			
				As for fixed $\omega \in \tilde{\Omega}$ the set 
				$\{\ell\in \RZ\, |\, T_{S,\ell-}(\omega)<T_{S,\ell}(\omega)\}$ is countable
				it is for every $t\in [0,\infty)$ a $g_t(\omega,\md \ell)$-null set. Hence we get
				\begin{align}\label{sto:eq_333}
					&\int_{[S(\omega),\infty)}\int_{[\ubar{\ell},\bar{\ell})}
					\liminf_{n\rightarrow \infty} \mathbb{1}_{[S,\tau_{S,\ell_n(\ell)})}(\omega,t)
					g_t(\omega,\md \ell)	\mu(\omega,\md t)\nonumber\\
									\geq &\int_{[S(\omega),\infty)}\int_{[\ubar{\ell},\bar{\ell})}
						\left(\mathbb{1}_{[S,\tau_{S,\ell})}(\omega,t) 
						\mathbb{1}_{\{T_{S,\ell}(\omega)\neq t\}}\right)
					g_t(\omega,\md \ell)	\mu(\omega,\md t)\nonumber\\
					&\quad +\int_{[S(\omega),\infty)}\int_{[\ubar{\ell},\bar{\ell})}
						\left(\liminf_{n\rightarrow \infty} 
						\mathbb{1}_{[S,\tau_{S,\ell_n(\ell)})}(\omega,t)
						\mathbb{1}_{\{T_{S,\ell}(\omega)=t\}}
					\right)
					g_t(\omega,\md \ell)	\mu(\omega,\md t).
				\end{align}					
				Therefore it remains to show for 
				any fixed $t\in [S(\omega),\infty)$ 
				that
				\begin{align}\label{sto:eq_335}					
					\liminf_{n\rightarrow \infty} \mathbb{1}_{[S,\tau_{S,\ell_n(\ell)})}(\omega,t)
							\geq		\mathbb{1}_{[S,\tau_{S,\ell})}(\omega,t)
				\end{align}
				for $ g_t(\omega,\md \ell)$-a.e. $\ell\in J$ with
				\begin{align*}
					J&:=\left\{\ell\in [\ubar{\ell},\bar{\ell})\midG T_{S,\ell}(\omega)=t\right\}.
				\end{align*}
				As $\ell\mapsto T_{S,\ell}(\omega)$ is non-decreasing,
				$J$ is an interval and since $g_t(\omega,\md \ell)$
				is an atomless measure, we can focus without loss of 
				generality on the interior of $J$ and we assume that 
				$\mathrm{int}J$ is non-empty. Fix $\ell\in \mathrm{int}J$.
				Now there exists some $N_{\omega,\ell}\in \NZ$
				such that $\ell_n(\ell)\in \mathrm{int}J$ for
				$n\geq N_{\omega,\ell}$ and thus
				$T_{S,\ell}(\omega)=T_{S,\ell-}(\omega)=T_{S,\ell+}(\omega)=T_{S,\ell_n(\ell)}(\omega)$ 
				for $n\geq N_{\omega,\ell}$. 
				This implies that for any fixed $\ell$, inequality \eqref{sto:eq_335}
				is equivalent to
				\begin{align}\label{sto:eq_336}					
					\liminf_{n\rightarrow \infty} \mathbb{1}_{H_{S,\ell_n(\ell)}^+}(\omega)
							\geq		\mathbb{1}_{H_{S,\ell}^+}(\omega).
				\end{align}
				Now we get by the property of $\tilde{\Omega}$ 
				and $(\ell_n(\ell))_{n\in \NZ}\subset \mathbb{Q}$
				that
				\begin{align*}
					\liminf_{n\rightarrow \infty}
							\mathbb{1}_{H_{S,\ell_n(\ell)}^+}(\omega)&=
					\liminf_{n\rightarrow \infty}
					\mathbb{1}_{\{
								Y_{T_{S,\ell_n(\ell)}}^{\ell_n(\ell)}>X_{T_{S,\ell_n(\ell)}}\}}(\omega)\\
							&=
									\liminf_{n\rightarrow \infty} \mathbb{1}_{\{
								Y_{T_{S,\ell}}^{\ell_n(\ell)}>X_{T_{S,\ell}}\}}(\omega)\\
							&=\mathbb{1}_{\bigcup_{m=1}^\infty \bigcap_{n=m}^\infty
								\{
								Y_{T_{S,\ell}}^{\ell_n(\ell)}>X_{T_{S,\ell}}\}}(\omega).
				\end{align*}						
				Moreover we have for $\omega\in 
				\{Y_{T_{S,\ell}}^{\ell}>X_{T_{S,\ell}}\}$ that there exists
				by continuity of $\tilde{\ell}\mapsto 
				Y^{\tilde{\ell}}_{T_{S,\ell}}$ some
				$\bar{N}(\omega)\geq N_{\omega,\ell}$ such that
				\[
					Y_{T_{S,\ell}(\omega)}^{\ell_m(\ell)}(\omega)>X_{T_{S,\ell}(\omega)}(\omega)
				\]
				for $m\geq \bar{N}(\omega)$ and so $\omega\in 
				\bigcap_{m=\bar{N}(\omega)}^\infty \{
								Y_{T_{S,\ell}}^{\ell_m(\ell)}>X_{T_{S,\ell}}\}$. 
				As $\mathbb{1}_{H_{S,{\ell}}^+}(\omega)
				\leq 
				\mathbb{1}_{\{X_{T_{S,\ell}}<Y_{T_{S,\ell}}^{\ell}\}}(\omega)$
				by definition of $H_{S,{\ell}}^+$ we finally obtain
				\begin{align*}
					\liminf_{n\rightarrow \infty}
							\mathbb{1}_{H_{S,\ell_n(\ell)}^+}(\omega)
							=\mathbb{1}_{\bigcup_{n=1}^\infty \bigcap_{m=n}^\infty
								\{
								Y_{T_{S,\ell}}^{\ell_m(\ell)}>X_{T_{S,\ell}}\}}(\omega)
							\geq		\mathbb{1}_{\{X_{T_{S,\ell}}<Y_{T_{S,\ell}}^{\ell}\}}(\omega)
							\geq \mathbb{1}_{H_{S,l}^+}(\omega),
				\end{align*}			
				 which shows \eqref{sto:eq_336}	and
				 finishes the proof of Claim 1.
			
				\textbf{Proof of Claim 2:}
				Let $\tilde{\Omega}\subset \Omega$ with $\WM(\tilde{\Omega})=1$ be
				such that the relation in \eqref{sto:eq_12} holds for any 
				$\omega\in \tilde{\Omega}$ and all $\ell\in 
				\mathbb{Q}$. Analogously to the proof of Claim 1 we get
				\begin{align}\label{sto:gl_97}
					&\int_{[S(\omega),\infty)}\int_{[\ubar{\ell},\bar{\ell})}
					\limsup_{n\rightarrow \infty} \mathbb{1}_{[S,\tau_{S,r_n(\ell)})}(\omega,t)
					g_t(\omega,\md \ell)	\mu(\omega,\md t)\nonumber\\
									\leq &\int_{[S(\omega),\infty)}\int_{[\ubar{\ell},\bar{\ell})}
						\left(\mathbb{1}_{[S,\tau_{S,\ell})}(\omega,t) 
						\mathbb{1}_{\{T_{S,\ell}(\omega)\neq t\}}\right)
					g_t(\omega,\md \ell)	\mu(\omega,\md t)\nonumber\\
					&\quad +\int_{[S(\omega),\infty)}\int_{[\ubar{\ell},\bar{\ell})}
						\left(\limsup_{n\rightarrow \infty} \mathbb{1}_{[S,\tau_{S,r_n(\ell)})}(\omega,t)
						\mathbb{1}_{\{T_{S,\ell}(\omega)=t\}}
					\right)
					g_t(\omega,\md \ell)	\mu(\omega,\md t).
				\end{align}					
				Therefore it remains to show for 
				any $t\in [S(\omega),\infty)$ and $\omega\in \tilde{\Omega}$
				that
				\begin{align}\label{sto:gl_96}					
					\limsup_{n\rightarrow \infty} \mathbb{1}_{[S,\tau_{S,r_n(\ell)})}(\omega,t)
							\leq		\mathbb{1}_{[S,\tau_{S,\ell})}(\omega,t)
				\end{align}
				$ g_t(\omega,\md \ell)$-a.e. on
				\begin{align*}
					J&:=\left\{\ell\in [\ubar{\ell},\bar{\ell})\midG T_{S,\ell}(\omega)=t\right\}.
				\end{align*}
				So fix $t\in [S(\omega),\infty)$. 
				As $\ell\mapsto T_{S,\ell}(\omega)$ is non-decreasing,
				$J$ is an interval and since $g_t(\omega,\md \ell)$
				is an atomless measure, we can focus without loss of 
				generality on the interior of $J$ and we can assume that 
				$\mathrm{int}J$ is non-empty. Now
				we get for $\ell\in \mathrm{int}J$ that also $r_n(\ell)\in \mathrm{int}J$ for
				$n$ large enough and thus
				$T_{S,\ell}(\omega)=T_{S,\ell-}(\omega)=T_{S,\ell+}(\omega)=T_{S,r_n(\ell)}(\omega)=t$ 
				for sufficiently large $n$. This implies, analalogously to the
				proof of Claim 1, that for $\ell\in \mathrm{int}J$, inequality \eqref{sto:gl_96}
				is equivalent to
				\begin{align}\label{sto:eq_84}
					\limsup_{n\rightarrow \infty} 
									\mathbb{1}_{\{X_{T_{S,\ell}}<Y_{T_{S,\ell}}^{r_n(\ell)}\}}(\omega)
									\leq 
									\mathbb{1}_{H_{S,\ell}^+}(\omega).
				\end{align}				
				We claim that $H_{S,\ell}^+={\{X_{T_{S,\ell}}<Y^{\ell}_{T_{S,\ell}}\}}$. 
				Indeed, as
				 $H_{S,\ell}^+\subset \{X_{T_{S,\ell}}<Y^{\ell}_{T_{S,\ell}}\}$
				  is clear we can assume 			
				$\omega \in \{X_{T_{S,\ell}}<Y^{\ell}_{T_{S,\ell}}\}$ and we will show 
				$\omega \in H_{S,\ell}^+$.
				By continuity and monotonicity of $\ell\mapsto Y^\ell_t(\omega)$ and 
				$\ell \in \mathrm{int}J$ there
				exists $q\in \mathrm{int}J\cap \mathbb{Q}$ with $q<\ell$ and 
				$\omega\in \{X_{T_{S,q}}<Y^{q}_{T_{S,q}}\}$.
				As $\omega \in \tilde{\Omega}$ this implies $\omega \in H_{S,q}^+$.
				By Lemma \ref{Lem:sp3} (vi) the mapping $\ell\mapsto \tau_{S,\ell}$
				 is increasing. In particular
				we get by $T_{S,\ell}(\omega)=T_{S,q}(\omega)=t$ and
				$\omega\in H_{S,q}^+$ that $\omega \in H_{S,\ell}^+$.
							
				Next we see that the inequality \eqref{sto:eq_84} is trivially fulfilled
				 if the left-hand side is zero or if $\ell=r_n(\ell)$ for sufficiently large
				 $n$, 
				we just have to analyse $\ell\in J_2$, where
				\begin{align*}
					J_2:=\left\{\ell\in \mathrm{int} J\midG \right.
					\omega \in &\{X_{T_{S,\ell}}<Y_{T_{S,\ell}}^{r_n(\ell)}\} 
					\text{ for infinitely many $n$ and}\\
											&\text{$(r_n(\ell))_{n\in \NZ}$ converges strictly
											from above to $\ell$} \left.\right\}.
				\end{align*}
				So let us show $\omega \in \{X_{T_{S,\ell}}<Y^{\ell}_{T_{S,\ell}}\}$ 
				for $\ell\in J_2$. For that 
				we will use the following, which we will prove at the end:
				
				\textbf{Claim 3:} There
				exists at most one $\tilde{\ell}\in J_2$ with		
				\begin{align}\label{sto:eq_82}
					Y_{T_{S,\tilde{\ell}}(\omega)}^{\tilde{\ell}}(\omega)
					<Y_{T_{S,\tilde{\ell}}(\omega)}^{r_{n}(\tilde{\ell})}(\omega)
					 \text{\quad  for all } n\in \NZ
				\end{align}
				and
				\begin{align}\label{sto:eq_83}
					X_{T_{S,\tilde{\ell}}(\omega)}(\omega)
					=Y^{\tilde{\ell}}_{T_{S,\tilde{\ell}}(\omega)}(\omega).
				\end{align}

				As $g_{t}(\omega,\md \ell)$ is a continuous measure we can now focus 
				 by Claim 3 on
				$\ell\in J_2\backslash \{\tilde{\ell}\}$, which do not satisfy \eqref{sto:eq_82}
				 and \eqref{sto:eq_83} at once. If $\ell$
				does not satisfy \eqref{sto:eq_83} we have  
				\begin{align*}
					X_{T_{S,\ell}(\omega)}(\omega)<Y^{\ell}_{T_{S,\ell}(\omega)}(\omega),
				\end{align*}		
				which is exactly what we want to show. Assume $\ell$ does 
				not satisfy \eqref{sto:eq_82}, i.e.
				$Y_{T_{S,\ell}(\omega)}^\ell(\omega)=Y_{T_{S,\ell}(\omega)}^{r_n(\ell)}
				(\omega)$ for $n\in \NZ$ large enough.
				Then there will be $\tilde{n}\geq n$ with $\omega \in
				 \{X_{T_{S,\ell}}<Y_{T_{S,\ell}}^{r_{\tilde{n}}(\ell)}\}$
				and by monotonicity of $r\mapsto Y^r$ we have again
				$Y_{T_{S,\ell}(\omega)}^\ell(\omega)
				=Y_{T_{S,\ell}(\omega)}^{r_{\tilde{n}}(\ell)}(\omega)$.
				This leads to 
				\[
					Y_{T_{S,\ell}(\omega)}^{\ell}(\omega)
					=Y_{T_{S,\ell}(\omega)}^{r_{\tilde{n}}(\ell)}(\omega)
					>X_{T_{S,\ell}(\omega)}(\omega)
				\]
				and hence $\omega \in \{X_{T_{S,\ell}}<Y_{T_{S,\ell}}^{\ell}\}$,
				which proves Claim 2 once Claim 3 is established.
				
				\textbf{Proof of Claim 3:} Assume $\tilde{\ell}$ fulfills 
				\eqref{sto:eq_82} and \eqref{sto:eq_83} and  $u\in J_2$.

				\emph{Case $u> \tilde{\ell}$:} As $r\mapsto Y^r_t(\omega)$ is
				non-decreasing we get by $\tilde{\ell}$ satisfying \eqref{sto:eq_82}
				and \eqref{sto:eq_83} that
				\[
					Y^{u}_{T_{S,u}(\omega)}(\omega)
					=Y^{u}_{T_{S,\tilde{\ell}}(\omega)}(\omega)> 
					Y^{\tilde{\ell}}_{T_{S,\tilde{l}}(\omega)}(\omega)= 
					X_{T_{S,\tilde{\ell}}(\omega)}(\omega)=X_{T_{S,u}(\omega)}(\omega),
				\]
				where we have used  $T_{S,\tilde{\ell}}(\omega)=t=T_{S,u}(\omega)$ 
				by $\tilde{\ell},u\in J_2$.
				Hence $u$ does not fulfill \eqref{sto:eq_83}.
				
				\emph{Case $u< \tilde{\ell}$:} Again as $r\mapsto Y^r_t(\omega)$ is
				non-decreasing we get  that   
				\[
					X_{T_{S,\tilde{\ell}}(\omega)}(\omega)
					=Y^{\tilde{\ell}}_{T_{S,\tilde{\ell}}(\omega)}(\omega)
					=Y^u_{T_{S,\tilde{\ell}}(\omega)}(\omega).  
				\]
				Furthermore we see by $u<\tilde{\ell}$ that the corresponding sequence
				 $(\ell_n(u))_{n\in \NZ}$ will fulfill  
				$\ell_n(u)\leq {\tilde{\ell}}$ for $n$ large enough and therefore 
				\[
					Y^{\ell_n(u)}_{T_{S,\tilde{\ell}}(\omega)}(\omega)=
						Y^{{\tilde{\ell}}}_{T_{S,\tilde{\ell}}(\omega)}(\omega)
						=Y^{u}_{T_{S,\tilde{\ell}}(\omega)}(\omega).
				\]		 
				Therefore $u$ does not satisfy \eqref{sto:eq_82}, which  proves our claim.
		
		
			\subsection{Proof of Lemma \ref{sto:Lem_12}}\label{proof:4.6}
    		 First we have $X_S=Y_S^{L_S}$ by Lemma \ref{Lem:s6}.			
    		 Fix now $\ell_0\in\mathbb{\RZ}$. As we 
    		 have $X_S=Y_S^{\ell_0}$ on $\{\ell_0\leq L_S\}$ and as $\ell\mapsto Y_S^\ell$ is 
    		 non-decreasing
    		(Lemma \ref{Lem:sp3} (viii)), we get
    		\begin{align}\label{sto:eq_41}
    			X_S=Y^{L_S}_S=Y^{\ell_0}_S-\int_{\RZ} 
    								\mathbb{1}_{[L_S\wedge \ell_0,\ell_0)}(\ell)  Y_S(\md \ell).
    		\end{align}
    		Denote by $I$ the integral on the right hand side of this expression.
    		Due to our disintegration
    		formula (see Proposition \ref{sto:dis_pro})
    		for the random measure $Y_S(\md \ell)$, we can rewrite
    		\begin{align}\label{sto:eq_17}
    			I=\EW\left[\int_{[S,\infty)} \left\{\int_{\RZ} 
    						\mathbb{1}_{[L_S\wedge \ell_0,\ell_0)}(\ell)
    						\mathbb{1}_{[S,\tau_{S,\ell})}(t) g_t(\md \ell)\right\}
    										\mu(\md t)\midG \aFA_{S}\right].
    		\end{align}
    		
    		Next we state a claim, which uses the notation $\bar{L}_{S,t}$
    		from \eqref{sto:eq_2}. The claim will be proven at the end.
    
    		\textbf{Claim:}\label{sto:claim} 
    		Let $\tilde\Omega:=\bar{\Omega}\backslash \mathcal{N}$, with
    		$\mathcal{N}$ from Lemma \ref{Lem:s5}
    		and $\bar{\Omega}\subset\Omega$, $\WM(\bar{\Omega})=1$ such that on
    		$\bar{\Omega}$ relation \eqref{sto:eq_12} holds for all 
    		$\ell\in \mathbb{Q}$ and $\ell_0$. Then
    		we have the following three equations: 			
    		\begin{compactenum}[(a)]
    			\item  For $\omega\in \tilde\Omega$,
    			$t\in [0,\infty)$ we have			
    					\begin{align}\label{sto:eq_611}
    					\mathbb{1}_{H_{S,\ell}^+}(\omega)
    					\mathbb{1}_{[L_{S}(\omega)\wedge \ell_0,\ell_0)}(\ell)
    					\mathbb{1}_{[S(\omega),T_{S,\ell}(\omega)]}(t)\nonumber\\
    					=
    					\mathbb{1}_{H_{S,\ell}^+}(\omega)
    				\mathbb{1}_{[\bar{L}_{S,t}(\omega),\ell_0)}(\ell)\mathbb{1}_{[S(\omega),T_{S,\ell_0}(\omega)]}(t)
    				\end{align}	
    				for $g_t(\omega,\md \ell)$-a.e. $\ell\in \RZ$.
    			\item  For $\omega\in \tilde\Omega$
    			and $t\in [0,\infty)$, we have
    			\begin{align}\label{sto:eq_8}
    				\mathbb{1}_{H_{S,\ell}^-\cup H_{S,\ell}}(\omega)
    				\mathbb{1}_{[L_S(\omega)\wedge \ell_0,\ell_0)}(\ell)\mathbb{1}_{[S(\omega),T_{S,\ell}(\omega))}(t)\nonumber\\
    					=\mathbb{1}_{H_{S,\ell}^-\cup H_{S,\ell}}(\omega)
    					\mathbb{1}_{[\bar{L}_{S,t}(\omega),\ell_0)}(\ell)\mathbb{1}_{[S(\omega),T_{S,\ell_0}(\omega))}(t)
    			\end{align}
    			for $g_t(\omega,\md \ell)$-a.e. $\ell\in \RZ$.
    			\item For $\omega\in\tilde\Omega$ 
    		and
    		$t=T_{S,\ell_0}(\omega)$, we have  
    		\begin{align}\label{eq_301}
    			\mathbb{1}_{[\bar{L}_{S,t},\ell_0)}(\ell)
    						\mathbb{1}_{H_{S,\ell}^+}(\omega)
    				=\mathbb{1}_{[\bar{L}_{S,t},\ell_0)}(\ell)
    						\mathbb{1}_{H_{S,\ell_0}^+}(\omega)
    		\end{align}
    		for $g_t(\omega,\md \ell)$-a.e. $\ell\in \RZ$.
    		\end{compactenum}
    
    		Combining (a) and (b) from the above Claim with \eqref{sto:eq_17} leads to
    		\begin{align*}
    			I&=\EW\left[\int_{[S,\infty)} \left\{\int_{\RZ} 
    						\left(\mathbb{1}_{[\bar{L}_{S,t},\ell_0)}(\ell)
    						\mathbb{1}_{[S,T_{S,\ell_0})}(t)\right.\right.\right.\\
    						&\left.\left.\left.\hspace{25ex} +\mathbb{1}_{[\bar{L}_{S,t},\ell_0)}(\ell)
    						\mathbb{1}_{H_{S,\ell}^+}\mathbb{1}_{\{T_{S,\ell_0}\}}(t)\right) g_t(\md \ell)\right\}
    										\mu(\md t)\midG \aFA_{S}\right]
    		\end{align*}
    		and by (c) of the above Claim
    		\begin{align*}
    			I&=\EW\left[\int_{[S,\tau_{S,\ell_0})} \left(g_t(\ell_0)-g_t(\bar{L}_{S,t})
    			\right)\mu(\md t)
    			 	\midG \aFA_S\right].
    		\end{align*}						
    		By \eqref{sto:eq_41} and Lemma \ref{Lem:sp3} (vi), we see that 
    		\begin{align*}
    			X_S= Y^{\ell_0}_S-I
    				= \EW\left[X_{\tau_{S,\ell_0}}
    									+\int_{[S,\tau_{S,\ell_0})} g_t(\bar{L}_{S,t})
    					\mu(\md t)
    						\midG \aFA_S\right],
    		\end{align*}
    		which is equivalent to
    		\begin{align}\label{sto:eq_51}
    			X_S	-\EW\left[X_{\tau_{S,\ell_0}}
    									\midG \aFA_S\right]
    						=\EW\left[\int_{[S,\tau_{S,\ell_0})} g_t(\bar{L}_{S,t})
    					\mu(\md t)
    						\midG \aFA_S\right].		
    		\end{align}
    		Now it stays to show the integrability of $\mathbb{1}_{[S,\tau_{S,\ell})}
    		g(\sup_{v\in [S,t]}L_v)$. First, we have for$\omega\in \Omega$ and $t<T_{S,\ell_0}(\omega)$
    		by Lemma \ref{Lem:s5} that $\bar{L}_{S,t}(\omega)<\ell_0$. On the other hand we
    		get for $\omega\in H_{S,l_0}^+$ and $t=T_{S,\ell_0}(\omega)$ that $X_t(\omega)<Y_t^{l_0}(\omega)$
    		which show also in this case $L_t(\omega)<l_0$. Hence we have for $t \in [S,\tau_{S,\ell_0})(\omega)$
    		that $L_t(\omega)<l_0$, which shows by monotonicity of $l\mapsto g_t(l)$,
    		$g_t(0)=0$ (see \eqref{sto:sep_gl}) and $g(l_0)\in \mathrm{L}^1$ 
    		by Assumption \ref{sto:frame_ass}  \ref{sto:ass_bullet_2} that
    		\[
    			\EW\left[\int_{[S,\tau_{S,\ell_0})} \left(g_t(\bar{L}_{S,t})\vee 0\right)
    					\mu(\md t)			\right]
    			\leq 
    			\EW\left[\int_{[0,\infty)} g_t(l_0\vee 0)
    					\mu(\md t)\right]<\infty,
    		\]
    		which shows that the positive part of $\mathbb{1}_{[S,\tau_{S,\ell})}	g(\sup_{v\in [S,t]}L_v)$	
    		is integrable. Furthermore we have by Lemma \ref{sto:tools_lem} that there exists a $\Lambda$-martingale $M^X$ 
    		of class($\text{D}^\Lambda$) with
    		$-M^X\leq X\leq M^X$ and by Lemma \citing{BB18_2}{Lemma 3.16}{13} we obtain with \eqref{sto:eq_51}
    		\begin{align*}
    			-\infty<\EW[X_S-M^X_S]\leq 
    						\EW\left[\int_{[S,\tau_{S,\ell_0})} g_t(\bar{L}_{S,t})
    					\mu(\md t) \right],	
    		\end{align*}
    		which shows that also the negative part of $\mathbb{1}_{[S,\tau_{S,\ell})}	g(\sup_{v\in [S,t]}L_v)$	
    		is integrable. This
    		completes the proof of our Lemma once we have proven the above Claim.
    				
    		\textbf{Proof of Part (a) of the above claim:} Fix $\omega\in \tilde\Omega$.
    		
    		``$\geq$'' in \eqref{sto:eq_611}:
    			 Assume $t\in [S(\omega),
    			T_{S,\ell_0}(\omega)]$, $\ell_0>\ell>
    			\bar{L}_{S,t}(\omega)\geq L_S(\omega)$ and $\omega\in H_{S,\ell}^+$.
    			Then we get by $A\subset B$ in 
    			 Lemma \ref{Lem:s5} (\eqref{sto:set_a} and \eqref{sto:set_b})
    			  that $t\leq T_{S,\ell}(\omega)$. We can focus on $\ell>\bar{L}_{S,t}(\omega)$
    			as $\{\bar{L}_{S,t}(\omega))\}$ is a $g_t(\omega,\md \ell)$-null set.

    		``$\leq$'' in \eqref{sto:eq_611}: 
    			Assume $t\in [S(\omega),
    			T_{S,\ell}(\omega)]$, $\ell_0>\ell\geq L_S(\omega)$ and $\omega\in H_{S,\ell}^+$.
    			 In the case $t<T_{S,\ell}(\omega)$
    			 we have by the relation $\tilde{B}\subset \tilde{C}$ in 
    			 Lemma \ref{Lem:s5} (\eqref{sto:set_tb} and \eqref{sto:set_tc}) that
    			  $\ell\geq \bar{L}_{S,t}(\omega)$. For
    			$t=T_{S,\ell}(\omega)$ we get by $B\subset C$ in 
    			Lemma \ref{Lem:s5} (\eqref{sto:set_b} and \eqref{sto:set_c})
    			that
    			 \[
    				\sup_{v\in [S(\omega),{T_{S,\ell}(\omega)})} L_v(\omega)\leq \ell
    			\]
    			and  $\omega \in H_{S,\ell}^+\subset\{X_{T_{S,\ell}}<Y_{T_{S,\ell}}^\ell\}$ (see \eqref{sto:eq_77})
    			shows by the definition of $L$ that $\bar{L}_{S,T_{S,\ell}(\omega)}(\omega)\leq \ell$. This finishes our proof as
    			 ${T_{S,\ell}(\omega)}\leq T_{S,\ell_0}(\omega)$ follows by monotonicity of $\ell\mapsto T_{S(\omega),\ell}(\omega)$. 			
    
    		\textbf{Proof of Part (b) of the above claim:}
    		Fix $\omega\in \tilde{\Omega}$. 
    		
    		``$\leq$'' in \eqref{sto:eq_8}: 
    		Assume $L_{S(\omega)}(\omega)\leq \ell<\ell_0$, $S(\omega)
    		\leq t <T_{S,\ell}(\omega)\leq T_{S,\ell_0}(\omega)$
    		 and $\omega\in H_{S,\ell}^-\cup H_{S,\ell}$.
    		 From $\tilde{B}\subset \tilde{C}$ in Lemma \ref{Lem:s5} 
    		(\eqref{sto:set_tb} and \eqref{sto:set_tc})
    		 we get $\bar{L}_{S,t}(\omega)\leq \ell<\ell_0$. 
    		
    		``$\geq$'' in \eqref{sto:eq_8}: Let $t\in 
    		[S(\omega),T_{S,\ell_0}(\omega))$, 
    		$\bar{L}_{S,t}(\omega)\leq \ell<\ell_0$  and 
    		$\omega\in H_{S,\ell}^-\cup H_{S,\ell}$. As $\{\bar{L}_{S,t}(\omega)\}$ is a 
    		$g_t(\omega,\md \ell)$-null set we can
    		focus on $\bar{L}_{S,t}(\omega)< \ell$. 
    		From $\bar{L}_{S,t}(\omega)< \ell$ we obtain by 
    		$A\subset B$ in Lemma \ref{Lem:s5} (\eqref{sto:set_a}
    		and \eqref{sto:set_b}) that $t\leq T_{S,\ell}(\omega)$.
    		Now we have to prove that for fixed 
    		$t\in [S(\omega),T_{S,\ell_0}(\omega))$ the set
    		\[
    			J:=(\bar{L}_{S,t}(\omega),l_0)\cap 
    			\left\{\ell\in \RZ\midG  
    			\omega \in H_{S,\ell}^-\cup H_{S,\ell} \text{ and } 
    			T_{S,\ell}(\omega)=t \right\}
    		\]
    		is a $g_t(\omega,\md \ell)$-null set. 
    		By Lemma  \ref{Lem:sp3} (vi) the mapping $\ell\mapsto 
    		\tau_{S,\ell}$ is increasing and therefore $J$ is an interval. 
    		Assume $J$ contains more than one point.
    		Then take $\ell_1,\ell_2\in J$ and some $q\in \mathbb{Q}$
    		with $\ell_1<q<\ell_2$. As $J$ is an interval also $q\in J$.
    		From 
    		$\omega\in \tilde{\Omega}$ we get 
    		$q\in H_{S,\ell}^-\cup H_{S,\ell}=\{X_{T_{S,q}}=Y_{T_{S,q}}^q\}$,
    		which implies by $\tilde{A}\subset \tilde{B}$ in Lemma 
    		\ref{Lem:s5} (\eqref{sto:set_ta}
    		and \eqref{sto:set_tb}) that $t<T_{S,q}(\omega)$,
    		which contradicts $q\in J$. Hence $J$ contains
    		at most one point, which shows $J$ is a 
    		$g_t(\omega,\md \ell)$-null set.
    		
    		\textbf{Proof of Part (c) of the above claim:}
    		Fix $\omega\in \tilde\Omega$, $t=T_{S,\ell_0}(\omega)$ and
    		$\ell\in [\bar{L}_{S,T_{S,\ell_0}}(\omega),\ell_0)$. We do not have
    		to consider the case 
    		$\ell=\bar{L}_{S,T_{S,\ell_0}}(\omega)$ as 
    		for fixed $\omega$ the set $\{\bar{L}_{S,T_{S,\ell_0}}(\omega)\}$
    		is a $g_{T_{S,\ell_0}(\omega)}(\omega,\md \ell)$-null set.			
    		Now we get from $\bar{L}_{S,T_{S,\ell_0}}(\omega)<\ell$
    		that 
    		\begin{align}\label{sto:eq_85}
    			X_v(\omega)<Y_v^\ell(\omega) 
    		\text{ for all } v\in [S(\omega),T_{S,\ell_0}(\omega)].
    		\end{align}			
    		Furthermore 
    		$\bar{L}_{S,T_{S,\ell_0}}(\omega)<\ell$ implies by 
    		$A\subset B$ in Lemma \ref{Lem:s5} (\eqref{sto:set_a}
    		and \eqref{sto:set_b}) that $T_{S,\ell_0}(\omega)
    		\leq T_{S,\ell}(\omega)$ and therefore by monotonicity of $\ell\mapsto
    		T_{S,\ell}(\omega)$ that $t=T_{S,\ell_0}(\omega)= T_{S,\ell}(\omega)$.
    		Hence if $\ell \in (\bar{L}_{S,T_{S,\ell_0}}(\omega),\ell_0]$
    		we have $X_t(\omega)<Y_t^\ell(\omega)$ and there exists
    		$q\in (\bar{L}_{S,T_{S,\ell_0}}(\omega),\ell]\cap \mathbb{Q}$
    		with $X_t(\omega)<Y_t^q(\omega)$. By $\omega\in \tilde{\Omega}$
    		this implies $\omega\in H_{S,q}^+$ and monotonicity of $\ell\mapsto
    		\tau_{S,\ell}$ we get $\omega \in H_{S,\ell}^+$,
    		which proves part (c).
		
		
	    \subsection{Proof of Lemma \ref{sto:Lem_13}}\label{proof:4.7}
			Note first that by Lemma \ref{Lem:sp3} (iii) $\ell\mapsto T_{S,\ell}$ is non-decreasing. Hence $T_{S,\infty}$ exists
			as a monotone limit of stopping times. Moreover, by Lemma \ref{Lem:sp3} (vi) and the definition of the essential
			supremum, we have
			\begin{align*}
				\EW[Y^\ell_S]=\EW\left[X_{\tau_{S,\ell}}+
										\int_{[S,\tau_{S,\ell})} g_t(\ell)\mu(\md t)\right]
							\geq \EW\left[X_{\infty}+\int_{[S,\infty)} g_t(\ell)\mu(\md t)
							\right]
			\end{align*}
			or, equivalently, as $X_{\infty}=0$ by assumption,
			\begin{align*}
				\EW\left[X_{\tau_{S,\ell}}\right]
							\geq   
				\EW\left[\int_{[S,\tau_{S,\ell})^c} g_t(\ell)\mu(\md t) \right].
			\end{align*}
			Hence, for any $\mathbb{Q}\ni \ell_0> 0$, we can, by monotonicity of $\ell\mapsto g_t(\ell)$,
			 and normalization to $g_t(0)=0$ (see (\ref{sto:sep_gl})), use
			 monotone convergence to conclude
			\begin{align}
				\EW[M_{S}^X] &\geq \liminf_{\mathbb{Q}\ni \ell\uparrow \infty} \EW\left[X_{\tau_{S,\ell}}\right]
				\geq   
				\liminf_{\mathbb{Q}\ni \ell\uparrow \infty} \EW\left[\int_{[S,\tau_{S,\ell})^c} g_t(\ell_0)\mu(\md t) \right]\nonumber\\
					&\geq \liminf_{\mathbb{Q}\ni \ell\uparrow \infty} \EW\left[\int_{(T_{S,\ell},\infty)}
							 g_t(\ell_0)\mu(\md t) \right]
							\geq \EW\left[\int_{(T_{S,\infty},\infty)} g_t(\ell_0)\mu(\md t)\right]\geq 0,\label{Gl:27}
			\end{align}
			where $M^X\geq X$ with $M^X$ of Lemma \ref{sto:tools_lem} (i) and the first
			inequality follows with the help of \citing{BB18_2}{Lemma 3.16}{13}
			applied to $M^X$. 
			
			For $\ell_0\uparrow
			\infty$, the right-hand side in \eqref{Gl:27} tends to $\infty$ on the set
			$\{\mu((T_{S,\infty},\infty))>0\}$ while the left-hand side
			yields a finite upper bound. Hence, 
			$\WM(\mu((T_{S,\infty},\infty))>0)=0$, establishing \eqref{sto:eq_36}.
			
			Now we want to analyse more precisely the set 
			$\{\mu(\{T_{S,\infty}\})>0\}$. By repeating the arguments in \eqref{Gl:27}
			and using \eqref{sto:eq_36} we obtain
			\begin{align}
				\EW[M_{S}^X ]
					&\geq   
				\liminf_{\mathbb{Q}\ni \ell\uparrow \infty}\EW\left[\int_{[S,\tau_{S,\ell})^c} g_t(\ell_0)\mu(\md t) \right]\nonumber\\
					&\geq \EW\left[g_{T_{S,\infty}}(\ell_0)\mu(\{T_{S,\infty}\})\left(\mathbb{1}_{\Gamma}
					+\liminf_{\mathbb{Q}\ni \ell\uparrow \infty}
					\mathbb{1}_{(H_{S,\ell}^-\cup H_{S,\ell})\cap \Gamma^c}\right)\right]\nonumber\\
					&\overset{\eqref{sto:eq_12}}{=} \EW\left[g_{T_{S,\infty}}(\ell_0)\mu(\{T_{S,\infty}\})\left(\mathbb{1}_{\Gamma}
					+\liminf_{\mathbb{Q}\ni \ell\uparrow \infty}
					\mathbb{1}_{\{X_{T_{S,l}}=Y_{T_{S,l}}^l\}\cap \Gamma^c}\right)\right].\label{sto:eq_199}
			\end{align}			
			Hence again by letting $\ell_0$ tend to $\infty$ we obtain
			\eqref{sto:eq_101} and \eqref{sto:eq_104} if we can show
			\eqref{sto:eq_105}.
			But actually \eqref{sto:eq_105} follows immediately by \eqref{sto:eq_12} and monotonicity of $\ell\mapsto Y^\ell$.
			
			Let us now show \eqref{sto:eq_38}.
			Repeating the arguments in \eqref{Gl:27} replacing the
			expectation by the conditional expectation with respect to 
			$\aFA_S$ gives us
			\[
				\liminf_{\mathbb{Q}\ni \ell\uparrow \infty}
			\EW\left[X_{\tau_{S,\ell}}\midG \aFA_S\right]\geq 0
			\]
			 almost surely.	
			 On the other hand, it remains to prove 
			\begin{align}\label{sto:eq_102}
					\limsup_{\mathbb{Q}\ni \ell\uparrow \infty}
			\EW\left[X_{\tau_{S,\ell}}\midG \aFA_S\right]\leq 0.
			\end{align}
			Actually we will prove
			\begin{align}\label{sto:eq_146}
					\limsup_{\mathbb{Q}\ni \ell\uparrow \infty}
			\EW\left[X_{\tau_{S,\ell}}\mathbb{1}_{\Gamma}\midG \aFA_S\right]\leq 0.
			\end{align}
			and 
			\begin{align}\label{sto:eq_147}
					\limsup_{\mathbb{Q}\ni \ell\uparrow \infty}
			\EW\left[X_{\tau_{S,\ell}}\mathbb{1}_{\Gamma^c}\midG \aFA_S\right]\leq 0,
			\end{align}
			which will lead to \eqref{sto:eq_102}. Remind that the set $\Gamma$ 
			is not necessarily $\aFA_S$-measurable.
			
			\textbf{Proving \eqref{sto:eq_146}:} First 
			$(T_{S,\infty})_{\Gamma}$ is a predictable stopping time
			with announcing sequence $(T_{S,n})_{\{T_{S,n}<T_{S,\infty}\}}\wedge n$.
			By \eqref{sto:eq_36} and \eqref{sto:eq_101} we get
			$\mu([(T_{S,\infty})_{\Gamma},\infty))=0$ almost surely. Hence
			we have by assumption on $X$ that  $X_{(T_{S,\infty})_{\Gamma}}=0$.
			Therefore we obtain by Fatou's Lemma and
			then Lemma \ref{sto:tools_lem} (ii) that
			\begin{align*}
			  \limsup_{\mathbb{Q}\ni \ell\uparrow \infty}	\EW\left[
				X_{\tau_{S,\ell}}
										\mathbb{1}_{\Gamma}\midG \aFA_{S}\right]
			 \leq   \EW\left[ \lsl{X}_{(T_{S,\infty})_{\Gamma}}
			\midG \aFA_{S}\right]
			\leq\EW\left[ X_{(T_{S,\infty})_{\Gamma}}		
				\midG \aFA_{S}\right]
			=0.
			\end{align*}			
			Before proving \eqref{sto:eq_147}
			we need as an intermediate result the following claim:
			
			\textbf{Claim:}
			\begin{align*}
			  \limsup_{\mathbb{Q}\ni \ell\uparrow \infty}	\EW\left[
				X_{\tau_{S,\ell}}			\mathbb{1}_{\Gamma^c}\midG \aFA_{S}\right]
			 \leq   \EW\left[ {X}_{T_{S,\infty}}\mathbb{1}_{\{X_{T_{S,\infty}}
				=Y_{T_{S,\infty}}^\ell
					\text{ for all $\ell$}\}}
			\mathbb{1}_{\Gamma^c}
				\midG \aFA_{S}\right].
			\end{align*}
			
			\textbf{Proof of the claim:} We define for $k\in \NZ$
			\begin{align*}
				A_0&:=\{T_{S,p}=T_{S,\infty}	 \text{ for some $p\leq 0$}\},\\
				A_k&:=\{T_{S,k-1}<T_{S,k}=T_{S,\infty}\}
			\end{align*}
			such that $\Gamma^c$ is the disjoint union of the sets $(A_k)_{k\in \NZ}$.
			Now we get
			\begin{align*}
			  \limsup_{\mathbb{Q}\ni \ell\uparrow \infty}	\EW\left[
				X_{\tau_{S,l}}
										\mathbb{1}_{\Gamma^c}\midG \aFA_{S}\right]
			 \leq   \limsup_{\mathbb{Q}\ni \ell\uparrow \infty}	\EW\left[
				X_{\tau_{S,l}}
										\mathbb{1}_{\cup_{k=0}^\ell A_k}\midG \aFA_{S}\right]
			\end{align*}
			By \eqref{sto:eq_36} we get for any $\epsilon>0$
			that the predictable 
			stopping time $T_{S,\infty}+\epsilon$ satisfies $\mu([T_{S,\infty}+\epsilon,
			\infty))=0$ almost surely. Hence by assumptions on 
			$X$ this gives us $X_{T_{S,\infty}+\epsilon}=0$ and therefore
			$\lsr{X}{T_{S,\infty}}=0$. As by Lemma \ref{Lem:sp3} (iv)
			$\tilde{T}_\ell:=(T_{S,\ell})_{H_{S,\ell}^-}$ is a predictable stopping time
			we obtain by Lemma \ref{sto:tools_lem} (ii) that
			\[
				\lsl{X}_{\tilde{T}_\ell}\leq
				{^\mathcal{P} X}_{\tilde{T}_\ell}.
			\]
			Combining this inequality and $\lsr{X}{T_{S,\infty}}=0$
			leads for any $l\in \mathbb{Q}$ to
			\begin{align}\label{sto:eq_271}
				\EW\left[
						X_{\tau_{S,l}}
										\mathbb{1}_{\cup_{k=0}^\ell A_k}\midG \aFA_{S}\right]\leq &
					\EW\left[\left({^\mathcal{P} X}_{T_{S,\ell}}\mathbb{1}_{H_{S,\ell}^-}
												+X_{T_{S,\ell}}\mathbb{1}_{H_{S,\ell}}\right)
								\mathbb{1}_{\cup_{k=0}^\ell A_k}
									\midG \aFA_S\right].
			\end{align}
			For $k\in \RZ$ we get by \citing{DM78}{Theorem 56 (c)}{p.118}, that 
			$A_k\in \aFA_{T_{S,k}-}\subset \aFA_{\tilde{T}_k-}$.
			Hence we see that for any $l\in \RZ$ and all $k\leq \ell$ 
			\begin{align*}
					\EW\left[{^p X}_{\tilde{T}_\ell}
								\mathbb{1}_{A_k}
									\midG \aFA_S\right]&=
					\EW\left[\EW\left[X_{\tilde{T}_\ell}\midG 
					\aFA_{\tilde{T}_\ell-}\right]
								\mathbb{1}_{A_k}
									\midG \aFA_S\right]\\
						&=\EW\left[\EW\left[X_{\tilde{T}_\ell}
						\mathbb{1}_{A_k}\midG \aFA_{\tilde{T}_\ell-}\right]
								\midG \aFA_S\right]
								=\EW\left[X_{\tilde{T}_\ell}\mathbb{1}_{A_k}	\midG \aFA_S\right].
			\end{align*}
			Plugging this into \eqref{sto:eq_271} gives us
			\begin{align*}
				\limsup_{\mathbb{Q}\ni \ell\uparrow \infty}	\EW\left[
				X_{\tau_{S,l}}
										\mathbb{1}_{\Gamma^c}\midG \aFA_{S}\right] \leq &\limsup_{\mathbb{Q}\ni \ell\uparrow \infty}
					\EW\left[\mathbb{1}_{\cup_{k=0}^\ell A_k}
					X_{T_{S,\infty}}\mathbb{1}_{H_{S,\ell}^-\cup H_{S,\ell}}
														\midG \aFA_S\right]
			\end{align*}
			By Fatou's Lemma and \eqref{sto:eq_105} this finally proves our
			claim.

		\textbf{Proving \eqref{sto:eq_147}:}
			The previous claim leads to \eqref{sto:eq_102} if we can show
			\begin{align}\label{sto:eq_103}
				{X}_{T_{S,\infty}}\mathbb{1}_{\{X_{T_{S,\infty}}
				=Y_{T_{S,\infty}}^\ell
					\text{ for all $\ell$}\}}
			\mathbb{1}_{\Gamma^c}=0.
			\end{align}
			For that we will show that
			\[
				\tilde{T}:=(T_{S,\infty})_{\{X_{T_{S,\infty}}
				=Y_{T_{S,\infty}}^\ell
					\text{ for all $\ell$}\}\cap \Gamma^c}
			\]
			is a $\Lambda$-stopping time. If this is true
			we obtain
			by \eqref{sto:eq_36} and \eqref{sto:eq_104}
			that $\mu([\tilde{T},
			\infty))=0$ almost surely and by assumptions on $X$
			that $X_{\tilde{T}}=0$ establishing \eqref{sto:eq_103}.
			
			\textbf{Showing that $\tilde{T}$ is a $\Lambda$-stopping time:}
			First we have
			\[
				\{X_{T_{S,\infty}}
				=Y_{T_{S,\infty}}^\ell
					\text{ for all $\ell$}\}\cap \Gamma^c
					=\bigcup_{k=0}^{\infty} B_k
			\]			
			with disjoint sets
			\begin{align*}
				B_0&:=\{X_{T_{S,0}}
				=Y_{T_{S,0}}^\ell
					\text{ for all $\ell$}\}\cap \{T_{S,0}=T_{S,\infty}\},\\
					B_k&:=\{X_{T_{S,k}}
				=Y_{T_{S,k}}^\ell
					\text{ for all $\ell$}\}\cap \{T_{S,k-1}<T_{S,k}=T_{S,\infty}\}.
			\end{align*}		
			Next we have by Lemma \ref{Lem:sp3} (iv), that for every $k\in \NZ$,
			$(T_{S,k})_{H_{S,k}^-}$ is a predictable stopping
			time and $(T_{S,k})_{H_{S,k}}$ is a $\Lambda$-stopping
			time. Combined $(T_{S,k})_{H_{S,k}^-\cup H_{S,k}}=
			(T_{S,k})_{H_{S,k}^-}\wedge (T_{S,k})_{H_{S,k}}$ is a 
			$\Lambda$-stopping time. Now we define 
			\[
				T_k:=(T_{S,k})_{B_k},\text{ for } k\in \NZ,
			\]			
			which is again a
			$\Lambda$-stopping time. Indeed, one can see by \eqref{sto:eq_12}
			that
			 $B_k\subset \{X_{T_{S,k}}=Y_{T_{S,k}}^k\}=H_{S,k}^-\cup H_{S,k}$
			 up to a $\WM$-null set and we assume without loss of generality that
			 this actually holds true for all $\omega\in \Omega$.
			 Then we can rewrite $B_k$ by
			\begin{align*}
				B_k
				&=B_k\cap (H_{S,k}^-\cup H_{S,k})&&\\
				   &=\left\{X_{(T_{S,k})_{H_{S,k}^-\cup H_{S,k}}}
				=Y_{(T_{S,k})_{H_{S,k}^-\cup H_{S,k}}}^\ell
				\text{ for all $\ell\in \mathbb{Q}$}\right\}&&\cap (H_{S,k}^-\cup H_{S,k})\\
				&&& \cap  
				\{T_{S,k-1}<T_{S,k}=T_{S,\infty}\},
			\end{align*}
			where we have used that $\ell\mapsto Y^\ell$ is non-decreasing and
			we can therefore restrict to $\ell\in \mathbb{Q}$.		 
			 This shows then by the $\Lambda$-measurability
				of $X$ and $Y^k$ that $B_k\in \aFA_{(T_{S,k})_{H_{S,k}^-
				\cup H_{S,k}}}$ and hence
				$T_k\in \stm$.		
				As for every $\omega\in \Omega$ there exists 
				at most one $k\in \NZ$ with $T_k(\omega)<\infty$
				we can see that
			\[
				\tilde{T}=\bigwedge_{k=1}^\infty T_k
			\]		
			and
			\[
				\stsetRO{\tilde{T}}{\infty}=\bigcup_{k=1}^\infty \stsetRO{T_k}{\infty}\in \Lambda.
			\]
			This implies that $\tilde{T}$ is a $\Lambda$-stopping time and finishes Step 2.	
	
			
		\section{Proof of Theorem \ref{thm:opt_stop}}\label{sec:opt_stop}

		    As a first step we get by \citing{BB18_2}{Lemma 4.4 (ii)}{21}, that left-upper-semicontinuity in expectation is equivalent to $\lsl{X}\leq {^\mathcal{P}X}$ up to indistinguishability. Hence, we can obtain for any $\tau=(T,H^-,H,H^+)\in \stmd$ an alternative divided stopping time $\tilde{\tau}=(T,\emptyset,H^-\cup H,H^+)$ which yields
			at least as high a value in \eqref{eq:0} as $\tau$ does. Hence, we can restrict ourselves
			to divided stopping times with $H^-=\emptyset$. 
			These divided stopping times
			can be approximated by $\Lambda$-stopping times as follows. Define for $\tau=(T,\emptyset,H,H^+)\in \stmd$ the times
			\[
				\tilde{T}_n:=T_{H^-\cup H} \wedge (T_n)_{H^+}, \quad n \in \NZ,
			\]
			where $T_n> T$ on $\{T<\infty\}$ is a sequence of
			$\Lambda$-stopping times such that	$\lim_{n\rightarrow \infty} X_{T_n}=X^\ast_{T_{H^+}}$ as given by \citing{BB18_2}{Proposition 4.2 (i)}{19}.
			Recalling the definition $X_\tau$ from \eqref{sto:eq_9}, we then have
			\[
				\lim_{n\rightarrow \infty} 
				\EW\left[X_{T_n}+\int_{[0,T_n)}g_t(\ell)\mu(\md t)\right]
				=
				\EW\left[X_\tau+\int_{[0,\tau)}g_t(\ell)\mu(\md t)\right].
			\]
			It follows that our optimal stopping problem \eqref{eq:0} 
			attains the
			same value as the optimal stopping problem over 
			$\Lambda$-stopping times, i.e.
			\begin{align}\label{eq:2}
				\sup_{\tau\in \stmd}
				\EW\left[X_\tau+\int_{[0,\tau)}g_t(\ell)\mu(\md t)\right]
				=\sup_{T\in \stm}
				\EW\left[X_T+\int_{[0,T)}g_t(\ell)\mu(\md t)\right].
			\end{align}	
			
            Next we consider 
			\[
					\tilde{\tau}_\ell:=(T_{0,\ell},\emptyset,\{L_{T_{0,\ell}}\geq \ell\},
				\{L_{T_{0,\ell}}<\ell\}),
			\]
			where $T_{0,\ell}$ is the stopping time defined (with $S=0$) in \eqref{sto:useful_gl_8}.
			We claim that $\tilde{\tau}_\ell$ is a divided stopping time. Indeed, as one can see from Proposition \ref{Lem:s4} the process $L$ is actually obtained as 
			\begin{align}\label{Main:3}
			    L_t=\sup\left\{\ell\in \RZ\midG X_t=Y_t^\ell\right\},\quad t\in[0,\infty),
			\end{align}
    		with $Y$ from Lemma \ref{Lem:sp3}. Therefore, 
			\[
				\{L_{T_{0,\ell}}<\ell\}=\{X_{T_{0,\ell}}<Y_{T_{0,\ell}}^\ell\}
			\]
			and, hence, Lemma \ref{Lem:sp3} (iv) yields that, up to a $\WM$-nullset,
			\[
				\{L_{T_{0,\ell}}<\ell\}=H_{0,\ell}^+,\quad
				\{L_{T_{0,\ell}}\geq \ell\}=H_{0,\ell}^-\cup H_{0,\ell}.
			\]
			This shows that $(T_{0,\ell})_{\{L_{T_{0,\ell}}\geq \ell\}}$ is a.s.
			equal to the $\Lambda$-stopping time 
			$(T_{0,\ell})_{H_{0,\ell}^-\cup H_{0,\ell}}$ and, as $\Lambda$ is 
			a $\WM$-complete Meyer-$\sigma$-field, it is thus also a $\Lambda$-stopping 
			time. This finally yields that $\tilde{\tau}_L$ is indeed a divided stopping time.
			By Lemma \ref{Lem:sp3} (vi), we actually see that $\tilde{\tau}_L$ is an optimal
			divided stopping time for \eqref{eq:0} as we have
			\begin{align*}
				\sup_{\tau\in \stmd}
				\EW\left[X_\tau+\int_{[0,\tau)}g_t(\ell)\mu(\md t)\right]
				&\overset{\eqref{eq:2}}{=}\sup_{T\in \stm}
				\EW\left[X_T+\int_{[0,T)}g_t(\ell)\mu(\md t)\right]\\
				&=Y_0^\ell=\EW\left[X_{\tau_L}+\int_{[0,\tau_L)}g_t(\ell)\mu(\md t)\right].
			\end{align*}
			
		    Finally, let us show that for a.e. $\omega\in \Omega$ we have $T_{0,\ell}(\omega)=T_\ell(\omega)$, which will then show that also $\tau_\ell$ of Theorem \ref{thm:opt_stop} is an 
		    optimal divided stopping time as claimed. 
			The inclusion $B^c\subset A^c$ from Lemma \ref{Lem:s5} reveals 
			that $T_{0,\ell} \geq T_\ell$ almost surely. 
			Hence, it just remains to show that also $T_{0,\ell}\leq T_L$
			almost surely. By Lemma \ref{Lem:sp3} (vii) there exists $\tilde{\Omega}\subset\Omega$
			with $\WM(\tilde{\Omega})=1$ such that for $\omega\in \tilde{\Omega}$ we have
			\begin{align}\label{Main:5}
										\lim_{\delta \downarrow 0}\sup_{\overset{\ell,\ell'\in C}{|\ell'-\ell|
		\leq \delta}}\sup_{t\in I}
										 \left|Y^\ell_t(\omega)-Y^{\ell'}_t(\omega)\right|=0
			\end{align}
			for all compact sets $C\subset \RZ$, $I\subset [0,\infty)$.
			Let now $\omega\in \tilde{\Omega}$ and assume there exists $t<T_{0,\ell}(\omega)$ with $\sup_{v\in [0,t]}L_v(\omega)\geq \ell$. If there exists $\tilde{t}\in [0,t]$
			with $L_{\tilde{t}}(\omega)\geq \ell$ we obtain by \eqref{Main:3} $X_{\tilde{t}}(\omega)=Y_{\tilde{t}}^{\ell}(\omega)$, which would lead by \eqref{sto:useful_gl_8} 
			to $T_{0,\ell}(\omega)\leq {\tilde{t}}\leq t$ contradicting $t<T_{0,\ell}(\omega)$. Hence we have $\sup_{v\in[0,t]}L_{t}(\omega)= \ell$ and 
			$L_{\tilde{t}}(\omega)< \ell$ for all $\tilde{t}\in [0,t]$. Now there has to exist a sequence $(t_n)_{n\in \NZ}\subset [0,t]$ such that
			$(\ell_n)_{n\in \NZ}$ defined by $\ell_n:=L_{t_n}(\omega)$, $n\in \NZ$, satisfies
			$\lim_{n\rightarrow \infty} \ell_n =\sup_{v\in [0,t]}L_v(\omega)= \ell$
			and $\ell_n<\ell$ for all $n\in \NZ$. As $(t_n)_{n\in \NZ}\subset [0,t]$,
			there exists a monotone subsequence, again denoted by $(t_n)_{n\in \NZ}$,
			with limit $\bar{t}$. We will only consider the case of an 
			increasing sequence as the other case follows analogously. Now we obtain
			by $X\leq Y^{\tilde{\ell}}$ for all $\tilde{\ell}\in \RZ$, \eqref{Main:3} and  \eqref{Main:5} that
			\begin{align}
			    \lsl{X}_{\bar{t}}(\omega)\leq Y_{\bar{t}-}^\ell(\omega)
			    =\lim_{n\rightarrow \infty} Y_{t_n}^{\ell_n}(\omega)
			    =\lim_{n\rightarrow \infty} X_{t_n}(\omega)\leq \lsl{X}_{\bar{t}}(\omega),
			\end{align}
			which shows $\lsl{X}_{\bar{t}}(\omega)= Y_{\bar{t}-}^\ell(\omega)$.
			Here the first equality follows by
			\begin{align}
			    \left|Y^\ell_{t_n}(\omega)-Y^{\ell_n}_{t_n}(\omega)\right|
			    \leq \sup_{\ell'\in [\ell_n,\ell]} \sup_{r\in [0,t]}
										 \left|Y^\ell_r(\omega)-Y^{\ell'}_r(\omega)\right|,\quad
										 n\in \NZ.
			\end{align}
			This leads by \eqref{sto:useful_gl_8}
			to $T_{0,\ell}(\omega)\leq {\tilde{t}}\leq t$, which is a contradiction. Hence for all $t<T_{0,\ell}(\omega)$ we have
			$\sup_{v\in [0,t]}L_v(\omega)< \ell$, which shows $T_{0,\ell}(\omega)\leq
			T_\ell(\omega)$ and finishes our proof.
\end{appendix}

\bibliographystyle{plainnat} 
\bibliography{Literature_ALL}

\end{document}